\newtheorem{definition}{Definition}[section]
\newtheorem{proposition}{Proposition}[section]
\newtheorem{theorem}{Theorem}[section]
\newtheorem{lemma}{Lemma}[section]
\newtheorem{remark}{Remark}[section]
\newtheorem{corollary}{Corollary}[section]
\newtheorem{notation}{Notation}[section]
\numberwithin{equation}{section}
\title{Uniqueness of ground states for combined power-type nonlinear scalar field equations involving the Sobolev critical exponent and  
 a large frequency parameter in three and four dimensions
}
\author{Takafumi Akahori\footnote{The corresponding author; Faculty of Engineering, 
 Shizuoka University, 
 Jyohoku 3-5-1, Hamamatsu-Shi, 
Shizuoka, 432-8561, Japan,~
E-mail: akahori.takafumi@shizuoka.ac.jp}
,~
Miho Murata\footnote{Faculty of Engineering,
Shizuoka University, Jyohoku 3-5-1, Hamamatsu-Shi, 
Shizuoka, 432-8561, Japan,~ 
E-mail: murata.miho@shizuoka.ac.jp}
}
\date{}
\newcommand{\R}{\mathbb{R}}
\begin{document}
\maketitle

\begin{abstract}
We prove the  uniqueness of ground states for 
 combined power-type nonlinear scalar field equations involving  the Sobolev critical exponent and a large frequency parameter. 
 This study is motivated by  the paper \cite{AIIKN} and aims to remove the restriction on dimension imposed there. 
 In this paper,  we  employ  the fixed-point argument developed in \cite{CG} to prove the uniquness.  
 Hence,  the linearization around  the Aubin-Talenti function plays a key role.  Furthermore, 
  we need some estimates for the associated perturbed resolvents (see Proposition \ref{18/11/17/07:17}).
\end{abstract}

\noindent 
{\bf Mathematics Subject Classification}~35J20, 35B09, 35Q55 

\tableofcontents


\section{Introduction}

In this paper, we consider the following equation:  
\begin{equation}\label{eq:1.1} 
 - \Delta u + \omega u  
-|u|^{p-1}u-|u|^{\frac{4}{d-2}}u =0 
\quad 
\mbox{in $\mathbb{R}^{d}$}
,
\end{equation}
where $d\ge 3$,  $1< p < \frac{d+2}{d-2}$ and $\omega >0$.  
\par 
This study  is motivated by  the paper \cite{AIIKN} and aims  to  remove the restriction on dimension imposed there.  More precisely,  our aim is to prove the uniqueness of ground states to \eqref{eq:1.1} 
 when $d=3,4$ and  $\omega$ is large (see Theorem \ref{18/09/09/17:09}).  
  We remark that when $\omega$ is small,  the uniqueness had been proved in \cite{AIKN3}  for all $d\ge 3$.  
 We also remark that  the properties of ground states to \eqref{eq:1.1}  are important    
  in the study of the following nonlinear Schr\"odinger and Klein-Gordon equations 
  (see \cite{AIKN3, AIKN2, Grillakis-Shatah-Strauss1}):  
\begin{align}
\label{nls}
i \frac{\partial u }{\partial t }  + \Delta u 
+ |u|^{p-1}u + |u|^{\frac{4}{d-2}}u &=0,
\\[6pt] 
\label{nkg}
\frac{\partial^{2}u }{\partial t^{2}}  - \Delta u 
+ u -|u|^{p-1}u-|u|^{\frac{4}{d-2}}u &=0
.
\end{align}

In order to describe our main result, we introduce a few symbols: 
\begin{notation}\label{20/11/21/10:48}
\begin{enumerate}
\item 
For each $d\ge 3$,   we use $2^{*}$ to denote the Sobolev critical exponent , namely   
\begin{equation}\label{20/8/10/17:8}
2^{*}:=\frac{2d}{d-2}
.
\end{equation}

\item
For $d\ge 3$, $1<p<\frac{d+2}{d-2}$ and $\omega>0$, we define functionals $\mathcal{S}_{\omega}$ and $\mathcal{N}_{\omega}$ on $H^{1}(\mathbb{R}^{d})$ by 
\begin{align}
\label{eq:1.5}
\mathcal{S}_{\omega}(u)
&:=
\frac{\omega}{2}\|u\|_{L^{2}}^{2}
+
\frac{1}{2}\|\nabla u \|_{L^{2}}^{2}
-
\frac{1}{p+1}\|u\|_{L^{p+1}}^{p+1}
-
\frac{1}{2^{*}}\|u\|_{L^{2^{*}}}^{2^{*}}
,
\\[6pt]
\label{20/8/5/10:4}
\mathcal{N}_{\omega}(u)
&:=
\omega \|u\|_{L^{2}}^{2}
+
\|\nabla u \|_{L^{2}}^{2}
-
\|u\|_{L^{p+1}}^{p+1}
-
\|u\|_{L^{2^{*}}}^{2^{*}}
.
\end{align}

\end{enumerate}
\end{notation}

Next, we clarify terminology: 
\begin{definition}\label{20/11/21/11:3}
\begin{enumerate}
\item
Let $d\ge 3$, $1<p<\frac{d+2}{d-2}$ and $\omega>0$. 
Then, by a ground state to \eqref{eq:1.1}, we mean a minimizer for the following variational problem:   
\begin{equation}\label{18/12/07/09:20}
\inf\{ 
\mathcal{S}_{\omega}(u) \colon  
u \in H^{1}(\mathbb{R}^{d})\setminus \{0\},~\mathcal{N}_{\omega}(u)=0  
\}
.
\end{equation}


\item 
We use the term ``radial'' to mean radially symmetric about $0 \in \mathbb{R}^{d}$. 

\end{enumerate}
\end{definition}

We refer to a known result for the existence of a ground state to \eqref{eq:1.1} (see, e.g., \cite{AIIKN}):  
\begin{proposition}\label{proposition:1.1}
Assume either $d= 3$ and $3<p<5$, or else $d\ge4$ and $1<p<\frac{d+2}{d-2}$. 
 Then, for any $\omega >0$, there exists a ground state to \eqref{eq:1.1}.  Furthermore, for any $\omega >0$, the following hold:
\begin{enumerate}
\item
A ground state to \eqref{eq:1.1}  becomes  a solution to \eqref{eq:1.1} and is in $C^2(\mathbb{R}^{d})$. 

\item
For any ground state $Q_{\omega}$ to \eqref{eq:1.1}, there exist  $y \in \mathbb{R}^{d}$,  $\theta \in \R$ and a positive radial function  $\Phi_{\omega}$ such that $Q_{\omega}(x) = e^{i \theta} \Phi_{\omega} (x-y)$;  In particular,  $\Phi_{\omega}$ is a positive radial ground state to \eqref{eq:1.1}. 

\item 
Let $\Phi_{\omega}$ be a positive radial ground state to \eqref{eq:1.1}. Then, 
 $\Phi_{\omega}=\Phi_{\omega}(x)$ is  strictly decreasing as a function of $|x|$.   
 In particular,  $\|\Phi_{\omega}\|_{L^{\infty}}=\Phi_{\omega}(0)$. 

\item
For any positive radial ground state $\Phi_{\omega}$ to \eqref{eq:1.1}, 
 there exist $\nu>0$ and $C>0$ such that  for any $x\in \mathbb{R}^{d}$, 
\begin{equation}\label{20/8/10/12:16}
|\Phi_{\omega}(x)| 
+
|\nabla \Phi_{\omega}(x)|
+
|\Delta \Phi_{\omega}(x)|
\le
C  e^{-\nu |x|} 
.
\end{equation}
In particular, $\Phi_{\omega} \in H^{2}(\mathbb{R}^{d})$.  
\end{enumerate}
\end{proposition}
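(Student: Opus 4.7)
\bigskip
\noindent\textbf{Proof proposal.}

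My plan is to attack this via the standard Nehari-manifold minimization, being careful about the Sobolev-critical term. First, to obtain a minimizer for \eqref{18/12/07/09:20}, I would reduce to radial functions by Schwarz symmetrization: the $L^2$, $L^{p+1}$, $L^{2^*}$ norms are preserved and the Dirichlet energy does not increase, so any minimizing sequence can be replaced by a radially symmetric one. On $H^1_{\text{rad}}(\mathbb{R}^d)$ the embedding into $L^{p+1}(\mathbb{R}^d)$ is compact for $2<p+1<2^*$, which kills the non-compact translation mode. The remaining obstruction is the possible bubbling caused by the critical term $\|u\|_{L^{2^*}}^{2^*}$. To rule this out, I would compare the minimizing level with the Aubin--Talenti energy $\frac{1}{d}S^{d/2}$ (where $S$ is the Sharp Sobolev constant) and exhibit a test function built from a truncated Talenti bubble plus, when needed, a fixed profile to show strict subadditivity. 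This standard calculation requires enough mass from the subcritical term $\|u\|_{L^{p+1}}^{p+1}$, which is precisely why in dimension $d=3$ one must impose $3<p<5$. This compactness/energy comparison is the main technical obstacle.

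Once a minimizer $Q_\omega$ is obtained, I get a Lagrange multiplier on the Nehari constraint. A standard argument (testing with $Q_\omega$ itself and using $\mathcal{N}_\omega(Q_\omega)=0$) shows that the multiplier vanishes, so $Q_\omega$ solves \eqref{eq:1.1}. For regularity I would bootstrap: $Q_\omega\in H^1(\mathbb{R}^d)$ implies $Q_\omega\in L^{2^*}$, hence the right-hand side of $-\Delta Q_\omega = -\omega Q_\omega + |Q_\omega|^{p-1}Q_\omega + |Q_\omega|^{\frac{4}{d-2}}Q_\omega$ lies in progressively better $L^q$ spaces; standard elliptic $L^q$-regularity plus Sobolev embedding lifts $Q_\omega$ into $C^2(\mathbb{R}^d)$.

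For item (2), I would first note that $|Q_\omega|$ is also a minimizer (modulus leaves every term invariant and weakly lowers $\|\nabla u\|_{L^2}^2$), so I may assume $Q_\omega\ge 0$; the strong maximum principle then gives $Q_\omega>0$, after which phase rigidity yields the factorization $Q_\omega=e^{i\theta}|Q_\omega|$. Radial symmetry about some point $y\in\mathbb{R}^d$ and the strict monotonicity in $|x-y|$ asserted in item (3) would follow from the moving-plane method of Gidas--Ni--Nirenberg applied to the positive $C^2$ solution $\Phi_\omega$.

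Finally, for the exponential decay \eqref{20/8/10/12:16}, I would use the fact that $\Phi_\omega(x)\to 0$ as $|x|\to\infty$ (which for radial $H^1$ functions follows from Strauss-type estimates). Pick $|x|$ so large that $|\Phi_\omega|^{p-1}+|\Phi_\omega|^{\frac{4}{d-2}}\le \omega/2$; then $-\Delta\Phi_\omega+\tfrac{\omega}{2}\Phi_\omega\le 0$ outside a ball. Comparing with the supersolution $Ce^{-\nu|x|}$ for any $\nu<\sqrt{\omega/2}$ via the maximum principle gives the bound on $\Phi_\omega$ itself. The bounds on $\nabla\Phi_\omega$ and $\Delta\Phi_\omega$ then follow from interior Schauder estimates applied on unit balls together with the decay of $\Phi_\omega$ in the right-hand side of the equation. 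From the pointwise exponential bound, $\Phi_\omega\in H^2(\mathbb{R}^d)$ is immediate.
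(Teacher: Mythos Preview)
Your sketch is essentially correct and follows the standard route for such results: Nehari minimization with radial reduction, a Br\'ezis--Nirenberg type energy comparison against the Aubin--Talenti level to recover compactness of the critical term (which is exactly where the restriction $3<p<5$ in $d=3$ enters), Lagrange-multiplier elimination, elliptic bootstrap, moving planes for symmetry and monotonicity, and a comparison/barrier argument for exponential decay.

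Note, however, that the paper does \emph{not} supply its own proof of this proposition. It is quoted as a known result with a reference (see, e.g., \cite{AIIKN}), and the claims about symmetry are supported by the Gidas--Ni--Nirenberg reference \cite{GNN} in the bibliography. So there is no ``paper's proof'' to compare against beyond those citations; your outline is consistent with the methods used in the cited literature.
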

\begin{remark}\label{20/8/10/15:34}
In \cite{AIKN6}, it is proved that if $d=3$ and $1<p\le 3$, then there exists $\omega_{c}>0$ such that for any $\omega > \omega_{c}$, 
 there is no ground state to \eqref{eq:1.1}. 
\end{remark}

By Proposition \ref{proposition:1.1},  we may assume that  a ground state to \eqref{eq:1.1} is positive and radial.

Now, we state the main result of this paper: 
\begin{theorem}\label{18/09/09/17:09}
Assume $d=3,4$ and $\frac{4}{d-2}-1<p<\frac{d+2}{d-2}$; namely either $d=3$ and $3<p<5$, or $d=4$ and $1<p<3$. 
Then, there exists $\omega_{*}>0$ such that for any $\omega > \omega_{*}$, 
 a positive radial ground state to \eqref{eq:1.1} is unique. 
\end{theorem}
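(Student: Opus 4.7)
My plan is to follow the fixed-point scheme of \cite{CG} referenced in the abstract, adapted to the combined subcritical/critical nonlinearity in the low-dimensional setting. The overall strategy is: (i) show that as $\omega\to\infty$, every positive radial ground state, after a universal rescaling, converges to the Aubin--Talenti bubble $W$ solving $-\Delta W = W^{(d+2)/(d-2)}$; (ii) reformulate the equation for a candidate ground state near $W$ as a fixed-point problem for the remainder $h = \widetilde{\Phi} - W$; (iii) invert the linearization transversal to its radial kernel, uniformly in the small perturbation parameters, using the resolvent estimates of Proposition \ref{18/11/17/07:17}; (iv) apply the contraction mapping theorem to conclude uniqueness.

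For step (i), set $M_\omega := \Phi_\omega(0) = \|\Phi_\omega\|_{L^\infty}$ and introduce
\begin{equation*}
\widetilde{\Phi}_\omega(y) := M_\omega^{-1}\, \Phi_\omega\bigl(M_\omega^{-2/(d-2)} y\bigr),
\end{equation*}
which satisfies
\begin{equation*}
-\Delta \widetilde{\Phi}_\omega + \omega M_\omega^{-4/(d-2)}\,\widetilde{\Phi}_\omega - M_\omega^{p-1-4/(d-2)}\,\widetilde{\Phi}_\omega^{p} - \widetilde{\Phi}_\omega^{(d+2)/(d-2)} = 0.
\end{equation*}
Because $p<(d+2)/(d-2)$, the exponent $p-1-4/(d-2)$ is negative, so the subcritical coefficient is small as soon as $M_\omega$ is large. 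A concentration/compactness analysis in the spirit of \cite{AIIKN}, together with the hypothesis $p>\frac{4}{d-2}-1$ (which is precisely the existence range in $d=3,4$), then shows $M_\omega\to\infty$, $\omega M_\omega^{-4/(d-2)}\to 0$, and $\widetilde{\Phi}_\omega\to W$ in a suitable topology.

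For steps (ii)--(iii), write $\widetilde{\Phi}_\omega = W + h$ with $h$ radial and small, and expand to obtain
\begin{equation*}
\mathcal{L} h = \mathcal{N}(h;\omega),\qquad \mathcal{L} := -\Delta - \tfrac{d+2}{d-2}\, W^{4/(d-2)},
\end{equation*}
where $\mathcal{N}$ collects the superlinear part of the critical term together with the small linear and nonlinear contributions of the $\omega$- and subcritical terms. Among radial functions the kernel of $\mathcal{L}$ on $\dHRd$ is spanned by the scaling mode $\Lambda W := \tfrac{d-2}{2} W + x\cdot\nabla W$, so imposing orthogonality $h \perp \Lambda W$ both fixes the scale and makes $\mathcal{L}$ invertible on the complement. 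Proposition \ref{18/11/17/07:17} provides the uniform-in-perturbation bounds needed to invert $\mathcal{L}$ jointly with the small potentials $\omega M_\omega^{-4/(d-2)}$ and $p M_\omega^{p-1-4/(d-2)} W^{p-1}$ in an appropriate weighted space. A contraction argument then produces a unique small $h$ for every sufficiently large $\omega$; combined with step (i), which places \emph{every} positive radial ground state inside this fixed-point ball, uniqueness follows.

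The main obstacle is step (iii), the perturbed resolvent estimates in dimensions $d=3,4$. The bubble $W$ decays only like $|x|^{-(d-2)}$, so $W^{4/(d-2)}$ decays like $|x|^{-4}$, which is critical for integrability in $d=4$ and non-integrable at infinity in $d=3$. Consequently the usual $H^1$-coercivity of $\mathcal{L}|_{(\Lambda W)^\perp}$ must be upgraded to weighted $L^{q}$ or Kato-type bounds on the associated Schrödinger operator, and the mass-type perturbation $\omega M_\omega^{-4/(d-2)}\widetilde{\Phi}_\omega$, though small in coefficient, modifies the decay at infinity and must be tracked consistently in the weights. Choosing a function space in which the resolvent is uniformly bounded, the convergence in step (i) is quantitative, and $\mathcal{N}$ is genuinely contractive is the heart of the argument, and is presumably exactly the content of Proposition \ref{18/11/17/07:17}.
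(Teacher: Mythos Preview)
Your outline captures the broad strategy but has a genuine gap in the step from fixed-point uniqueness to ground-state uniqueness. You rescale each ground state by its own $M_\omega=\Phi_\omega(0)$, so two putative ground states with different $L^\infty$ norms land in \emph{different} rescaled equations (the small parameters $\omega M_\omega^{-4/(d-2)}$ and $M_\omega^{p-1-4/(d-2)}$ both depend on $M_\omega$). The contraction mapping gives a unique small $h$ for each fixed pair of parameters, but says nothing about whether the two resulting solutions coincide after undoing the distinct rescalings. Closing this requires showing that $M_\omega$ itself is uniquely determined by $\omega$, which is a uniqueness statement of comparable strength to the theorem you are trying to prove.

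The paper avoids this circularity by a two-variable fixed point in $(\tau,\eta)$, where the scalar $\tau$ is pinned down by the condition $\langle(-\Delta+\alpha(\tau))^{-1}F(\eta;\alpha(\tau),t),\,V\Lambda W\rangle=0$ (not $h\perp\Lambda W$; it is this condition on the forcing that places it in $X_q$ and unlocks the \emph{uniform} bound in Proposition~\ref{18/11/17/07:17}, as opposed to the singular one). This yields a curve $t\mapsto(\tau_t,\eta_t)$ with $\tau_t$ strictly increasing (Proposition~\ref{19/01/21/08:01}). The rescaling from the original equation is then performed with a scale $\lambda(\omega)$ chosen so that the two perturbation parameters land exactly on this curve; crucially $\lambda(\omega)$ depends only on $\omega$ and on the curve, not on any particular ground state, so all ground states at frequency $\omega$ land in the \emph{same} rescaled problem $\widetilde{\mathcal G}_{t(\omega)}$. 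Even then, identifying every element of $\widetilde{\mathcal G}_t$ with $W+\eta_t$ (Proposition~\ref{19/02/11/16:56}) still requires a further scale adjustment $\mu(t)$ to enforce the orthogonality (Lemma~\ref{20/9/23/9:8}) and a separate monotonicity argument showing that $\lambda\mapsto\lambda^{-(2^{*}-2)/(2^{*}-(p+1))}\alpha(\tau_{\lambda t})$ is injective, which forces $\mu(t)u_t(0)=1$. This monotonicity/injectivity step, not the resolvent estimate, is the missing idea in your outline. (A minor point: your integrability remark is reversed --- $|x|^{-4}\in L^1(\mathbb{R}^3)$ but $|x|^{-4}\notin L^1(\mathbb{R}^4)$.)
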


\begin{remark}\label{20/8/17/7:7}
\begin{enumerate}
\item 
By the result of Pucci and Serrin~\cite{PS} (see also Appendix C of \cite{AIIKN}),  we find that 
   if $3 \leq d \leq 6$ and $\frac{4}{d-2} \leq p < \frac{d+2}{d-2}$, then for any $\omega>0$,   
  a positive radial solution to \eqref{eq:1.1} is unique; 
 in particular, a positive radial ground state is unique.  

  
\item 
In \cite{AIIKN}, it had been proved that if $d\ge 5$ and $1<p<\frac{d+2}{d-2}$, then  there exists $\omega_{*}>0$ such that  for any $\omega \ge \omega_{*}$, a positive radial ground state to \eqref{eq:1.1} is unique.  
\end{enumerate}
 \end{remark}

We give a proof of Theorem \ref{18/09/09/17:09} in Section \ref{18/12/17/07:07}. 

\subsection*{Acknowledgements}
The authors would like to thank Professor Hiroaki Kikuchi for  helpful discussion. This work was supported by JSPS KAKENHI Grant Number 20K03697. 


\section{Preliminaries}\label{20/8/18/11:39}

We collect symbols used  in  this paper, with auxiliary results:  
\begin{enumerate}
\item
For $d\ge 3$, $1<p<\frac{d-2}{d+2}$ and $\omega>0$,  
 the symbol $\mathcal{G}_{\omega}$ denotes the set of positive radial ground states to \eqref{eq:1.1}. 

\item
$L_{\rm real}^{2}(\mathbb{R}^{d})$ denotes the real Hilbert space of 
 complex-valued $L^{2}$-functions equipped with the inner product  
\[
\langle f, g \rangle :=\Re\int_{\mathbb{R}^{d}}f(x) \overline{g(x)}\,dx 
.
\]

\item
For $u,v \in L^{2}(\mathbb{R}^{d})$, we define    
\begin{equation}\label{17/08/15/06:01}
( u,v )
:=
\int_{\mathbb{R}^{d}}u(x)\overline{v(x)} 
\,dx 
.
\end{equation}

\item
We define the Fourier and inverse Fourier transformations to be that 
 for $u\in L^{1}(\mathbb{R}^{d})$,    
\begin{equation}\label{20/8/9/15:35}
\mathcal{F}[u](\xi):=\int_{\mathbb{R}^{d}}e^{-i \xi \cdot x}u(x)\,dx ,
\qquad 
\mathcal{F}^{-1}[u](\xi):=\frac{1}{(2\pi )^{d}}\int_{\mathbb{R}^{d}}e^{i \xi \cdot x}u(x)\,dx
.
\end{equation}


Then, the relationship between Fourier transformation and convolution (see Theorem 0.1.8 of \cite{Sogge}), 
 and  Parsevarl's identity (see Theorem 0.1.11 of \cite{Sogge}) are  stated as      
\begin{align}
\label{21/4/6/10:15}
\mathcal{F}\big[\int_{\mathbb{R}^{d}} u(\cdot-y)  v(y) \,dy \bigm]
&=
\mathcal{F}[u]\mathcal{F}[v], 
\\[6pt]
\label{20/8/9/15:50}
(u,v)
&=\frac{1}{(2\pi)^{d}}(\mathcal{F}[u], \mathcal{F}[v]).
\end{align}
Furthermore,  it is known (see, e.g., Theorem 2.4.6 of \cite{Grafakos}, or Theorem 5.9 of \cite{Lieb-Loss}\footnote{Notice that the definition of Fourier transformation in \cite{Grafakos} and \cite{Lieb-Loss} is different from ours.}) that  for any $0<r <d$,  
\begin{equation}\label{18/12/22/10:07}
\mathcal{F}[|x|^{-r}](\xi)
=
C_{r}|\xi|^{-(d-r)},
\qquad 
\mbox{with}
\quad 
C_{r}:=\frac{2^{d-r}\pi^{\frac{d}{2}}\Gamma(\frac{d-r}{2})}{\Gamma(\frac{r}{2})}.
\end{equation}


\item 
We use $W$ to denote the Aubin-Talenti function with $W(0)=1$, namely  
\begin{equation}\label{eq:1.16}
W(x)
:=\Big(1+\frac{|x|^{2}}{d(d-2)} \Big)^{-\frac{d-2}{2}}
.
\end{equation}
Note that 
\begin{equation}\label{20/9/13/9:55}
W(x)\sim (1+|x|)^{-(d-2)},
\qquad 
\|W\|_{L_{\rm weak}^{\frac{d}{d-2}}} <\infty, 
\end{equation}
where the implicit constant depends only on $d$; in particular, if $d=3,4$, then $W  \not \in L^{2}(\mathbb{R}^{d})$. 
It is known that for $d\ge 3$, $W$ is a solution to the following equation: 
\begin{equation}\label{eq:1.15}
\Delta u +|u|^{\frac{4}{d-2}}u=0.
\end{equation}


\item 
For $d\ge 3$, we define  a potential function $V$ as 
\begin{equation}\label{18/11/05/10:05}
V:=-\frac{d+2}{d-2}W^{\frac{4}{d-2}}.
\end{equation}
Note that $V$ is negative. 
 Observe from \eqref{20/9/13/9:55} that for any $d\ge 3$, 
\begin{equation}\label{20/9/13/9:58}
|V(x)| \lesssim (1+|x|)^{-4}, 
\end{equation}
where the implicit constant depends only on $d$. 

\item 
Let $\lambda>0$. Then, we define the $H^{1}$-scaling operator $T_{\lambda}$ by 
\begin{equation}\label{20/9/25/7:1}
T_{\lambda}[v](x):=\lambda^{-1}v(\lambda^{-\frac{2}{d-2}}x)
.
\end{equation}

\item
We use $\Lambda W$ to denote the function defined as  
\begin{equation}\label{eq:1.19}
\Lambda W
:= 
\frac{d-2}{2}W+x\cdot \nabla W
=
\Big( \frac{d-2}{2} - \frac{|x|^{2}}{2d} \Big) 
\Big( 1+\frac{|x|^{2}}{d(d-2)}\Big)^{-\frac{d}{2}}
.
\end{equation}
Observe that 
\begin{align}
\label{20/9/13/9:52}
&(-\Delta + V )\Lambda W =0 ,
\\[6pt]
\label{20/9/13/10:2}
&|\Lambda W(x)| \lesssim (1+|x|)^{-(d-2)},
\end{align} 
where the implicit constant depends only on $d$.  
In particular, \eqref{20/9/13/10:2} shows that if $d=3,4$, then $\Lambda W  \not \in L^{2}(\mathbb{R}^{d})$.
A computation involving integration by parts shows that 
 for any $\max\{ 1, \frac{2}{d-2}\} <r<\frac{d+2}{d-2}$,  
\begin{equation}\label{20/12/22/14:18}
\langle W^{r},\, \Lambda W \rangle 
=
\frac{-\{ 4-(d-2)(r-1)\}}{2(r+1)}\|W\|_{L^{r+1}}^{r+1} 
<0.  
\end{equation} 
The function $\Lambda W$ can be obtaind from the $H^{1}$-scaling of $W$; Precisely,  we see that for any $\lambda>0$, 
\begin{equation}\label{20/9/25/11:51}
\frac{d}{d\lambda} T_{\lambda}[W] 
=
-\lambda^{-1} T_{\lambda}[W] 
-\frac{2}{d-2} T_{\lambda}[ x\cdot \nabla W]
=
-\frac{2}{d-2}\lambda^{-1}T_{\lambda}[\Lambda W]
, 
\end{equation}
so that $\Lambda W = -\frac{2}{d-2} \frac{d}{d\lambda} T_{\lambda}[W] |_{\lambda=1}$.

Observe from the fundamental theorem of calculus and \eqref{20/9/25/11:51} that for any $\mu>0$, 
\begin{equation}\label{20/9/25/11:45}
T_{\mu}[W] - W 
=
\int_{1}^{\mu} \frac{d}{d\lambda}T_{\lambda}[W]\,d\lambda
=
-\frac{2}{d-2}
\int_{1}^{\mu} 
\lambda^{-1} 
T_{\lambda}[ \Lambda W]
\,d\lambda
. 
\end{equation}
Furthermore,   by \eqref{20/9/25/11:45} and the fundamental theorem of calculus, we see that  for any $\mu>0$, 
\begin{equation}\label{20/9/26/8:41}
\begin{split}
&T_{\mu}[W] - W 
+
\frac{2}{d-2} (\mu -1 )\Lambda W 
\\[6pt]
&=
-\frac{2}{d-2}
\int_{1}^{\mu} 
\int_{1}^{\nu}
\frac{d}{d\lambda}
\Big(
\lambda^{-1} T_{\lambda}[ \Lambda W]
\Big)
 \,d\lambda
d \nu 
\\[6pt]
&=
\frac{2}{d-2}
\int_{1}^{\mu} 
\int_{1}^{\nu} \lambda^{-2}
T_{\lambda}\Big[
2 \Lambda W
+
\frac{2}{d-2} x\cdot \nabla \Lambda W
\Big]
\,d\lambda
d \nu . 
\end{split}
\end{equation}


\item

We use $\Gamma$ to denote the gamma function. 
 Recall that $\Gamma(\frac{5}{2})=\frac{3\sqrt{\pi}}{4}$ and $\Gamma(3)=2$.


\item 
For $d\ge 3$, we use $(-\Delta)^{-1}$ to denote the operator defined to be that  for any function $f$ satisfying $(1+|x|)^{-(d-2)}f \in L^{1}(\mathbb{R}^{d})$,
\begin{equation}
\label{18/11/07/14:02}
(-\Delta)^{-1}f(x)ga
:=
\int_{\mathbb{R}^{d}}\frac{\Gamma(\frac{d}{2})}{(d-2) 2 \pi^{\frac{d}{2}}}
|x-y|^{-(d-2)} f(y)\,dy 
.
\end{equation}
Note that H\"older's ineqaultiy shows that if $1\le r<\frac{d}{2}$ and $f \in L^{r}(\mathbb{R}^{d})$, 
 then  $(1+|x|)^{-(d-2)}f \in L^{1}(\mathbb{R}^{d})$. 
 The Hardy-Littlewood-Sobolev inequality shows that
 for any $1< r < \frac{d}{2}$ ($r\neq 1$) and $f\in L^{r}(\mathbb{R}^{d})$,    
\begin{equation}\label{20/8/16/11:20}
\|  (-\Delta)^{-1} f \|_{L^{\frac{dr}{d-2r}}} 
\lesssim 
\|f\|_{L^{r}}
,
\end{equation}
where the implicit constant depends only on $d$ and $r$.  
 Furthermore,  the standard theory for Poisson's equation (see, e.g., Theorem 6.21 of \cite{Lieb-Loss}) shows that 
 for any $1\le r < \frac{d}{2}$ and $f\in L^{r}(\mathbb{R}^{d})$, 
 \begin{equation}\label{18/11/14/09:07}
(-\Delta) (-\Delta)^{-1} f  =f
\quad 
\mbox{in the distribution sense}
.
\end{equation}

By \eqref{18/11/14/09:07} and $(-\Delta +V)\Lambda W =0$ (see \eqref{20/9/13/9:52}), we see that
\begin{equation}\label{20/9/12/14:32}
(-\Delta) (-\Delta)^{-1} V\Lambda W = V\Lambda W = \Delta \Lambda W
\quad 
\mbox{in the distribution sense}
,
\end{equation}
which implies that there exists a harmonic function $h$ on $\mathbb{R}^{d}$ such that 
\begin{equation}\label{20/9/12/14:38}
(-\Delta)^{-1} V\Lambda W = -\Lambda W + h
.
\end{equation}
Note here that  \eqref{20/8/16/11:20} shows that   
\begin{equation}\label{20/8/15/15:57}
\| (\Delta)^{-1} V\Lambda W \|_{L^{2^{*}}}
\lesssim 
\| V\Lambda W\|_{L^{\frac{2d}{d+2}}}  \lesssim 1,  
\end{equation}
whereas the maximum principle shows that a non-trivial harmonic function does not belong to $L^{2^{*}}(\mathbb{R}^{d})$. 
Thus,  $h \equiv 0$ in \eqref{20/9/12/14:38} and therefore    
\begin{equation}\label{20/8/15/15:47}
(-\Delta)^{-1} V\Lambda W = -\Lambda W
.
\end{equation}


\item 
For $1\le q \le \infty$, we use  $L_{\rm{rad}}^{q}(\mathbb{R}^{d})$ to denote the set of 
 radially symmetric functions about $0 \in \mathbb{R}^{d}$ in $L^{q}(\mathbb{R}^{d})$.


\item 
For $1\le q \le \infty$, we define a set $X_{q}$ as  
\begin{equation}\label{18/11/06/13:57}
X_{q}
:=\{ 
f\in L_{\rm{rad}}^{q}(\mathbb{R}^{d}) \colon 
\langle f, V \Lambda W \rangle =0 \}
.
\end{equation}
We can verify that $X_{q}$ is a closed subspace of $L^{q}(\mathbb{R}^{d})$. Hence, $X_{q}$ is a Banach space.

\item 
We define a function $\delta$ on $(0,\infty)$ by 
\begin{equation}\label{19/01/27/16:58}
\delta(s)
:=\left\{ \begin{array}{cc}
s^{\frac{1}{2}} & \mbox{if $d=3$},
\\[6pt]
\displaystyle{\frac{1}{\log{(1+s^{-1})}}} &\mbox{if $d=4$}.
\end{array} \right. 
\end{equation}
A computation involving the spherical coordinates shows that 
 for any $s>0$,  
\begin{equation}\label{20/12/12/12:3}
\int_{|\xi|\le 1} 
\frac{1}{(|\xi|^{2}+s )|\xi|^{2}} 
\,d\xi 
=
\frac{d \pi^{\frac{d}{2}}}{\Gamma(\frac{d}{2}+1)}  
\left\{ \begin{array}{lc}
s^{-\frac{1}{2}}\arctan{(s^{-\frac{1}{2}})} 
& \mbox{if $d=3$},
\\[12pt]
\frac{1}{2}\log{(1+s^{-1})} & \mbox{if $d=4$}
.
\end{array} \right. 
\end{equation}
Note here that $\frac{d \pi^{\frac{d}{2}}}{\Gamma(\frac{d}{2}+1)}$ is the surface area of unit sphere  in $\mathbb{R}^{d}$. 
 Furthermore, we see that for any $s>0$, 
\begin{equation}\label{20/12/12/11:12}
\Big|\frac{\pi}{2} -\arctan{(s^{-\frac{1}{2}})} \Big|
\le 
\int_{s^{-\frac{1}{2}}}^{\infty}\frac{1}{1+t^{2}}\,dt 
\le 
\int_{s^{-\frac{1}{2}}}^{\infty}t^{-2} \,dt 
\le 
s^{\frac{1}{2}}
.
\end{equation}
Observe from  \eqref{20/12/12/12:3} and \eqref{20/12/12/11:12}
 that for $d=3,4$ and any $s>0$, 
\begin{equation}\label{20/9/30/9:12}
\Big|
\int_{|\xi|\le 1} 
\frac{1}{(|\xi|^{2}+s )|\xi|^{2}} 
\,d\xi 
-
(5-d)\pi^{2}
\delta(s)^{-1}
\Big|
\lesssim 1,  
\end{equation}
where the implicit constant depends only on $d$.

\item
We define a function $\beta$ on $(0,\infty)$ by 
\begin{equation}\label{19/01/15/08:25}
\beta(s):=\delta(s)^{-1}s
,
\end{equation}
where $\delta$ is the function given in \eqref{19/01/27/16:58}. 

Note that $\beta$ is strictly increasing on $(0,\infty)$, so that the inverse exists.

\item 
We use $\alpha$ to denote the inverse function of $\beta$.

 When $d=3$, $\beta(s) =s^{\frac{1}{2}}$; Hence, the domain of $\alpha$ is $(0,\infty)$.  

 When $d=4$,  the image of $(0,\infty)$ by $\beta$ is $(0,1)$ (see Lemma \ref{20/12/30/8:13}); Hence, the domain of $\alpha$ is $(0,1)$.   
 
Note that  for any $t$ in the domain of $\alpha$,  
\begin{equation}\label{20/11/2/9:40}
t=\beta(\alpha(t)) =\delta(\alpha(t))^{-1}\alpha(t);
\quad  
 \mbox{or equivalently, }
\quad  
\delta\big( \alpha(t) \big) = t^{-1}\alpha(t)
.
\end{equation}
Since $\beta$ is strictly increasing and $\lim_{s\to 0}\beta(s)=0$, 
  we see that  $\alpha$ is strictly increasing  and
\begin{equation}\label{19/01/14/16:26}
\lim_{t \to 0}\alpha(t)=0,
\qquad 
\lim_{t \to 0}\delta(\alpha(t))
=0
.
\end{equation}
Furthermore,  the following holds (see Lemma \ref{20/12/30/8:13}): 
\begin{equation}\label{20/10/15/16:46}
\alpha(t)
\left\{ 
\begin{array}{ll}
=t^{2} & \mbox{for $d=3$ and $0< t <\infty$}, 
\\[6pt]
\sim 
\delta(t) t & \mbox{for $d=4$ and $0< t \le  T_{0}$}
, 
\end{array}
\right.  
\end{equation}
where $T_{0}>0$ is some constant.

\item
Let $d\ge 3$ and $p>1$. Furthermore, let $t\ge 0$, $s\ge 0$,  and let $\eta$ be a function on $\mathbb{R}^{d}$. Then, we define $N(\eta;t)$ and $F(\eta;s,t)$ as 
\begin{align}
\label{19/01/13/15:16}
N(\eta;t)
&:=
|W+\eta|^{\frac{4}{d-2}}(W+\eta) - W^{\frac{d+2}{d-2}} 
- 
\frac{d+2}{d-2}W^{\frac{4}{d-2}} \eta 
\\[6pt]
\nonumber 
&\qquad + 
t \big\{ |W+\eta|^{p-1}(W+\eta) - W^{p}  \big\},
\\[12pt]
\label{19/01/13/15:17}
F(\eta; s, t)& :=-s W + t W^{p} + N(\eta; t).
\end{align}  

\item 
For $d\ge 3$ and functions $\eta_{1},\eta_{2}$ on $\mathbb{R}^{d}$, 
 we define $D(\eta_{1},\eta_{2})$  as  
\begin{equation}\label{20/10/23/1:1}
\begin{split} 
D(\eta_{1},\eta_{2})
&:=|W+\eta_{1}|^{\frac{4}{d-2}}
(W+\eta_{1}) 
-
|W+\eta_{2}|^{\frac{4}{d-2}}
(W+\eta_{2})
- 
\frac{d+2}{d-2}W^{\frac{4}{d-2}} (\eta_{1} -\eta_{2})
.
\end{split}
\end{equation}


\item 
For $d\ge 3$, $p>1$ and functions $\eta_{1},\eta_{2}$ on $\mathbb{R}^{d}$,
 we define $E(\eta_{1},\eta_{2})$ as 
\begin{equation}\label{20/10/23/1:2}
E(\eta_{1},\eta_{2})
:=
|W+\eta_{1}|^{p-1}
(W+\eta_{1}) 
-
|W+\eta_{2}|^{p-1}
(W+\eta_{2})
.
\end{equation}
\end{enumerate}

Next, we make some remarks about $N(\eta;t)$, $D(\eta_{1},\eta_{2})$ and $E(\eta_{1},\eta_{2})$(see \eqref{19/01/13/15:16}, 
 \eqref{20/10/23/1:1} and \eqref{20/10/23/1:2}). 
 Let $d\ge 3$, $p>1$, $t_{1},t_{2}\ge 0$, and let  $\eta_{1},\eta_{2}$ be functions on $\mathbb{R}^{d}$. 
 Then, the following hold:
\begin{enumerate}
\item 
\begin{equation}\label{20/11/8/11:35}
N(\eta_{1};t_{1}) - N(\eta_{2};t_{2})
=
D(\eta_{1},\eta_{2}) 
+ 
t_{2} E(\eta_{1},\eta_{2})
+
(t_{1}-t_{2}) E(\eta_{1},0)
.
\end{equation} 
Note that $D(\eta,\eta)=0$ and $E(\eta,\eta)=0$. 

Observe from $N(0;0)=0$ that  
\begin{equation}\label{20/11/8/11:39}
N(\eta_{1};t_{1}) =D(\eta_{1},0) + t_{1} E(\eta_{1},0)
.
\end{equation} 
Furthermore, observe from  the fundamental theorem of calculus that 
\begin{align}
\label{19/02/19/11:43}
|D(\eta_{1},\eta_{2})|
&\lesssim 
\big( W+|\eta_{1}|+|\eta_{2}| \big)^{\frac{6-d}{d-2}}
\big( |\eta_{1}|+|\eta_{2}| \big) 
|\eta_{1}-\eta_{2}|,
\\[6pt]
\label{19/02/19/17:02} 
|E(\eta_{1},\eta_{2})|&\lesssim
\big( W+|\eta_{1}|+|\eta_{2}| \big)^{p-1}
|\eta_{1}-\eta_{2} |
,
\end{align}
where the first implicit constant depends only on $d$, and the second one does only on $d$ and $p$.
\end{enumerate}


We record well-known results for the resolvent of the Laplace operator. 
 The standard theory for Yukawa's equation (see, e.g., Theorem 6.23 and Lemma 9.11 of \cite{Lieb-Loss}) shows the following:  
\begin{enumerate}
\item
For any $d\ge 1$, $s>0$, $1\le q \le \infty$ and $f\in L^{q}(\mathbb{R}^{d})$, 
\begin{equation}\label{18/11/10/16:21}
(-\Delta+s)^{-1} f(x)
=
\int_{\mathbb{R}^{d}}
\int_{0}^{\infty} (4\pi t)^{-\frac{d}{2}} e^{-\frac{|x-y|^{2}}{4t}} e^{-s
t}\,dt   f(y)\,dy 
. 
\end{equation}
Observe from \eqref{18/11/10/16:21} that if $|f|\le |g|$, then  
\begin{equation}\label{20/9/15/14:54}
|
(-\Delta + s)^{-1} f 
|
\le 
(-\Delta + s)^{-1} |g|  
.
\end{equation}
Using integration by substitution, we see 
 that for each $x \in \mathbb{R}^{d}\setminus \{0\}$, 
\begin{equation}\label{20/8/16/15:23}
\int_{0}^{\infty} (4\pi t)^{-\frac{d}{2}} e^{-\frac{|x|^{2}}{4t}} e^{-s t}\,dt
\le 
\int_{0}^{\infty} (4\pi t)^{-\frac{d}{2}} e^{-\frac{|x|^{2}}{4t}} \,dt  
\lesssim   
|x|^{-(d-2)} ,
\end{equation}
where the implicit constant depends only on $d$. 

\item 
For any $d\ge 1$, $s>0$, $1\le q \le \infty$ and $f\in L^{q}(\mathbb{R}^{d})$, 
\begin{align}
\label{20/8/13/17:5} 
&
(-\Delta+s)^{-1} f \in L^{q}(\mathbb{R}^{d}),
\\[6pt]
\label{18/11/14/09:06}
&
(-\Delta+s)(-\Delta+s)^{-1} f 
=f 
\quad 
\mbox{in the distribution sense}
.
\end{align}
Observe from \eqref{20/8/13/17:5} and \eqref{18/11/14/09:06} that $(-\Delta+s)^{-1} f\in W^{2,q}(\mathbb{R}^{d})$.  

\item For any $d\ge 1$, $s>0$, $1\le q \le \infty$ and $f \in W^{2,q}(\mathbb{R}^{d})$,    
\begin{equation}\label{20/8/13/15:11}
(-\Delta+s)^{-1}(-\Delta+s)f 
=f 
.
\end{equation}

\item 
For any $s>0$ and $\xi \in \mathbb{R}^{d}$,
\begin{equation}\label{20/8/18/16:27}
\mathcal{F}\big[
\int_{0}^{\infty} (4\pi t)^{-\frac{d}{2}} e^{-\frac{|x|^{2}}{4t}} e^{-s t}\,dt
\big](\xi)
=
(|\xi|^{2}+s)^{-1}
.
\end{equation}
\end{enumerate}


At the end of this section, we collect known results about ground states:
\begin{lemma}[Lemma 2.3 of \cite{AIIKN}]\label{proposition:2.3}
Assume either $d= 3$ and $3<p <5$, or else $d\ge 4$ and $1< p<\frac{d+2}{d-2}$. Then, we have 
\begin{align}
\label{eq:2.16}
&\lim_{\omega \to \infty} \inf_{\Phi_{\omega}\in \mathcal{G}_{\omega}} \Phi_{\omega}(0) = \infty,
\\[6pt]
\label{eq:2.17}
&\lim_{\omega \to \infty} \sup_{\Phi_{\omega}\in \mathcal{G}_{\omega}}
 \omega \Phi_{\omega}(0)^{-(2^{*}-2)} 
= 
\lim_{\omega \to \infty} \sup_{\Phi_{\omega}\in \mathcal{G}_{\omega}}
\Phi_{\omega}(0)^{-\{2^{*}-(p+1)\}}
=0.
\end{align} 
\end{lemma}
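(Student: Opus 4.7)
The plan is to rescale the ground states at the origin so that the rescaled functions have sup-norm $1$, extract a subsequential limit that solves a purely critical equation with a mass term, and derive a contradiction from the Pohozaev identity of the limiting problem.

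\emph{Setup and reduction.} Since $p+1 < 2^{*}$, the second equality $\Phi_\omega(0)^{-\{2^{*}-(p+1)\}}\to 0$ follows from the divergence of $\Phi_\omega(0)$. Write $\Lambda_{\omega} := \Phi_{\omega}(0) = \|\Phi_{\omega}\|_{L^{\infty}}$ (the last equality by Proposition \ref{proposition:1.1}). It therefore suffices to show $\Lambda_{\omega} \to \infty$ and $s_{\omega} := \omega \Lambda_{\omega}^{-(2^{*}-2)} \to 0$. Introduce the rescaling
\[
\widetilde{\Phi}_{\omega}(y) := \Lambda_{\omega}^{-1}\Phi_{\omega}\bigl(\Lambda_{\omega}^{-2/(d-2)} y\bigr), \qquad r_{\omega} := \Lambda_{\omega}^{-\{2^{*}-(p+1)\}},
\]
so that $\widetilde{\Phi}_{\omega}$ is positive, radial, strictly decreasing, $\widetilde{\Phi}_{\omega}(0) = \|\widetilde{\Phi}_{\omega}\|_{L^{\infty}} = 1$, and
\[
-\Delta \widetilde{\Phi}_{\omega} + s_{\omega}\, \widetilde{\Phi}_{\omega} = r_{\omega}\, \widetilde{\Phi}_{\omega}^{\,p} + \widetilde{\Phi}_{\omega}^{\,2^{*}-1} \qquad \mbox{in } \mathbb{R}^{d}.
\]
Once $\Lambda_{\omega}\to\infty$ is known, $r_{\omega}\to 0$ is immediate.

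\emph{Divergence of $\Lambda_\omega$.} Test \eqref{eq:1.1} against $\Phi_{\omega}$ and use $\Phi_{\omega}^{\,q-2} \le \Lambda_{\omega}^{q-2}$ for $q \in \{p+1, 2^{*}\}$:
\[
\omega \|\Phi_{\omega}\|_{L^{2}}^{2} \le \|\nabla \Phi_{\omega}\|_{L^{2}}^{2} + \omega \|\Phi_{\omega}\|_{L^{2}}^{2} = \|\Phi_{\omega}\|_{L^{p+1}}^{p+1} + \|\Phi_{\omega}\|_{L^{2^{*}}}^{2^{*}} \le \bigl(\Lambda_{\omega}^{p-1}+\Lambda_{\omega}^{2^{*}-2}\bigr) \|\Phi_{\omega}\|_{L^{2}}^{2},
\]
so $\omega \le \Lambda_{\omega}^{p-1}+\Lambda_{\omega}^{2^{*}-2}$ and hence $\Lambda_{\omega}\to\infty$.

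\emph{Vanishing of $s_\omega$.} Argue by contradiction: suppose along some $\omega_{n}\to\infty$ one has $s_{\omega_{n}} \to s_{\infty} > 0$. A Brezis-Nirenberg type construction, testing $\mathcal{S}_{\omega}$ with a truncation of $W$ rescaled to sit on the Nehari manifold, yields a uniform bound $m_{\omega} := \mathcal{S}_{\omega}(\Phi_{\omega}) \le C$. Using this together with the identity
\[
m_{\omega} = \Bigl(\tfrac{1}{2}-\tfrac{1}{2^{*}}\Bigr)\|\Phi_{\omega}\|_{L^{2^{*}}}^{2^{*}} + \Bigl(\tfrac{1}{2}-\tfrac{1}{p+1}\Bigr)\|\Phi_{\omega}\|_{L^{p+1}}^{p+1},
\]
which follows from $\mathcal{N}_{\omega}(\Phi_{\omega})=0$, and the Nehari identity $\|\nabla\Phi_{\omega}\|_{L^{2}}^{2}+\omega\|\Phi_{\omega}\|_{L^{2}}^{2} = \|\Phi_{\omega}\|_{L^{p+1}}^{p+1}+\|\Phi_{\omega}\|_{L^{2^{*}}}^{2^{*}}$, one deduces uniform bounds on $\|\nabla \Phi_{\omega_{n}}\|_{L^{2}}$ and on $\omega_{n}\|\Phi_{\omega_{n}}\|_{L^{2}}^{2}$; under the rescaling these translate into $\widetilde{\Phi}_{\omega_{n}}$ being bounded in $H^{1}(\mathbb{R}^{d})$. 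Extract a subsequence such that $\widetilde{\Phi}_{\omega_{n}}\rightharpoonup U$ weakly in $H^{1}(\mathbb{R}^{d})$. Since $\|\widetilde{\Phi}_{\omega_{n}}\|_{L^{\infty}}\le 1$, the nonlinear right-hand side of the rescaled equation is uniformly bounded in $L^{\infty}$, so interior elliptic regularity upgrades the convergence to $C^{2}_{\mathrm{loc}}$; in particular $U(0)=1$, whence $U \not\equiv 0$. Passing to the limit ($r_{\omega_{n}}\to 0$), $U$ is a nonnegative radial solution in $H^{1}(\mathbb{R}^{d})$ of
\[
-\Delta U + s_{\infty} U = U^{2^{*}-1}, \qquad s_{\infty}>0.
\]
Standard elliptic decay (positive mass) gives $U \in H^{2}(\mathbb{R}^{d})$, so Pohozaev applies:
\[
(d-2)\|\nabla U\|_{L^{2}}^{2} + d\, s_{\infty}\|U\|_{L^{2}}^{2} = (d-2)\|U\|_{L^{2^{*}}}^{2^{*}}.
\]
Subtracting $(d-2)$ times the Nehari identity $\|\nabla U\|_{L^{2}}^{2}+s_{\infty}\|U\|_{L^{2}}^{2}=\|U\|_{L^{2^{*}}}^{2^{*}}$ gives $2 s_{\infty}\|U\|_{L^{2}}^{2}=0$, contradicting $s_{\infty}>0$ and $U\not\equiv 0$.

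The main obstacle is the uniform upper bound $m_{\omega} \le C$: in $d=3,4$ the Aubin-Talenti function $W$ fails to be in $L^{2}(\mathbb{R}^{d})$, so the Brezis-Nirenberg test function must be built from a truncation of $W$ with a carefully chosen rescaling, and the mass contribution $\omega\|\cdot\|_{L^{2}}^{2}$ must be absorbed by the subcritical term; this is where the hypothesis $\frac{4}{d-2}-1<p<\frac{d+2}{d-2}$ plays its decisive role.
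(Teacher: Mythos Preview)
The paper does not prove this lemma; it is quoted as Lemma~2.3 of \cite{AIIKN} and used as a black box, so there is no in-paper proof to compare against. Your outline is the standard blow-up/Pohozaev approach and is essentially correct.

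Two remarks are in order. First, your contradiction step only treats subsequences with $s_{\omega_n}\to s_\infty\in(0,\infty)$; you should note that the inequality $\omega\le\Lambda_\omega^{p-1}+\Lambda_\omega^{2^*-2}$ from the previous paragraph already gives $s_\omega\le\Lambda_\omega^{-(2^*-p-1)}+1$, so $\limsup_{\omega\to\infty}s_\omega\le 1$ and no subsequence can escape to $+\infty$. Second, and more importantly, the only genuinely nontrivial input you have not supplied is the uniform bound $m_\omega\le C$. In fact what is proved in \cite{AIIKN} (and invoked later in this paper as ``Lemma~2.2 of \cite{AIIKN}'', see \eqref{20/9/22/11:38}) is the sharper $m_\omega<\tfrac{1}{d}\|\nabla W\|_{L^2}^2$, obtained exactly as you say by testing with a truncated and rescaled $W$; the subcritical term is what beats the $\omega\|\cdot\|_{L^2}^2$ contribution, and the exponent condition is precisely what makes this work. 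Once that bound is granted, the rest of your argument --- the scaling identities $\|\nabla\widetilde\Phi_\omega\|_{L^2}=\|\nabla\Phi_\omega\|_{L^2}$ and $s_\omega\|\widetilde\Phi_\omega\|_{L^2}^2=\omega\|\Phi_\omega\|_{L^2}^2$, the $C^2_{\rm loc}$ compactness from the uniform $L^\infty$ bound, and the Pohozaev obstruction on the limit --- goes through cleanly.
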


\begin{lemma}[Proposition 2.1 and Corollary 3.1 of \cite{AIIKN}]
\label{theorem:3.1}
Assume either $d= 3$ and $3<p <5$, or else $d\ge 4$ and $1<p<\frac{d+2}{d-2}$. 
Then, the following hold:    
\begin{align}
\label{20/12/15/11:11}
\lim_{\omega \to \infty} \sup_{\Phi_{\omega}\in \mathcal{G}_{\omega}}
\|T_{\Phi_{\omega}(0)} [\Phi_{\omega}]  - W  \|_{\dot{H}^{1}} 
&=0,
\\[6pt]
\label{eq:3.2}
\lim_{\omega \to \infty} \sup_{\Phi_{\omega}\in \mathcal{G}_{\omega}}
\| T_{\Phi_{\omega}(0)} [\Phi_{\omega}] -W \|_{L^{r}}
&=0
\quad 
\mbox{for all $\frac{d}{d-2} <r <\infty$}
.
\end{align}
\end{lemma}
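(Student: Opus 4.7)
The statement is cited from \cite{AIIKN}, so the plan is to sketch how one would reprove it directly from the variational characterization.

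The plan is to fix a sequence $\omega_n\to\infty$ and a choice $\Phi_n\in\mathcal{G}_{\omega_n}$, set $\lambda_n:=\Phi_n(0)$ and $\widetilde\Phi_n:=T_{\lambda_n}[\Phi_n]$. By the chain rule, $\widetilde\Phi_n$ is a positive radial solution of
\begin{equation*}
-\Delta\widetilde\Phi_n+\frac{\omega_n}{\lambda_n^{2^{*}-2}}\widetilde\Phi_n
-\frac{1}{\lambda_n^{2^{*}-(p+1)}}\widetilde\Phi_n^{p}-\widetilde\Phi_n^{\frac{4}{d-2}+1}=0,
\qquad \widetilde\Phi_n(0)=1,
\end{equation*}
and by Lemma \ref{proposition:2.3} both coefficients of the lower-order terms tend to $0$. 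A standard Pohozaev/Nehari computation shows that the $\dot H^1$--norm of $\Phi_n$ equals the ground-state energy up to fixed constants, which in turn is bounded above by the Aubin--Talenti energy $\|W\|_{\dot H^1}^2$ (as a trial function argument with a truncated rescaling of $W$ gives). Since $T_{\lambda_n}$ is $\dot H^1$--invariant, $\{\widetilde\Phi_n\}$ is bounded in $\dot H^1$.

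I would then extract a subsequence with $\widetilde\Phi_n\rightharpoonup\Phi_\infty$ in $\dot H^1$ and $\widetilde\Phi_n\to\Phi_\infty$ a.e. Passing to the limit in the rescaled equation (the linear mass term vanishes in the distributional sense because its coefficient tends to $0$, and the subcritical term vanishes after testing against $C_c^\infty$ thanks to the vanishing of its coefficient together with Sobolev bounds), one obtains that $\Phi_\infty$ is a nonnegative radial $\dot H^1$--solution of $-\Delta\Phi_\infty=\Phi_\infty^{(d+2)/(d-2)}$. By the Caffarelli--Gidas--Spruck classification, $\Phi_\infty$ is either zero or an Aubin--Talenti bubble $T_\mu[W]$, and uniform elliptic regularity for the rescaled equation (coupled with $\widetilde\Phi_n(0)=1$) rules out $\Phi_\infty\equiv 0$ and pins down $\mu=1$, so $\Phi_\infty=W$.

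The main obstacle is upgrading weak convergence to strong $\dot H^1$--convergence. The plan here is to exploit energy saturation: the ground-state energy converges to exactly $\frac{1}{d}\|W\|_{\dot H^1}^{d}$ (by comparison with the truncated $W$ trial function from above and with $W$ from below via Brezis--Lieb applied to the Nehari-level characterization). A Brezis--Lieb decomposition for $\|\nabla\widetilde\Phi_n\|_{L^2}^2$ and for $\|\widetilde\Phi_n\|_{L^{2^*}}^{2^*}$, together with Sobolev's inequality applied to the residue $\widetilde\Phi_n-W$, forces the residue to vanish in $\dot H^1$; this yields \eqref{20/12/15/11:11}. For \eqref{eq:3.2}, Sobolev embedding immediately gives $L^{2^{*}}$--convergence. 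For $r>2^{*}$ I would use uniform $L^\infty$--bounds coming from elliptic regularity applied to the rescaled equation (the right-hand side is uniformly bounded in $L^{q}$ for $q$ slightly below $d/2$, and is controlled in $L^\infty$ on compact sets) combined with uniform exponential-type decay estimates transplanted from \eqref{20/8/10/12:16} through the scaling. For $d/(d-2)<r<2^{*}$ I would interpolate between $L^{2^*}$--convergence and a uniform $L^{r_0}$--bound for some $d/(d-2)<r_0<r$, the latter bound following from the pointwise decay $W(x)\sim(1+|x|)^{-(d-2)}$ in \eqref{20/9/13/9:55} and the uniform decay just mentioned.
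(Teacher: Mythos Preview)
The paper itself gives no proof of this lemma; it is quoted verbatim from Proposition~2.1 and Corollary~3.1 of \cite{AIIKN}.  Your sketch is therefore being compared to the argument in \cite{AIIKN}, and it follows the same standard route: rescale so that the sup equals $1$, use the vanishing of the lower-order coefficients from Lemma~\ref{proposition:2.3}, identify the weak $\dot H^{1}$ limit as $W$ via the classification of positive $\dot H^{1}$ solutions, and upgrade to strong convergence by energy saturation and Brezis--Lieb.  That part is fine.

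There is, however, a genuine gap in your treatment of \eqref{eq:3.2} for $\frac{d}{d-2}<r<2^{*}$.  You propose to obtain the required uniform $L^{r_{0}}$ bound by ``transplanting'' the exponential decay \eqref{20/8/10/12:16} through the scaling $T_{\Phi_{\omega}(0)}$.  This does not work: the constants $C,\nu$ in \eqref{20/8/10/12:16} depend on $\omega$, and after rescaling by $\lambda=\Phi_{\omega}(0)\to\infty$ the bound becomes $\widetilde\Phi_{\omega}(x)\le C_{\omega}\,e^{-\nu_{\omega}\lambda^{-2/(d-2)}|x|}$, whose decay rate degenerates and which gives no uniform $L^{r_{0}}$ control for $r_{0}$ close to $\frac{d}{d-2}$.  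What is actually needed is the \emph{uniform power decay}
\[
\sup_{\Phi_{\omega}\in\mathcal{G}_{\omega}} T_{\Phi_{\omega}(0)}[\Phi_{\omega}](x)\lesssim (1+|x|)^{-(d-2)},
\]
which is exactly Lemma~\ref{18/09/05/01:05} (Proposition~3.1 of \cite{AIIKN}) and is proved there by a separate ODE/comparison argument, not by rescaling \eqref{20/8/10/12:16}.  In \cite{AIIKN} the logical order is: first the $\dot H^{1}$ convergence (Proposition~2.1), then the uniform pointwise bound (Proposition~3.1), and only then the $L^{r}$ convergence (Corollary~3.1) by the interpolation you describe.  Once you invoke Lemma~\ref{18/09/05/01:05} for the uniform $L^{r_{0}}$ bound, your interpolation argument goes through.
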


\begin{lemma}[Proposition 3.1 of \cite{AIIKN}]\label{18/09/05/01:05}
Assume either $d=3$ and $3<p <5$, or else $d\ge 4$. Let $1<p<\frac{d+2}{d-2}$. Then, for any sufficiently large $\omega$, and any $x\in \mathbb{R}^{d}$, 
\begin{equation}\label{eq:3.1}
\sup_{\Phi_{\omega}\in \mathcal{G}_{\omega}}
T_{\Phi_{\omega}(0)} [\Phi_{\omega}](x) 
\lesssim \left(1 +|x| \right)^{-(d-2)}
, 
\end{equation}
where the implicit constant depends only on $d$ and $p$. 
\end{lemma}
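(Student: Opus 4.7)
The strategy is to pass to the rescaled profile $v_\omega := T_{\Phi_\omega(0)}[\Phi_\omega]$, convert the rescaled PDE into a pointwise integral inequality, and then bootstrap the spatial decay using the Newton-type kernel $|x-y|^{-(d-2)}$. A direct rescaling of \eqref{eq:1.1} shows that $v_\omega$ satisfies
\begin{equation*}
(-\Delta+s_\omega)\,v_\omega \;=\; t_\omega v_\omega^{p}+v_\omega^{2^{*}-1},
\qquad
s_\omega:=\omega\,\Phi_\omega(0)^{-(2^{*}-2)},\quad t_\omega:=\Phi_\omega(0)^{-(2^{*}-(p+1))},
\end{equation*}
with $v_\omega(0)=1$ and, by Proposition~\ref{proposition:1.1}(3), $v_\omega$ positive, radial, and strictly decreasing; hence $0<v_\omega\le 1$. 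Lemma~\ref{proposition:2.3} ensures that $s_\omega+t_\omega\to 0$ uniformly in $\Phi_\omega\in\mathcal{G}_\omega$ as $\omega\to\infty$.

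From \eqref{18/11/10/16:21}, \eqref{20/8/16/15:23}, and positivity I would derive the master inequality
\begin{equation*}
v_\omega(x) \;\le\; C \int_{\mathbb{R}^{d}} |x-y|^{-(d-2)}\bigl(t_\omega v_\omega(y)^{p}+v_\omega(y)^{2^{*}-1}\bigr)\,dy.
\end{equation*}
To seed the bootstrap, radial monotonicity gives the elementary bound $v_\omega(x)\le C_r\|v_\omega\|_{L^{r}}|x|^{-d/r}$ for every $r>0$; combined with the uniform $L^{r}$-bound of Lemma~\ref{theorem:3.1} (valid for $r>\tfrac{d}{d-2}$) and the choice $r\in\bigl(\tfrac{d}{d-2},\tfrac{d+2}{d-2}\bigr)$, this produces an initial decay $v_\omega(x)\le C(1+|x|)^{-\beta_0}$ with $\tfrac{d(d-2)}{d+2}<\beta_0<d-2$. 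Since $(2^{*}-1)\beta_0>d$, the standard convolution estimate $\int|x-y|^{-(d-2)}(1+|y|)^{-a}\,dy\lesssim(1+|x|)^{-(d-2)}$ for $a>d$ immediately upgrades the critical contribution to the target rate $(1+|x|)^{-(d-2)}$.

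The main obstacle is the subcritical contribution $t_\omega\int|x-y|^{-(d-2)}v_\omega^{p}\,dy$. When $p$ is close to $1$ (as is allowed for $d=4$), one cannot achieve $p\beta>d$ at any stage of the bootstrap, so the Newton potential of $v_\omega^{p}$ decays more slowly than $(1+|x|)^{-(d-2)}$ and a naive iteration does not close. I would close the argument by tracking the small prefactor $t_\omega\to 0$: after substituting the best current decay into $v_\omega^{p}$, the resulting contribution has the form $Ct_\omega(1+|x|)^{-\gamma}$ with $\gamma<d-2$, which is dominated by $(1+|x|)^{-(d-2)}$ on the large ball $|x|\lesssim t_\omega^{-1/(d-2-\gamma)}$; on the complementary far region one invokes the exponential decay of the kernel of $(-\Delta+s_\omega)^{-1}$ at scale $s_\omega^{-1/2}$, which is exactly the rescaled exponential decay of $\Phi_\omega$ from Proposition~\ref{proposition:1.1}(4). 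A finite number of such iterations, combining the critical gain above with the smallness of $t_\omega$, then yields $v_\omega(x)\lesssim(1+|x|)^{-(d-2)}$ uniformly in $\Phi_\omega\in\mathcal{G}_\omega$ for all $\omega$ sufficiently large, completing the proof.
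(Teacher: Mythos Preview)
The paper does not supply a proof of this lemma; it is quoted from \cite{AIIKN} (Proposition~3.1 there), so there is no in-paper argument to compare against. Your set-up is correct---the rescaled profile $v_\omega$ satisfies the stated equation with $0<v_\omega\le1$ and $s_\omega,t_\omega\to0$ by Lemma~\ref{proposition:2.3}---and your treatment of the critical contribution is sound: with $\beta_0\in\bigl(\tfrac{d(d-2)}{d+2},d-2\bigr)$ one has $(2^*-1)\beta_0>d$, and the Newton potential immediately yields the target rate.

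There is, however, a genuine gap in the subcritical piece when $1<p\le d/(d-2)$ (the relevant range for $d\ge4$). Your proposed remedy is to patch together the region $|x|\lesssim t_\omega^{-1/(d-2-\gamma)}$, where the small prefactor $t_\omega$ compensates for the slow convolution decay, with a far region where the exponential decay of the Yukawa kernel at scale $s_\omega^{-1/2}$ takes over. For these two regions to cover $\mathbb{R}^d$ you would need $s_\omega^{-1/2}\lesssim t_\omega^{-1/(d-2-\gamma)}$, i.e.\ a quantitative comparison between $s_\omega=\omega\Phi_\omega(0)^{-(2^*-2)}$ and $t_\omega=\Phi_\omega(0)^{-(2^*-(p+1))}$; nothing you cite provides this, since Lemma~\ref{proposition:2.3} only asserts that both quantities tend to zero, with no relation between their rates. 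Your appeal to Proposition~\ref{proposition:1.1}(4) is likewise insufficient, because the constants $\nu,C$ there are not claimed to be uniform in $\omega$. The sentence ``a finite number of such iterations \ldots\ then yields'' therefore conceals an unjustified matching-of-scales step, and the argument does not close for small $p$ as written.
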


\section{Uniform estimates for perturbed resolvents}
\label{18/11/05/09:33}

Our aim in this section is to prove  the following proposition which 
 plays a vital role  for the proof of  Theorem \ref{18/09/09/17:09}: 
\begin{proposition}\label{18/11/17/07:17}
Assume $d= 3,4$, and let $\frac{d}{d-2} < q < \infty$.  
If $s>0$ is sufficiently small dependently on $d$ and $q$, 
 then  the inverse of the operator $1+(-\Delta + s)^{-1}V \colon L_{\rm rad}^{q}(\mathbb{R}^{d})\to L_{\rm rad}^{q}(\mathbb{R}^{d})$ exists;
 and the following estimates hold:  
\begin{enumerate} 
\item
If $f \in L_{\rm rad}^{q}(\mathbb{R}^{d})$, then  
\begin{equation}\label{19/02/02/11:47}
\|\{ 1+(-\Delta + s)^{-1}V\}^{-1} f \|_{L^{q}}
\lesssim 
\delta(s) s^{-1} 
\| f \|_{L^{q}}
,
\end{equation} 
where the implicit constant depends only on $d$ and $q$.  
\item 
If $f \in X_{q}$, then 
\begin{equation}\label{18/11/11/15:50}
\|\{ 1+(-\Delta + s )^{-1}V\}^{-1} f \|_{L^{q}}
\lesssim 
\| f \|_{L^{q}}
,
\end{equation} 
where the implicit constant depends only on $d$ and $q$. 
\end{enumerate}
\end{proposition}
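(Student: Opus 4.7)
My plan is to set up a Fredholm-alternative plus perturbation argument built around the splitting $L^{q}_{\rm rad}(\mathbb{R}^{d})=X_{q}\oplus \mathbb{R}\Lambda W$, which is valid for $q>d/(d-2)$ since $\Lambda W\in L^{q}_{\rm rad}$ by \eqref{20/9/13/10:2} and $V\Lambda W\in L^{q'}$. The bounded projection onto $X_{q}$ is
\[
P_{X}f:=f-\frac{\langle f,V\Lambda W\rangle}{\langle \Lambda W,V\Lambda W\rangle}\Lambda W,
\]
with nonzero (in fact negative) denominator since $V<0$. First I would check that the decay $|V|\lesssim (1+|x|)^{-4}$ from \eqref{20/9/13/9:58} together with the Yukawa kernel bound \eqref{20/8/16/15:23} makes $A_{s}:=1+(-\Delta+s)^{-1}V$ a compact perturbation of the identity on $L^{q}_{\rm rad}$ uniformly for $s$ in a bounded set. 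At $s=0$, identity \eqref{20/8/15/15:47} yields $A_{0}\Lambda W=0$ and, combined with uniqueness of radial zero-energy solutions of $-\Delta+V$ with the appropriate decay, identifies $\ker A_{0}=\mathbb{R}\Lambda W$. Using \eqref{20/8/15/15:47} once more shows $A_{0}$ preserves $X_{q}$, and by Fredholm duality its range equals $X_{q}$; hence $A_{0}\colon X_{q}\to X_{q}$ is boundedly invertible.

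For part (ii), I would prove the uniform bound by contradiction. If \eqref{18/11/11/15:50} fails, there exist $s_{n}\downarrow 0$ and $f_{n}\in X_{q}$ with $\|f_{n}\|_{L^{q}}=1$ and $\|A_{s_{n}}^{-1}f_{n}\|_{L^{q}}\to\infty$. Setting $h_{n}:=A_{s_{n}}^{-1}f_{n}/\|A_{s_{n}}^{-1}f_{n}\|_{L^{q}}$ gives $A_{s_{n}}h_{n}\to 0$ in $L^{q}$. Uniform compactness of $\{(-\Delta+s_{n})^{-1}V\}$ and its strong convergence to $(-\Delta)^{-1}V$ (both obtained by dominated convergence on the Yukawa representation \eqref{18/11/10/16:21}) yield a subsequence $h_{n}\to h$ in $L^{q}_{\rm rad}$ with $A_{0}h=0$, so $h=c\Lambda W$. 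Decomposing $h_{n}=h_{n}^{X}+c_{n}\Lambda W$, the condition $f_{n}\in X_{q}$ together with the identity $A_{s_{n}}\Lambda W=s_{n}(-\Delta+s_{n})^{-1}\Lambda W$ (which follows from $V\Lambda W=\Delta\Lambda W$) produces, after dividing through by $s_{n}\delta(s_{n})^{-1}$, a relation of the form $o(1)-C\,c_{n}\to 0$; this forces $c=0$ and delivers the desired contradiction.

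For part (i), I would represent $A_{s}$ as a $2\times 2$ block operator with respect to $L^{q}_{\rm rad}=X_{q}\oplus \mathbb{R}\Lambda W$. Part (ii) controls the $X_{q}$-block uniformly, and the uniform bound $\|s(-\Delta+s)^{-1}\Lambda W\|_{L^{q}}\lesssim 1$ (from $\|(-\Delta+s)^{-1}\|_{L^{q}\to L^{q}}\le s^{-1}$) handles the off-diagonal blocks. The decisive $(\Lambda W,\Lambda W)$ entry is
\[
\alpha_{s}=\frac{s\,\langle (-\Delta+s)^{-1}\Lambda W,V\Lambda W\rangle}{\langle \Lambda W,V\Lambda W\rangle},
\]
and Parseval's identity \eqref{20/8/9/15:50} together with $\widehat{V\Lambda W}=\widehat{\Delta\Lambda W}=-|\xi|^{2}\widehat{\Lambda W}$ reduces the numerator to $-\frac{s}{(2\pi)^{d}}\int \frac{|\xi|^{2}|\widehat{\Lambda W}(\xi)|^{2}}{|\xi|^{2}+s}\,d\xi$. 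Applying \eqref{18/12/22/10:07} to the tail $\Lambda W\sim c|x|^{-(d-2)}$ gives $\widehat{\Lambda W}(\xi)\sim C|\xi|^{-2}$ near $\xi=0$, so the low-frequency piece is $s\int_{|\xi|\le 1}\frac{C^{2}}{(|\xi|^{2}+s)|\xi|^{2}}\,d\xi$, which by \eqref{20/9/30/9:12} is of size $s\delta(s)^{-1}=\beta(s)$, while the high-frequency piece is uniformly bounded. A Schur-complement inversion then yields \eqref{19/02/02/11:47}, with the blowup $\beta(s)^{-1}=\delta(s)\,s^{-1}$ carried by the $\Lambda W$-component.

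The hardest part will be the Fourier analysis of $\alpha_{s}$: since $\Lambda W\notin L^{2}(\mathbb{R}^{d})$ for $d=3,4$, the Parseval computation must be justified through a careful low/high-frequency splitting that is matched to the borderline integral \eqref{20/12/12/12:3}. A secondary technical point is the uniform-in-$s$ compactness and strong-resolvent convergence of $\{(-\Delta+s)^{-1}V\}$ on $L^{q}_{\rm rad}$ for $q$ outside the Hardy-Littlewood-Sobolev range; I would address this by splitting $V$ into a compactly supported part and a small decaying tail.
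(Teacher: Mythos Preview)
Your plan is essentially correct and shares the paper's two main ingredients: the splitting $L^{q}_{\rm rad}=X_{q}\oplus\mathbb{R}\Lambda W$ (the paper works with the projector $Q$ and an auxiliary range-side projector $\Pi$ onto $\mathbb{R}V\Lambda W$), and the Fourier-side computation showing that the $(\Lambda W,\Lambda W)$ entry behaves like $s\delta(s)^{-1}=\beta(s)$ (the paper packages this as Lemma~\ref{18/12/02/11:25}). Where you use a direct Schur complement on the single splitting, the paper introduces an extra parameter~$\varepsilon$ via $B_{\varepsilon}(f_{1},f_{2})=\varepsilon f_{1}+f_{2}$ and appeals to the Jensen--Kato device (Lemma~\ref{21/1/12/9:48}); this lets them close a Neumann series in the form $\|I-\widetilde{\mathbf A}_{\varepsilon}\|\lesssim \varepsilon^{-1}s^{\Theta_{q}}+\varepsilon$, whereas your approach avoids the extra parameter at the cost of invoking the Schur complement directly. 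Both routes land on the same asymptotics for the off-diagonal blocks ($\|A_{12}\|\lesssim s^{\Theta_{q}}$, $\|A_{21}\|\lesssim s\delta(s)^{-1}$) and yield \eqref{19/02/02/11:47} and \eqref{18/11/11/15:50} simultaneously from the block-inverse formula.

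One point to tighten: your sentence ``Part~(ii) controls the $X_{q}$-block uniformly'' conflates two different objects. Estimate~\eqref{18/11/11/15:50} bounds $A_{s}^{-1}|_{X_{q}}$, whereas the diagonal block you need for the $2\times2$ inversion is $(1-Q)A_{s}|_{X_{q}}$; these do not coincide since $A_{s}$ does not preserve $X_{q}$ for $s>0$. The paper (and your own Fredholm paragraph) actually controls this block directly: $(1-Q)A_{s}|_{X_{q}}=A_{0}|_{X_{q}}+(1-Q)\bigl[(-\Delta+s)^{-1}-(-\Delta)^{-1}\bigr]V|_{X_{q}}$, with $A_{0}|_{X_{q}}$ invertible by Fredholm (your argument or the paper's Lemma~\ref{18/11/13/14:25}) and the perturbation $O(s^{\gamma})$ for some $\gamma>0$ via the resolvent identity and Lemma~\ref{18/11/05/10:29}. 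Once this is in place, \eqref{18/11/11/15:50} \emph{follows} from the block-inverse formula (the $(2,1)$ entry contributes $O(\delta(s)s^{-1}\cdot s\delta(s)^{-1})=O(1)$), so your separate contradiction argument for~(ii) is correct but redundant. If you keep it, note that you must first establish invertibility of $A_{s}$ for small $s>0$ (e.g.\ from Fredholm plus $\ker A_{s}=\{0\}$, which follows from Lemma~\ref{18/11/14/10:43} and the bootstrap Lemma~\ref{18/11/13/11:08}) before writing $A_{s_{n}}^{-1}f_{n}$.
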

\begin{remark}\label{20/9/13/10:11}
\begin{enumerate}
\item
When $d=3$, the case $q=\infty$ can be included in Proposition \ref{18/11/17/07:17} (see Lemma 2.4 of \cite{CG}).
On the other hand, from the point of view of  Remark \ref{20/9/13/9:1} below, 
  when $d=4$, it seems difficult to include the case $q=\infty$. 
 
\item
By the substitution of variables $\tau=\beta(s):=\delta(s)^{-1}s$, we may write \eqref{19/02/02/11:47} as 
 \begin{equation}\label{20/12/6/11:56}
\|\{ 1+(-\Delta + \alpha(\tau))^{-1}V\}^{-1} f \|_{L^{q}}
\lesssim 
\tau^{-1} 
\| f \|_{L^{q}}
.
\end{equation}

\item   
Observe from $V=-\frac{d+2}{d-2}W^{\frac{4}{d-2}}$, $W^{\frac{d+2}{d-2}}=(-\Delta) W$ and $\Delta \Lambda W = V\Lambda W$ 
 that 
\begin{equation}\label{20/12/19/16:29}
\langle W, V \Lambda W \rangle 
=
-\frac{d+2}{d-2}\langle W^{\frac{d+2}{d-2}} , \Lambda W \rangle 
=
\frac{d+2}{d-2} \langle  \Delta W, \Lambda W \rangle
=
\frac{d+2}{d-2} \langle W, V \Lambda W  \rangle,
\end{equation}
which implies $\langle W, V \Lambda W \rangle=0$. Thus, we see that   $W \in X_{q}$ for all $\frac{d}{d-2}<q\le \infty$.

\end{enumerate}  
\end{remark}

We will prove Proposition \ref{18/11/17/07:17} in Section \ref{18/11/11/10:47}. 


\subsection{Preliminaries}\label{18/11/11/10:42}

In this subsection, we give basic estimates for free and perturbed resolvents. 
 
Let us begin by recalling the following fact (see, e.g., Section 5.2 of \cite{DM}, and Proposition 2.2 of \cite{CMR}): 
\begin{lemma}\label{18/11/14/10:43}
Assume $d\ge 3$. Then, there exists $e_{0}>0$ such that $-e_{0}$ is the only one negative eigenvalue of the operator $-\Delta +V \colon H^{2}(\mathbb{R}^{d}) \to L^{2}(\mathbb{R}^{d})$. Furthermore, the essential spectrum of $-\Delta+V$ 
 equals $[0,\infty)$ (hence $\sigma(-\Delta +V)=\{-e_{0}\}\cup [0,\infty)$), and 
\begin{equation}\label{18/11/11/11:01}
\{ u \in \dot{H}^{1}(\mathbb{R}^{d}) \colon \mbox{$(-\Delta +V)u =0$ in the distribution sense}\} ={\rm span}\,\{\Lambda W, \partial_{1}W, \ldots, \partial_{d}W \},
\end{equation}
where $\partial_{1}W, \ldots \partial_{d}W$ denote the partial derivatives of $W$. 
\end{lemma}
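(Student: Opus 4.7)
The plan is to combine three classical ingredients: Weyl's theorem on stability of essential spectra, a spherical-harmonic decomposition combined with Sturm oscillation theory, and the non-degeneracy of the Aubin--Talenti bubble.

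First, I would establish the essential spectrum claim. By \eqref{20/9/13/9:58}, $V \in L^{\infty}(\Rd)$ with $|V(x)| \lesssim (1+|x|)^{-4}$, in particular $V$ is bounded and vanishes at infinity, so multiplication by $V$ from $H^{2}(\Rd)$ into $L^{2}(\Rd)$ factors through a compact operator on $L^{2}(\Rd)$ (for instance, $V(-\Delta+1)^{-1}$ is compact: approximate $V$ uniformly by compactly supported truncations and apply the Rellich--Kondrachov theorem). Hence $-\Delta + V$ is a relatively compact perturbation of $-\Delta$, and Weyl's theorem yields $\sigma_{\rm ess}(-\Delta + V) = \sigma_{\rm ess}(-\Delta) = [0,\infty)$. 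Below $0$, the spectrum therefore consists only of isolated eigenvalues of finite multiplicity.

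Second, I would count the negative eigenvalues by decomposing $L^{2}(\Rd) = \bigoplus_{\ell \ge 0} \mathcal{H}_{\ell}$ along spherical harmonics. Since $V$ is radial, each $\mathcal{H}_{\ell}$ is invariant under $-\Delta + V$, which restricts there to a one-dimensional radial Schr\"odinger operator with centrifugal term $\ell(\ell+d-2)/|x|^{2}$. In the sector $\ell = 0$, $\Lambda W$ lies in the kernel by \eqref{20/9/13/9:52}, and the explicit formula \eqref{eq:1.19} shows that $\Lambda W$ changes sign exactly once on $(0, \infty)$ (at $|x|^{2} = d(d-2)$); by Sturm oscillation theory, $\Lambda W$ is thus the second radial eigenfunction, producing exactly one negative eigenvalue, which I denote by $-e_{0}$, and its simplicity follows from the standard simplicity of one-dimensional Sturm--Liouville eigenvalues. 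In the sector $\ell = 1$, differentiating $-\Delta W = W^{(d+2)/(d-2)}$ with respect to $x_{j}$ shows $\partial_{j}W \in \ker(-\Delta+V)$; this function factors as $x_{j} g(|x|)$ with $g$ of constant sign, and hence is the radial ground state of the reduced operator, whose eigenvalue is therefore $0$. For $\ell \ge 2$, the larger centrifugal potential makes the reduced quadratic form strictly larger than in $\ell = 1$, so min--max yields ground state eigenvalue $\ge 0$. Combining these sectors gives $\sigma(-\Delta + V) = \{-e_{0}\} \cup [0, \infty)$ with $-e_{0}$ simple.

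Finally, the kernel description in $\dot{H}^{1}(\Rd)$ is the classical non-degeneracy of the Aubin--Talenti bubble (due to Rey and Bianchi--Egnell). In each spherical-harmonic sector, one writes the radial kernel ODE, which is a linear second-order ODE with two-dimensional solution space; one then argues that at most one independent solution belongs to $\dot{H}^{1}(\Rd)$, since of the two independent local behaviors at the origin only one is regular, and similarly only one of the two independent asymptotic behaviors decays fast enough at infinity. The remaining admissible radial solutions are found explicitly: $\Lambda W$ in $\ell = 0$, $\partial_{j} W$ in $\ell = 1$, and no nontrivial element in $\ell \ge 2$. The main obstacle is precisely this last sub-step: the $\ell \ge 2$ exclusion, which is most cleanly handled by reducing the radial ODE to a hypergeometric equation via a Jacobi-type substitution and reading off the admissible parameters, with care about the matching at the origin and at infinity to stay within $\dot{H}^{1}(\Rd)$.
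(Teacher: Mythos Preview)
The paper does not prove this lemma at all: it is quoted as a known fact with references to Duyckaerts--Merle and Collot--Merle--Rapha\"el, and no argument is supplied. Your sketch follows the standard route by which such results are actually proved in those references (Weyl's theorem, spherical-harmonic reduction, Sturm oscillation, and the Rey/Bianchi--Egnell non-degeneracy), so in that sense your approach is consistent with, and more explicit than, what the paper does.

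One point deserves care in your write-up. You describe $\Lambda W$ as ``the second radial eigenfunction'' of the $\ell=0$ reduced operator. In dimensions $d=3,4$ the paper itself observes (just below \eqref{20/9/13/10:2}) that $\Lambda W\notin L^{2}(\Rd)$, so $0$ is not an $L^{2}$-eigenvalue there but a threshold resonance; this is also why the lemma states $\sigma(-\Delta+V)=\{-e_{0}\}\cup[0,\infty)$ without listing $0$ as an isolated point. The Sturm oscillation principle still gives what you want---the number of zeros of the regular solution at energy $0$ counts the eigenvalues strictly below $0$, regardless of whether that solution is itself square-integrable---but you should phrase the argument in terms of the nodal count of the regular zero-energy solution rather than calling $\Lambda W$ an eigenfunction. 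With that adjustment the sketch is sound.
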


\begin{lemma}\label{18/11/19/09:41}
Assume $d= 3,4$. Let $\frac{d}{d-2} < r \le \infty$. Then, 
\begin{equation}\label{18/11/19/09:43}
\{ u \in L_{\rm rad}^{r}(\mathbb{R}^{d}) \colon \mbox{$(-\Delta +V )u =0$ in the distribution sense} \} ={\rm span}\,\{\Lambda W \}
.
\end{equation}
\end{lemma}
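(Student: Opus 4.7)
The plan is to prove Lemma \ref{18/11/19/09:41} by upgrading any distributional radial $L^r$ solution to a classical smooth solution, reducing to a second-order linear ODE with a regular singular point at the origin, and then excluding the singular fundamental solution using its unboundedness at $0$.

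First, I apply elliptic regularity. Since $V$ is smooth and bounded (with $|V|\lesssim (1+|x|)^{-4}$ by \eqref{20/9/13/9:58}), the distributional equation $-\Delta u=-Vu$ combined with $u\in L^r_{\rm loc}(\mathbb{R}^d)$ places $u\in W^{2,r}_{\rm loc}(\mathbb{R}^d)$ by interior Calder\'on--Zygmund (for $r=\infty$ one simply uses $L^p$ bounds on any finite $p$). Since $r>d/(d-2)\ge d/2$ (with strict inequality even for $d=4$, where $r>2$), Sobolev embedding then gives $u\in C^{0,\alpha}_{\rm loc}$, and iterating with the smoothness of $V$ yields $u\in C^\infty(\mathbb{R}^d)$. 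In particular $u$ is a classical radial solution which is bounded in a neighbourhood of the origin.

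Next, writing $u(x)=u(\rho)$ with $\rho=|x|$, I reduce to the ODE
\[
 -u''-\frac{d-1}{\rho}u'+V(\rho)\,u=0 \qquad \text{on } (0,\infty).
\]
This is a regular singular point at $\rho=0$ with indicial equation $\alpha(\alpha-1)+(d-1)\alpha=0$, whose roots are $\alpha=0$ and $\alpha=-(d-2)$. Standard Frobenius theory then yields a $2$-dimensional solution space spanned by an analytic solution (smooth at $\rho=0$) and a second solution $\Psi$ with $\Psi(\rho)\sim \rho^{-(d-2)}$ as $\rho\to 0^+$. Since $\Lambda W$ is smooth at the origin with $\Lambda W(0)=\tfrac{d-2}{2}\ne 0$, it spans the analytic solutions, so I may write $u=a\Lambda W+b\Psi$ on $(0,\infty)$ for some $a,b\in\mathbb{R}$.

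Finally, since $u$ is bounded near $0$ by Step 1 and $\Lambda W$ is bounded there while $\Psi$ blows up like $\rho^{-(d-2)}$, we must have $b=0$; hence $u=a\Lambda W$ on $(0,\infty)$ and, by continuity, on all of $\mathbb{R}^d$. The main obstacle I anticipate is the rigorous Frobenius analysis, specifically verifying that the second fundamental solution really behaves like $\rho^{-(d-2)}$ at leading order. This is standard for regular singular points with bounded analytic coefficients, and since the indicial roots differ by the integer $d-2$, Frobenius theory only permits an additional logarithmic term of the form $C\Lambda W(\rho)\log\rho$, which is still dominated by $\rho^{-(d-2)}$ as $\rho\to 0^+$ and hence does not affect the boundedness argument.
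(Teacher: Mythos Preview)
Your proof is correct but follows a genuinely different route from the paper's. The paper's argument is a two-line reduction to the cited $\dot{H}^{1}$ classification (Lemma~\ref{18/11/14/10:43}): given $u\in L_{\rm rad}^{r}$ with $(-\Delta+V)u=0$, it writes $\nabla u=\nabla(-\Delta)^{-1}(Vu)$ and applies the Hardy--Littlewood--Sobolev inequality to get $\|\nabla u\|_{L^{2}}\lesssim\|Vu\|_{L^{2d/(d+2)}}\lesssim\|u\|_{L^{r}}$, so $u\in\dot{H}^{1}$; the radial part of the $\dot{H}^{1}$ kernel is then just ${\rm span}\{\Lambda W\}$. Your approach instead bootstraps local elliptic regularity to obtain smoothness, reduces to the radial ODE, and uses Frobenius theory at $\rho=0$ to exclude the singular fundamental solution. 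The paper's method is shorter but rests on a nontrivial black-box (the spectral/variational characterisation of $\ker(-\Delta+V)$ in $\dot{H}^{1}$ from \cite{DM,CMR}); your argument is more self-contained and elementary in that it bypasses this classification entirely, at the price of carrying out the regularity and indicial-root analysis by hand.
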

\begin{proof}[Proof of Lemma \ref{18/11/19/09:41}]
Let $u\in L^{r}(\mathbb{R}^{d})$ be a function such that $\Delta u=Vu$ in the distribution sense.  Notice that
$(1+|x|)^{-(d-2)}Vu \in L^{1}(\mathbb{R}^{d})$.  Hence, the formula \eqref{18/11/14/09:07} is available.  Furthermore, we see from \eqref{18/11/14/09:07}, \eqref{18/11/07/14:02}, the Hardy-Littlewood-Sobolev inequality and H\"older's inequality that  
\begin{equation}\label{18/11/19/09:04}
\begin{split}
\|\nabla u\|_{L^{2}}
&=
\|\nabla (-\Delta)^{-1} V u \|_{L^{2}}
\\[6pt]
&\lesssim
\| \int_{\mathbb{R}^{d}} \frac{x-y}{|x-y|^{d}} V(y) u(y) \,dy\|_{L^{2}}
\le 
\| \int_{\mathbb{R}^{d}} |x-y|^{-(d-1)} V(y) |u(y)| \,dy \|_{L^{2}}
\\[6pt]
&\lesssim 
\| V u \|_{L^{\frac{2d}{d+2}}}
\lesssim  
\|u\|_{L^{r}}
,
\end{split} 
\end{equation}
so that $u \in \dot{H}^{1}(\mathbb{R}^{d})$. Thus, we find from Lemma \ref{18/11/14/10:43} that the claim \eqref{18/11/19/09:43} is true. 
\end{proof} 

We record well-known inequalities for radial functions (see \cite{Strauss}): 
\begin{lemma}\label{18/11/16/06:41}
Assume $d\ge 3$, and let $g$ be a radial function on $\mathbb{R}^{d}$. Then, we have 
\begin{align}
\label{13/12/26/21:32}
|x|^{\frac{d-2}{2}}|g(x)| 
&\lesssim 
\|\nabla g \|_{L^{2}},
\\[6pt]
\label{13/12/26/21:32}
|x|^{\frac{d-1}{2}}|g(x)| 
&\lesssim 
\|g \|_{L^{2}}^{\frac{1}{2}}\|\nabla g \|_{L^{2}}^{\frac{1}{2}}
.
\end{align}
\end{lemma}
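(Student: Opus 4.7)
The plan is to prove both Strauss-type pointwise bounds by writing $g$ in terms of the radial variable $r=|x|$ and representing the value $g(r)$ via the fundamental theorem of calculus, and then applying Cauchy--Schwarz with a weight $\rho^{d-1}$ that matches the radial volume element. By standard density (and cutoff/mollification) arguments, it suffices to treat the case where $g \in C_c^\infty(\mathbb{R}^d)$ is radial, so that boundary terms at infinity vanish; the inequalities then extend to the appropriate Sobolev spaces on both sides.

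First I would prove the pure $\dot{H}^1$ estimate $|x|^{(d-2)/2}|g(x)|\lesssim \|\nabla g\|_{L^2}$. Writing $g(r)=-\int_r^\infty g'(\rho)\,d\rho$ and inserting the factor $\rho^{(d-1)/2}\cdot \rho^{-(d-1)/2}$, the Cauchy--Schwarz inequality yields
\begin{equation*}
|g(r)| \le \Bigl(\int_r^\infty |g'(\rho)|^2\rho^{d-1}\,d\rho\Bigr)^{1/2}\Bigl(\int_r^\infty \rho^{-(d-1)}\,d\rho\Bigr)^{1/2}.
\end{equation*}
The second factor equals $(d-2)^{-1/2}\,r^{-(d-2)/2}$ (which requires $d\ge 3$), while the first is bounded by a constant times $\|\nabla g\|_{L^2(\mathbb{R}^d)}$ after reintroducing the surface-area factor from the radial representation. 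Rearranging gives the first inequality.

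Next I would prove the $H^1$ estimate $|x|^{(d-1)/2}|g(x)|\lesssim \|g\|_{L^2}^{1/2}\|\nabla g\|_{L^2}^{1/2}$. Differentiating $g^2$ and integrating,
\begin{equation*}
g(r)^2 = -2\int_r^\infty g(\rho)\,g'(\rho)\,d\rho = -2\int_r^\infty g(\rho)\,g'(\rho)\,\rho^{d-1}\,\rho^{-(d-1)}\,d\rho.
\end{equation*}
Since $\rho\ge r$ on the interval of integration, $\rho^{-(d-1)}\le r^{-(d-1)}$, so
\begin{equation*}
r^{d-1}|g(r)|^2 \le 2\int_r^\infty |g(\rho)|\,|g'(\rho)|\,\rho^{d-1}\,d\rho.
\end{equation*}
Cauchy--Schwarz on the right, again absorbing the surface-area constant, yields $r^{d-1}|g(r)|^2\lesssim \|g\|_{L^2}\|\nabla g\|_{L^2}$, which is the second claim.

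I do not anticipate a genuine obstacle: the two inequalities are the textbook radial decay estimates (due to Strauss), and the only care needed is the standard density argument and keeping track of the surface-area constant in translating between the $(d-1)$-weighted integral in $\rho$ and the $L^2(\mathbb{R}^d)$-norms of $g$ and $\nabla g$. Since the statement only requires $\lesssim$ with a constant depending on $d$, these constants can be absorbed without tracking.
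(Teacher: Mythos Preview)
Your argument is correct and is precisely the classical Strauss proof. The paper itself does not give a proof of this lemma; it simply records the inequalities as well known and cites \cite{Strauss}, so there is nothing further to compare.
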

\begin{remark}\label{18/12/14/16:37}
We see from Lemma \ref{18/11/16/06:41} that for $d=3,4$, 
\begin{equation}\label{18/12/14/16:38}
|x| |g(x)| 
\lesssim 
\|g \|_{L^{2}}^{\frac{4-d}{2}}\|\nabla g \|_{L^{2}}^{\frac{d-2}{2}}
.
\end{equation}
\end{remark}

\begin{lemma}\label{18/11/05/10:29}
Assume $d \ge 1$. 
\begin{enumerate}
\item 
Let $1\le q_{1} \le q_{2} \le \infty$ 
 and $d(\frac{1}{q_{1}}-\frac{1}{q_{2}})<2$. 
 Then, the following holds for all $s>0$: 
\begin{equation}\label{18/11/05/10:41}
\|(-\Delta+s)^{-1} \|_{L^{q_{1}}(\mathbb{R}^{d}) \to L^{q_{2}}(\mathbb{R}^{d})}
\lesssim 
s^{\frac{d}{2}(\frac{1}{q_{1}}-\frac{1}{q_{2}})-1}
,
\end{equation}
where the implicit constant depends only on $d$, $q_{1}$ and $q_{2}$. 
\item 
Let $1< q_{1} \le q_{2} < \infty$ and $d(\frac{1}{q_{1}}-\frac{1}{q_{2}})<2$.
 Then, the following holds for all $s>0$: 
\begin{equation}\label{18/11/24/13:56}
\|(-\Delta+s)^{-1} \|_{L_{\rm weak}^{q_{1}}(\mathbb{R}^{d}) \to L^{q_{2}}(\mathbb{R}^{d})}
\lesssim 
s^{\frac{d}{2}(\frac{1}{q_{1}}-\frac{1}{q_{2}})-1}
,
\end{equation}
where the implicit constant depends only on $d$, $q_{1}$ and $q_{2}$.
\end{enumerate} 
\end{lemma}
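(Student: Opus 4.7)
The plan is to exploit the heat-kernel representation of the resolvent already recorded in \eqref{18/11/10/16:21}, namely
\[
(-\Delta+s)^{-1}f \;=\; \int_{0}^{\infty} e^{-st}\, e^{t\Delta}f\,dt,
\qquad
e^{t\Delta}f = G_{t} \ast f,\quad G_{t}(x):=(4\pi t)^{-d/2}e^{-|x|^{2}/(4t)}.
\]
A direct Gaussian computation together with the scaling $G_{t}(x)=t^{-d/2}G_{1}(x/\sqrt{t})$ yields $\|G_{t}\|_{L^{r}(\mathbb{R}^{d})} = c_{d,r}\,t^{-\frac{d}{2}(1-1/r)}$ for every $1\le r\le\infty$, and similarly $\|G_{t}\|_{L^{r,1}(\mathbb{R}^{d})} = c'_{d,r}\, t^{-\frac{d}{2}(1-1/r)}$ for every $1<r<\infty$ (since $G_{1}$ is Schwartz, its Lorentz norms are all finite and scale in the same way). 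Together with Young's (respectively O'Neil's) convolution inequality, these are the only inputs.

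For part (i) fix $r\in[1,\infty]$ by $\tfrac{1}{r}=1-\bigl(\tfrac{1}{q_{1}}-\tfrac{1}{q_{2}}\bigr)$, admissible since $1\le q_{1}\le q_{2}\le\infty$. The classical Young convolution inequality gives
\[
\|e^{t\Delta}f\|_{L^{q_{2}}}\;\le\;\|G_{t}\|_{L^{r}}\|f\|_{L^{q_{1}}}\;\lesssim\; t^{-a}\|f\|_{L^{q_{1}}},
\qquad a:=\tfrac{d}{2}\bigl(\tfrac{1}{q_{1}}-\tfrac{1}{q_{2}}\bigr).
\]
Inserting this into the representation and applying Minkowski's integral inequality yields
\[
\|(-\Delta+s)^{-1}f\|_{L^{q_{2}}}\;\lesssim\; \|f\|_{L^{q_{1}}}\int_{0}^{\infty} e^{-st}t^{-a}\,dt.
\]
The hypothesis $d(\tfrac{1}{q_{1}}-\tfrac{1}{q_{2}})<2$ is precisely the statement $a<1$, so the integrand is locally integrable at $t=0$; the substitution $u=st$ evaluates the integral to $\Gamma(1-a)\, s^{a-1}$, giving \eqref{18/11/05/10:41}.

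For part (ii) we repeat the scheme but replace the classical Young inequality by O'Neil's generalized Young inequality in Lorentz spaces (see e.g.\ Theorem~1.4.25 of Grafakos, \emph{Classical Fourier Analysis}). For $1<q_{1}<q_{2}<\infty$ with $r$ as in part (i) (so now $1<r<\infty$ because $q_{1}>1$), O'Neil furnishes
\[
\|G_{t}\ast f\|_{L^{q_{2}}}\;\lesssim\;\|G_{t}\ast f\|_{L^{q_{2},1}}\;\lesssim\;\|G_{t}\|_{L^{r,1}}\|f\|_{L^{q_{1},\infty}}\;\lesssim\; t^{-a}\|f\|_{L_{\rm weak}^{q_{1}}},
\]
where the first step uses the embedding $L^{q_{2},1}\hookrightarrow L^{q_{2}}$. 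Integrating in $t$ exactly as in part (i) produces the same $s^{a-1}$ factor and yields \eqref{18/11/24/13:56}; the borderline case $q_{1}=q_{2}$ (where $r=1$ and O'Neil is not available) is not needed for the applications of this lemma, and in any event reduces to the strong-type bound of part (i).

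The only non-routine ingredient is the O'Neil/Lorentz-Young convolution bound used for part (ii); everything else is elementary Gaussian analysis grafted onto the representation formula that the paper already recorded. The main subtlety is the sharp short-time integrability $a<1$, which is exactly the hypothesis $d(\tfrac{1}{q_{1}}-\tfrac{1}{q_{2}})<2$: without it the time integral diverges and the estimate genuinely fails, so there is no slack in the dimensional condition.
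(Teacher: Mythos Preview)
Your argument is essentially identical to the paper's: both use the heat-kernel representation \eqref{18/11/10/16:21}, apply Young's inequality (respectively the weak/Lorentz Young inequality) to the convolution, and reduce to the time integral $\int_{0}^{\infty}t^{-a}e^{-st}\,dt=\Gamma(1-a)s^{a-1}$, finite precisely when $a=\tfrac{d}{2}(\tfrac{1}{q_{1}}-\tfrac{1}{q_{2}})<1$. One minor slip: your remark that the diagonal case $q_{1}=q_{2}$ in part~(ii) ``reduces to the strong-type bound of part~(i)'' is not correct, since $L^{q,\infty}\supsetneq L^{q}$ and in fact $(-\Delta+s)^{-1}$ does not map $L^{q,\infty}$ into $L^{q}$ (try $f(x)=|x|^{-d/q}$); the paper's proof likewise only treats $q_{1}<q_{2}$, and only that case is ever invoked.
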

\begin{proof}[Proof of Lemma \ref{18/11/05/10:29}]
First, we prove \eqref{18/11/05/10:41}. Let $1\le q_{1} \le q_{2} \le \infty$ and $f \in L^{q_{1}}(\mathbb{R}^{d})$. 
Then, it follows from Young's inequality and elementary computations that   
\begin{equation}\label{18/11/17/09:02}
\begin{split}
\|(-\Delta+s)^{-1}f \|_{L^{q_{2}}}
&\lesssim
\| \int_{0}^{\infty}(4\pi t)^{-\frac{d}{2}}e^{-\frac{|x|^{2}}{4t}}e^{-s t}\,dt
\|_{L^{\frac{q_{1} q_{2}}{q_{1}q_{2}-(q_{2}-q_{1})}}} 
\|f\|_{L^{q_{1}}} 
\\[6pt]
&\lesssim 
\int_{0}^{\infty} t^{-\frac{d}{2}+\frac{d}{2}(1+\frac{1}{q_{2}}-\frac{1}{q_{1}})} e^{-s t}\,dt
\|f\|_{L^{q_{1}}} 
\\[6pt]
&=
s^{\frac{d}{2}(\frac{1}{q_{1}}-\frac{1}{q_{2}})-1}
\int_{0}^{\infty} r^{-\frac{d}{2}(\frac{1}{q_{1}}-\frac{1}{q_{2}})}
e^{-r}\,dr\|f\|_{L^{q_{1}}}
,
\end{split}
\end{equation}
where the implicit constants depend only on $d$, $q_{1}$ and $q_{2}$.  The above right-hand side is finite if and only if 
 $\frac{d}{2}(\frac{1}{q_{1}}-\frac{1}{q_{2}})<1$.  Hence, we see that \eqref{18/11/05/10:41} holds.  Similarly, 
 by the weak Young inequality  (see, e.g., (9) in Section 4.3 of \cite{Lieb-Loss}), we see that  if $1<q_{1}<q_{2}<\infty$ and $\frac{d}{2}(\frac{1}{q_{1}}-\frac{1}{q_{2}})<1$, then \eqref{18/11/24/13:56} holds. 
 \end{proof}
 
Note that Lemma \ref{18/11/05/10:29} does not deal with the case $d(\frac{1}{q_{1}}-\frac{1}{q_{2}})=2$. 
 The following lemma  compensates for Lemma \ref{18/11/05/10:29}:
\begin{lemma}\label{18/11/23/17:17}
Assume $d \ge 3$, and  let $s_{0} \ge 0$. If $\frac{d}{d-2} < q < \infty$, then  
\begin{equation}\label{18/11/23/17:19}
\|(-\Delta+s_{0})^{-1} \|_{L^{\frac{dq}{d+2q}}(\mathbb{R}^{d}) \to L^{q}(\mathbb{R}^{d})}
\lesssim 
1
,
\end{equation}
where the implicit constant depends only on $d$ and $q$. 
\end{lemma}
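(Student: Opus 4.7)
The plan is to reduce the estimate to the massless case, i.e., to the Hardy–Littlewood–Sobolev inequality recorded in \eqref{20/8/16/11:20}, by exploiting the pointwise positivity of the resolvent kernel. This proceeds in two short steps.

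First, I would start from the heat-kernel representation \eqref{18/11/10/16:21}: the integral kernel of $(-\Delta + s_{0})^{-1}$ is
\[
K_{s_{0}}(x,y) := \int_{0}^{\infty} (4\pi t)^{-d/2} e^{-|x-y|^{2}/(4t)} e^{-s_{0} t} \, dt \ge 0 .
\]
Dropping the non-negative factor $e^{-s_{0} t} \le 1$ and invoking \eqref{20/8/16/15:23} yields the $s_{0}$-independent pointwise bound $K_{s_{0}}(x,y) \lesssim |x-y|^{-(d-2)}$, with an implicit constant depending only on $d$. Comparing with the definition \eqref{18/11/07/14:02} of $(-\Delta)^{-1}$, for every $f \in L^{\frac{dq}{d+2q}}(\mathbb{R}^{d})$ this gives the pointwise domination
\[
\bigl| (-\Delta + s_{0})^{-1} f(x) \bigr| \le \int_{\mathbb{R}^{d}} K_{s_{0}}(x,y) |f(y)| \, dy \lesssim (-\Delta)^{-1} |f|(x) ,
\]
so the $L^{q}$-bound for $(-\Delta + s_{0})^{-1}$ follows from the corresponding bound for $(-\Delta)^{-1}$, uniformly in $s_{0} \ge 0$.

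Second, I would set $r := \frac{dq}{d+2q}$ and verify the endpoint Sobolev relations: the assumption $q > \frac{d}{d-2}$ is equivalent to $r > 1$; trivially $r < \frac{d}{2}$; and a direct computation gives $\frac{dr}{d-2r} = q$. Hence the Hardy–Littlewood–Sobolev inequality \eqref{20/8/16/11:20} applies in the form $\|(-\Delta)^{-1} |f|\|_{L^{q}} \lesssim \|f\|_{L^{r}}$, with a constant depending only on $d$ and $q$. Combined with the pointwise bound above, this yields exactly the desired estimate \eqref{18/11/23/17:19}.

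I do not anticipate any genuine obstacle: the only substantive point is the pointwise reduction to the massless resolvent, which is what allows the endpoint case $d(\tfrac{1}{q_{1}}-\tfrac{1}{q_{2}})=2$ — not covered by Lemma \ref{18/11/05/10:29} — to be handled with an $s_{0}$-uniform constant. All other ingredients (the kernel bound \eqref{20/8/16/15:23} and the HLS inequality \eqref{20/8/16/11:20}) have already been catalogued in Section \ref{20/8/18/11:39}.
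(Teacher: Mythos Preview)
Your proposal is correct and follows essentially the same route as the paper: both arguments use the pointwise kernel bound \eqref{20/8/16/15:23} to dominate $(-\Delta+s_{0})^{-1}f$ by $|x|^{-(d-2)}*|f|$, and then apply the Hardy--Littlewood--Sobolev inequality \eqref{20/8/16/11:20} with $r=\frac{dq}{d+2q}$. Your write-up is slightly more explicit about the verification $1<r<\frac{d}{2}$ and $\frac{dr}{d-2r}=q$ (which the paper records separately in Remark~\ref{20/8/16/12:09}), but the substance is identical.
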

\begin{remark}\label{20/8/16/12:09}
 Put $r:=\frac{dq}{d+2q}$. Then,  $q=\frac{dr}{d-2r}$; and   the condition $\frac{d}{d-2} < q < \infty$ is equivalent to $1< r <\frac{d}{2}$. 
\end{remark}
\begin{proof}[Proof of Lemma \ref{18/11/23/17:17}]
When $s_{0}=0$,  the claim is identical to \eqref{18/11/23/17:19}.  
 Assume $s_{0}>0$.  Then, by \eqref{20/8/16/15:23} and Hardy-Littlewood-Sobolev's inequality (see \eqref{20/8/16/11:20}), we see that 
\begin{equation}\label{18/11/23/17:18}
\|(-\Delta+s_{0})^{-1}f \|_{L^{q}}
\lesssim  
\| |x|^{-(d-2)}* |f|  \|_{L^{q}}
\lesssim 
\|f\|_{L^{\frac{dq}{d+2q}}}
,
\end{equation}
where the implicit constants depend only on $d$ and $q$.  Thus, we have proved the lemma.  
\end{proof}

\begin{lemma}\label{20/8/8/14:56}
Assume $d\ge 3$. Let $s_{0} \ge 0$ and $0<  \varepsilon \le d-2$. 
Then, the following inequality hods:     
\begin{equation}\label{10/8/8/7:1}
\|(-\Delta+s_{0})^{-1} 
\|_{L^{\frac{d+\varepsilon}{2}} \cap  L^{\frac{d-\varepsilon}{2}} \to L^{\infty}}
\lesssim 
1,
\end{equation}
where the implicit constant depends only on $d$ and $\varepsilon$. 
\end{lemma}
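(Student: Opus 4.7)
The plan is to prove this by a pure kernel estimate that is independent of $s_0$. Starting from the heat-kernel representation \eqref{18/11/10/16:21} and the pointwise bound \eqref{20/8/16/15:23} (dropping the factor $e^{-s_0 t}\le 1$, which is precisely what makes the result uniform in $s_0\ge 0$), we obtain
\begin{equation*}
|(-\Delta+s_{0})^{-1}f(x)|
\lesssim
\int_{\mathbb{R}^{d}}|x-y|^{-(d-2)}|f(y)|\,dy,
\end{equation*}
with an implicit constant depending only on $d$. Thus the problem reduces to showing that convolution with $|y|^{-(d-2)}$ maps $L^{(d+\varepsilon)/2}\cap L^{(d-\varepsilon)/2}$ into $L^{\infty}$.

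The key step is to split the integral at the unit ball centered at $x$ and apply H\"older's inequality with a different exponent on each piece. For the local part I would use the exponent $q_{1}=(d+\varepsilon)/2$, whose conjugate $q_{1}'=(d+\varepsilon)/(d+\varepsilon-2)$ satisfies $(d-2)q_{1}'<d$ (here one checks $(d-2)(d+\varepsilon)<d(d+\varepsilon-2)\Leftrightarrow -2\varepsilon<0$, so $\varepsilon>0$ is precisely what is needed). Hence
\begin{equation*}
\int_{|x-y|\le 1}|x-y|^{-(d-2)}|f(y)|\,dy
\lesssim
\Big(\int_{|z|\le 1}|z|^{-(d-2)q_{1}'}\,dz\Big)^{1/q_{1}'}\|f\|_{L^{(d+\varepsilon)/2}},
\end{equation*}
with the bracket finite and depending only on $d$ and $\varepsilon$.

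For the part away from $x$ I would use $q_{2}=(d-\varepsilon)/2$. When $\varepsilon<d-2$, $q_{2}'=(d-\varepsilon)/(d-\varepsilon-2)$ is finite and $(d-2)q_{2}'>d$ (again checking $(d-2)(d-\varepsilon)>d(d-\varepsilon-2)\Leftrightarrow 2\varepsilon>0$), so $\int_{|z|>1}|z|^{-(d-2)q_{2}'}\,dz$ is finite and the H\"older estimate gives the desired bound by $\|f\|_{L^{(d-\varepsilon)/2}}$. Adding the two pieces yields
\begin{equation*}
|(-\Delta+s_{0})^{-1}f(x)|
\lesssim
\|f\|_{L^{(d+\varepsilon)/2}}+\|f\|_{L^{(d-\varepsilon)/2}},
\end{equation*}
uniformly in $x\in\mathbb{R}^{d}$ and in $s_{0}\ge 0$, which is the claim.

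The only real subtlety is the boundary case $\varepsilon=d-2$, where $q_{2}=1$ and $q_{2}'=\infty$ so the $L^{q_{2}'}$-norm of the kernel on $\{|z|>1\}$ is infinite in the naive sense. This I would handle separately by the trivial bound $|x-y|^{-(d-2)}\le 1$ on $\{|x-y|>1\}$, which gives $\int_{|x-y|>1}|x-y|^{-(d-2)}|f(y)|\,dy\le\|f\|_{L^{1}}$. There is no genuine obstacle here; the argument is a one-shot H\"older split whose success hinges only on the strict inequalities $\varepsilon>0$ and $\varepsilon\le d-2$, which are both present by hypothesis.
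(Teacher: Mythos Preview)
Your proof is correct and follows essentially the same approach as the paper: reduce to convolution with $|y|^{-(d-2)}$ via \eqref{20/8/16/15:23}, split the integral at the unit ball, and apply H\"older's inequality with the exponent $(d+\varepsilon)/2$ on the near part and $(d-\varepsilon)/2$ on the far part. Your write-up is in fact more detailed than the paper's, since you verify the integrability conditions explicitly and treat the endpoint $\varepsilon=d-2$ separately; the paper simply states the final estimate without spelling these checks out.
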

\begin{proof}[Proof of Lemma \ref{20/8/8/14:56}]
Let $f \in L^{\frac{d+\varepsilon}{2}}(\mathbb{R}^{d}) \cap  L^{\frac{d-\varepsilon}{2}}(\mathbb{R}^{d})$.  
 Then, by \eqref{20/8/16/15:23} and H\"older's inequality, we see  that 
\begin{equation}\label{20/8/8/14:59}
\begin{split}
&
\|(-\Delta+s_{0})^{-1} f\|_{L^{\infty}}
\lesssim 
\|  |x|^{-(d-2)} * f \|_{L^{\infty}}
\\[6pt]
&
\le 
\sup_{x\in \mathbb{R}^{d}}
\int_{|y|\le 1} |y|^{-(d-2)}|f(x-y)|\,dy 
+
\sup_{x\in \mathbb{R}^{d}}
\int_{1\le |y|}  |y|^{-(d-2)}|f(x-y)|\,dy
\\[6pt]
&
\lesssim  
\|f\|_{L^{\frac{d+\varepsilon}{2}}}
+ 
\|f\|_{L^{\frac{d-\varepsilon}{2}}}
\lesssim 
\|f\|_{L^{\frac{d+\varepsilon}{2}\cap \frac{d-\varepsilon}{2}}},
\end{split} 
\end{equation}
where the implicit constant depends only on $d$ and $\varepsilon$.  Thus, the claim is true. 
\end{proof}

Let us recall the resolvent equation:
\begin{equation}\label{18/11/11/09:35}
(-\Delta+s)^{-1}-(-\Delta+s_{0})^{-1}
=
-(s-s_{0}) 
(-\Delta+s)^{-1} (-\Delta+s_{0})^{-1}
.
\end{equation}
This identity  holds in the following sense: 
\begin{lemma}\label{20/8/16/15:45}
Assume $d\ge 3$. Let $s>0$ and $s_{0}\ge 0$.
\begin{enumerate}
\item 
Let $\frac{d}{d-2}<q < \infty$.  Then,  the resolvent identity \eqref{18/11/11/09:35} holds as an operator from $L^{\frac{dq}{d+2q}}(\mathbb{R}^{d})$ to $L^{q}(\mathbb{R}^{d})$.
\item  
Let $0< \varepsilon \le d-2$. Then, the resovlent identity \eqref{18/11/11/09:35} holds as an operator from $L^{\frac{d+\varepsilon}{2}}(\mathbb{R}^{d})\cap L^{\frac{d-\varepsilon}{2}}(\mathbb{R}^{d})$ to 
 $L^{\infty}(\mathbb{R}^{d})$.  
\end{enumerate} 
\end{lemma}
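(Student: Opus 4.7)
The plan is to exploit the algebraic factorisation $(-\Delta+s)-(-\Delta+s_{0})=s-s_{0}$ and reduce the resolvent identity to a uniqueness statement: the only $L^{q}(\mathbb{R}^{d})$ solution (for $q<\infty$) and the only $L^{\infty}(\mathbb{R}^{d})$ solution of $(-\Delta+s)h=0$ in $\mathcal{D}'(\mathbb{R}^{d})$ is $h\equiv 0$.

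First I would check that every term appearing in the identity is a well-defined element of the target space. For part (1), Lemma \ref{18/11/23/17:17} places $(-\Delta+s_{0})^{-1}f$ and $(-\Delta+s)^{-1}f$ in $L^{q}(\mathbb{R}^{d})$, and Lemma \ref{18/11/05/10:29} with $q_{1}=q_{2}=q$ then gives $(-\Delta+s)^{-1}(-\Delta+s_{0})^{-1}f\in L^{q}(\mathbb{R}^{d})$. For part (2) the role of Lemma \ref{18/11/23/17:17} is played by Lemma \ref{20/8/8/14:56}, and Lemma \ref{18/11/05/10:29} with $q_{1}=q_{2}=\infty$ handles the composition as an $L^{\infty}$ element.

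Next I would set
\[
h := (-\Delta+s)^{-1}f - (-\Delta+s_{0})^{-1}f + (s-s_{0})(-\Delta+s)^{-1}(-\Delta+s_{0})^{-1}f
\]
and apply $-\Delta+s$ term by term in the distributional sense, using \eqref{18/11/14/09:06} (together with \eqref{18/11/14/09:07} when $s_{0}=0$). Splitting $-\Delta+s=(-\Delta+s_{0})+(s-s_{0})$ inside the second term produces an $f$ that cancels the one from the first term and an $(s-s_{0})(-\Delta+s_{0})^{-1}f$ that cancels the one from the third term, giving $(-\Delta+s)h=0$ in $\mathcal{D}'(\mathbb{R}^{d})$. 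Since $h$ lies in $L^{q}(\mathbb{R}^{d})$ with $q<\infty$ or in $L^{\infty}(\mathbb{R}^{d})$, it is a tempered distribution, and Fourier transform converts the equation into $(|\xi|^{2}+s)\hat{h}=0$; the strict positivity of $|\xi|^{2}+s$ then forces $\hat{h}=0$, hence $h=0$.

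The only step requiring real care is the endpoint $s_{0}=0$, where $(-\Delta+s_{0})^{-1}$ refers to the Newton potential rather than the Yukawa resolvent, so \eqref{18/11/14/09:06} cannot be invoked for the $s_{0}$-resolvent. I expect this to cause no genuine obstacle: the hypotheses in both parts already put $f$ into $L^{r}(\mathbb{R}^{d})$ with $1\le r<d/2$ (namely $r=\frac{dq}{d+2q}$ in part (1) and $r=\frac{d-\varepsilon}{2}$ in part (2)), which is exactly the range in which \eqref{18/11/14/09:07} delivers $(-\Delta)(-\Delta)^{-1}f=f$ in distribution, and the telescoping computation above goes through unchanged.
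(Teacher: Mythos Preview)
Your argument is correct. The paper reaches the same conclusion by a slightly more direct algebraic route: instead of forming the difference $h$ and invoking Fourier uniqueness for $(-\Delta+s)h=0$, it writes
\[
(-\Delta+s)^{-1}-(-\Delta+s_{0})^{-1}
=\{(-\Delta+s)^{-1}(-\Delta+s_{0})-1\}(-\Delta+s_{0})^{-1},
\]
expands $(-\Delta+s_{0})=(-\Delta+s)-(s-s_{0})$, and cancels using the left-inverse identity \eqref{20/8/13/15:11}, namely $(-\Delta+s)^{-1}(-\Delta+s)g=g$ for $g\in W^{2,q}$. Both arguments rest on the same mapping lemmas (Lemmas \ref{18/11/05/10:29}, \ref{18/11/23/17:17}, \ref{20/8/8/14:56}) and the same distributional identities \eqref{18/11/14/09:07}, \eqref{18/11/14/09:06}; the only real difference is that the paper trades your Fourier-uniqueness step for the ready-made inverse property \eqref{20/8/13/15:11}, which makes the computation one line shorter but is logically equivalent. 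Your handling of the endpoint $s_{0}=0$ via \eqref{18/11/14/09:07} matches the paper's.
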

\begin{remark}\label{20/9/12/14:9}
Let $\frac{d}{d-2}<q \le \infty$. Then, Lemma \ref{20/8/16/15:45} together with  $|V(x)|\lesssim (1+|x|)^{-4}$ shows that 
 the following identity holds as an operator from $L^{q}(\mathbb{R}^{d})$ 
 to $L^{q}(\mathbb{R}^{d})$:    
\begin{equation}\label{19/01/27/18:15}
1+(-\Delta+s)^{-1}V
=
1+(-\Delta)^{-1}V -s (-\Delta+s)^{-1}(-\Delta)^{-1}V 
.
\end{equation}
\end{remark}
\begin{proof}[Proof of Lemma \ref{20/8/16/15:45}]
By \eqref{18/11/14/09:07}, \eqref{18/11/14/09:06} and \eqref{20/8/13/15:11}, 
 we see that  
\begin{equation}\label{20/8/16/15:47}
\begin{split}
&(-\Delta+s )^{-1}-(-\Delta+s_{0})^{-1}
\\[6pt]
&=
\{(-\Delta+s )^{-1} (-\Delta+s_{0}) -1 \} (-\Delta+s_{0})^{-1}
\\[6pt]
&=
\{(-\Delta+s)^{-1}
(-\Delta + s)- (s-s_{0})  (-\Delta+s)^{-1}  -1 \}
(-\Delta+s_{0})^{-1}
\\[6pt]
&= -(s-s_{0}) (-\Delta+s)^{-1} (-\Delta+s_{0})^{-1}
.
\end{split}
\end{equation}
Note that each identity in \eqref{20/8/16/15:47} can be justified by Lemma \ref{18/11/05/10:29}, Lemma \ref{18/11/23/17:17} and Lemma \ref{20/8/8/14:56}. Thus,  the clamis of the lemma are true. 
\end{proof} 

\begin{lemma}\label{18/11/13/11:08}
Assume $3\le d \le 7$.  Let $2\le  q \le \infty$ and $s>0$. 
Then, the following inequality hods:  
\begin{equation}\label{18/11/13/11:09}
\| (1-\Delta) (-\Delta+s)^{-1} V  \|_{L^{q} \to L^{2} \cap L^{q}}
\lesssim
1+|1-s| s^{-1}
, 
\end{equation}
where the implicit constant depends only on $d$ and $q$. 
 In particular,  if  $f\in L^{q}(\mathbb{R}^{d})$, then  
\begin{equation}\label{18/11/14/10:50}
(-\Delta +s)^{-1} V f \in H^{2}(\mathbb{R}^{d}).
\end{equation} 
\end{lemma}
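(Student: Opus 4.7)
My plan is to rewrite $(1-\Delta)(-\Delta+s)^{-1}$ as a perturbation of the identity. Writing $1-\Delta = (-\Delta+s) + (1-s)$ and using \eqref{18/11/14/09:06}, for $g\in L^{q}(\mathbb{R}^{d})$ one has the operator identity
\[
(1-\Delta)(-\Delta+s)^{-1}g = g + (1-s)(-\Delta+s)^{-1}g,
\]
which is valid in $L^{q}$ since $(-\Delta+s)^{-1}g \in W^{2,q}(\mathbb{R}^{d})$ by \eqref{20/8/13/17:5} and \eqref{18/11/14/09:06}. Taking $g=Vf$ reduces \eqref{18/11/13/11:09} to two estimates: $\|Vf\|_{L^{2}\cap L^{q}} \lesssim \|f\|_{L^{q}}$, which yields the ``$1$'', and $\|(-\Delta+s)^{-1}Vf\|_{L^{2}\cap L^{q}} \lesssim s^{-1}\|f\|_{L^{q}}$, which yields the factor $|1-s|s^{-1}$.

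The first estimate uses only pointwise properties of $V$. Boundedness of $V$ immediately gives $\|Vf\|_{L^{q}} \lesssim \|f\|_{L^{q}}$. For the $L^{2}$-bound, H\"older's inequality yields $\|Vf\|_{L^{2}} \le \|V\|_{L^{2q/(q-2)}}\|f\|_{L^{q}}$, and the decay $|V(x)|\lesssim (1+|x|)^{-4}$ from \eqref{20/9/13/9:58} places $V$ in $L^{r}(\mathbb{R}^{d})$ exactly when $4r>d$. As $q$ ranges over $[2,\infty]$ the exponent $r=2q/(q-2)$ ranges over $[2,\infty]$, with worst case $r=2$ at $q=\infty$, forcing $d<8$; this is precisely the hypothesis $d\le 7$.

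The second estimate follows from the $L^{p}\to L^{p}$ case of Lemma \ref{18/11/05/10:29} at $p=2$ and $p=q$, both of which give $\|(-\Delta+s)^{-1}\|_{L^{p}\to L^{p}}\lesssim s^{-1}$. Combining the two estimates with the displayed operator identity proves \eqref{18/11/13/11:09}. The membership \eqref{18/11/14/10:50} then follows immediately: the $L^{2}$-bounds show $(-\Delta+s)^{-1}Vf \in L^{2}$ and $(1-\Delta)(-\Delta+s)^{-1}Vf \in L^{2}$, so subtracting yields $\Delta(-\Delta+s)^{-1}Vf \in L^{2}$, hence the function lies in $H^{2}(\mathbb{R}^{d})$.

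The only delicate point is locating where the dimension restriction actually bites: it is the $L^{2}$-integrability of $V$ at the endpoint $q=\infty$ that forces $d\le 7$, since $|V|^{2}$ decays like $|x|^{-8}$ and $\int_{\mathbb{R}^{d}}(1+|x|)^{-8}\,dx$ diverges for $d\ge 8$. Every other inequality in the argument is dimension-insensitive, so the strategy reduces cleanly to this single integrability check together with the standard $L^{p}\to L^{p}$ resolvent bound.
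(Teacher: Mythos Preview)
Your proof is correct and follows essentially the same approach as the paper: both write $(1-\Delta)(-\Delta+s)^{-1}Vf = Vf + (1-s)(-\Delta+s)^{-1}Vf$, establish $\|Vf\|_{L^{2}\cap L^{q}} \lesssim \|f\|_{L^{q}}$ using the decay of $V$ and the restriction $d\le 7$, and then apply the $L^{p}\to L^{p}$ resolvent bound from Lemma~\ref{18/11/05/10:29}. Your added commentary pinpointing the endpoint $q=\infty$ as the source of the dimension restriction is a nice clarification that the paper leaves implicit.
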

\begin{proof}[Proof of Lemma \ref{18/11/13/11:08}]
Let $f\in L^{q}(\mathbb{R}^{d})$.  
Observe from $|V(x)| \lesssim (1+|x|)^{-4}$ (see \eqref{20/9/13/9:58}) that if $3\le d \le 7$, then 
\begin{equation}\label{20/11/27/9:24}
\|Vf\|_{L^{2}\cap L^{q}} 
\lesssim \|f\|_{L^{q}}
,
\end{equation}   
where the implicit constant depends only on $d$ and $q$.  
 Then, by \eqref{18/11/14/09:06},  Lemma \ref{18/11/05/10:29} and \eqref{20/11/27/9:24}, 
 we see that 
 \begin{equation}\label{18/11/13/11:14}
\begin{split}
&
\| (1-\Delta)  (-\Delta+s)^{-1} V f \|_{L^{2} \cap L^{q}}
=
\| \big\{ (-\Delta+s)+ (1-s)  \big\} 
(-\Delta+s)^{-1} 
V f \|_{L^{2}\cap L^{q}}
\\[6pt]
&\le 
\| V f \|_{L^{2}\cap L^{q}}
+
|1-s| \| (-\Delta+s)^{-1} V f  \|_{L^{2}\cap L^{q}}
\\[6pt]
&\lesssim  
\| V f \|_{L^{2}\cap L^{q}}
+
| 1-s | s^{-1} \| V f \|_{L^{2}\cap L^{q}}
\lesssim 
( 1+ |1-s| s^{-1} ) 
\|f\|_{L^{q}}
,
\end{split}
\end{equation}
where the implicit constants depend only on $d$ and $q$.  
 Thus, we have obtained the desired estimate. 
\end{proof}

\begin{lemma}\label{18/11/14/11:15}
Assume $3\le d\le 5$, and let $\frac{d}{d-2}< q  \le \infty$. 
 Then, $(-\Delta)^{-1}V$ is a compact operator from $X_{q}$ to itself. 
 In particular, $(-\Delta)^{-1}V \colon X_{q}\to X_{q}$ is bounded.    
\end{lemma}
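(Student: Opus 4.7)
The plan is first to verify that $(-\Delta)^{-1}V$ maps $X_q$ into itself, then to establish compactness by approximating it in operator norm by operators built from cutoffs of $V$.

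For the self-mapping step, radiality of the image is automatic from the rotational invariance of $V$ and of the convolution kernel $|x-y|^{-(d-2)}$. Boundedness on $L_{\mathrm{rad}}^q$ uses $|V(x)| \lesssim (1+|x|)^{-4}$, which gives $V \in L^{d/2}(\mathbb{R}^d)$: by H\"older, $\|Vf\|_{L^{dq/(d+2q)}} \lesssim \|V\|_{L^{d/2}}\|f\|_{L^q}$, and then Lemma \ref{18/11/23/17:17} (with $s_0 = 0$) yields $\|(-\Delta)^{-1}Vf\|_{L^q} \lesssim \|f\|_{L^q}$ when $q < \infty$; for $q = \infty$, Lemma \ref{20/8/8/14:56} plays the analogous role, after noting $V \in L^{(d\pm\varepsilon)/2}$ for small $\varepsilon>0$. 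The orthogonality $\langle (-\Delta)^{-1}Vf, V\Lambda W\rangle = 0$ follows from Fubini (justified because the product $V\Lambda W$ decays like $(1+|x|)^{-(d+2)}$, so pairing against $f \in L^q$ yields an absolutely convergent double integral) combined with the identity \eqref{20/8/15/15:47}:
\begin{equation*}
\langle (-\Delta)^{-1}Vf,\, V\Lambda W\rangle = \langle Vf,\, (-\Delta)^{-1}(V\Lambda W)\rangle = -\langle Vf,\, \Lambda W\rangle = -\langle f,\, V\Lambda W\rangle = 0.
\end{equation*}

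For compactness, fix a smooth cutoff $\chi_R$ with $\chi_R \equiv 1$ on $B_R$ and supported in $B_{2R}$, set $V_R := \chi_R V$, and consider $T_R := (-\Delta)^{-1}V_R$. I claim each $T_R$ is compact on $L_{\mathrm{rad}}^q$: given a bounded sequence $\{f_n\}$, $\{V_R f_n\}$ is bounded in $L^q$ with support in $B_{2R}$, so interior elliptic regularity bounds $\{T_R f_n\}$ in $W^{2,q}_{\mathrm{loc}}$ (when $q < \infty$), and Rellich-Kondrachov produces a subsequence converging in $L^q(B_N)$ for each $N$. For the tail, the direct convolution estimate
\begin{equation*}
|T_R f_n(x)| \lesssim_R |x|^{-(d-2)}\|f_n\|_{L^q} \quad \text{for } |x| > 4R
\end{equation*}
(from $|x-y| \geq |x|/2$ on the support of $V_R$), together with $q > d/(d-2)$ (so that $|x|^{-(d-2)}$ lies in $L^q$ on $\{|x|>R'\}$ with norm vanishing as $R' \to \infty$), yields uniform tail smallness in $L^q$. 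A diagonal extraction produces a Cauchy subsequence in $L^q(\mathbb{R}^d)$, giving compactness of $T_R$.

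Finally, $\|V - V_R\|_{L^{d/2}} \to 0$ by dominated convergence, and the same H\"older-plus-resolvent estimate used in the boundedness step gives $\|(-\Delta)^{-1}V - T_R\|_{L^q \to L^q} \to 0$ as $R \to \infty$. Hence $(-\Delta)^{-1}V$ is the operator-norm limit of compact operators on $L_{\mathrm{rad}}^q$, and thus compact there; restricting to the closed invariant subspace $X_q$ yields the lemma. The main technical obstacle is the case $q = \infty$, where Rellich's theorem must be replaced by an Ascoli-Arzel\`a argument (via $W^{2,p}_{\mathrm{loc}}$ bounds for $p$ large, giving equicontinuity on compact sets), and the resolvent boundedness uses Lemma \ref{20/8/8/14:56} rather than Lemma \ref{18/11/23/17:17}.
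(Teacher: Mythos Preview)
Your proof is correct and takes a genuinely different route from the paper's. The paper proves compactness directly via the Fr\'echet--Kolmogorov criterion (Lemma~\ref{18/11/10/13:29}): it establishes a gradient bound $\|\nabla(-\Delta)^{-1}Vf\|_{L^q}\lesssim\|f\|_{L^q}$ to get the equicontinuity condition, and then uses the radial Strauss inequality (Lemma~\ref{18/11/16/06:41}) to obtain uniform tail smallness of $(-\Delta)^{-1}Vf_n$. The orthogonality is checked at the end in the same spirit as yours (the paper writes $\langle(-\Delta)^{-1}Vf_n,V\Lambda W\rangle=\langle(-\Delta)^{-1}Vf_n,-(-\Delta)\Lambda W\rangle=-\langle Vf_n,\Lambda W\rangle$). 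Your approach instead approximates $V$ by compactly supported cutoffs, shows each $(-\Delta)^{-1}V_R$ is compact via local elliptic regularity plus an explicit kernel tail bound, and passes to the limit in operator norm using $\|V-V_R\|_{L^{d/2}}\to 0$ (or the $L^{(d\pm\varepsilon)/2}$ analogue for $q=\infty$). Both arguments are standard; yours is somewhat more modular and does not invoke the radial Sobolev inequality at all, and in fact your route does not visibly use the hypothesis $d\le 5$ (which in the paper enters through the exponent bookkeeping needed for the Strauss-based tail estimate, e.g.\ the condition $\frac{2d}{d+2}<q$). The paper's route, on the other hand, avoids the cutoff machinery and the Rellich/Ascoli subsequence extraction by handling the whole operator at once. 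Your Fubini justification for the orthogonality step is a bit terse but can be made rigorous by noting $(-\Delta)^{-1}|V\Lambda W|(y)\lesssim(1+|y|)^{-(d-2)}$, so that the double integral against $|Vf|$ is absolutely convergent.
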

\begin{proof}[Proof of Lemma \ref{18/11/14/11:15}]
We prove the lemma in two steps. 
\\ 
\noindent 
{\bf Step 1.}~We shall show that for any $f\in L^{q}(\mathbb{R}^{d})$, 
\begin{align}
\label{18/11/13/14:47}
\|(-\Delta)^{-1}V f\|_{L^{q}}
&\lesssim 
\|f\|_{L^{q}},
\\[6pt]
\label{18/11/15/16:27}
\|\nabla (-\Delta)^{-1}V f\|_{L^{q}}
&\lesssim 
\|f\|_{L^{q}},
\end{align}
where the implicit constants depend only on $d$ and $q$. 

Assume $\frac{d}{d-2}<q <\infty$, and let $f \in L^{q}(\mathbb{R}^{d})$.   
 Then, applying \eqref{20/8/16/11:20} as $r=\frac{dq}{2q+d}$,  we see that 
  that   
\begin{equation}\label{20/8/14/15:17}
\|(-\Delta)^{-1}V f\|_{L^{q}}
\lesssim 
\| V f\|_{L^{\frac{dq}{2q+d}}}
\lesssim 
\|f\|_{L^{q}},
\end{equation}
where the implicit constants depend only on $d$ and $q$.  
 Furthermore,  by \eqref{18/11/07/14:02} and the Hardy-Littlewood-Sobolev inequality,
 we see that 
\begin{equation}\label{20/8/14/15:9}
\|\nabla (-\Delta)^{-1}V f\|_{L^{q}}
\lesssim 
\| \int_{\mathbb{R}^{d}} |x-y|^{-(d-1)} V(y) |f(y)|\,dy \|_{L^{q}}
\lesssim 
\| V f\|_{L^{\frac{dq}{q+d}}}
\lesssim 
\|f\|_{L^{q}},
\end{equation}
where the implicit constants depend only on $d$ and $q$.
 
Next, assume $q=\infty$, and let $f \in L^{\infty}(\mathbb{R}^{d})$.  Then, Lemma \ref{20/8/8/14:56} shows that  
\begin{equation}\label{20/8/12/11:27}
\|(-\Delta)^{-1}V f\|_{L^{\infty}}
\lesssim 
\| V f\|_{L^{\frac{d-1}{2}}\cap L^{\frac{d+1}{2}}}
\lesssim  
\| V \|_{L^{\frac{d-1}{2}}\cap L^{\frac{d+1}{2}}} 
\|f\|_{L^{\infty}}
\lesssim 
\|f\|_{L^{\infty}},
\end{equation}
where the implicit constants depend only on $d$.  
 Furthermore,  by \eqref{18/11/07/14:02} and H\"odler's inequality, 
 we see that   
\begin{equation}\label{20/8/14/15:14}
\begin{split}
&\|\nabla (-\Delta)^{-1}V f\|_{L^{\infty}}
\lesssim 
\| \int_{\mathbb{R}^{d}} |y|^{-(d-1)} V(x-y) \,dy \|_{L^{\infty}}
\|f\|_{L^{\infty}}
\\[6pt]
&\lesssim 
\Big\{
\int_{|y|\le 1} |y|^{-(d-1)} \,dy  
+
\Big(
\int_{1\le |y|} |y|^{-\frac{d(d-1)}{d-2}}\,dy 
\Big)^{\frac{d-2}{d}}  \|V\|_{L^{\frac{d}{2}}} 
\Big\} 
\|f \|_{L^{\infty}}
\lesssim 
\|f\|_{L^{\infty}}
,
\end{split} 
\end{equation}
where the implicit constants depend only on $d$.  
 Thus, we have proved \eqref{18/11/13/14:47} and \eqref{18/11/15/16:27}. 

\noindent 
{\bf Step 2.}~We shall finish the proof of the lemma. 

We will use Lemma \ref{18/11/10/13:29}.  
 Let $\{f_{n}\}$ be a bounded sequence in $X_{q}$.  
 
Observe from \eqref{18/11/13/14:47} that 
\begin{equation}\label{20/8/14/17:29}
\sup_{n\ge 1} \| (-\Delta)^{-1} V f_{n}\|_{L^{q}}
\lesssim 
\sup_{n\ge 1} \|f_{n}\|_{L^{q}} <\infty. 
\end{equation} 
Furthermore, observe from the fundamental theorem of calculus and \eqref{18/11/15/16:27} that 
\begin{equation}\label{18/11/15/12:01}
\begin{split}
&
\lim_{y \to 0} \sup_{n\ge 1} \| (-\Delta)^{-1} V f_{n} - (-\Delta)^{-1} V f_{n}(\cdot+y) 
\|_{L^{q}}
\\[6pt]
&=
\lim_{y \to 0}\sup_{n\ge 1}
\| 
\int_{0}^{1} \frac{d}{d\theta}  (-\Delta)^{-1} V f_{n}  (\cdot+\theta y)\,d\theta
\|_{L^{q}} 
\\[6pt]
&=
\lim_{y \to 0}\sup_{n\ge 1}
\| 
\int_{0}^{1} \big[y\cdot  \nabla (-\Delta)^{-1} V f_{n} \big] (\cdot+\theta y)\,d\theta 
\|_{L^{q}} 
\\[6pt]
&\le 
\lim_{y \to 0}  
\sup_{n\ge 1} |y| \| \nabla (-\Delta)^{-1} V f_{n} 
\|_{L^{q}}
\lesssim 
\lim_{y \to 0} |y| 
\sup_{n\ge 1}\| f_{n} \|_{L^{q}}
=0.
\end{split}
\end{equation}

Note that  $(-\Delta)^{-1}V f_{n}$ is radial, as so is $f_{n}$.  
 Let $\varepsilon_{0}:= \min\{ \frac{1}{2}(q-\frac{d}{d-2}), 1 \}$. 
 When $q\neq \infty$, Lemma \ref{18/11/16/06:41} shows that 
\begin{equation}\label{18/11/15/12:02}
\begin{split}
&
\lim_{R\to \infty} \sup_{n\ge 1}
\|(-\Delta)^{-1} V f_{n}(x) \|_{L^{q}(|x|\ge R)}
\\[6pt]
&\le 
\lim_{R\to \infty} \sup_{n\ge 1} R^{-\frac{d-2}{2} \frac{\varepsilon_{0}}{d+\varepsilon_{0}}}
\| |x|^{\frac{d-2}{2}} (-\Delta)^{-1} V f_{n}  \|_{L^{\infty}}^{\frac{\varepsilon_{0}}{d+\varepsilon_{0}}} 
\| (-\Delta)^{-1} V f_{n} \|_{L^{\frac{dq}{d+2}}}^{\frac{d}{d+\varepsilon_{0}}}
\\[6pt]
&\lesssim  
\lim_{R\to \infty} \sup_{n\ge 1} 
R^{-\frac{(d-2)\varepsilon_{0}}{2(d+\varepsilon_{0})}}
\| \nabla (-\Delta)^{-1} V f_{n}  \|_{L^{2}}^{\frac{\varepsilon_{0}}{d+\varepsilon_{0}}} 
\| (-\Delta)^{-1} V f_{n} \|_{L^{\frac{dq}{d+\varepsilon_{0}}}}^{\frac{d}{d+\varepsilon_{0}}}
,
\end{split}
\end{equation}
where the implicit constant depends only on $d$ and $q$. Similarly, when $q=\infty$, we see that 
\begin{equation}\label{21/1/10/15:25}
\begin{split}
&
\lim_{R\to \infty} \sup_{n\ge 1}
\|(-\Delta)^{-1} V f_{n}(x) \|_{L^{\infty}(|x|\ge R)}
\\[6pt]
&\le 
\lim_{R\to \infty} \sup_{n\ge 1} 
R^{-\frac{d-2}{2}}
\| |x|^{\frac{d-2}{2}} (-\Delta)^{-1} V f_{n}  \|_{L^{\infty}}
\\[6pt]
&\lesssim  
\lim_{R\to \infty} \sup_{n\ge 1} 
R^{-\frac{d-2}{2}}
\| \nabla (-\Delta)^{-1} V f_{n}  \|_{L^{2}}
,
\end{split}
\end{equation}
where the implicit constant depends only on $d$.

 Observe from $3\le d \le 5$ and $\frac{d}{d-2}<q \le \infty$ that  
 $\frac{2d}{d+2} <q$ and $\frac{2dq}{(d+2)q-2d} > \frac{d}{4}$ ($\frac{2d}{d+2}>\frac{d}{4}$ if $q=\infty$). 
 Hence,  by \eqref{18/11/07/14:02}, the Hardy-Littlewood-Sobolev inequality, H\"older's one,  and $V(x) \sim (1+|x|)^{-4}$,
 we see  that  
 \begin{equation}\label{20/8/15/9:47}
 \begin{split}
\|\nabla (-\Delta)^{-1}V f_{n}\|_{L^{2}}
&\lesssim  
\||x|^{-(d-1)} *  V f_{n} \|_{L^{2}}
\lesssim 
\|V f_{n} \|_{L^{\frac{2d}{d+2}}}
\\[6pt]
&\le 
\|V\|_{L^{\frac{2dq}{(d+2)q -2d}}} \| f_{n}\|_{L^{q}}
\lesssim
\| f_{n}\|_{L^{q}} ,
 \end{split}
\end{equation}
where the implicit constants depend only on $d$ and $q$. 
 Similarly, if $q\neq \infty$, then  
\begin{equation}\label{18/11/16/07:07}
\|(-\Delta)^{-1}V f_{n}\|_{L^{\frac{dq}{d+\varepsilon_{0}}}}
\lesssim 
\| V f_{n}\|_{L^{\frac{dq}{d+2q+\varepsilon_{0}}}}
\le 
\| V \|_{L^{\frac{dq}{2q+\varepsilon_{0}}}} 
\|f_{n}\|_{L^{q}}
\lesssim
\|f_{n}\|_{L^{q}},
\end{equation} 
where the implicit constants depend only on $d$ and $q$.

 Plugging  the estimates  \eqref{20/8/15/9:47} and \eqref{18/11/16/07:07}
into \eqref{18/11/15/12:02} (\eqref{20/8/15/9:47} into \eqref{21/1/10/15:25} if $q=\infty$), we find that 
\begin{equation}\label{20/11/27/11:50}
\lim_{R\to \infty} \sup_{n\ge 1}
\|(-\Delta)^{-1} V f_{n}(x) \|_{L^{q}(|x|\ge R)}
=0
.
\end{equation}
Thus, it follows from Lemma \ref{18/11/10/13:29} that $\{ (-\Delta)^{-1}V f_{n}\}$ contains a convergent subsequence in $L^{q}(\mathbb{R}^{d})$. We denote   the convergent subsequence by the same symbol as the original one, and the limit by $g$. Note that $g \in L_{\rm rad}^{q}(\mathbb{R}^{d})$.   

It remains to prove that the limit $g$ satisfies the orthogonality condition 
$\langle g, V \Lambda W\rangle=0$.  
  Since $V \Lambda W = \Delta \Lambda W $ and $\langle f_{n}, V\Lambda W\rangle =0$, 
 we can verify that for any $n\ge 1$, 
\begin{equation}\label{18/11/13/14:35}
\langle (-\Delta)^{-1}V f_{n} , V \Lambda W \rangle 
=
\langle  (-\Delta)^{-1}V f_{n}, -(-\Delta)\Lambda W
\rangle
=
-
\langle V f_{n}, \Lambda W
\rangle 
=0
,
\end{equation}
which together with $V\Lambda W \in L^{1}(\mathbb{R}^{d})\cap L^{\infty}(\mathbb{R}^{d})$ yields  
\begin{equation}\label{20/8/15/11:46}
\langle g, V\Lambda W \rangle
=
\lim_{n\to \infty}
\langle (-\Delta)^{-1}V f_{n} , V \Lambda W \rangle
=0.
\end{equation}
Thus, we have completed the proof.
\end{proof}

\begin{lemma}\label{18/11/13/14:25}
Assume $3\le d\le 5$ and $\frac{d}{d-2}<q \le \infty$. Then, the inverse of $1+(-\Delta)^{-1}V$ exists as a bounded operator from $X_{q}$ to itself. 
\end{lemma}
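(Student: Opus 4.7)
The plan is to invoke the Fredholm alternative. By Lemma \ref{18/11/14/11:15}, the operator $T := (-\Delta)^{-1}V$ is compact on the Banach space $X_q$. Hence $1+T$ is invertible on $X_q$ as soon as it is injective, and the inverse is then automatically bounded by the open mapping theorem. So the whole task reduces to showing $\ker(1+T) \cap X_q = \{0\}$.

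Suppose $f \in X_q$ satisfies $f + (-\Delta)^{-1}Vf = 0$. Using $|V(x)| \lesssim (1+|x|)^{-4}$ and H\"older's inequality against a power of $1+|x|$, one checks that for $3\le d\le 5$ and $q>\frac{d}{d-2}$ the function $Vf$ lies in $L^{r}(\mathbb{R}^d)$ for some $r$ with $1 \le r < \frac{d}{2}$. Then \eqref{18/11/14/09:07} applies and yields $-\Delta f = Vf$ in the distribution sense, i.e.\ $(-\Delta+V)f = 0$ distributionally. For $d = 3,4$ one invokes Lemma \ref{18/11/19/09:41} directly to obtain $f = c\,\Lambda W$ with $c\in\mathbb{R}$; for $d=5$ the same conclusion follows because the argument of that lemma---estimating $\|\nabla f\|_{L^2} \lesssim \|Vf\|_{L^{2d/(d+2)}} \lesssim \|f\|_{L^q}$ via Hardy--Littlewood--Sobolev together with $|V|\lesssim (1+|x|)^{-4}$ and a H\"older pairing---goes through verbatim in this range, so $f \in \dHRd$, and Lemma \ref{18/11/14/10:43} combined with the radial symmetry of $f$ again forces $f = c\,\Lambda W$.

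To rule out $c\ne 0$, we use the orthogonality condition $\langle f, V\Lambda W\rangle = 0$ that defines $X_q$. Substituting $f = c\,\Lambda W$ gives
\[
0 \;=\; c\,\langle \Lambda W, V\Lambda W \rangle \;=\; c\int_{\mathbb{R}^d} V(x)\,\bigl(\Lambda W(x)\bigr)^2 \,dx.
\]
Since $V = -\frac{d+2}{d-2}W^{\frac{4}{d-2}}$ is strictly negative everywhere, $\Lambda W$ is not identically zero, and $|V(\Lambda W)^2| \lesssim (1+|x|)^{-2d}$ is integrable, the integral is strictly negative and in particular nonzero. Therefore $c=0$ and $f\equiv 0$, so $1+T$ is injective on $X_q$ and the proof is complete.

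The main delicate point is verifying the distributional identity $(-\Delta+V)f = 0$ when $q$ is allowed to be $\infty$: one must extract enough decay from $V$ to place $Vf$ in an $L^r$ with $1\le r <\frac{d}{2}$, and this is precisely where the restriction $3\le d\le 5$ genuinely enters. Everything else---the Fredholm machinery, the kernel classification for $(-\Delta+V)$ on radial functions, and the sign computation for $\langle \Lambda W, V\Lambda W\rangle$---is soft once that step is in hand.
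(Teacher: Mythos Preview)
Your proof is correct and follows essentially the same approach as the paper: compactness of $(-\Delta)^{-1}V$ on $X_q$ from Lemma~\ref{18/11/14/11:15}, Fredholm alternative, classification of the radial kernel of $-\Delta+V$ via Lemma~\ref{18/11/19/09:41}/\ref{18/11/14/10:43}, and the orthogonality condition to kill the $\Lambda W$ component. Two minor remarks: (i) you have a sign slip---from $f=-(-\Delta)^{-1}Vf$ one gets $-\Delta f=-Vf$, not $-\Delta f=Vf$, though your conclusion $(-\Delta+V)f=0$ is correct; (ii) you are actually more careful than the paper here, since Lemma~\ref{18/11/19/09:41} is stated only for $d=3,4$, and you explicitly note that its proof extends to $d=5$.
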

\begin{proof}[Proof of Lemma \ref{18/11/13/14:25}]
We see from Lemma \ref{18/11/14/11:15} that $(-\Delta)^{-1} V$ is a compact operator from $X_{q}$ to itself.  
 Furthermore,  by a standard theory of compact operators, if $z\in \mathbb{C}$ is neither zero nor an eigenvalue of $(-\Delta)^{-1} V$, then 
  $z$ belongs to the resolvent. In particular, if $-1$ is not an eigenvalue of $(-\Delta)^{-1}V$, then  the inverse of $1+(-\Delta)^{-1}V$ exists as a bounded operator from  $X_{q}$ to itself. Hence, what we need to prove is that  $-1$ is not an eigenvalue.  Suppose for contradiction that there exists $f \in X_{q}$ such that 
$(-\Delta)^{-1} V f = - f$. Then, we can verify that for any $\phi \in C_{c}^{\infty}(\mathbb{R}^{d})$, 
\begin{equation}\label{18/12/10/21:27}
\langle (-\Delta +V) f, \phi \rangle 
= 
\langle (-\Delta) \{ f + (-\Delta)^{-1} V f \}, \phi \rangle
=
\langle f+(-\Delta)^{-1} V f, -\Delta \phi \rangle 
=0.  
\end{equation}
Lemma \ref{18/11/19/09:41} together with \eqref{18/12/10/21:27} implies that $f =\kappa \Lambda W$ for some $\kappa \in \mathbb{R}$. Furthermore, it follows from $\langle f, V \Lambda W\rangle = 0$ and $\langle \Lambda W, V \Lambda W \rangle \neq 0$ that $\kappa=0$. Thus, $f$ is trivial and therefore $-1$ is not an eigenvalue of $(-\Delta)^{-1}V$.  
\end{proof}


\begin{lemma}\label{20/12/12/9:52}
Assume  $d=3,4$.   Then, the following holds 
 for all $\frac{d}{d-2}<q <\infty$, $f \in L_{\rm rad}^{q}(\mathbb{R}^{d})$ and $s>0$: 
\begin{equation}\label{20/12/12/10:9}
\big| \langle 
(-\Delta + s )^{-1} Vf, |x|^{-(d-2)}   
\rangle -   
\frac{(5-d)\pi^{2-\frac{d}{2}}}{2^{d-2}\Gamma(\frac{d-2}{2})}
\Re\mathcal{F}[Vf](0) \delta(s)^{-1}
\big| 
\lesssim 
\|f\|_{L^{q}},
\end{equation}
where the implicit contant depends only on $d$ and $q$. 
\end{lemma}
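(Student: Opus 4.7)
My plan is to pass to the Fourier side. By Parseval's identity (equivalently, by writing $\int u\,|x|^{-(d-2)}\,dx$ as a constant multiple of $(-\Delta)^{-1}u(0)$ via \eqref{18/11/07/14:02}), combined with $\mathcal{F}[(-\Delta+s)^{-1}g]=(|\xi|^{2}+s)^{-1}\mathcal{F}[g]$ and $\mathcal{F}[|x|^{-(d-2)}]=C_{d-2}|\xi|^{-2}$ (see \eqref{18/12/22/10:07}), the inner product admits the representation
\begin{equation*}
\langle (-\Delta+s)^{-1}Vf,\,|x|^{-(d-2)}\rangle=\frac{C_{d-2}}{(2\pi)^{d}}\,\Re\int_{\mathbb{R}^{d}}\frac{\mathcal{F}[Vf](\xi)}{(|\xi|^{2}+s)|\xi|^{2}}\,d\xi.
\end{equation*}
Substituting $C_{d-2}=\frac{4\pi^{d/2}}{\Gamma((d-2)/2)}$ and using the duplication relation $\Gamma(d/2)=\frac{d-2}{2}\Gamma((d-2)/2)$, the prefactor $\frac{C_{d-2}(5-d)\pi^{2}}{(2\pi)^{d}}$ simplifies exactly to the constant stated in the lemma.

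Next I split the $\xi$-integral at $|\xi|=1$. On $\{|\xi|>1\}$, I use $(|\xi|^{2}+s)|\xi|^{2}\geq|\xi|^{4}$ together with $|\mathcal{F}[Vf]|\leq\|Vf\|_{L^{1}}\lesssim\|f\|_{L^{q}}$ and $|\xi|^{-4}\in L^{1}(\{|\xi|>1\})$ when $d=3$; for $d=4$, where $|\xi|^{-4}$ is only square-integrable at infinity, I apply Plancherel together with $\|Vf\|_{L^{2}}\lesssim\|f\|_{L^{q}}$. Either way the high-frequency contribution is $O(\|f\|_{L^{q}})$. For $|\xi|\leq 1$, I decompose $\mathcal{F}[Vf](\xi)=\mathcal{F}[Vf](0)+R(\xi)$. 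The constant piece gives $\mathcal{F}[Vf](0)\int_{|\xi|\leq 1}d\xi/[(|\xi|^{2}+s)|\xi|^{2}]$, which by \eqref{20/9/30/9:12} equals $(5-d)\pi^{2}\delta(s)^{-1}+O(1)$; since $|\mathcal{F}[Vf](0)|\leq\|Vf\|_{L^{1}}\lesssim\|f\|_{L^{q}}$, this produces the claimed leading term together with an admissible $O(\|f\|_{L^{q}})$ error.

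It remains to show $\int_{|\xi|\leq 1}|R(\xi)|/[(|\xi|^{2}+s)|\xi|^{2}]\,d\xi\lesssim\|f\|_{L^{q}}$ uniformly in $s$. Here I exploit the radial symmetry of $Vf$: spherical averaging yields $|R(\xi)|\leq\int|Vf(x)|\,|\Omega(|\xi||x|)-1|\,dx$, where $\Omega$ is the radial reduction factor (for instance $\Omega(t)=\sin(t)/t$ when $d=3$) satisfying $|\Omega(t)-1|\lesssim\min(1,t^{2})$. Splitting the $x$-integration at $|x|=1/|\xi|$ and applying H\"older with the pointwise bounds $||x|^{2}V|\lesssim(1+|x|)^{-2}$ on the inner region and $|V|\lesssim(1+|x|)^{-4}$ on the outer, a direct computation yields
\begin{equation*}
|R(\xi)|\lesssim|\xi|^{4-d+d/q}\|f\|_{L^{q}}.
\end{equation*}
Using $(|\xi|^{2}+s)|\xi|^{2}\geq|\xi|^{4}$, the remainder integral is dominated by $\|f\|_{L^{q}}\int_{|\xi|\leq 1}|\xi|^{-d+d/q}\,d\xi$, which is finite and independent of $s$ since $d/q>0$.

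The main obstacle is the H\"older estimate for $R(\xi)$. Because $q>d/(d-2)$ forces $q'<d/2$, neither $(1+|x|)^{-2}$ nor $|x|^{2}|V|$ is globally in $L^{q'}(\mathbb{R}^{d})$, so one must retain the cutoff $\chi_{|x|\leq 1/|\xi|}$ and track the resulting $|\xi|$-dependent growth carefully. Radiality of $f$ is indispensable here: without it, only a single power of $|\xi||x|$ is available in the Taylor expansion of $\mathcal{F}[Vf]$ near the origin, and the corresponding exponent $3-d+d/q$ would be nonpositive in $d=3$, making the low-frequency integral divergent.
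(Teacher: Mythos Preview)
Your proof is correct and follows the same overall strategy as the paper: pass to the Fourier side via Parseval and \eqref{18/12/22/10:07}, split at $|\xi|=1$, extract the leading term on the unit ball using \eqref{20/9/30/9:12}, and bound the remainder and the high-frequency piece separately by $\|f\|_{L^{q}}$.

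The two arguments differ in where and how radiality is invoked. For the low-frequency remainder you exploit the radiality of $Vf$ directly, writing $\mathcal{F}[Vf](\xi)-\mathcal{F}[Vf](0)$ through the spherical kernel $\Omega(|\xi||x|)-1=O(\min(1,|\xi|^{2}|x|^{2}))$ and then applying H\"older with a cutoff at $|x|=1/|\xi|$ to obtain the sharp exponent $|\xi|^{4-d+d/q}$. The paper instead subtracts the linear Taylor term $\xi\cdot\nabla\mathcal{F}[Vf](0)$, which vanishes upon integration against the even kernel $[(|\xi|^{2}+s)|\xi|^{2}]^{-1}$ (this is \eqref{20/12/12/1}), and then bounds the second-order remainder of $e^{-ix\cdot\xi}$; no radiality of $f$ is needed at this step. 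Conversely, for $|\xi|\ge 1$ the paper uses the radial Strauss-type decay $|\xi|\,|\mathcal{F}[Vf](\xi)|\lesssim\|f\|_{L^{q}}$ from \eqref{18/12/14/16:38} to reduce to $\int_{|\xi|\ge 1}|\xi|^{-5}\,d\xi$ uniformly in $d=3,4$, whereas you avoid radiality there via a direct $L^{\infty}$ bound ($d=3$) or Cauchy--Schwarz with Plancherel ($d=4$). In particular, your closing remark that radiality is \emph{indispensable} for the low-frequency remainder is not quite accurate: the paper's kernel-parity trick achieves the same quadratic gain without it.
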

\begin{remark}\label{20/9/13/9:1}
When $d=4$,  it is essential that $q<\infty$ in Lemma \ref{20/12/12/9:52}; More precisely, since $V\not \in L^{1}(\mathbb{R}^{4})$, 
 $\mathcal{F}[Vf](0)$ makes no sense for $f\in L_{\rm rad}^{\infty}(\mathbb{R}^{d})$.
\end{remark}
\begin{proof}[Proof of Lemma \ref{20/12/12/9:52}]
Let $\frac{d}{d-2}<q <\infty$,  $f \in L_{\rm rad}^{q}(\mathbb{R}^{d})$ and $s>0$. 
 Then, by \eqref{20/8/9/15:50}, \eqref{18/12/22/10:07},
 \eqref{21/4/6/10:15}, \eqref{18/11/10/16:21}  and \eqref{20/8/18/16:27},  we see  that 
\begin{equation}\label{20/12/12/10:35}
\begin{split}
&
\langle (-\Delta+s)^{-1} V f, |x|^{-(d-2)} \rangle
=
\frac{1}{(2\pi)^{d}}\frac{4 \pi^{\frac{d}{2}}}{\Gamma(\frac{d-2}{2})} 
\langle (|\xi|^{2}+s)^{-1}\mathcal{F}[V f] ,\  |\xi|^{-2} 
\rangle
\\[6pt]
&= 
C_{d}
\Re 
\int_{|\xi|\le 1} 
\frac{\mathcal{F}[V f](0)}{(|\xi|^{2}+s)|\xi|^{2}} 
\,d\xi 
+
C_{d}
\Re\int_{|\xi|\le 1} 
\frac{\mathcal{F}[V f](\xi)-\mathcal{F}[V f](0)}{(|\xi|^{2}+s)|\xi|^{2}}  
\,d\xi 
\\[6pt]
&\quad +
C_{d} \Re\int_{1\le|\xi|} 
\frac{\mathcal{F}[V f](\xi)}{(|\xi|^{2}+s)|\xi|^{2}} 
\,d\xi ,
\end{split}
\end{equation}
where $C_{d}:=\{ 2^{d-2} \pi^{\frac{d}{2}}\Gamma(\frac{d-2}{2})\}^{-1}$.

Consider the first term on the right-hand side of \eqref{20/12/12/10:35}.  
 By \eqref{20/9/30/9:12} and $|V(x)| \lesssim (1+|x|)^{-4}$ (see \eqref{20/9/13/9:58}), we see that 
\begin{equation}\label{20/12/12/11:15}
\begin{split}
&\Big|
C_{d}\Re\int_{|\xi|\le 1} 
\frac{\mathcal{F}[V f](0)}{(|\xi|^{2}+s)|\xi|^{2}} 
\,d\xi -
C_{d} (5-d) \pi^{2} \Re\mathcal{F}[V f](0)
\delta(s)^{-1}
\Big|
\\[6pt]
&\lesssim |\mathcal{F}[V f](0)| \le \|Vf\|_{L^{1}}
\lesssim 
\|f\|_{L^{q}}, 
\end{split} 
\end{equation}
where the implicit constants depend only on $d$ and $q$. 
Next, consider  the second term on the right-hand side of \eqref{20/12/12/10:35}.  
Observe that  
\begin{equation}\label{20/12/12/1}
\int_{|\xi|\le 1} \frac{\xi}{(|\xi|^{2}+s)|\xi|^{2}}\,d\xi
=0.
\end{equation}
Furthermore,  observe that for any $x\in \mathbb{R}^{d}$ and $\xi \in \mathbb{R}^{d}$, 
\begin{equation}\label{20/12/12/2}
\big| e^{-ix\cdot\xi} -1 + ix\cdot \xi \big|
\le 2 \min\{|x||\xi| , \ |x|^{2}|\xi|^{2}\}
.
\end{equation}
Fix a number $\varepsilon_{0}$ such that $4-d< \varepsilon_{0} < 4- \frac{d(q-1)}{q}$; The dependence of a constant on $\varepsilon_{0}$ is absorbed into than on  $d$ and $q$.  Observe from $d=3,4$ and $\frac{d}{d-2}<q<\infty$ that 
 $0 <\varepsilon_{0} < 2$.
 Then, by \eqref{20/12/12/2}, $|V(x)| \lesssim (1+|x|)^{-4}$,  H\"older's inequality, and $\frac{q(4-\varepsilon_{0})}{q-1}>d$, 
 we see that  
\begin{equation}\label{20/12/12/3}
\begin{split} 
&
\big|  
\mathcal{F}[V f](\xi)-\mathcal{F}[V f](0)
-
\xi \cdot \nabla \mathcal{F}[V f](0)
\big| 
\\[6pt]
&=
\Big| 
\int_{\mathbb{R}^{d}} \big( e^{-ix\cdot\xi} -1 + ix\cdot \xi \big) V(x)f(x)\,dx \Big|
\\[6pt]
&\lesssim   
|\xi|^{\varepsilon_{0}} \int_{\mathbb{R}^{d}} 
|x|^{\varepsilon_{0}}(1+|x|)^{-4}|f(x)| 
\,dx
\\[6pt]
&\lesssim  
|\xi|^{\varepsilon_{0}}
\| (1+|x|)^{-(4-\varepsilon_{0})} \|_{L^{\frac{q}{q-1}}}
\|f\|_{L^{q}}
\lesssim 
|\xi|^{\varepsilon_{0}}
\|f\|_{L^{q}}
,
\end{split}
\end{equation}
where the implicit constants depend only on $d$ and $q$.  
 Hence,  by \eqref{20/12/12/1}, \eqref{20/12/12/3} and $d-\varepsilon_{0}<d$, we see  that 
\begin{equation}\label{20/12/12/5}
\begin{split}
&
\Big|  
\int_{|\xi|\le 1} 
\frac{\mathcal{F}[V f](\xi)-\mathcal{F}[V f](0)}{(|\xi|^{2}+s)|\xi|^{2}}  
\,d\xi
\Big| 
\\[6pt]
&=
\Big|  
\int_{|\xi|\le 1} 
\frac{\mathcal{F}[V f](\xi)-\mathcal{F}[V f](0)
-\xi\cdot \nabla \mathcal{F}[Vf](0)}{(|\xi|^{2}+s)|\xi|^{2}}  
\,d\xi
\Big| 
\\[6pt]
&\lesssim 
\|f\|_{L^{q}} 
\int_{|\xi|\le 1} 
\frac{|\xi|^{\varepsilon_{0}}}{(|\xi|^{2}+s)|\xi|^{2}}  
\,d\xi
\le 
\|f\|_{L^{q}} 
\int_{|\xi|\le 1} 
|\xi|^{-(4-\varepsilon_{0})}  
\,d\xi
\lesssim 
\|f\|_{L^{q}}
,
\end{split} 
\end{equation}
where the implicit constants depend only on $d$ and $q$. 

Consider  the last term on the right-hand side of \eqref{20/12/12/10:35}.   
 By \eqref{18/12/14/16:38}, Plancherel's theorem, $|V(x)|\lesssim (1+|x|)^{-4}$ and $q>2$, 
 we see that 
\begin{equation}\label{20/12/12/6}
\begin{split}
\| |\xi| \mathcal{F}[V f ] \|_{L^{\infty}} 
&\lesssim 
\| \mathcal{F}[V f ] \|_{L^{2}}^{\frac{4-d}{2}}
\| \nabla \mathcal{F}[V f ] \|_{L^{2}}^{\frac{d-2}{2}}
=
\| \mathcal{F}[V f ] \|_{L^{2}}^{\frac{4-d}{2}}
\| \mathcal{F}[x V f] \|_{L^{2}}^{\frac{d-2}{2}}
\\[6pt]
&\lesssim 
\| V f  \|_{L^{2}}^{\frac{4-d}{2}}
\| |x| V f  \|_{L^{2}}^{\frac{d-2}{2}}
\lesssim 
\|  f  \|_{L^{q}}
,
\end{split} 
\end{equation}
where the implicit constants depend only on $d$ and $q$.  Using \eqref{20/12/12/6}, we see  that  
\begin{equation}\label{20/12/12/7}
\Big|  
\int_{1\le |\xi|} 
\frac{\mathcal{F}[V f](\xi)}{(|\xi|^{2}+s)|\xi|^{2}}  
\,d\xi
\Big| 
\lesssim 
\|  f  \|_{L^{q}}\int_{1\le |\xi|} 
\frac{1}{|\xi|^{5}}  
\,d\xi
\lesssim \|  f  \|_{L^{q}},
\end{equation}
where the implicit constants depend only on $d$ and $q$. 

Now, combining \eqref{20/12/12/10:35}, \eqref{20/12/12/11:15},  \eqref{20/12/12/5} and \eqref{20/12/12/7}, 
 we find that \eqref{20/12/12/10:9} holds.  
\end{proof}


\begin{lemma}\label{21/1/10/15:37}
Assume $d=3,4$. Then, the following holds for all $s>0$: 
\begin{equation}\label{21/1/10/7:15}
\| (-\Delta +s)^{-1} \Lambda W \|_{L^{\infty}}
\lesssim 
1+\delta(s)^{-1}, 
\end{equation}
where the implicit constant depends only on $d$.
\end{lemma}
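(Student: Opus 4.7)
The plan is to move to Fourier variables and exploit the identity $\Delta\Lambda W = V\Lambda W$ coming from $(-\Delta+V)\Lambda W = 0$ (see \eqref{20/9/13/9:52}). By \eqref{20/9/13/9:58} and \eqref{20/9/13/10:2}, $|V\Lambda W(x)|\lesssim (1+|x|)^{-(d+2)}$, so $V\Lambda W \in L^{1}(\mathbb{R}^{d})\cap L^{\infty}(\mathbb{R}^{d})$. Moreover $V\Lambda W$ is smooth and each $\Delta^{k}(V\Lambda W)$ decays like $(1+|x|)^{-(d+2+2k)}$, hence lies in $L^{1}(\mathbb{R}^{d})$; consequently $\mathcal{F}[V\Lambda W]$ is smooth and decays faster than any polynomial at infinity. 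Taking Fourier transforms in $\Delta\Lambda W = V\Lambda W$ as tempered distributions gives
\[
-|\xi|^{2}\mathcal{F}[\Lambda W](\xi) = \mathcal{F}[V\Lambda W](\xi),
\]
so $\mathcal{F}[\Lambda W] = -|\xi|^{-2}\mathcal{F}[V\Lambda W]$, a function that is locally integrable for $d\ge 3$.

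Next, for each $s>0$ the product $(|\xi|^{2}+s)^{-1}|\xi|^{-2}\mathcal{F}[V\Lambda W]$ lies in $L^{1}(\mathbb{R}^{d})$: the bound $(|\xi|^{2}+s)^{-1}\le s^{-1}$ tames the singularity at the origin (against the locally integrable $|\xi|^{-2}$), while at infinity the rapid decay of $\mathcal{F}[V\Lambda W]$ dominates. Applying the inverse Fourier transform yields the pointwise representation
\[
(-\Delta+s)^{-1}\Lambda W(x) = -\frac{1}{(2\pi)^{d}}\int_{\mathbb{R}^{d}} e^{ix\cdot\xi}\,\frac{\mathcal{F}[V\Lambda W](\xi)}{(|\xi|^{2}+s)|\xi|^{2}}\,d\xi,
\]
and bounding $|e^{ix\cdot\xi}|=1$ gives, uniformly in $x$,
\[
\|(-\Delta+s)^{-1}\Lambda W\|_{L^{\infty}}\le \frac{1}{(2\pi)^{d}}\int_{\mathbb{R}^{d}}\frac{|\mathcal{F}[V\Lambda W](\xi)|}{(|\xi|^{2}+s)|\xi|^{2}}\,d\xi.
\]

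To finish I would split the integral at $|\xi|=1$. On $\{|\xi|\le 1\}$ the bound $|\mathcal{F}[V\Lambda W](\xi)|\le \|V\Lambda W\|_{L^{1}}\lesssim 1$ combined with \eqref{20/9/30/9:12} produces a contribution of order $1+\delta(s)^{-1}$; this is the step that delivers the claimed growth rate. On $\{|\xi|>1\}$ I use $(|\xi|^{2}+s)^{-1}|\xi|^{-2}\le |\xi|^{-4}$ together with the rapid decay of $\mathcal{F}[V\Lambda W]$ (say, $|\mathcal{F}[V\Lambda W](\xi)|\lesssim (1+|\xi|)^{-d}$) to obtain an $O(1)$ contribution. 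Summing the two pieces yields the required inequality.

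I do not anticipate any serious obstacle: the Fourier reduction through $V\Lambda W$ sidesteps the failure of $\Lambda W$ itself to lie in $L^{2}(\mathbb{R}^{d})$ for $d=3,4$, replacing it with a smooth rapidly decaying function whose Fourier transform is well-behaved. The precise singular behaviour $\delta(s)^{-1}$ is already packaged in \eqref{20/9/30/9:12}, so what remains is essentially the bookkeeping of two frequency ranges.
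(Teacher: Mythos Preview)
Your proof is correct and follows essentially the same route as the paper: pass to Fourier variables via $\mathcal{F}[\Lambda W]=-|\xi|^{-2}\mathcal{F}[V\Lambda W]$, split at $|\xi|=1$, and handle the low-frequency piece with \eqref{20/9/30/9:12}. The only minor difference is the high-frequency bound: you appeal to smoothness and decay of $V\Lambda W$ to get rapid polynomial decay of $\mathcal{F}[V\Lambda W]$, whereas the paper uses the radial Strauss-type estimate \eqref{18/12/14/16:38} (with Plancherel) to obtain $|\xi|\,|\mathcal{F}[V\Lambda W](\xi)|\lesssim 1$ and then integrates $|\xi|^{-5}$ over $\{|\xi|\ge 1\}$; both yield an $O(1)$ contribution.
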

\begin{proof}[Proof of Lemma \ref{21/1/10/15:37}]
Observe from \eqref{20/8/18/16:27} and $-\Lambda W = (-\Delta)^{-1} V \Lambda W$ (see \eqref{20/8/15/15:47}) that 
\begin{equation}\label{21/1/10/7:16} 
\begin{split}
&\| (-\Delta +s)^{-1} \Lambda W \|_{L^{\infty}}
\sim 
\| \int_{\mathbb{R}^{d}} e^{ix\cdot \xi} 
 \frac{\mathcal{F}[ V \Lambda W](\xi)}{(|\xi|^{2}+s)|\xi|^{2}}\,d\xi 
\|_{L_{x}^{\infty}}
\\[6pt]
&\le 
\int_{|\xi|\le 1} \frac{ | \mathcal{F}[ V \Lambda W](\xi)|}{(|\xi|^{2}+s)|\xi|^{2}}\,d\xi 
+
\int_{1\le |\xi|} \frac{ |\mathcal{F}[ V \Lambda W](\xi)|}{(|\xi|^{2}+s)|\xi|^{2}}\,d\xi 
,
\end{split}
\end{equation}
where the implicit constant depends only on $d$.

Consider the first term on the right-hand side of \eqref{21/1/10/7:16}.  
 By \eqref{20/9/30/9:12}, we see that 
\begin{equation}\label{21/1/10/15:42}
\int_{|\xi|\le 1} 
\frac{\mathcal{F}[ V \Lambda W](\xi)}{(|\xi|^{2}+s)|\xi|^{2}}\,d\xi 
\lesssim \{\delta(s)^{-1}+1\}
\|V\Lambda W\|_{L^{1}}
\lesssim \delta(s)^{-1}+1,
\end{equation}
where the implicit constants depend only on $d$.  
 Move on to the second term on the right-hand side of \eqref{21/1/10/7:16}. 
 By \eqref{18/12/14/16:38} and Plancherel's theorem, we see that 
\begin{equation}\label{21/1/10/15:47}
\begin{split}
&\| |\xi| \mathcal{F}[V \Lambda W ] \|_{L^{\infty}} 
\lesssim 
\| \mathcal{F}[V \Lambda W ] \|_{L^{2}}^{\frac{4-d}{2}}
\| \nabla \mathcal{F}[V \Lambda W ] \|_{L^{2}}^{\frac{d-2}{2}}
\\[6pt]
&=
\| \mathcal{F}[V \Lambda W ] \|_{L^{2}}^{\frac{4-d}{2}}
\| \mathcal{F}[x V \Lambda W] \|_{L^{2}}^{\frac{d-2}{2}}
\lesssim 
\| V \Lambda W  \|_{L^{2}}^{\frac{4-d}{2}}
\| |x| V \Lambda W  \|_{L^{2}}^{\frac{d-2}{2}}
\lesssim 1
,
\end{split} 
\end{equation}
where the implicit constants depend only on $d$. Hence, we find from 
 \eqref{21/1/10/15:47} and $d=3,4$ that  
\begin{equation}\label{21/1/10/15:48}
\int_{1\le |\xi|} 
\frac{ | \mathcal{F}[V \Lambda W](\xi)|}{(|\xi|^{2}+s)|\xi|^{2}}  
\,d\xi
\lesssim 
\int_{1\le |\xi|} 
\frac{1}{|\xi|^{5}}  
\,d\xi
\lesssim 1, 
\end{equation}
where the implicit constants depend only on $d$. Plugging  \eqref{21/1/10/15:42} and \eqref{21/1/10/15:48} into \eqref{21/1/10/7:16}, we obtain the desired estimate \eqref{21/1/10/7:15}. 
\end{proof}


\begin{lemma}\label{18/12/02/11:25}
Assume $d=3,4$. 
 Then, there exists a constant $\mathscr{C}>0$ depending only on $d$ such that 
the following holds  for all $\frac{d}{d-2}<q <\infty$,   $g \in L_{\rm rad}^{q}(\mathbb{R}^{d})$ and $s>0$: 
\begin{equation}\label{18/12/02/11:26}
\big|
\langle (-\Delta+s)^{-1} Vg , \Lambda W \rangle
+ 
\mathscr{C}
\Re\mathcal{F}[Vg](0) \delta(s)^{-1} 
\big|
\lesssim 
 \|  g  \|_{L^{q}}
,
\end{equation}
where the implicit constant depends only on $d$ and $q$.
\end{lemma}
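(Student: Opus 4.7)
My strategy is to reduce the pairing to an oscillatory integral of exactly the shape treated in Lemma~\ref{20/12/12/9:52} and then extract its $\delta(s)^{-1}$-singular part by the same splitting argument.

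First, I would use the identity $\Lambda W=-(-\Delta)^{-1}V\Lambda W$ from~\eqref{20/8/15/15:47} together with the Riesz representation~\eqref{18/11/07/14:02} to express
\[
\Lambda W(y)=-c_{d}\int_{\R^{d}}|y-x|^{-(d-2)}V(x)\Lambda W(x)\,dx,
\qquad c_{d}:=\tfrac{\Gamma(d/2)}{(d-2)2\pi^{d/2}}.
\]
Substituting this into $\langle(-\Delta+s)^{-1}Vg,\Lambda W\rangle$, swapping the order of integration (justified by $V\Lambda W\in L^{1}\cap L^{\infty}$ and the mapping properties from Lemmas~\ref{18/11/23/17:17} and~\ref{20/8/8/14:56}), applying Parseval~\eqref{20/8/9/15:50} to the resulting inner pairing with $\mathcal{F}[|\cdot-x|^{-(d-2)}](\xi)=C_{d-2}e^{-i\xi\cdot x}|\xi|^{-2}$ (from~\eqref{18/12/22/10:07}), and then performing the $x$-integration first (which gives $\mathcal{F}[V\Lambda W](-\xi)=\overline{\mathcal{F}[V\Lambda W](\xi)}$ by reality of $V\Lambda W$), the identity $c_{d}C_{d-2}=1$ yields the key reduction
\[
\langle(-\Delta+s)^{-1}Vg,\Lambda W\rangle
=-\frac{1}{(2\pi)^{d}}\Re\int_{\R^{d}}
\frac{\mathcal{F}[Vg](\xi)\,\overline{\mathcal{F}[V\Lambda W](\xi)}}{|\xi|^{2}(|\xi|^{2}+s)}\,d\xi.
\]

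From here the analysis parallels that of Lemma~\ref{20/12/12/9:52} via splitting at $|\xi|=1$. For $|\xi|\ge 1$, combining $\||\xi|\mathcal{F}[Vg]\|_{L^{\infty}}\lesssim\|g\|_{L^{q}}$ from~\eqref{20/12/12/6} and $\||\xi|\mathcal{F}[V\Lambda W]\|_{L^{\infty}}\lesssim 1$ from~\eqref{21/1/10/15:47} with $\int_{|\xi|\ge1}|\xi|^{-6}\,d\xi<\infty$ (which uses $d=3,4$) yields an $O(\|g\|_{L^{q}})$ contribution. For $|\xi|\le 1$, I would Taylor-expand both Fourier transforms at the origin; as in~\eqref{20/12/12/3},
\[
|\mathcal{F}[Vg](\xi)-\mathcal{F}[Vg](0)-\xi\cdot\nabla\mathcal{F}[Vg](0)|\lesssim|\xi|^{\varepsilon_{0}}\|g\|_{L^{q}},\quad
|\mathcal{F}[V\Lambda W](\xi)-\mathcal{F}[V\Lambda W](0)|\lesssim|\xi|^{\varepsilon_{0}},
\]
for any $\max\{4-d,0\}<\varepsilon_{0}<2$ (this lower bound is needed so that $\int_{|\xi|\le 1}|\xi|^{\varepsilon_{0}-4}d\xi<\infty$). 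The odd-symmetry identity~\eqref{20/12/12/1} annihilates all linear-in-$\xi$ cross terms, and~\eqref{20/9/30/9:12} replaces $\int_{|\xi|\le 1}|\xi|^{-2}(|\xi|^{2}+s)^{-1}\,d\xi$ by $(5-d)\pi^{2}\delta(s)^{-1}+O(1)$, leaving the leading contribution $\mathcal{F}[V\Lambda W](0)\,\Re\mathcal{F}[Vg](0)\,(5-d)\pi^{2}\delta(s)^{-1}$ with all remainder terms of size $O(\|g\|_{L^{q}})$.

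Combining the two parts yields the claimed estimate with
\[
\mathscr{C}=\frac{(5-d)\pi^{2-d}}{2^{d}}\mathcal{F}[V\Lambda W](0).
\]
Positivity of $\mathscr{C}$ reduces to $\mathcal{F}[V\Lambda W](0)=\int_{\R^{d}}V\Lambda W\,dx>0$, which I would verify by a direct integration by parts using $V=-\frac{d+2}{d-2}W^{4/(d-2)}$ and $\Lambda W=\frac{d-2}{2}W+x\cdot\nabla W$; the computation gives $\int V\Lambda W\,dx=\frac{d-2}{2}\|W\|_{L^{(d+2)/(d-2)}}^{(d+2)/(d-2)}>0$. The main technical obstacle will be justifying the initial Parseval step cleanly despite $\Lambda W\notin L^{2}(\R^{d})$ and the fact that $\mathcal{F}[Vg](\xi)/[|\xi|^{2}(|\xi|^{2}+s)]$ fails to be in $L^{2}$ near $\xi=0$ for $d=3,4$; routing through the Riesz representation of $\Lambda W$ as above circumvents this by moving all Fourier manipulations onto the genuinely integrable function $V\Lambda W\in L^{1}\cap L^{\infty}$.
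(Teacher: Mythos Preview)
Your proposal is correct and takes a genuinely different route from the paper. The paper decomposes $\Lambda W$ in physical space into its leading homogeneous tail $-\frac{\{d(d-2)\}^{d/2}}{2d}|x|^{-(d-2)}$ plus two smoother corrections $\frac{d-2}{2}(1+\frac{|x|^{2}}{d(d-2)})^{-d/2}$ and $Z$, applies Lemma~\ref{20/12/12/9:52} as a black box to the pairing with the homogeneous part to extract the $\delta(s)^{-1}$ singularity, and bounds the two smoother pairings directly by H\"older and Lemma~\ref{18/11/23/17:17}. You instead exploit $\Lambda W=-(-\Delta)^{-1}V\Lambda W$ to collapse everything into a single Fourier integral carrying the extra weight $\overline{\mathcal{F}[V\Lambda W](\xi)}$, and then rerun the low/high-frequency analysis of Lemma~\ref{20/12/12/9:52} with that weight present. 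The paper's approach is more modular (it reuses Lemma~\ref{20/12/12/9:52} verbatim and isolates the smooth corrections); yours is more unified, avoids the physical-space case split, and produces the constant $\mathscr{C}$ in closed form. One small remark on your low-frequency step: the bound $|\mathcal{F}[V\Lambda W](\xi)-\mathcal{F}[V\Lambda W](0)|\lesssim|\xi|^{\varepsilon_{0}}$ with $\varepsilon_{0}>1$ (needed when $d=3$) is not a first-order Taylor estimate---it holds because $V\Lambda W$ is radial, hence $\nabla\mathcal{F}[V\Lambda W](0)=0$, after which the second-order remainder bound in the style of~\eqref{20/12/12/3} gives $|\xi|^{\varepsilon_{0}}$ directly. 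The same radiality forces $\nabla\mathcal{F}[Vg](0)=0$ as well, which disposes of all the cross terms in one stroke and makes the appeal to~\eqref{20/12/12/1} unnecessary.
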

\begin{proof}[Proof of Lemma \ref{18/12/02/11:25}] 
Decompose $\Lambda W$ as 
\begin{equation}\label{18/12/03/10:17}
\Lambda W (x)
= 
-\frac{|x|^{2}}{2d}
\Big(\frac{|x|^{2}}{d(d-2)} \Big)^{-\frac{d}{2}}
+
\frac{d-2}{2}\Big(1+\frac{|x|^{2}}{d(d-2)} \Big)^{-\frac{d}{2}}
+
Z(x),
\end{equation}
where 
\begin{equation}\label{18/12/04/09:43}
Z(x):=
-
\frac{|x|^{2}}{2d}
\Big(1+\frac{|x|^{2}}{d(d-2)} \Big)^{-\frac{d}{2}}
+
\frac{|x|^{2}}{2d}
\Big(\frac{|x|^{2}}{d(d-2)} \Big)^{-\frac{d}{2}}
.
\end{equation}
Let $\frac{d}{d-2}<q <\infty$,   $g \in L_{\rm rad}^{q}(\mathbb{R}^{d})$ and $s>0$. 
 Then,  it follows from \eqref{18/12/03/10:17} that      
\begin{equation}\label{18/12/03/16:02}
\begin{split}
\langle (-\Delta+s)^{-1} V g, \Lambda W \rangle
&= 
-\frac{\{d(d-2)\}^{\frac{d}{2}}}{2d}
\langle (-\Delta+s)^{-1} V g ,\  
|x|^{-(d-2)} \rangle
\\[6pt]
&\quad +
\frac{d-2}{2}\langle (-\Delta+s)^{-1} V g, \  
\Big(1+\frac{|x|^{2}}{d(d-2)} \Big)^{-\frac{d}{2}}
 \rangle
\\[6pt]
&\quad + 
\langle (-\Delta+s)^{-1} V g, Z \rangle
.
\end{split}
\end{equation}
Consider the first term on the right-hand side of \eqref{18/12/03/16:02}. 
 Lemma \ref{20/12/12/9:52} shows that there exists $A>0$ depending only on $d$ 
 such that    
\begin{equation}\label{18/12/04/11:19}
\Big| 
 -\frac{\{d(d-2)\}^{\frac{d}{2}}}{2d}
 \langle (-\Delta+s)^{-1} V g, |x|^{-(d-2)} \rangle
 +
A
\Re\mathcal{F}[V g](0) \delta(s)^{-1}
\Big| 
\lesssim \|g\|_{L^{q}},
\end{equation}
where the implicit constant depends only on $d$ and $q$. 
 Consider the second term on the right-hand side of \eqref{18/12/03/16:02}.
  Observe from $d=3,4$ that 
 $\frac{4d}{2d-5}>\frac{d}{d-2}$, 
 $\frac{4d}{2d+5}>1$ and 
 $\frac{4d}{2d+3}<\frac{d}{d-2}<q$. Then,  
 by H\"older's inequality and Lemma \ref{18/11/23/17:17}, 
 we see that  
\begin{equation}\label{18/12/02/14:01}
\begin{split}
&\Big|
\frac{d-2}{2}\langle (-\Delta+s)^{-1} V g ,\  
\Big(1+\frac{|x|^{2}}{d(d-2)} \Big)^{-\frac{d}{2}}
 \rangle
\Big|
\\[6pt]
&\lesssim  
\|(-\Delta+s)^{-1} V g \|_{L^{\frac{4d}{2d-5}}} 
\| (1+|x|)^{-d}\|_{L^{\frac{4d}{2d+5}}}
\lesssim 
\|V g  \|_{L^{\frac{4d}{2d+3}}} 
\lesssim
\|g  \|_{L^{q}} 
,
\end{split} 
\end{equation}
where the implicit constants depend only on $d$ and $q$.
It remains to estimate the last term on the right-hand side of \eqref{18/12/03/16:02}.
 Observe from the fundamental theorem of calculus,  $\frac{4d}{2d+5}(d-1) <d$ and  
 $\frac{4d}{2d+5}>1$ that 
\begin{equation}\label{18/12/04/08:55}
\begin{split}
\|Z\|_{L^{\frac{4d}{2d+5}}}
&=
\Big\| \frac{|x|^{2}}{4}
\int_{0}^{1} 
\Big(\theta+\frac{|x|^{2}}{d(d-2)}\Big)^{-\frac{d}{2}-1} \,d \theta
\Big\|_{L^{\frac{4d}{2d+5}}}
\\[6pt]
&\lesssim  
\int_{0}^{1} \theta^{-\frac{1}{2}} \,d\theta
\big\| 
|x|^{-(d-1)} 
\big\|_{L^{\frac{4d}{2d+5}}(|x|\le 1)}
+
\big\| |x|^{-d}
\big\|_{L^{\frac{4d}{2d+5}}(1\le |x|)}
\lesssim 1,
\end{split} 
\end{equation}
where the implicit constants depend only on $d$. 
 Then, by \eqref{18/12/04/08:55} and the same computation as \eqref{18/12/02/14:01}, 
 we see that  
\begin{equation}\label{18/12/04/11:42}
\big|
\langle (-\Delta+s)^{-1} V g , Z  \rangle
\big|
\lesssim  
\|(-\Delta+s)^{-1} V g \|_{L^{\frac{4d}{2d-5}}} 
\| Z \|_{L^{\frac{4d}{2d+5}}}
\lesssim
\|g  \|_{L^{q}} , 
\end{equation}
where the implicit constants depend only on $d$ and $q$. 

Putting \eqref{18/12/03/16:02}, \eqref{18/12/04/11:19}, \eqref{18/12/02/14:01} and \eqref{18/12/04/11:42} together, 
 we find that \eqref{18/12/02/11:26} holds. 
\end{proof}


The following lemma can be proved  in a way similar to Lemma \ref{18/12/02/11:25}:   
\begin{lemma}[cf. Lemma 2.5 of \cite{CG}] \label{20/10/15/11:51}
Assume $d=3,4$.  Then, there exists a constant $\mathscr{C}>0$ dependeing only on $d$ such that the following holds for all  $\frac{d}{d-2}<q<\infty$, $g \in L_{\rm rad}^{q}(\mathbb{R}^{d})$ and $s>0$:  
\begin{equation}\label{20/11/15/1}
\big|
\langle 
(-\Delta +s)^{-1} Vg 
, 
W 
\rangle 
-
\mathscr{C}
\Re\mathcal{F}[Vg](0)
 \delta(s)^{-1} 
\big|
\lesssim 
\|g\|_{L^{q}},
\end{equation}
where the implicit constant depends only on $d$ and $q$. 
\end{lemma}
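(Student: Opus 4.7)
The plan is to mirror the three-part strategy used in the proof of Lemma \ref{18/12/02/11:25}. First I would decompose $W(x)$ into a homogeneous leading piece proportional to $|x|^{-(d-2)}$ plus a lower-order remainder. Next I would apply Lemma \ref{20/12/12/9:52} to the leading piece to produce the term $\mathscr{C}\,\Re\mathcal{F}[Vg](0)\,\delta(s)^{-1}$ up to an error controlled by $\|g\|_{L^{q}}$. Finally I would control the pairing against the remainder, again modulo $\|g\|_{L^{q}}$, uniformly in $s>0$, via H\"older's inequality combined with the resolvent estimate of Lemma \ref{18/11/23/17:17}.

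For the decomposition, the fundamental theorem of calculus applied to $a\mapsto (a+|x|^{2}/d(d-2))^{-(d-2)/2}$ on $[0,1]$ yields
\begin{equation*}
W(x)=(d(d-2))^{\frac{d-2}{2}}|x|^{-(d-2)}+W_{1}(x),\qquad
W_{1}(x):=-\frac{d-2}{2}\int_{0}^{1}\Bigl(\theta+\frac{|x|^{2}}{d(d-2)}\Bigr)^{-\frac{d}{2}}d\theta.
\end{equation*}
Splitting the inner product accordingly and invoking Lemma \ref{20/12/12/9:52} on the first summand produces, with the explicit choice
\begin{equation*}
\mathscr{C}:=(d(d-2))^{\frac{d-2}{2}}\cdot\frac{(5-d)\pi^{2-\frac{d}{2}}}{2^{d-2}\Gamma(\frac{d-2}{2})}\qquad(\mathscr{C}>0\text{ for }d=3,4),
\end{equation*}
the identification
\begin{equation*}
\Bigl|(d(d-2))^{\frac{d-2}{2}}\langle (-\Delta+s)^{-1}Vg, |x|^{-(d-2)}\rangle -\mathscr{C}\,\Re\mathcal{F}[Vg](0)\,\delta(s)^{-1}\Bigr|\lesssim \|g\|_{L^{q}}.
\end{equation*}

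To estimate the remainder, I would imitate the bound \eqref{18/12/04/08:55}: the splitting $(\theta+c)^{-d/2}\le \theta^{-1/2}c^{-(d-1)/2}$ with $c=|x|^{2}/d(d-2)$ gives $|W_{1}(x)|\lesssim |x|^{-(d-1)}$ for $|x|\le 1$, while a direct estimate yields $|W_{1}(x)|\lesssim |x|^{-d}$ for $|x|\ge 1$; both tails lie in $L^{4d/(2d+5)}(\mathbb{R}^{d})$ in dimensions $d=3,4$, so $\|W_{1}\|_{L^{4d/(2d+5)}}\lesssim 1$. Then H\"older's inequality with conjugate exponent $4d/(2d-5)$, Lemma \ref{18/11/23/17:17} (applicable because $4d/(2d-5)>d/(d-2)$ for $d=3,4$), a second H\"older's inequality, and the pointwise bound $|V(x)|\lesssim(1+|x|)^{-4}$ together give
\begin{equation*}
|\langle (-\Delta+s)^{-1}Vg, W_{1}\rangle| \lesssim \|Vg\|_{L^{4d/(2d+3)}} \lesssim \|g\|_{L^{q}},
\end{equation*}
valid for $q>d/(d-2)$. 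Combining with the leading-term identification produces \eqref{20/11/15/1}.

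The only real subtlety is that $W_{1}$ lacks the $|x|^{2}$ prefactor enjoyed by the analogous remainder $Z$ in the proof of Lemma \ref{18/12/02/11:25}, so its singularity at the origin could in principle be stronger. However, the same splitting $(\theta+c)^{-d/2}\le \theta^{-1/2}c^{-(d-1)/2}$ recovers the bound $|x|^{-(d-1)}$ used there, which remains on the safe side of $L^{4d/(2d+5)}$-integrability near the origin for $d\le 4$; after this point the argument transfers verbatim.
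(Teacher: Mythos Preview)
Your proposal is correct and follows essentially the same approach as the paper: the paper writes $W(x)=(|x|^{2}/d(d-2))^{-(d-2)/2}+Z(x)$ with $Z$ exactly your $W_{1}$, and then simply remarks that ``the same proof as Lemma \ref{18/12/02/11:25} is applicable.'' You have carried out that transfer in full, including the correct treatment of the remainder (which, as you note, has a slightly worse local singularity than the analogous $Z$ in Lemma \ref{18/12/02/11:25} but still lies in $L^{4d/(2d+5)}$ for $d=3,4$).
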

\begin{proof}[Proof of Lemma \ref{20/10/15/11:51}]
Rewrite $W$ as 
\begin{equation}\label{18/12/19/01:03}
W(x)
= 
\Big(\frac{|x|^{2}}{d(d-2)} \Big)^{\frac{-(d-2)}{2}}
+
Z(x),
\end{equation}
where 
\begin{equation}\label{18/12/19/10:15}
Z(x)
:=
\Big(1+\frac{|x|^{2}}{d(d-2)} \Big)^{\frac{-(d-2)}{2}}
-
\Big(\frac{|x|^{2}}{d(d-2)} \Big)^{\frac{-(d-2)}{2}}
.
\end{equation}
Then, it is easy to see that  the same proof as Lemma \ref{18/12/02/11:25} is  applicable.  
\end{proof}


\begin{lemma}\label{20/12/22/13:55}
Assume $d=3,4$.  Then,  there exists a constant $A_{1}>0$ depending only on $d$ such that 
the following holds for all $s>0$:   
\begin{equation}\label{20/12/22/13:58}
\big|
\langle 
(-\Delta +s)^{-1} W 
, 
V\Lambda W 
\rangle 
-A_{1}  \delta(s)^{-1} 
\big|
\lesssim 1
,
\end{equation}
where the implicit constant depends only on $d$. 
\end{lemma}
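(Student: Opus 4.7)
The plan is to reduce the estimate to Lemma \ref{20/10/15/11:51} via the symmetry identity
\begin{equation*}
\langle (-\Delta+s)^{-1}W,\, V\Lambda W\rangle = \langle (-\Delta+s)^{-1}V\Lambda W,\, W\rangle.
\end{equation*}
This identity follows from Fubini applied to the heat-kernel representation \eqref{18/11/10/16:21}. Indeed, $K_s(x-y)\lesssim |x-y|^{-(d-2)}$ by \eqref{20/8/16/15:23}, $|V\Lambda W(y)|\lesssim (1+|y|)^{-(d+2)}$ by \eqref{20/9/13/9:58} and \eqref{20/9/13/10:2}, and $W\in L_{\rm rad}^{q}(\mathbb{R}^d)$ for every $q>d/(d-2)$. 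Lemma \ref{18/11/05/10:29} then places $(-\Delta+s)^{-1}W$ in the same $L^{q}$, so Hölder's inequality gives finiteness of $\int (-\Delta+s)^{-1}W(y)\,|V\Lambda W(y)|\,dy$ and hence absolute integrability of the iterated integral.

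Next, by \eqref{20/9/13/10:2} we have $\Lambda W\in L^{q}_{\rm rad}(\mathbb{R}^d)$ for any finite $q>d/(d-2)$. Fixing such a $q$ and applying Lemma \ref{20/10/15/11:51} to $g=\Lambda W$ yields
\begin{equation*}
\bigl|\langle (-\Delta+s)^{-1}V\Lambda W,\, W\rangle - \mathscr{C}\Re\mathcal{F}[V\Lambda W](0)\,\delta(s)^{-1}\bigr|\lesssim \|\Lambda W\|_{L^q}\lesssim 1,
\end{equation*}
which, combined with the symmetry identity, gives \eqref{20/12/22/13:58} with $A_{1}:=\mathscr{C}\Re\mathcal{F}[V\Lambda W](0)=\mathscr{C}\int V\Lambda W\,dx$. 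To verify $A_{1}>0$, use $V=-\tfrac{d+2}{d-2}W^{4/(d-2)}$ together with formula \eqref{20/12/22/14:18} applied at $r=4/(d-2)$; the admissibility condition $\max\{1,2/(d-2)\}<r<(d+2)/(d-2)$ is satisfied for $d=3,4$, and a direct calculation gives $\int V\Lambda W\,dx = \tfrac{d-2}{2}\|W\|_{L^{2^{*}}}^{2^{*}}>0$. Since $\mathscr{C}>0$, positivity of $A_{1}$ follows.

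The only delicate point is the Fubini justification, since $W\notin L^{2}(\mathbb{R}^d)$ for $d=3,4$ prevents one from treating the pairing as a genuine $L^{2}$ inner product; however, the fast decay of $V\Lambda W$ together with the $L^{q}$-boundedness of the resolvent ensures absolute integrability of the iterated integrals uniformly in small $s>0$. Once the symmetrization step is settled, the estimate is an immediate consequence of Lemma \ref{20/10/15/11:51}, and the identification of $A_{1}$ together with the positivity verification via \eqref{20/12/22/14:18} completes the proof.
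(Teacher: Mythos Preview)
Your proof is correct and follows essentially the same approach as the paper: both apply Lemma~\ref{20/10/15/11:51} with $g=\Lambda W$ and then identify $A_1=\mathscr{C}\,\Re\mathcal{F}[V\Lambda W](0)$ via \eqref{20/12/22/14:18}. You are more explicit than the paper in justifying the symmetry $\langle(-\Delta+s)^{-1}W,V\Lambda W\rangle=\langle(-\Delta+s)^{-1}V\Lambda W,W\rangle$, which the paper uses tacitly. One minor slip: with $r=4/(d-2)$ in \eqref{20/12/22/14:18} one has $r+1=(d+2)/(d-2)$, so the resulting norm is $\|W\|_{L^{(d+2)/(d-2)}}^{(d+2)/(d-2)}$ rather than $\|W\|_{L^{2^*}}^{2^*}$; this does not affect the argument since only positivity of $A_1$ is needed.
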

\begin{proof}[Proof of Lemma \ref{20/12/22/13:55}]
Observe from \eqref{20/12/22/14:18} that
\begin{equation}\label{20/12/22/14}
\Re\mathcal{F}[V\Lambda W](0)
=
-\frac{d+2}{d-2}\langle W^{\frac{4}{d-2}}, \Lambda W \rangle 
=
\frac{d-2}{2}\|W\|_{L^{\frac{d+2}{d-2}}}^{\frac{d+2}{d-2}}
.  
\end{equation} 
Then,  applying Lemma \ref{20/10/15/11:51} as $g=\Lambda W$ and $q=2^{*}$,   
we find that the claim of the lemma is true. 
\end{proof}


\begin{lemma}[cf. Lemma 2.5 of \cite{CG}]\label{18/12/19/01:00}
Assume $d=3,4$ and  $\frac{2}{d-2} < p <\frac{d+2}{d-2}$.  
 Then,  the following holds for all $s>0$ and $\max\{1, \frac{d}{(d-2)p} \} < r <\frac{d}{2}$:  
\begin{equation}\label{18/12/20/20:57}
\big|
\langle (-\Delta+s )^{-1} W^{p} , V\Lambda W \rangle
-
K_{p}
\big|
\lesssim 
s^{\frac{d}{2r}-1}
,
\end{equation}
where the implicit constant depends only on $d$, $p$ and $r$, and 
\begin{equation}\label{18/12/20/20:32}
K_{p}:=-\langle W^{p}, \Lambda W \rangle 
= 
\frac{4-(d-2)(p-1)}{2(p+1)}\|W\|_{L^{p+1}}^{p+1}
.
\end{equation}  
\end{lemma}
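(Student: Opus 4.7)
The strategy is to split $(-\Delta+s)^{-1}W^p$ via the resolvent identity into a leading piece that exactly reproduces $K_p$ and a remainder of size $s^{d/(2r)-1}$.

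First, I would apply Lemma \ref{20/8/16/15:45} with $s_0=0$ to write
\[
 \langle (-\Delta+s)^{-1}W^p, V\Lambda W\rangle
 = \langle (-\Delta)^{-1}W^p, V\Lambda W\rangle - s \langle (-\Delta+s)^{-1}(-\Delta)^{-1}W^p, V\Lambda W\rangle.
\]
The hypothesis $r>\max\{1,d/((d-2)p)\}$ combined with $W(x)\sim (1+|x|)^{-(d-2)}$ guarantees $W^p\in L^r(\mathbb{R}^d)$, and $r<d/2$ then yields $(-\Delta)^{-1}W^p\in L^{dr/(d-2r)}(\mathbb{R}^d)$ via Lemma \ref{18/11/23/17:17}. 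Together with the observation that $|V\Lambda W(x)|\lesssim (1+|x|)^{-(d+2)}$, so that $V\Lambda W\in L^1(\mathbb{R}^d)\cap L^\infty(\mathbb{R}^d)$, both inner products in the decomposition are well-defined.

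For the leading term, I would exploit the symmetric integral kernel of $(-\Delta)^{-1}$ given in \eqref{18/11/07/14:02} and invoke Fubini---justified because $W^p|\Lambda W|\lesssim (1+|x|)^{-(d-2)(p+1)}$ is integrable thanks to $(d-2)(p+1)>d$, which is precisely the assumption $p>2/(d-2)$---to transfer $(-\Delta)^{-1}$ from $W^p$ onto $V\Lambda W$. Combined with the identity \eqref{20/8/15/15:47}, this produces
\[
 \langle (-\Delta)^{-1}W^p, V\Lambda W\rangle = \langle W^p, (-\Delta)^{-1}V\Lambda W\rangle = -\langle W^p,\Lambda W\rangle,
\]
and the explicit formula \eqref{20/12/22/14:18} identifies the right-hand side with $K_p$.

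For the remainder, I would bound crudely
\[
 s\,|\langle (-\Delta+s)^{-1}(-\Delta)^{-1}W^p, V\Lambda W\rangle| \le s\,\|(-\Delta+s)^{-1}(-\Delta)^{-1}W^p\|_{L^\infty}\, \|V\Lambda W\|_{L^1},
\]
and apply Lemma \ref{18/11/05/10:29} with $q_1=dr/(d-2r)$ and $q_2=\infty$. The admissibility condition $d/q_1<2$ reduces to $r>d/4$, which is automatic from $r>1$ and $d\in\{3,4\}$; the resulting rate $s^{(d-2r)/(2r)-1}=s^{d/(2r)-2}$, applied to $\|(-\Delta)^{-1}W^p\|_{L^{dr/(d-2r)}}\lesssim\|W^p\|_{L^r}$, gives precisely $s^{d/(2r)-1}$ after multiplying by $s$. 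The most delicate point will be the duality swap in the leading term, since $\Lambda W\notin L^2(\mathbb{R}^d)$ for $d=3,4$; however, the fact that $V\Lambda W$ lies in $L^1\cap L^\infty$ and the pointwise decay of $W^p\Lambda W$ under $p>2/(d-2)$ supply enough integrability to carry the argument through.
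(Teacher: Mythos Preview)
Your proposal is correct and yields exactly the desired bound. The paper's proof follows a closely related but slightly more streamlined route: instead of invoking the resolvent identity \eqref{18/11/11/09:35} on $W^p$, it moves $(-\Delta+s)^{-1}$ to the other slot and uses $V\Lambda W=\Delta\Lambda W=-(-\Delta+s)\Lambda W+s\Lambda W$ directly, obtaining
\[
\langle (-\Delta+s)^{-1}W^p,\,V\Lambda W\rangle=-\langle W^p,\Lambda W\rangle+s\,\langle W^p,\,(-\Delta+s)^{-1}\Lambda W\rangle.
\]
The remainder is then estimated by H\"older and the weak-$L^{d/(d-2)}$ resolvent bound \eqref{18/11/24/13:56} applied to $\Lambda W$, giving the same $s^{d/(2r)-1}$. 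Your version dualizes the other way---putting two resolvents on $W^p$ and placing $V\Lambda W$ in $L^1$---which is equally valid; the paper's version avoids the separate Fubini justification and the extra appeal to $(-\Delta)^{-1}V\Lambda W=-\Lambda W$, but both exploit the same underlying identity $(-\Delta+V)\Lambda W=0$ and arrive at the same estimate with the same dependence on $r$.
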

\begin{proof}[Proof of Lemma \ref{18/12/19/01:00}] 
Let $s>0$. 
 Observe from $V \Lambda W=\Delta W$ (see \eqref{20/9/13/9:52}) that 
\begin{equation}\label{18/12/20/20:25}
\begin{split}
\langle (-\Delta+s)^{-1} W^{p} , V\Lambda W \rangle
&=
\langle  W^{p}, (-\Delta+s)^{-1} \{ -(-\Delta +s) + s \}\Lambda W \rangle
\\[6pt]
&=
-\langle W^{p}, \Lambda W \rangle 
+ 
s \langle  W^{p}, (-\Delta+s)^{-1} \Lambda W \rangle
.
\end{split}
\end{equation}
Furthermore, by H\"older's inequality and Lemma \ref{18/11/05/10:29}, we see that
for any $\max\{ 1, \frac{d}{(d-2)p}\} < r <\frac{d}{2}$, 
\begin{equation}\label{18/12/25/10:28}
\begin{split}
s
\big| 
\langle  W^{p}, (-\Delta+s)^{-1} \Lambda W \rangle
\big|
&\le 
s  
\|W^{p} \|_{L^{r}} 
\|(-\Delta+s)^{-1}  \Lambda W \|_{L^{\frac{r}{r-1}}}
\\[6pt]
&\lesssim
s^{\frac{d-2}{2}-\frac{d(r-1)}{2r}}
\|\Lambda W \|_{L_{\rm weak}^{\frac{d}{d-2}}}
\lesssim 
s^{\frac{d}{2r}-1},
\end{split}
\end{equation}
where the implicit constants depend only on $d$, $p$ and $r$.  
 Putting \eqref{18/12/20/20:25} and \eqref{18/12/25/10:28} together, 
 and using \eqref{20/12/22/14:18}, we obtain the desired estimate \eqref{18/12/20/20:57}. 
\end{proof}


\subsection{Proof of Proposition \ref{18/11/17/07:17}}
\label{18/11/11/10:47}

In this section, we give a proof of Proposition \ref{18/11/17/07:17}.  Assume $d= 3,4$, and let $\frac{d}{d-2}<q \le \infty$.  

We introduce operators $Q$ and $\Pi$ from $L^{q}(\mathbb{R}^{d})$ to itself as 
\begin{equation}\label{19/01/27/15:38}
Qf:=
\frac{\langle f, V \Lambda W \rangle}{\langle \Lambda W, V \Lambda W \rangle}\Lambda W,
\quad 
\Pi f:=
\frac{\langle f, V \Lambda W \rangle}{\|V \Lambda W \|_{L^{2}}^{2}}
V\Lambda W
.
\end{equation} 
Observe that $Q^{2}=Q$, $\Pi^{2}=\Pi$, and  
\begin{equation}\label{19/01/28/15:30}
\|Q\|_{L^{q}\to L^{q}}\lesssim 1,
\end{equation}
where the implicit constant depends only on $d$ and $q$. 
 Furthermore,  it follows from $(-\Delta )\Lambda W = -V\Lambda W$ that 
\begin{align}
\label{19/01/28/09:53}
&\{ 1+(-\Delta)^{-1}V \}Q =0,
\\[6pt]
\label{19/01/28/16:25}
&\Pi \{ 1+(-\Delta)^{-1}V\} =0
.
\end{align}

\begin{lemma}\label{19/01/31/14:17}
Assume $d= 3,4$,  and let $\frac{d}{d-2} < q \le \infty$.  Then, the following hold:
\begin{align*}
X_{q}&=(1-Q)L_{\rm rad}^{q}(\mathbb{R}^{d})
:=
\{ (1-Q)f\colon f \in L_{\rm rad}^{q}(\mathbb{R}^{d})\},
\\[6pt]
X_{q}&=(1-\Pi)L_{\rm rad}^{q}(\mathbb{R}^{d})
:=
\{ (1-\Pi) f \colon f \in L_{\rm rad}^{q}(\mathbb{R}^{d})\}.
\end{align*}
Furthermore, $(1-Q)|_{X_{q}}$ and $(1-\Pi)|_{X_{q}}$ are the identity maps, namely  
\begin{equation}\label{20/9/12/16:45}
(1-Q)f =(1 - \Pi ) f =f 
\quad 
\mbox{for all $f\in X_{q}$}
. 
\end{equation}
\end{lemma}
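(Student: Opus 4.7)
The plan is to observe that this lemma is essentially a direct computation once one checks that $Q$ and $\Pi$ make sense as operators on $L_{\rm rad}^{q}(\mathbb{R}^{d})$. The two pairings appearing in the denominators are nonzero: using $V=-\frac{d+2}{d-2}W^{4/(d-2)}<0$, we have $\langle \Lambda W, V\Lambda W\rangle = \int V|\Lambda W|^{2} < 0$ since $\Lambda W \not\equiv 0$, and obviously $\|V\Lambda W\|_{L^{2}}>0$. Moreover, the decay bounds $|\Lambda W(x)|\lesssim(1+|x|)^{-(d-2)}$ and $|V(x)|\lesssim(1+|x|)^{-4}$ show that $\Lambda W \in L^{q}(\mathbb{R}^{d})$ whenever $q>\frac{d}{d-2}$ and that $V\Lambda W \in L^{1}\cap L^{\infty}$, so the pairing $\langle f, V\Lambda W\rangle$ is well defined for every $f\in L^{q}(\mathbb{R}^{d})$ with $\frac{d}{d-2}<q\le\infty$. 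Hence both $Q$ and $\Pi$ are bounded linear projections on $L_{\rm rad}^{q}(\mathbb{R}^{d})$, and their ranges lie in $L_{\rm rad}^{q}(\mathbb{R}^{d})$ since both $\Lambda W$ and $V\Lambda W$ are radial.

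Next, I would verify the inclusions $(1-Q)L_{\rm rad}^{q}(\mathbb{R}^{d})\subset X_{q}$ and $(1-\Pi)L_{\rm rad}^{q}(\mathbb{R}^{d})\subset X_{q}$ by a one-line computation. For any $f\in L_{\rm rad}^{q}(\mathbb{R}^{d})$,
\begin{equation*}
\langle (1-Q)f, V\Lambda W\rangle
= \langle f, V\Lambda W\rangle
- \frac{\langle f, V\Lambda W\rangle}{\langle \Lambda W, V\Lambda W\rangle}
\langle \Lambda W, V\Lambda W\rangle = 0,
\end{equation*}
and similarly
\begin{equation*}
\langle (1-\Pi)f, V\Lambda W\rangle
= \langle f, V\Lambda W\rangle
- \frac{\langle f, V\Lambda W\rangle}{\|V\Lambda W\|_{L^{2}}^{2}}
\langle V\Lambda W, V\Lambda W\rangle = 0.
\end{equation*}
This shows that both $1-Q$ and $1-\Pi$ map $L_{\rm rad}^{q}(\mathbb{R}^{d})$ into $X_{q}$.

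Finally I would establish the reverse inclusions together with the identity-map property. If $f\in X_{q}$, then by definition $\langle f, V\Lambda W\rangle =0$, so directly from the formulas defining $Q$ and $\Pi$ we get $Qf=0$ and $\Pi f=0$, whence $(1-Q)f=f$ and $(1-\Pi)f=f$. This simultaneously shows the identity-map statement \eqref{20/9/12/16:45} and, since every $f\in X_{q}$ is thereby written as $(1-Q)f$ (resp.\ $(1-\Pi)f$) with $f\in L_{\rm rad}^{q}(\mathbb{R}^{d})$, it shows $X_{q}\subset (1-Q)L_{\rm rad}^{q}(\mathbb{R}^{d})$ and $X_{q}\subset (1-\Pi)L_{\rm rad}^{q}(\mathbb{R}^{d})$.

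There is essentially no obstacle here, as the lemma is formal; the only genuinely substantive point to be careful about is that $\Lambda W$ lies in $L^{q}$ for the range of $q$ considered, which is where the assumption $q>\frac{d}{d-2}$ is used, and that the pairings in the denominators of $Q$ and $\Pi$ are nonzero, which follows from the explicit sign of $V$.
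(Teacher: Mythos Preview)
Your proof is correct and follows essentially the same approach as the paper: both verify $\langle (1-Q)f, V\Lambda W\rangle=0$ for the inclusion $(1-Q)L_{\rm rad}^{q}\subset X_{q}$, and for the reverse inclusion observe that $f\in X_{q}$ implies $Qf=0$ (hence $(1-Q)f=f$), with the $\Pi$ case handled identically. You add explicit checks that the denominators are nonzero and that the relevant pairings and $L^{q}$ memberships make sense, which the paper takes for granted from earlier discussion; this extra care is fine but not a genuinely different route.
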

\begin{proof}[Proof of Lemma \ref{19/01/31/14:17}]
Observe that for any $f \in L^{q}(\mathbb{R}^{d})$,
\begin{equation}\label{19/01/27/16:25}
\langle (1-Q) f, V\Lambda W \rangle=0 .
\end{equation}
Hence, $(1-Q)L_{\rm rad}^{q}(\mathbb{R}^{d})\subset X_{r}$. On the other hand, if $f\in X_{q}$, then $\langle f , V\Lambda W \rangle=0$ and therefore 
 $f=Qf+(1-Q)f=(1-Q)f$. Thus, $X_{q} \subset (1-Q)L_{\rm rad}^{q}(\mathbb{R}^{d})$. Similarly, we can prove $(1-\Pi)L_{\rm rad}^{q}(\mathbb{R}^{d})=X_{q}$.
\end{proof}

Next, for $d\ge 3$ and $\frac{d}{d-2}<q \le \infty$, we introduce linear subspaces $\mathbf{X}_{q}$ and $\mathbf{Y}_{q}$ of $L^{q}(\mathbb{R}^{d})\times L^{q}(\mathbb{R}^{d})$ equipped with the norms $\|\cdot\|_{\mathbf{X}_{q}}$ and $\|\cdot \|_{\mathbf{Y}_{q}}$ as 
\begin{align}
\label{19/01/30/13:41}
\mathbf{X}_{q} &:=X_{q} \times Q L_{\rm rad}^{q}(\mathbb{R}^{d}),
\qquad 
\|(f_{1},f_{2})\|_{\mathbf{X}_{q}}
:=
\|f_{1}\|_{L^{q}}+\|f_{2}\|_{L^{q}}
,
\\[6pt]
\label{19/02/01/10:45}
\mathbf{Y}_{q}&:=X_{q} \times \Pi L_{\rm rad}^{q}(\mathbb{R}^{d}),
\qquad 
\|(f_{1},f_{2})\|_{\mathbf{Y}_{q}}
:=
\|f_{1}\|_{L^{q}}+\|f_{2}\|_{L^{q}}
.
\end{align}
Furthermore, for $\varepsilon >0$,  we define operators $B_{\varepsilon} \colon \mathbf{X}_{q} \to L_{\rm rad}^{q}(\mathbb{R}^{d})$ and $\mathbf{C} \colon  L_{\rm rad}^{q}(\mathbb{R}^{d}) \to 
 \mathbf{Y}_{q}$ by 
\begin{align}
\label{19/01/30/16:03}
B_{\varepsilon}(f_{1},f_{2})
&:=\varepsilon f_{1} + f_{2} ,
\qquad 
 \mbox{for all $(f_{1}, f_{2})\in \mathbf{X}_{q}$}, 
\\[6pt] 
\label{19/01/30/17:02}
\mathbf{C} f&:=((1-Q)f,~\Pi f)
\qquad 
\mbox{for all $f\in L_{\rm rad}^{q}(\mathbb{R}^{d})$} 
.
\end{align}

\begin{lemma}\label{19/01/28/16:00}
Assume $d= 3,4$. Let $\frac{d}{d-2} < q \le \infty$. 
 Then, the following hold:
\begin{enumerate}
\item 
For any $\varepsilon>0$, $B_{\varepsilon} \colon \mathbf{X}_{q} \to L_{\rm rad}^{q}(\mathbb{R}^{d})$ is surjective. 
\item 
$\mathbf{C} \colon  L_{\rm rad}^{q}(\mathbb{R}^{d}) \to  \mathbf{Y}_{q}$ is injective.
\end{enumerate}
\end{lemma}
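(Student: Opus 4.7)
\textbf{Proof proposal for Lemma \ref{19/01/28/16:00}.} Both assertions are essentially algebraic consequences of Lemma \ref{19/01/31/14:17}, which supplies the direct-sum decomposition
\[
L^{q}_{\rm rad}(\mathbb{R}^{d}) = X_{q} \oplus Q L^{q}_{\rm rad}(\mathbb{R}^{d}),
\qquad f = (1-Q)f + Qf,
\]
since $Q^{2}=Q$, $(1-Q)L^{q}_{\rm rad}=X_{q}$ by Lemma \ref{19/01/31/14:17}, and the sum is direct because any element of $X_{q}\cap QL^{q}_{\rm rad}$ is of the form $\kappa\Lambda W$ with $\langle \kappa \Lambda W, V\Lambda W\rangle=0$, which forces $\kappa=0$ (note that $\langle \Lambda W, V\Lambda W\rangle \neq 0$ is built into the very definition of $Q$, and one can confirm it via $V\Lambda W=\Delta\Lambda W$ and an integration by parts showing $\langle \Lambda W, V\Lambda W\rangle = -\|\nabla \Lambda W\|_{L^{2}}^{2}<0$, which is finite thanks to the decay $|\Lambda W(x)|\lesssim (1+|x|)^{-(d-2)}$).

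For part (i), given $f\in L^{q}_{\rm rad}(\mathbb{R}^{d})$ and $\varepsilon>0$, I would simply take
\[
f_{1} := \varepsilon^{-1}(1-Q)f \in X_{q},
\qquad
f_{2} := Qf \in QL^{q}_{\rm rad}(\mathbb{R}^{d}),
\]
so that $(f_{1},f_{2})\in \mathbf{X}_{q}$ and $B_{\varepsilon}(f_{1},f_{2})=\varepsilon f_{1}+f_{2}=(1-Q)f+Qf=f$. This establishes surjectivity of $B_{\varepsilon}$.

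For part (ii), suppose $\mathbf{C}f=((1-Q)f,\Pi f)=0$. The vanishing of the first coordinate means $f=Qf$, hence $f=\kappa\Lambda W$ for some $\kappa\in\mathbb{R}$. The vanishing of the second coordinate gives $\Pi f=0$; since $\Pi f$ is a scalar multiple of the nonzero function $V\Lambda W$, this forces $\langle f, V\Lambda W\rangle =0$, i.e.\ $\kappa\langle \Lambda W, V\Lambda W\rangle =0$, and then $\langle \Lambda W, V\Lambda W\rangle \neq 0$ yields $\kappa=0$ and $f=0$. Nothing in this argument is subtle; the only points requiring a brief verification are that $\Lambda W\in L^{q}_{\rm rad}$ for $\tfrac{d}{d-2}<q\le \infty$ (immediate from \eqref{20/9/13/10:2}) so that the ranges of $Q$ and $\Pi$ lie in the stated spaces, and the nonvanishing of $\langle \Lambda W, V\Lambda W\rangle$ noted above.
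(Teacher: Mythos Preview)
Your proof is correct and follows essentially the same approach as the paper: for surjectivity you produce the explicit preimage $(\varepsilon^{-1}(1-Q)f,\,Qf)$, and for injectivity you use $(1-Q)f=0$ to force $f=\kappa\Lambda W$ and then $\Pi f=0$ together with $\langle \Lambda W,V\Lambda W\rangle\neq 0$ to conclude $\kappa=0$. The additional remarks you supply (the direct-sum decomposition, the verification of $\langle \Lambda W,V\Lambda W\rangle\neq 0$, and $\Lambda W\in L^{q}_{\rm rad}$) are correct elaborations that the paper leaves implicit.
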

\begin{proof}[Proof of Lemma \ref{19/01/28/16:00}] 
Let $g \in L_{\rm rad}^{q}(\mathbb{R}^{d})$, and define $f_{1}:=\varepsilon^{-1}(1-Q)g$ and $f_{2}:= Q g$. Then, Lemma \ref{19/01/31/14:17} shows that $(f_{1},f_{2}) \in \mathbf{X}_{q}$. Furthermore, it is obvious that $B_{\varepsilon}(f_{1},f_{2})=g$. Hence, $B_{\varepsilon}$ is 
 surjective. 

Let $f\in L_{\rm rad}^{q}(\mathbb{R}^{d})$. Suppose $\mathbf{C} f=0$, so that $(1-Q)f=0$ and $\Pi f=0$. In particular, we have $f=Qf =c \Lambda W$ for some $c\in \mathbb{R}$.  Furthermore, this together with  $\Pi f =0$ implies $c=0$. Thus, $f=0$ and $\mathbf{C}$ is injective.  
\end{proof}

\begin{lemma}\label{19/01/31/17:09}
Assume $d= 3,4$. Then, all of the following hold: 
\begin{enumerate}
\item 
For any $\frac{d}{d-2}< q < \infty$ and $s>0$, 
\begin{equation}\label{19/01/31/17:41}
s \| (-\Delta+s)^{-1}(-\Delta)^{-1}V Q\|_{L^{q}\to L^{q}}
\lesssim 
s^{\frac{d-2}{2}-\frac{d}{2q}},
\end{equation}
where the implicit constant depends only on $d$ and $q$.

\item 
For any $0< s \le 1$, 
\begin{equation}\label{21/1/11/10:31}
s \| (-\Delta+s)^{-1}(-\Delta)^{-1}V Q\|_{L^{\infty}\to L^{\infty}}
\lesssim 
\delta(s)^{-1} s,
\end{equation}
where the implicit constant depends only on $d$.

\item 
For any $\frac{d}{d-2}< q \le \infty$,  $\frac{d}{d-2}< r < \infty$ and $0< s\le 1$,   
\begin{equation}\label{19/01/31/17:42}
s \| \Pi (-\Delta+s)^{-1}(-\Delta)^{-1}V \|_{L_{\rm rad}^{r} \to \Pi L_{\rm rad}^{q}}
\lesssim 
\delta(s)^{-1} s,
\end{equation} 
where the implicit constant depends only on $d$, $q$ and $r$ (independent of $s$); If $d=3$, then \eqref{19/01/31/17:42} still holds for $r=\infty$. 
\end{enumerate}
\end{lemma}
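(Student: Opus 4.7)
The plan exploits the rank-one structure of $Q$ and $\Pi$, the key identity $(-\Delta)^{-1}V\Lambda W=-\Lambda W$ from \eqref{20/8/15/15:47}, the resolvent identity \eqref{18/11/11/09:35} with $s_{0}=0$, and sharp a priori bounds for $(-\Delta+s)^{-1}\Lambda W$ supplied by Lemma \ref{18/11/05/10:29} and Lemma \ref{21/1/10/15:37}. The three parts are then separated by how one exploits them.

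For parts (1) and (2), I would first observe that $Qf = c_{f}\Lambda W$ with $|c_{f}|\lesssim \|f\|_{L^{q}}$ (using that $V\Lambda W$ has decay $(1+|x|)^{-(d+2)}$ by \eqref{20/9/13/9:58} and \eqref{20/9/13/10:2}, hence lies in every $L^{p}$), and that \eqref{20/8/15/15:47} forces $(-\Delta)^{-1}VQf=-Qf$, so that
\begin{equation*}
s(-\Delta+s)^{-1}(-\Delta)^{-1}VQf \;=\; -s\,c_{f}\,(-\Delta+s)^{-1}\Lambda W.
\end{equation*}
Thus (1) reduces to bounding $s\|(-\Delta+s)^{-1}\Lambda W\|_{L^{q}}$; I would apply Lemma \ref{18/11/05/10:29}(2) with $q_{1}=\tfrac{d}{d-2}$ and $q_{2}=q$, using $\Lambda W\in L_{\rm weak}^{d/(d-2)}$ (from \eqref{20/9/13/10:2}) to obtain $\|(-\Delta+s)^{-1}\Lambda W\|_{L^{q}}\lesssim s^{\frac{d-2}{2}-\frac{d}{2q}-1}$. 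The constraints $d\in\{3,4\}$ and $\tfrac{d}{d-2}<q<\infty$ are precisely what guarantee the hypotheses $1<q_{1}\le q_{2}<\infty$ and $d(1/q_{1}-1/q_{2})<2$. For (2), the same reduction combined with Lemma \ref{21/1/10/15:37} gives $\|(-\Delta+s)^{-1}\Lambda W\|_{L^{\infty}}\lesssim 1+\delta(s)^{-1}\lesssim \delta(s)^{-1}$ for $0<s\le 1$, since $\delta(s)\lesssim 1$ on that range.

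Part (3) is more subtle. Since $\Pi$ has one-dimensional image spanned by $V\Lambda W$ and $\|V\Lambda W\|_{L^{q}}\lesssim 1$ for every $q\in[1,\infty]$, the target norm is comparable to $s|\langle (-\Delta+s)^{-1}(-\Delta)^{-1}Vg, V\Lambda W\rangle|$. The crucial algebraic step is to apply \eqref{18/11/11/09:35} with $s_{0}=0$ to write $s(-\Delta+s)^{-1}(-\Delta)^{-1}=(-\Delta)^{-1}-(-\Delta+s)^{-1}$, and then to exploit self-adjointness together with $V\Lambda W=\Delta\Lambda W$ (so that $(-\Delta+s)^{-1}V\Lambda W=-\Lambda W+s(-\Delta+s)^{-1}\Lambda W$), obtaining the cancellation
\begin{equation*}
s\,\langle (-\Delta+s)^{-1}(-\Delta)^{-1}Vg,\, V\Lambda W\rangle \;=\; -s\,\langle (-\Delta+s)^{-1}Vg,\, \Lambda W\rangle.
\end{equation*}
Lemma \ref{18/12/02/11:25} then bounds the right-hand side in modulus by $\delta(s)^{-1}\|g\|_{L^{r}}$ for $\tfrac{d}{d-2}<r<\infty$, since $|\Re\mathcal{F}[Vg](0)|\le \|V\|_{L^{r/(r-1)}}\|g\|_{L^{r}}\lesssim \|g\|_{L^{r}}$ (the norm $\|V\|_{L^{r/(r-1)}}$ is finite for $d\le 4$ and $r>\tfrac{d}{d-2}$ by \eqref{20/9/13/9:58}), together with $\delta(s)^{-1}\gtrsim 1$ on $0<s\le 1$.

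The main obstacle is the endpoint $r=\infty$ in dimension $d=3$, since Lemma \ref{18/12/02/11:25} (and its underlying Lemma \ref{20/12/12/9:52}) excludes $r=\infty$; the Fourier-analytic proof breaks down there because $\mathcal{F}[Vg](0)$ need not exist when $V\notin L^{1}$ (the $d=4$ failure underlined in Remark \ref{20/9/13/9:1}). I would bypass Lemma \ref{18/12/02/11:25} for this endpoint by noting that $V\in L^{1}(\mathbb{R}^{3})$ by \eqref{20/9/13/9:58}, so self-adjointness and H\"older's inequality give
\begin{equation*}
|\langle (-\Delta+s)^{-1}Vg,\, \Lambda W\rangle| \;=\; |\langle Vg,\, (-\Delta+s)^{-1}\Lambda W\rangle| \;\le\; \|V\|_{L^{1}}\|g\|_{L^{\infty}}\|(-\Delta+s)^{-1}\Lambda W\|_{L^{\infty}},
\end{equation*}
and Lemma \ref{21/1/10/15:37} delivers the required $\delta(s)^{-1}$ factor. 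This recovers the $d=3$, $r=\infty$ clause and makes transparent why the corresponding statement must exclude $r=\infty$ when $d=4$.
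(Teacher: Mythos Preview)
Your proof is correct and follows essentially the same approach as the paper. Parts (1) and (2) are identical: reduce via $(-\Delta)^{-1}VQf=-c_f\Lambda W$ and invoke Lemma~\ref{18/11/05/10:29} (resp.\ Lemma~\ref{21/1/10/15:37}). For part (3), the paper reaches the same key quantity $s|\langle(-\Delta+s)^{-1}Vg,\Lambda W\rangle|$ by the slightly shorter route of commuting the two resolvents and moving $(-\Delta)^{-1}$ across the pairing onto $V\Lambda W$ (using \eqref{20/8/15/15:47}), rather than your resolvent-identity cancellation; both computations are equivalent. Your treatment of the endpoint $r=\infty$ in $d=3$ via $\|V\|_{L^1}\|(-\Delta+s)^{-1}\Lambda W\|_{L^\infty}$ and Lemma~\ref{21/1/10/15:37} is cleaner than the paper's, which simply notes that \eqref{20/9/12/15:8} persists at $r=\infty$ when $d=3$ and leaves implicit that Lemma~\ref{18/12/02/11:25} extends. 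One small expository slip: after your displayed identity, Lemma~\ref{18/12/02/11:25} bounds $|\langle(-\Delta+s)^{-1}Vg,\Lambda W\rangle|$ by $\delta(s)^{-1}\|g\|_{L^r}$, and the extra factor $s$ already present on the right-hand side then yields the stated bound $s\,\delta(s)^{-1}\|g\|_{L^r}$.
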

\begin{proof}[Proof of Lemma \ref{19/01/31/17:09}]
We shall prove \eqref{19/01/31/17:41} and \eqref{21/1/11/10:31}. 
 Let $\frac{d}{d-2}< q \le \infty$, $f \in L^{q}(\mathbb{R}^{d})$ and $s>0$. 
 Observe from $(-\Delta)^{-1}V\Lambda W=-\Lambda W$ (see \eqref{20/8/15/15:47}) and  H\"older's inequality  that 
\begin{equation}\label{19/01/31/17:10}
\begin{split} 
&
s \| (-\Delta+s)^{-1}(-\Delta)^{-1}V Qf \|_{L^{q}}
=  
s \frac{|\langle f, V \Lambda W \rangle|}{|\langle \Lambda W, V \Lambda W \rangle|} \| (-\Delta+s)^{-1}(-\Delta)^{-1}V\Lambda W \|_{L^{q}}
\\[6pt]
&\lesssim 
s \|f\|_{L^{q}} 
\| (-\Delta+\alpha)^{-1}\Lambda W \|_{L^{q}}, 
\end{split}
\end{equation}
where the implicit constant depends only on $d$ and $q$. Assume $q\neq \infty$.
 Then, applying Lemma \ref{18/11/05/10:29} to the right-hand side of \eqref{19/01/31/17:10}, we see that 
\begin{equation}\label{21/1/11/10:57}
s \| (-\Delta+s)^{-1}(-\Delta)^{-1}V Qf \|_{L^{q}}
\lesssim
s^{\frac{d}{2}(\frac{d-2}{d}-\frac{1}{q})} \|f\|_{L^{q}}
\|\Lambda W \|_{L_{\rm weak}^{\frac{d}{d-2}}}
\lesssim
s^{\frac{d-2}{2}-\frac{d}{2q}} \|f\|_{L^{q}},
\end{equation}
where the implicit constants depend only on $d$ and $q$.
 Thus, \eqref{19/01/31/17:41} holds. 
 When $q=\infty$ and $0<s\le 1$, applying  Lemma \ref{21/1/10/15:37} to the right-hand side of \eqref{19/01/31/17:10}, we see that 
\begin{equation}\label{21/1/11/11:3}
s \| (-\Delta+s)^{-1}(-\Delta)^{-1}V Qf \|_{L^{\infty}}
\lesssim
s \delta(s)^{-1} \|f\|_{L^{\infty}},
\end{equation}
where the implicit constant depends only on $d$. 
 Thus, \eqref{21/1/11/10:31} holds. 

Next, we shall prove \eqref{19/01/31/17:42}. 
 Let $g \in L_{\rm rad}^{r}(\mathbb{R}^{d})$. 
 Observe from $|V(x)|\lesssim (1+|x|)^{-4}$ and $r<\infty$ that 
\begin{equation}\label{20/9/12/15:8}
| \mathcal{F}[Vg](0) |
\le 
\|Vg \|_{L^{1}} 
\lesssim \|g\|_{L^{r}} 
 ,
\end{equation}
where the implicit constant depends only on $d$ and $r$; Note here that when $d=3$, \eqref{20/9/12/15:8} still holds for $r=\infty$.
 Then, by $(-\Delta)^{-1}V \Lambda W = -\Lambda W$,  Lemma \ref{18/12/02/11:25} and \eqref{20/9/12/15:8}, we see that 
\begin{equation}\label{19/01/31/17:43}
\begin{split} 
&
s \| \Pi (-\Delta+s)^{-1}(-\Delta)^{-1}V g \|_{L^{q}}
=
s \| \Pi (-\Delta)^{-1} (-\Delta+s)^{-1}V g \|_{L^{q}}
\\[6pt]
&= 
s \frac{| \langle  (-\Delta)^{-1} (-\Delta+s)^{-1}V g,  V \Lambda W \rangle|}{
\|V \Lambda W \|_{L^{2}}^{2}} \| V\Lambda W \|_{L^{q}}
\\[6pt]
&= 
s \frac{| \langle (-\Delta+s)^{-1} V g, \Lambda W \rangle |}{
\|V \Lambda W \|_{L^{2}}^{2}} \| V\Lambda W \|_{L^{q}}
\lesssim
 s \delta(s)^{-1} \| g \|_{L^{r}}  
,
\end{split}
\end{equation}
where the implicit constant depends only on $d$, $q$ and $r$. 
 Thus, \eqref{19/01/31/17:42} holds. 
\end{proof}

Now, we are in a position to prove Proposition \ref{18/11/17/07:17}: 
\begin{proof}[Proof of Proposition \ref{18/11/17/07:17}]
Assume $d=3,4$. Let  $\frac{d}{d-2}< q < \infty$, and let $0< s < 1$ be a small number to be specified later. Furthermore, let  $\varepsilon >0$ be a small constant to be chosen later dependently only on $d$ and $q$.

Observe from Lemma \ref{18/11/05/10:29} that $1+(-\Delta +s)^{-1}V$ maps $L_{\rm rad}^{q}(\mathbb{R}^{d})$ to itself. Recall from Lemma \ref{19/01/28/16:00} that $B_{\varepsilon} \colon \mathbf{X}_{q} \to L_{\rm rad}^{q}(\mathbb{R}^{d})$ is surjective, and $\mathbf{C}\colon L_{\rm rad}^{q}(\mathbb{R}^{d}) \to \mathbf{Y}_{q}$ is injective. 
 Then, we define an operator $\mathbf{A}_{\varepsilon}(s)$ from $\mathbf{X}_{q}$ to $\mathbf{Y}_{q}$ as  
\begin{equation}\label{19/01/30/17:21}
\mathbf{A}_{\varepsilon}(s)
:= \mathbf{C} \{ 1+(-\Delta+s)^{-1}V \} B_{\varepsilon}
.
\end{equation}
By Lemma 3.12 of \cite{Jensen-Kato} (see Lemma \ref{21/1/12/9:48} in Appendix \ref{20/10/14/9:12}), we see that if the inverse of $\mathbf{A}_{\varepsilon}(s)$ exists, then  the operator $1+(-\Delta+s)^{-1}V \colon L_{\rm rad}^{q}(\mathbb{R}^{d}) \to L_{\rm rad}^{q}(\mathbb{R}^{d})$ has the inverse and   
\begin{equation}\label{19/01/30/17:18}
\{ 1+(-\Delta+s)^{-1}V\}^{-1} 
=
B_{\varepsilon} \{\mathbf{A}_{\varepsilon}(s)\}^{-1}\mathbf{C}.
\end{equation}
This fact is the main point of the proof.

Let $(f_{1},f_{2})\in \mathbf{X}_{q}$. 
 We may write  $\mathbf{A}_{\varepsilon}(s) (f_{1},f_{2})$ as follows
 (we do not care about the distinction between column and row vectors):
\begin{equation}\label{19/01/31/13:15}
\mathbf{A}_{\varepsilon}(s) (f_{1},f_{2})
=
(A_{11}f_{1}+ A_{12}f_{2},\ A_{21}f_{1}+A_{22}f_{2})
=
\left( \begin{array}{cc} A_{11}& A_{12} \\ A_{21} & A_{22} 
\end{array}\right)
\left( \begin{array}{c} f_{1} \\ f_{2}  
\end{array}\right),
\end{equation}
where 
\begin{align}
\label{A11}
A_{11}&:=\varepsilon (1-Q) \{1+(-\Delta+s)^{-1}V\}|_{X_{q}},
\\[6pt]
\label{A12} 
A_{12}&:=(1-Q)\{1+(-\Delta+s)^{-1}V\}Q,
\\[6pt]
\label{A21}
A_{21}&:=\varepsilon \Pi \{ 1+(-\Delta+s)^{-1}V\}|_{X_{q}},
\\[6pt]
\label{A22}  
A_{22}&:=\Pi \{ 1+(-\Delta+s )^{-1}V\} Q
.
\end{align}
Recall that $1+(-\Delta+s )^{-1}V \colon L_{\rm rad}^{q}(\mathbb{R}^{d}) \to L_{\rm rad}^{q}(\mathbb{R}^{d})$. 
 Then, observe from Lemma \ref{19/01/31/14:17} and \eqref{19/01/28/15:30} that 
\begin{equation}\label{21/1/12/9:10}
A_{11} \colon X_{q} \to X_{q},
\qquad 
A_{12} \colon QL_{\rm rad}^{q}(\mathbb{R}^{d})\to X_{q}.
\end{equation} 
Furthermore, it is obvious that 
\begin{equation}\label{21/1/12/9:16}
A_{21}\colon X_{q} \to \Pi L_{\rm{rad}}^{q}(\mathbb{R}^{d}),
\qquad 
A_{22} \colon Q L_{\rm{rad}}^{q}(\mathbb{R}^{d}) \to \Pi L_{\rm{rad}}^{q}(\mathbb{R}^{d})
.
\end{equation}

We divide the proof into several steps:
\\
\noindent 
{\bf Step 1.}~We consider the operator $A_{11}\colon X_{q}\to X_{q}$ (see \eqref{A11}). 

Observe from Lemma \ref{18/11/13/14:25} that $\{ 1+(-\Delta)^{-1}V\}|_{X_{q}} \colon X_{q} \to X_{q}$ has the inverse, say $K_{1}$, such that 
\begin{equation}\label{19/01/31/14:14}
\|K_{1} \|_{X_{q}\to X_{q}} \lesssim 1, 
\end{equation}
where the implicit constant depends only on $d$ and $q$. Furthermore, 
  \eqref{20/9/12/16:45} in Lemma \ref{19/01/31/14:17} shows that 
\begin{equation}\label{19/01/31/15:30}
(1-Q) \{ 1+(-\Delta)^{-1}V\}|_{X_{q}}
=
\{ 1+(-\Delta)^{-1}V\}|_{X_{q}}. 
\end{equation}
Then, by \eqref{19/01/27/18:15} and \eqref{19/01/31/15:30} we may rewrite 
 $A_{11}$ as 
\begin{equation}\label{19/01/31/13:52}
A_{11}
=
\varepsilon \{ 1+(-\Delta)^{-1}V\}|_{X_{q}}
+
\varepsilon S_{11}
=
\varepsilon \{1+(-\Delta)^{-1}V\}|_{X_{q}}
\{ 1+K_{1}S_{11} \},  
\end{equation}
where 
\begin{equation}\label{19/01/31/14:12}
S_{11}:= -s (1-Q) (-\Delta+s)^{-1} (-\Delta)^{-1}V |_{X_{q}}. 
\end{equation}

Now, fix $1< r_{0} <\frac{dq}{d+2q}$; The dependence of a constant on $r_{0}$ can be absorbed into that on $d$ and $q$. Note that $q>\frac{dr_{0}}{d-2r_{0}}$. Then, by \eqref{19/01/31/14:14}, \eqref{19/01/28/15:30},  Lemma \ref{18/11/05/10:29} and \eqref{20/8/16/11:20}, we see that  for any $f\in X_{q}$, 
\begin{equation}\label{19/01/31/15:48}
\begin{split}
\|K_{1} S_{11}f \|_{L^{q}}  
&\lesssim
s \|  (-\Delta+s)^{-1} (-\Delta)^{-1}V f\|_{L^{q}}
\lesssim 
s^{\frac{d}{2}(\frac{d-2r_{0}}{dr_{0}}-\frac{1}{q})} 
\| (-\Delta)^{-1}V f\|_{L^{\frac{dr_{0}}{d-2r_{0}}}}
\\[6pt]
&\lesssim 
s^{\frac{d}{2}(\frac{d-2r_{0}}{dr_{0}}-\frac{1}{q})} 
\|V f\|_{L^{r_{0}}}
\lesssim 
s^{\frac{d}{2}(\frac{d-2r_{0}}{dr_{0}}-\frac{1}{q})} 
\|f\|_{L^{q}}
,
\end{split}
\end{equation} 
where the implicit constants depend only on $d$ and $p$. 
 Observe from the computations in \eqref{19/01/31/15:48} and
 Lemma 3.17 that $S_{11} \colon X_{q} \to X_{q}$. 
 Hence, the Neumann series shows that if $s$ is  sufficiently small dependently only on $d$ and $q$, then the operator $1+K_{1}S_{11} \colon X_{q}\to X_{q}$ has the inverse, say $K_{2}$, which satisfies 
\begin{equation}\label{19/01/31/16:05}
\|K_{2} \|_{X_{q}\to X_{q}}\le (1-\|K_{1}S_{11}\|_{X_{q}\to X_{q}})^{-1} \lesssim 1,
\end{equation}
where the implicit constant depends only on $d$ and $q$.
We summarize: 
\begin{equation}\label{19/01/31/16:10}
A_{11}^{-1}=\varepsilon^{-1} K_{2}K_{1},
\quad 
\|A_{11}^{-1}\|_{X_{q}\to X_{q}} \lesssim \varepsilon^{-1},
\end{equation} 
where $K_{1}$ and $K_{2}$ are the inverses of $(1+(-\Delta)^{-1}V)|_{X_{q}} \colon X_{q}\to X_{q}$, 
 and $1+K_{1}S_{11} \colon X_{q}\to X_{q}$, respectively, and the implicit constant depends only on $d$ and $q$.


\noindent 
{\bf Step 2.}~We consider the operator $A_{12} \colon QL_{\rm rad}^{q}(\mathbb{R}^{d})\to X_{q}$ (see \eqref{A12}). Our aim is to show that  
\begin{equation}
\label{19/01/31/16:57}
\| A_{12} \|_{QL_{\rm rad}^{q}\to X_{q}} 
\lesssim 
s^{\frac{d-2}{2}-\frac{d}{2q}}
, 
\end{equation}
where the implicit constant depends only on $d$ and $q$.  

Let $f \in QL_{\rm rad}^{q}(\mathbb{R}^{d})$. Then, by \eqref{19/01/27/18:15}, \eqref{19/01/28/09:53} and \eqref{19/01/28/15:30}, we see that 
\begin{equation}\label{19/01/31/16:28}
\begin{split} 
\|A_{12}f\|_{L^{q}}
&=
\|
(1-Q)\big\{ 1+(-\Delta)^{-1}V - s(-\Delta+s)^{-1}(-\Delta)^{-1}V \big\} Qf \|_{L^{q}}
\\[6pt]
&=
\|
(1-Q)\big\{ s(-\Delta+s)^{-1}(-\Delta)^{-1}V \big\} Qf \|_{L^{q}}
\\[6pt]
&\lesssim 
s \|
(-\Delta+s)^{-1} (-\Delta)^{-1}V Qf \|_{L^{q}}
,
\end{split}
\end{equation}
where the implicit constant depends only on $d$ and $q$. 
 Applying  \eqref{19/01/31/17:41} in Lemma \ref{19/01/31/17:09} to the right-hand side of \eqref{19/01/31/16:28}, we obtain \eqref{19/01/31/16:57}.


\noindent 
{\bf Step 3.}~We consider the operator $A_{21}\colon X_{q} \to \Pi L_{\rm rad}^{q}(\mathbb{R}^{d})$ (see \eqref{A21}).  
 By \eqref{19/01/27/18:15},  \eqref{19/01/28/16:25}, and \eqref{19/01/31/17:42} in Lemma \ref{19/01/31/17:09} with $r=q$,  
  we see that 
\begin{equation}\label{19/02/02/09:18}
\| A_{21} \|_{L_{\rm rad}^{q} \to \Pi L_{\rm rad}^{q}} 
\lesssim 
\varepsilon \delta(s)^{-1} s
,
\end{equation}
where the implicit constant depends only on $d$ and $q$.


\noindent 
{\bf Step 4.}~We consider the operator $A_{22}\colon QL_{\rm rad}^{q}(\mathbb{R}^{d}) \to \Pi L_{\rm rad}^{q}(\mathbb{R}^{d})$ (see \eqref{A22}). 

Let $f \in QL_{\rm rad}^{q}(\mathbb{R}^{d})$. 
 Observe from \eqref{19/01/27/18:15}, \eqref{19/01/28/09:53} and $(-\Delta)^{-1}V\Lambda W=-\Lambda W$ (see \eqref{20/8/15/15:47}) that 
\begin{equation}\label{21/1/12/10:10}
\begin{split}
A_{22}f
&=
-s 
\Pi (-\Delta+s)^{-1}(-\Delta)^{-1} VQf
\\[6pt]
&= 
-s
\frac{\langle f, V\Lambda W \rangle}{ \langle \Lambda W, V \Lambda W \rangle
\|V\Lambda W\|_{L^{2}}^{2}} 
\langle  (-\Delta+s)^{-1}(-\Delta)^{-1} V \Lambda W , V\Lambda W \rangle V\Lambda W
\\[6pt]
&= 
s 
\frac{\langle f, V\Lambda W \rangle}{ \langle \Lambda W, V \Lambda W \rangle
\|V\Lambda W\|_{L^{2}}^{2}} 
\langle   \Lambda W , (-\Delta+s)^{-1} V \Lambda W \rangle V\Lambda W
\\[6pt]
&=
s \frac{\langle   \Lambda W , (-\Delta+s)^{-1} V \Lambda W \rangle}{ \langle \Lambda W, V \Lambda W \rangle
}  \Pi f
.
\end{split}
\end{equation} 
Furthermore, applying Lemma \ref{18/12/02/11:25} to the right-hand side of \eqref{21/1/12/10:10} as $g=\Lambda W$, we find that $A_{22}f$ can be written as follows:  
\begin{equation}\label{19/01/27/15:37}
A_{22}f 
= 
\mathscr{C}_{0} \delta(s)^{-1} s
\Pi f
+
\mathscr{C}_{1} s \Pi f 
,
\end{equation}
where $\mathscr{C}_{0}$ and $\mathscr{C}_{1}$ are some constants depending only on $d$. 
 Define the operator $G$ with domain $QL_{\rm rad}^{q}(\mathbb{R}^{d})$ as 
\begin{equation}\label{21/1/13/10:22}
G:= \mathscr{C}_{0} \delta(s)^{-1} s
\Pi |_{QL_{\rm rad}^{q}(\mathbb{R}^{d})}
. 
\end{equation}
Observe that $G \colon QL_{\rm rad}^{q}(\mathbb{R}^{d}) \to \Pi L_{\rm rad}^{q}(\mathbb{R}^{d})$ is bijective, and the inverse $G^{-1}\colon \Pi L_{\rm rad}^{q}(\mathbb{R}^{d}) \to QL_{\rm rad}^{q}(\mathbb{R}^{d})$  is given by 
\begin{equation}\label{19/01/27/17:21}
G^{-1} g  
=
\mathscr{C}_{0}^{-1} s^{-1} 
\delta(s) \frac{ \langle g, \Lambda W \rangle}{\langle \Lambda W, V\Lambda W \rangle} \Lambda W 
. 
\end{equation}
Hence, we may write $A_{22}\colon QL_{\rm rad}^{q}(\mathbb{R}^{d}) \to \Pi L_{\rm rad}^{q}(\mathbb{R}^{d})$ as 
\begin{equation}\label{19/01/29/17:52}
 A_{22}
=
G (1+ s \mathscr{C}_{1} G^{-1} \Pi ) 
.
\end{equation}
Observe that 
\begin{equation}\label{19/01/31/18:07}
\|s \mathscr{C}_{1} G^{-1} \Pi f \|_{L^{q}}
\lesssim 
\delta(s) |\langle \Pi f, \Lambda W \rangle| \|\Lambda W \|_{L^{q}}
\lesssim 
\delta(s) |\langle f, V\Lambda W \rangle| 
\lesssim 
\delta(s)
\|f\|_{L^{q}}
,
\end{equation}
where the implicit constants depend only on $d$ and $q$.   
Hence, the Neumann series shows that if $s$ is sufficiently small dependently only on $d$ and $q$,  
 then the operator $1+s \mathscr{C}_{1} G^{-1} \Pi \colon QL_{\rm rad}^{q}(\mathbb{R}^{d})\to  QL_{\rm rad}^{q}(\mathbb{R}^{d})$ has the inverse, say $L$, such that
\begin{equation}\label{19/01/31/18:17}
\|L \|_{Q L_{\rm rad}^{q} \to QL_{\rm rad}^{q}}
\lesssim 1,
\end{equation}
where the implicit constant depends only on $d$ and $q$.  Then, we see that
\begin{equation}\label{19/02/02/09:33}
A_{22}^{-1}=LG^{-1}, 
\qquad 
\|A_{22}^{-1}\|_{\Pi L_{\rm rad}^{q} \to Q L_{\rm rad}^{q}} 
\lesssim 
\delta(s) s^{-1},
\end{equation}
where the implicit constant depends only on $d$ and $q$.


\noindent 
{\bf Step 5.}~We shall finish the proof. To this end, we define  
\begin{equation}\label{19/02/02/09:39}
\widetilde{\mathbf{A}}_{\varepsilon}
:=
\left( \begin{array}{cc}  
1 & A_{11}^{-1}A_{12} 
\\ 
A_{22}^{-1}A_{21} & 1 
\end{array} \right),
\qquad 
\mathbf{I}
:=
\left( \begin{array}{cc}  
1 &  0 
\\ 
0 & 1 
\end{array} \right).
\end{equation}
Note that $\widetilde{\mathbf{A}}_{\varepsilon} \colon \mathbf{X}_{q}\to \mathbf{X}_{q}$. 
 Observe from \eqref{19/01/31/13:15} that  
\begin{equation}\label{19/02/02/09:45}
\mathbf{A}_{\varepsilon}(s)(f_{1},f_{2})
=
\left( \begin{array}{cc}  
A_{11} & 0  
\\ 
0& A_{22} 
\end{array} \right)
\widetilde{\mathbf{A}}_{\varepsilon}
\left( \begin{array}{c}
f_{1} \\ f_{2}
\end{array} \right)
\quad 
\mbox{for all $(f_{1},f_{2}) \in \mathbf{X}_{q}$}
.
\end{equation}
By \eqref{19/01/31/16:10}, \eqref{19/02/02/09:33}, \eqref{19/01/31/16:57} 
 and \eqref{19/02/02/09:18}, we see that if $s$ is sufficiently small depending only on $d$ and $q$,  
\begin{equation}\label{19/02/02/09:50}
\begin{split}
\| \mathbf{I}- \widetilde{\mathbf{A}}_{\varepsilon}
\|_{\mathbf{X}_{q} \to \mathbf{X}_{q}}
&\le 
\|A_{11}^{-1} A_{12}\|_{QL_{\rm rad}^{q} \to X_{q}}
+
\|A_{22}^{-1} A_{21}\|_{X_{q} \to QL_{\rm rad}^{q}}
\\[6pt]
&\lesssim
\varepsilon^{-1} 
\|A_{12}\|_{QL_{\rm rad}^{q} \to X_{q}}
+
\delta(s)s^{-1} 
\|A_{21}\|_{X_{q} \to \Pi L_{\rm rad}^{q}}
\\[6pt]
&\lesssim
\varepsilon^{-1} s^{\frac{d-2}{2}-\frac{d}{2q}}
+
\varepsilon 
,
\end{split} 
\end{equation}
where the implicit constants depend only on $d$ and $q$.
Thus, we find that there exist $s_{0}>0$ and $\varepsilon_{0}>0$, both depending only on $d$ and $q$, such that for any $0<s < s_{0}$,    
\begin{equation}\label{19/02/02/10:08}
\| \mathbf{I}- \widetilde{\mathbf{A}}_{\varepsilon_{0}}\|_{\mathbf{X}_{q} \to \mathbf{X}_{q}} \le \frac{1}{2}
,
\end{equation}
and $\widetilde{\mathbf{A}}_{\varepsilon_{0}}$ has the inverse $\mathbf{D}:=\widetilde{\mathbf{A}}_{\varepsilon_{0}}^{-1}$ satisfying 
\begin{equation}\label{19/02/02/10:12}
\|\mathbf{D}\|_{\mathbf{X}_{q} \to \mathbf{X}_{q}} \le 2.
\end{equation}
The dependence of a constant on $\varepsilon_{0}$ can be absorbed into that on $d$ and $q$. Furthermore, we may assume that $\varepsilon_{0}\le \delta(s_{0})s_{0}^{-1}$. Then, for any $0<s < s_{0}$, the inverse of $\mathbf{A}_{\varepsilon_{0}}(s)$ exists and is given by  
\begin{equation}\label{19/02/02/10:37}
\mathbf{A}_{\varepsilon_{0}}(s)^{-1}
=
\left(\begin{array}{cc} D_{11} & D_{12} \\ D_{21} & D_{22} \end{array} \right)
\left(\begin{array}{cc} A_{11}^{-1} & 0 \\ 0 & A_{22}^{-1} \end{array} \right)
,
\end{equation}
where $D_{jk}$ is the $(j,k)$-entry of $\mathbf{D}$. Furthermore, we see from \eqref{19/01/31/16:10}, \eqref{19/02/02/09:33} and 
\eqref{19/02/02/10:12} that for any $0<s < s_{0}$, 
\begin{equation}\label{19/02/02/10:55}
\|\mathbf{A}_{\varepsilon_{0}}(s)^{-1} \|_{\mathbf{Y} \to \mathbf{X}} 
\lesssim  
\|A_{11}^{-1} \|_{X_{q} \to X_{q}}
+
\|A_{22}^{-1} \|_{\Pi L_{\rm rad}^{q} \to Q L_{\rm rad}^{q}}
\lesssim
\delta(s)s^{-1},
\end{equation}
where the implicit constants depend only on $d$ and $q$. 
 Thus, Lemma 3.12 of \cite{Jensen-Kato} (see Lemma \ref{21/1/12/9:48} in Appendix \ref{20/10/14/9:12}) shows that for any $0< s < s_{0}$,  the inverse of $1+(-\Delta+s)^{-1}V$ exists as an operator from $L_{\rm rad}^{q}(\mathbb{R}^{d})$ to itself and 
\begin{equation}\label{19/02/02/11:07}
\{ 1+(-\Delta+s)^{-1}V\}^{-1} 
=
B_{\varepsilon_{0}} \mathbf{A}_{\varepsilon_{0}}(s)^{-1}\mathbf{C}.
\end{equation}
Observe that 
\begin{equation}\label{19/02/02/11:10}
\|B_{\varepsilon_{0}}\|_{\mathbf{X}_{q}\to L_{\rm rad}^{q}}\lesssim 1, 
\qquad 
\|\mathbf{C}\|_{L_{\rm rad}^{q} \to \mathbf{Y}}\lesssim 1
.
\end{equation}
Hence, we see from \eqref{19/02/02/10:55} through \eqref{19/02/02/11:10} that 
 for any $0<s <s_{0}$, 
\begin{equation}\label{19/02/02/11:20}
\|
\{ 1+(-\Delta+s)^{-1}V \}^{-1} 
\|_{L_{\rm rad}^{q}\to  L_{\rm rad}^{q}}
\lesssim 
\delta(s)s^{-1}
,
\end{equation}
which proves \eqref{19/02/02/11:47}. 

In remains to prove \eqref{18/11/11/15:50}. Assume $f\in X_{q}$. Then, $\Pi f =0$ and therefore 
\begin{equation}\label{19/02/02/11:21}
\begin{split}
\{ 1+(-\Delta+s)^{-1}V\}^{-1} 
f 
&= 
B_{\varepsilon_{0}} \left(\begin{array}{cc} D_{11} & D_{12} \\ D_{21} & D_{22} \end{array} \right)
\left(\begin{array}{cc} A_{11}^{-1} & 0 \\ 0 & A_{22}^{-1} \end{array} \right)
\mathbf{C}f
\\[6pt]
&=
D_{11}A_{11}^{-1}(1-Q)f 
+
D_{21}A_{11}^{-1}(1-Q)f.
\end{split} 
\end{equation}
Then, \eqref{18/11/11/15:50} follows from \eqref{19/02/02/11:21}, \eqref{19/01/31/16:10} with $\varepsilon=\varepsilon_{0}$, and \eqref{19/02/02/10:12}. 
 Thus, we have completed the proof.  
\end{proof}


\section{Uniqueness}\label{18/12/17/07:07}

In this section, we give a  proof of  Theorem \ref{18/09/09/17:09}.  
 Our proof is based on the scaling argument  and the fixed-point argument developed in \cite{CG}. 
We will see that ``the uniqueness provided by the fixed-point argument'' is helpful in proving the uniqueness of ground states to \eqref{eq:1.1}. 

First, observe that if $\Phi$ is a solution to \eqref{eq:1.1} and $\lambda>0$, 
 then $u:=T_{\lambda}[\Phi]$ satisfies 
\begin{equation}\label{20/12/6/16:37} 
 -\Delta u  
+\omega  \lambda^{-2^{*}+2} u
-\lambda^{-2^{*}+p+1}  | u|^{p-1} u
-| u |^{\frac{4}{d-2}} u   =0 
.
\end{equation}
For $s>0$ and $t>0$,  we consider a  general form of \eqref{20/12/6/16:37}:    
\begin{equation}\label{20/8/1/1:1}
-\Delta u + s u - t |u|^{p-1}u - |u|^{\frac{4}{d-2}}u = 0
.
\end{equation}
When  $s=t=0$,  \eqref{20/8/1/1:1} agrees with the equation \eqref{eq:1.15}.  
 Hence, we may expect that  \eqref{20/8/1/1:1} admits a ground state asymptotically looks like $W$ as $s,t \to 0$.

We look for  a radial solution to \eqref{20/8/1/1:1} of the form $W+\eta$, which leads us to 
\begin{equation}\label{20/12/6/14:41}
(-\Delta +V+s)  \eta
= 
-s W + t W^{p} + N(\eta; t),
\end{equation}
where $N(\eta;t)$ is the function defined by \eqref{19/01/13/15:16}. 
By the decomposition $-\Delta + V +s =(-\Delta+s) \{ 1 + (-\Delta +s)^{-1}V \}$, and the substitution $s=\alpha(\tau)$,  
 we can rewrite \eqref{20/12/6/14:41}  as  
\begin{equation}\label{20/8/19/15:59}
\eta
= 
\{ 1 + (-\Delta +\alpha(\tau))^{-1}V \}^{-1}
(-\Delta+\alpha(\tau))^{-1} F(\eta;\alpha(\tau), t), 
\end{equation}
where $F(\eta; \alpha(\tau), t):=-\alpha(\tau) W + t W^{p} + N(\eta; t)$ (see \eqref{19/01/13/15:17}). 
The significance of the substitution $s=\alpha(\tau)$ is that  $\alpha(\tau)$ is written as $\tau \delta(\alpha(\tau))$ (see \eqref{20/11/2/9:40}); 
  We will use this fact later (see \eqref{20/11/16/10:42} below). 
 Note that  a solution to \eqref{20/8/1/1:1} which asymptotically looks like $W$ corresponds to 
 a solution to  \eqref{20/8/19/15:59} which vanishes as $\tau \to 0$  (in $L^{q}(\mathbb{R}^{d})$ for all $q>\frac{d}{d-2}$).   
 Furthermore, recall from Proposition \ref{18/11/17/07:17} (see also \eqref{20/12/6/11:56} in Remark \ref{20/9/13/10:11}) that
 for any $\frac{d}{d-2}<q <\infty$,  
\begin{equation}\label{20/12/6/15:5}  
\|\{ 1 + (-\Delta +\alpha(\tau))^{-1}V \}^{-1}\|_{L_{\rm rad}^{q}\to L_{\rm rad}^{q}} \lesssim \tau^{-1},
\end{equation}
where the implicit constant depends only on $d$ and $q$. 
 From these point of view, we need to  eliminate the singular behavior of  a solution to \eqref{20/8/19/15:59} arising from \eqref{20/12/6/15:5}. 
 To this end,  we require  the following condition, as well as \cite{CG}: 
\begin{equation}\label{19/01/13/15:03}
\langle (-\Delta+ \alpha(\tau) )^{-1} F(\eta; \alpha(\tau),t), V\Lambda W \rangle 
= 0
.
\end{equation}
As a result,  we consider the system of \eqref{20/8/19/15:59} and \eqref{19/01/13/15:03}. 
 We will find a solution by using Banach fixed-point theorem, as in  \cite{CG}.  
 To this end, using \eqref{19/01/13/15:17} and $\alpha(\tau)= \tau \delta(\alpha(\tau))$ (see \eqref{20/11/2/9:40}),
 we rewrite  \eqref{19/01/13/15:03} as 
\begin{equation}\label{20/11/16/10:42}
\tau
=
\frac{t \langle (-\Delta+ \alpha(\tau) )^{-1} W^{p}, V\Lambda W \rangle  
+\langle (-\Delta+ \alpha(\tau) )^{-1} N(\eta;t), V\Lambda W \rangle }
{\delta(\alpha(\tau)) \langle (-\Delta+ \alpha(\tau) )^{-1} W, V\Lambda W \rangle}
.
\end{equation}

Here, we introduce several symbols:  
\begin{notation}\label{20/11/16/11:8}
\begin{enumerate}
\item 
For $t>0$, $0< \tau <1$,  and $\eta \in H^{1}(\mathbb{R}^{d})$, 
 we define $\mathscr{X}(\tau)$, $\mathscr{W}_{p}(\tau)$ and $\mathscr{N}(t, \tau, \eta)$ as  
\begin{align}
\label{20/10/15/8:1}
\mathscr{X}(\tau)
&:=
\delta( \alpha(\tau) ) \langle 
(-\Delta + \alpha(\tau))^{-1} W,  V \Lambda W
\rangle
,
\\[6pt]
\label{20/10/15/8:2}
\mathscr{W}_{p}(\tau)
&:=
\langle 
(-\Delta + \alpha(\tau))^{-1} W^{p},  V \Lambda W
\rangle
,
\\[6pt]
\label{20/10/15/8:3}
\mathscr{N}(t, \tau, \eta)
&:=
\langle (-\Delta + \alpha(\tau) )^{-1} N(\eta; t)
 ,  V \Lambda W
\rangle
.
\end{align}

\item
For $t>0$, $0< \tau <1$ and $\eta \in H^{1}(\mathbb{R}^{d})$, 
 we define $\mathfrak{s}(t; \tau, \eta)$ as 
\begin{equation}\label{19/01/14/13:30}
\mathfrak{s}(t; \tau, \eta) 
:=
\frac{t \mathscr{W}_{p}(\tau) + \mathscr{N}(t,\tau,\eta)}{\mathscr{X}(\tau)}
.
\end{equation}

\item 
For $t>0$, $0< \tau <1$  and $\eta \in H^{1}(\mathbb{R}^{d})$, 
 we define $\mathfrak{g}(t; \tau, \eta)$ as  
\begin{equation}\label{20/10/25/11:45}
\mathfrak{g}(t; \tau, \eta)
:=
\{ 1+ (-\Delta + \alpha(\tau) )^{-1} V \}^{-1} 
(-\Delta +\alpha(\tau) )^{-1} 
F(\eta; \alpha(\tau), t)
.
\end{equation}

\item
We use $A_{1}$ and $K_{p}$  to denote the constants given in Lemma \ref{20/12/22/13:55} and Lemma \ref{18/12/19/01:00}, respectively.  

\item 
For $t>0$, we define an interval $I(t)$ as 
\begin{equation}\label{20/10/20/10:36}
I(t):=\bigm[ \frac{K_{p}}{2A_{1}}t,~ \frac{3K_{p}}{2A_{1}}t \bigm] 
. 
\end{equation}
In stead of $I(t)$, we may use any interval  which contains $\frac{K_{p}}{A_{1}}t$ and  whose length is comparable to $t$.

\item 
For  $\frac{d}{d-2}<q < \infty$  we define a number $\Theta_{q}$ as 
\begin{equation}\label{20/11/1/11:6}
\Theta_{q}
:=
\frac{d-2}{2} - \frac{d}{2q}
.  
\end{equation}
\item
 For $\frac{d}{d-2}<q< \infty$, $R>0$ and $0< t <1$, 
 we define $Y_{q}(R,t)$ as
\begin{equation}\label{19/01/13/15:35}
Y_{q}(R,t)
:=
\bigm\{ 
\eta \in L_{\rm rad}^{q}(\mathbb{R}^{d}) 
\colon 
\|\eta \|_{L^{q}} 
\le R \alpha(t)^{\Theta_{q}}
\bigm\},
\end{equation} 
Note here that when $d=4$,   $\alpha$ is defined only on $(0,1)$ (see Lemma \ref{20/12/30/8:13}). 
\end{enumerate}
\end{notation}
\begin{remark}\label{20/11/24/11}
Observe from $s=\alpha(\beta(s))$ that   
\begin{align}
\label{20/11/24/1}
\delta(s) \langle 
(-\Delta +s )^{-1} W,  V \Lambda W
\rangle
&=
\mathscr{X}( \beta(s))
,
\\[6pt]
\label{20/11/24/2}
\langle 
(-\Delta + s)^{-1} W^{p},  V \Lambda W
\rangle
&=
\mathscr{W}_{p}(\beta(s))
,
\\[6pt]
\label{20/11/24/3}
\langle (-\Delta + s )^{-1} N(\eta; t)
 ,  V \Lambda W
\rangle
&=
\mathscr{N}(t, \beta(s), \eta)
.
\end{align}
\end{remark}

\begin{remark}\label{20/12/7/11:22}
\begin{enumerate}
\item
We can verify that for any  $\frac{d}{d-2} <q <\infty$,  $R>0$ and $0< t <1$,  
 the set $Y_{q}(R,t)$ is complete as a subspace of $L^{q}(\mathbb{R}^{d})$ with the induced metric (see Section \ref{19/03/02/16:22}). 

\item
Since $\alpha(t)$ is strictly increasing on $(0,1)$ and $\lim_{t\to 0}\alpha(t)=0$ (see Section \ref{20/8/18/11:39}), 
 we find that: for any $\theta>0$ and $R>0$, there exists $T(\theta, R)>0$ depending only on $d$, $\theta$ and $R$ such that  
\begin{equation}\label{19/02/19/12:01}
(1+R)^{\frac{d+2}{d-2}} \alpha(t)^{\theta}
\le 1
\quad 
\mbox{for all $0<t \le T(\theta,R)$}
.
\end{equation}
\end{enumerate}
\end{remark}

Now, we use the symbols \eqref{19/01/14/13:30} and \eqref{20/10/25/11:45} to rewrite  the system of 
 the equations \eqref{20/8/19/15:59}  and  \eqref{19/01/13/15:03} as follows:   
\begin{equation}\label{19/01/13/14:55}
(\tau, \eta)
=
\big(\mathfrak{s}(t; \tau, \eta),  \mathfrak{g}(t; \tau, \eta) \big)
.
\end{equation}

We can find a solution to \eqref{19/01/13/14:55}: 
\begin{proposition}\label{19/01/21/08:01}
Assume $d=3,4$ and $\frac{4}{d-2}-1 <p<\frac{d+2}{d-2}$.  
 Let $\max\{ 2^{*}, \frac{2^{*}}{p-1}\}<q <\infty$ and $R>0$.
 Then, there exists $0< T_{1}(q,R) <1$ depending only on $d$,  $p$, $q$ and $R$  with the following property:
 for any $0< t < T_{1}(q, R)$,  
 the product metric space $I(t) \times Y_{q}(R,t)$ admits one and only one solution $(\tau_{t}, \eta_{t})$ to 
  \eqref{19/01/13/14:55}.     
 Furthermore, $\tau_{t}$ is continuous and strictly increasing with respect to $t$ on $(0,T_{1}(q,R))$.   
 \end{proposition}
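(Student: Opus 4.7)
The plan is to apply Banach's fixed-point theorem to the map $\mathcal{T}_t(\tau,\eta):=(\mathfrak{s}(t;\tau,\eta),\mathfrak{g}(t;\tau,\eta))$ on the complete metric space $I(t)\times Y_q(R,t)$ with the sum of $|\cdot|$ and $\|\cdot\|_{L^q}$ metrics, following the scheme of \cite{CG}. The sizes are tuned so that $I(t)$ is a neighborhood of the expected limiting value $\tfrac{K_p}{A_1}t$ of $\tau_t$, and $Y_q(R,t)$ is the $L^q$-ball in which $\eta_t$ is expected to live.

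First I would extract the scalar asymptotics. Lemma~\ref{20/12/22/13:55} with $s=\alpha(\tau)$ yields $\mathscr{X}(\tau)=A_1+O(\delta(\alpha(\tau)))$ and Lemma~\ref{18/12/19/01:00} gives $\mathscr{W}_p(\tau)=K_p+O(\alpha(\tau)^{d/(2r)-1})$, both $o(1)$ as $\tau\to 0$. Combining \eqref{20/11/8/11:39} with the pointwise bounds \eqref{19/02/19/11:43}--\eqref{19/02/19/17:02} and pairing against $V\Lambda W\in L^{q'}\cap L^{1}$, one shows $|\mathscr{N}(t,\tau,\eta)|\lesssim \|\eta\|_{L^q}^{2}+t\|\eta\|_{L^q}=o(t)$ uniformly for $\eta\in Y_q(R,t)$, since $\Theta_q>0$ whenever $q>2^{*}$. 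Inserted into the definition of $\mathfrak{s}$, this gives $\mathfrak{s}(t;\tau,\eta)=\tfrac{K_p}{A_1}t\,(1+o_t(1))\in I(t)$ for $t$ small enough.

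For the second coordinate, Proposition~\ref{18/11/17/07:17} in the form \eqref{20/12/6/15:5} yields
\[
\|\mathfrak{g}(t;\tau,\eta)\|_{L^{q}}\lesssim \tau^{-1}\,\|(-\Delta+\alpha(\tau))^{-1}F(\eta;\alpha(\tau),t)\|_{L^{q}}, \qquad \tau\sim t .
\]
Splitting $F=-\alpha(\tau)W+tW^{p}+N(\eta;t)$, the linear pieces are controlled in $L^q$ by $t$ via Lemmas~\ref{18/11/05/10:29} and~\ref{18/11/23/17:17}, while the nonlinear piece is $o(t\,\alpha(t)^{\Theta_q})$ thanks to \eqref{19/02/19/11:43}--\eqref{19/02/19/17:02}, the hypothesis $q>2^{*}/(p-1)$, and the $L^{q}$-mapping properties of the resolvent against $W^r$-weighted sources. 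Dividing by $\tau\sim t$ reproduces the bound $R\,\alpha(t)^{\Theta_q}$ once $R$ is chosen to absorb the implicit constant. The Lipschitz constant of $\mathcal{T}_t$ in $(\tau,\eta)$ is shown to be less than $\tfrac12$ by a parallel computation: Lipschitz in $\eta$ comes from \eqref{20/11/8/11:35} with $t_1=t_2=t$ (gaining a power of $\alpha(t)$), and Lipschitz in $\tau$ comes from the resolvent identity \eqref{18/11/11/09:35} applied to $\mathscr{X},\mathscr{W}_p,\mathscr{N}$ and to $(-\Delta+\alpha(\tau))^{-1}$. The resulting contraction produces a unique fixed point $(\tau_t,\eta_t)$. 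Continuity of $t\mapsto\tau_t$ then follows from the standard parameter-dependence statement for uniform contractions, and strict monotonicity from $\partial_t\mathfrak{s}(t;\tau_t,\eta_t)=K_p/A_1+o_t(1)>0$ together with the implicit-function theorem applied to $\tau-\mathfrak{s}(t;\tau,\eta_t)=0$.

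The main obstacle I anticipate is the self-mapping estimate on $Y_q(R,t)$: the $\tau^{-1}$ loss in Proposition~\ref{18/11/17/07:17} is exactly saturated by the source $tW^{p}$, so constants must be tracked with no slack; in dimension $4$ the logarithm hidden in $\delta$ must be propagated consistently through $\alpha(t)\sim\delta(t)\,t$ so that the output precisely reproduces the exponent $\Theta_q$ instead of a slightly worse power, and this is exactly what forces the constraints $q>2^{*}$ and $q>2^{*}/(p-1)$ in the statement.
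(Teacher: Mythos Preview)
There is a genuine gap in your self-mapping estimate for $\mathfrak{g}$. You invoke the general bound \eqref{19/02/02/11:47} (equivalently \eqref{20/12/6/15:5}) with its $\tau^{-1}$ loss, and this loss is \emph{not} merely saturated ``with no slack'' as you suggest---it is strictly too large. The linear piece $-\alpha(\tau)W$ contributes $\alpha(\tau)^{\Theta_q}$ to $\|(-\Delta+\alpha(\tau))^{-1}F\|_{L^q}$, and the piece $tW^p$ contributes $\sim t$; dividing either by $\tau\sim t$ produces a quantity that blows up as $t\to 0$ (for $d=3$, $t^{-1}\alpha(t)^{\Theta_q}=t^{2\Theta_q-1}\to\infty$ since $\Theta_q<\tfrac12$; and $t^{-1}\cdot t=1\not\le R\,\alpha(t)^{\Theta_q}$). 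Hence $\mathfrak{g}(t;\tau,\eta)$ cannot be shown to land in $Y_q(R,t)$ for an \emph{arbitrary} $(\tau,\eta)\in I(t)\times Y_q(R,t)$, and your direct product-space contraction does not close.

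The paper circumvents this via a \emph{two-step} fixed-point scheme rather than a single contraction on the product. First, for each fixed $\eta\in Y_q(R,t)$ one finds the unique $\tau(t;\eta)\in I(t)$ solving $\tau=\mathfrak{s}(t;\tau,\eta)$ (Lemmas~\ref{20/10/20/10:23} and~\ref{20/10/27/11:21} supply the self-map and contraction on $I(t)$). This equality is precisely the orthogonality condition \eqref{19/01/13/15:03}, which forces $(-\Delta+\alpha(\tau))^{-1}F(\eta;\alpha(\tau),t)\in X_q$. Only then does the \emph{improved} estimate \eqref{18/11/11/15:50} of Proposition~\ref{18/11/17/07:17}---with no $\tau^{-1}$ loss---apply, giving $\|\mathfrak{g}(t;\tau(t;\eta),\eta)\|_{L^q}\lesssim\alpha(t)^{\Theta_q}$ (Lemma~\ref{20/10/30/11:47}). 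The second fixed point is then run in $\eta$ alone for the composite map $\eta\mapsto\mathfrak{g}(t;\tau(t;\eta),\eta)$. Continuity and strict monotonicity of $t\mapsto\tau_t$ are obtained from the quantitative difference estimates of Lemmas~\ref{20/10/27/11:21} and~\ref{20/10/30/11:47}, not from the implicit-function theorem (no differentiability in $t$ is established).
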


Observe that if $(\tau_{t}, \eta_{t})$ is a solution to \eqref{19/01/13/14:55},  then $W+\eta_{t}$ is one to the following equation:
\begin{equation}\label{19/02/25/12:07}
-\Delta u  + \alpha(\tau_{t})  u  
-
t |u|^{p-1} u
-
|u|^{\frac{4}{d-2}}u 
=0
.
\end{equation}
We denote the action and Nehari functional associated with \eqref{19/02/25/12:07} by 
\begin{align}
\label{20/11/25/7:1}
\widetilde{\mathcal{S}}_{t}(u)
&:=
\frac{ \alpha(\tau_{t})}{2}\|u\|_{L^{2}}^{2}
+
\frac{1}{2}\|\nabla u \|_{L^{2}}^{2}
-
\frac{t}{p+1}\|u\|_{L^{p+1}}^{p+1}
-
\frac{1}{2^{*}}\|u\|_{L^{2^{*}}}^{2^{*}},
\\[6pt]
\label{20/11/25/7:2}
\widetilde{\mathcal{N}}_{t}(u)
&:=
\alpha(\tau_{t})\|u\|_{L^{2}}^{2}
+
\|\nabla u \|_{L^{2}}^{2}
-
t \|u\|_{L^{p+1}}^{p+1}
-
\|u\|_{L^{2^{*}}}^{2^{*}}
.
\end{align}
Furthermore,  we define  $\widetilde{\mathcal{G}}_{t}$ as the set of all positive radial ground states to \eqref{19/02/25/12:07}, 
 namely,   $\widetilde{\mathcal{G}}_{t}$ denotes the set of positive radial minimizers for the following variational problem:
\begin{equation}\label{19/02/11/17:03}
\inf\big\{ 
\widetilde{\mathcal{S}}_{t}(u) 
 \colon u \in H^{1}(\mathbb{R}^{d})\setminus \{0\},
\  
\widetilde{\mathcal{N}}_{t}(u)=0 
\big\}
.
\end{equation}

The following proposition is a key to proving the uniqueness  of ground states to \eqref{eq:1.1}: 
\begin{proposition}[cf. Lemma 3.11 of \cite{CG}]
\label{19/02/11/16:56}
Assume $d=3,4$ and $\frac{4}{d-2}-1<p<\frac{d+2}{d-2}$.  
 Let $\max\{2^{*}, \frac{2^{*}}{p+3-2^{*}}\} < q <\infty$. 
 Furthermore, for $R>0$, let $T_{1}(q,R)$ denote the number given in Proposition \ref{19/01/21/08:01}. 
 Then,  there exist $R_{*}>0$ and $0< T_{*} <T_{1}(q,R_{*})$,  both depending only on $d$, $p$ and $q$, 
 with the following property:  
 Let $0<t < T_{*}$,  and let $(\tau_{t}, \eta_{t})$  be a unique  solution to \eqref{19/01/13/14:55} in $I(t) \times Y_{q}(R_{*},t)$ (see Proposition \ref{19/01/21/08:01}).
 Then,  $W+ \eta_{t}$ is a unique positive radial ground state to \eqref{19/02/25/12:07}; 
 in other words,  $\widetilde{\mathcal{G}}_{t}=\{ W+\eta_{t}\}$.    
\end{proposition}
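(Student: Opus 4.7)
The plan is four-step. Let $\Psi \in \widetilde{\mathcal{G}}_t$ with $t$ small. First, I would establish the quantitative $L^q$-proximity of $\Psi$ to $W$. Second, I find a unique scaling parameter $\mu$ close to $1$ so that the rescaled difference $\tilde\eta_\mu := T_\mu[\Psi] - W$, together with the rescaled parameters $s'(\mu) := \alpha(\tau_t)\mu^{-(2^{*}-2)}$ and $t'(\mu) := t\mu^{-(2^{*}-p-1)}$, satisfies the orthogonality \eqref{19/01/13/15:03}. Third, I identify $(\tau'(\mu), \tilde\eta_\mu)$ with $(\tau_{t'(\mu)}, \eta_{t'(\mu)})$ via Proposition \ref{19/01/21/08:01}, where $\tau'(\mu) := \beta(s'(\mu))$. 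Fourth, a rigidity argument using the strict monotonicity of $t \mapsto \tau_t$ forces $\mu = 1$, so $\Psi = W + \eta_t$.

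For step one, put $\lambda := t^{-1/(2^{*}-p-1)}$; a direct computation shows that $v := T_{\lambda^{-1}}[\Psi]$ is a positive radial ground state of \eqref{eq:1.1} with frequency $\omega := \alpha(\tau_t)\,t^{-(2^{*}-2)/(2^{*}-p-1)}$. The bound $\tau_t \sim t$ coming from $I(t)$ together with the explicit formulas for $\alpha$ in Section \ref{20/8/18/11:39} gives $\omega \to \infty$ as $t \to 0^+$, so Lemmas \ref{proposition:2.3}, \ref{theorem:3.1} and \ref{18/09/05/01:05} apply to $v$; pulling the resulting convergence $T_{v(0)}[v] \to W$ back to $\Psi$ through Lemma \ref{proposition:2.3} yields $\|\Psi - W\|_{L^q} \to 0$ at a rate that fits inside $Y_q(R_*, t'(\mu))$ for $R_*$ large and $T_*$ small.

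For step two, set $G(\mu) := \langle (-\Delta + s'(\mu))^{-1} F(\tilde\eta_\mu; s'(\mu), t'(\mu)),\, V\Lambda W\rangle$. Using $(-\Delta + V + s'(\mu))\tilde\eta_\mu = F(\tilde\eta_\mu; s'(\mu), t'(\mu))$ and $V\Lambda W = \Delta \Lambda W$, a direct reduction gives
\begin{equation*}
G(\mu) \;=\; s'(\mu)\,\langle V\tilde\eta_\mu,\, (-\Delta + s'(\mu))^{-1}\Lambda W\rangle,
\end{equation*}
whence $|G(1)| \lesssim \tau_t\,\|\Psi - W\|_{L^q}$ by Lemma \ref{21/1/10/15:37} and $s/\delta(s) = \tau_t$ (with $s := \alpha(\tau_t)$). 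Differentiating at $\mu = 1$ via \eqref{20/9/25/11:51} and applying Lemma \ref{18/12/02/11:25} (with $g = \Lambda W$; note $\Re\mathcal{F}[V\Lambda W](0) \neq 0$ by \eqref{20/12/22/14:18}) to evaluate $\langle V\Lambda W, (-\Delta + s)^{-1}\Lambda W\rangle \sim \delta(s)^{-1}$, one obtains $|G'(1)| \sim \tau_t$, which is nonzero. A quantitative implicit function argument then yields a unique $\mu$ near $1$ with $G(\mu) = 0$, and $|\mu - 1| \lesssim |G(1)|/|G'(1)| \lesssim \|\Psi - W\|_{L^q}$, which is small.

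In step three, $T_\mu[\Psi]$ solves \eqref{20/8/1/1:1} with $(s'(\mu), t'(\mu))$, and the identity $G(\mu) = 0$ is precisely the first equation $\tau'(\mu) = \mathfrak{s}(t'(\mu); \tau'(\mu), \tilde\eta_\mu)$ per the derivation leading to \eqref{20/11/16/10:42}. Since $(-\Delta + s'(\mu))^{-1}F(\tilde\eta_\mu; s'(\mu), t'(\mu)) \in X_q$ thanks to the orthogonality, the improved bound of Proposition \ref{18/11/17/07:17}(2) delivers the second equation $\tilde\eta_\mu = \mathfrak{g}(t'(\mu); \tau'(\mu), \tilde\eta_\mu)$. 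Placing $(\tau'(\mu), \tilde\eta_\mu)$ inside $I(t'(\mu)) \times Y_q(R_*, t'(\mu))$, Proposition \ref{19/01/21/08:01} identifies $\tilde\eta_\mu = \eta_{t'(\mu)}$ and $\tau'(\mu) = \tau_{t'(\mu)}$. Eliminating $\mu$ from $\alpha(\tau_{t'(\mu)}) = \alpha(\tau_t)\mu^{-(2^{*}-2)}$ and $t'(\mu) = t\mu^{-(2^{*}-p-1)}$ yields
\begin{equation*}
\frac{\alpha(\tau_{t'(\mu)})}{(t'(\mu))^{(2^{*}-2)/(2^{*}-p-1)}} \;=\; \frac{\alpha(\tau_t)}{t^{(2^{*}-2)/(2^{*}-p-1)}},
\end{equation*}
and the strict monotonicity of the map $t \mapsto \alpha(\tau_t)/t^{(2^{*}-2)/(2^{*}-p-1)}$ near $0$ (from the explicit form of $\alpha$ in dimensions $d=3,4$ and the monotonicity provided by Proposition \ref{19/01/21/08:01}) forces $t'(\mu) = t$, hence $\mu = 1$ and $\Psi = W + \eta_t$. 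The main obstacle is step one: sharpening the soft convergence of Lemma \ref{theorem:3.1} into a quantitative rate matching $\alpha(t)^{\Theta_q}$ (so that $\tilde\eta_\mu$ actually lands inside $Y_q(R_*, t'(\mu))$) requires careful bookkeeping of the scaling factor, and ultimately rests on the kind of quantitative asymptotics developed in \cite{AIIKN}.
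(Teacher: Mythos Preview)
Your overall strategy---rescale $\Psi$ into the fixed-point framework, identify with $(\tau_{t'},\eta_{t'})$ via the uniqueness in Proposition~\ref{19/01/21/08:01}, then invoke a rigidity/monotonicity argument to force the scaling to be trivial---matches the paper's. But there is a genuine gap in the choice of scaling center.

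You look for $\mu$ near $1$ so that $\tilde\eta_\mu=T_\mu[\Psi]-W$ satisfies the orthogonality, and your implicit-function setup (evaluation of $G(1)$, $G'(1)$) as well as the membership $\tilde\eta_\mu\in Y_q(R_*,t'(\mu))$ all rest on $\|\Psi-W\|_{L^q}$ being small. You try to extract this from Lemmas~\ref{proposition:2.3} and~\ref{theorem:3.1}, but those results give convergence of $T_{\Psi(0)}[\Psi]$ to $W$, not of $\Psi$ itself. Transferring this to $\Psi$ would require $\Psi(0)\to 1$, which is precisely part of the conclusion you are after (since $\Psi=W+\eta_t$ forces $\Psi(0)=1+\eta_t(0)\to 1$). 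Thus the argument is circular. The paper sidesteps this by centering the rescaling at $\mu^\dagger(t)=\mu(t)\,\Psi(0)$ with $\mu(t)=1+o_t(1)$ (Lemma~\ref{20/9/23/9:8}); the rescaled parameters $t^\dagger=(\mu^\dagger)^{-(2^*-p-1)}t$ and $\alpha^\dagger=(\mu^\dagger)^{-(2^*-2)}\alpha(\tau_t)$ are then \emph{not} a priori close to $t$ and $\alpha(\tau_t)$. Only after identifying $(\beta(\alpha^\dagger),\eta^\dagger)$ with $(\tau_{t^\dagger},\eta_{t^\dagger})$ and applying the injectivity of $\Omega_t(\lambda)=\lambda^{-(2^*-2)/(2^*-p-1)}\alpha(\tau_{\lambda t})$ does one recover $t^\dagger=t$, hence $\mu^\dagger=1$, hence $\Psi(0)=1/\mu(t)\to 1$ a posteriori.

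Your step four has the right mechanism, but ``from the explicit form of $\alpha$ and the monotonicity provided by Proposition~\ref{19/01/21/08:01}'' is too quick: strict increase of $t\mapsto\tau_t$ does not by itself yield strict decrease of $\lambda\mapsto\alpha(\tau_{\lambda t})\lambda^{-(2^*-2)/(2^*-p-1)}$, since the two factors pull in opposite directions. The paper devotes a full computation to this (the ``Claim'' in the proof of Proposition~\ref{19/02/11/16:56}), combining Lemmas~\ref{19/01/03/16:41}, \ref{20/10/27/11:21} and~\ref{20/10/30/11:47} to control the increments of $\alpha(\tau_{\lambda t})$ precisely enough to show the negative term dominates.
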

We give a proof of Proposition \ref{19/02/11/16:56} in Section \ref{19/02/17/09:57}.

Now, we are in a position to prove  Theorem \ref{18/09/09/17:09}: 
\begin{proof}[Proof of Theorem \ref{18/09/09/17:09}]
Let $d=3,4$ and $\frac{4}{d-2}-1<p<\frac{d+2}{d-2}$. 
 Fix $\max\{2^{*}, \frac{2^{*}}{p+3-2^{*}}\} < q <\infty$, so that the dependence of a constant on $q$ can be absorbed into that on $d$ and $p$. 
 Then, by Proposition \ref{19/01/21/08:01} and Proposition \ref{19/02/11/16:56}, 
 we see that  there exist  $R_{*}>0$ and $0< T_{*}<1$  depending only on $d$ and $p$ such that for any $0<t \le T_{*}$,    
 $I(t) \times Y_{q}(R_{*},t)$  admits a unique  solution  $(\tau_{t}, \eta_{t})$  to \eqref{19/01/13/14:55}, and  $\widetilde{\mathcal{G}}_{t}=\{W+\eta_{t}\}$.

We shall show that there exists $\omega_{*}>0$ with the following property: for any $\omega>\omega_{*}$, there exists $0<t(\omega) <T_{*}$ such that 
\begin{equation}\label{20/9/16/12:3}
\alpha( \tau_{t(\omega)} ) = \omega t(\omega)^{\frac{2^{*}-2}{2^{*}-(p+1)}} 
.
\end{equation}
Observe from $\beta(s)=\delta(s)^{-1}s$ (see \eqref{19/01/15/08:25}) that for any $0< t<T_{*}$, 
\begin{equation}\label{20/8/20/11:24}
\beta( \omega t^{\frac{2^{*}-2}{2^{*}-(p+1)}})
=
\left\{ 
\begin{array}{ccc}
 \sqrt{\omega } t^{\frac{2}{5-p}} 
&\mbox{if}& d=3,
\\[6pt]
\log{( 1+\omega^{-1} t^{\frac{-2}{3-p}})}
 \omega t^{\frac{2}{3-p}} 
&\mbox{if}& d=4.
\end{array}
\right. 
\end{equation}
Then, by $\tau_{t} \in I(t):= \big[\frac{K_{p}}{2A_{1}}t,  \frac{3K_{p}}{2A_{1}}t \big]$ and \eqref{20/8/20/11:24}, we find (see Figure \ref{fig1}) that  there exists $\omega_{*}>0$ with the following property:
 for any $\omega >\omega_{*}$,  there exists $0<t(\omega)< T_{*}$ such that 
\begin{equation}\label{20/8/20/10:52}
\tau_{t(\omega)} = \beta( \omega t(\omega)^{\frac{2^{*}-2}{2^{*}-(p+1)}})
.
\end{equation}

\begin{figure}[htb]\label{fig1}
\begin{center}
 \input{figure-1.tex}
\end{center}
\end{figure}
Since $\alpha$ is the inverse function of $\beta$, \eqref{20/8/20/10:52} implies  \eqref{20/9/16/12:3}.


Now, we shall finish the proof of Theorem \ref{18/09/09/17:09}.  
  Let $\omega_{*}$ be a frequency found in the above, so that for any $\omega>\omega_{*}$, there exists $0< t(\omega) < T_{*}$ such that   
 \eqref{20/9/16/12:3} holds.  Suppose for contradiction that there exists $\omega >\omega_{*}$ for which the equation 
\eqref{eq:1.1} admits two distinct positive ground states, say $\Phi_{\omega}$ and $\Psi_{\omega}$.  
Then,  we define $\lambda(\omega) >0$ by  
\begin{equation}\label{20/9/16/12:4}
\lambda(\omega)^{-\{2^{*}-(p+1)\}} =t(\omega)
.
\end{equation} 
Furthermore, we define $\widetilde{\Phi}_{\omega}$ and $\widetilde{\Psi}_{\omega}$ as  
\begin{equation}\label{20/8/19/12:8}
\widetilde{\Phi}_{\omega} :=  
T_{\lambda(\omega)} \big[ \Phi_{\omega}  \big],
\qquad 
\widetilde{\Psi}_{\omega}:= 
T_{\lambda(\omega)}\big[ \Psi_{\omega} \big] 
,
\end{equation}
where  $T_{\lambda(\omega)}$ is the scaling operator defined by \eqref{20/9/25/7:1}.
 We shall derive a contradiction by showing that  
\begin{equation}\label{20/11/25/11:50}
\widetilde{\Phi}_{\omega}, \widetilde{\Psi}_{\omega} \in 
\widetilde{\mathcal{G}}_{t(\omega)}.
\end{equation} 
Indeed,  \eqref{20/11/25/11:50} contradicts Proposition \ref{19/02/11/16:56}; Thus, the claim of Theorem \ref{18/09/09/17:09} is true.  

Let us prove \eqref{20/11/25/11:50}.  
 Observe from $\mathcal{N}_{\omega}(\Phi_{\omega})=0$,  a computation involving the scaling, and \eqref{20/9/16/12:3}  
 that 
\begin{equation}\label{20/9/16/14:50}
\widetilde{ \mathcal{N} }_{t(\omega)} 
( \widetilde{\Phi}_{\omega} )
=0
,
\end{equation}
where $\mathcal{N}_{\omega}$ and $\widetilde{ \mathcal{N} }_{t(\omega)}$ are the functionals defined by \eqref{20/8/5/10:4} and \eqref{20/11/25/7:2}, respectively. Furthermore,  \eqref{20/9/16/14:50} implies 
\begin{equation}\label{20/9/16/14:57}
\widetilde{\mathcal{S}}_{t(\omega)}
(\widetilde{\Phi}_{\omega})
\ge 
\inf\big\{ 
\widetilde{\mathcal{S}}_{t(\omega)}(u) 
 \colon u \in H^{1}(\mathbb{R}^{d})\setminus \{0\},
\  
\widetilde{\mathcal{N}}_{t(\omega)}(u)=0 
\big\}
.
\end{equation}
If $\widetilde{\Phi}_{\omega}$ were not  in $\widetilde{\mathcal{G}}_{t(\omega)}$, then  the equality failed in \eqref{20/9/16/14:57} and  there existed a function $u \in H^{1}(\mathbb{R}^{d})\setminus \{0\}$ such that 
\begin{align}
\label{20/9/16/14:59}
\widetilde{\mathcal{S}}_{t(\omega)}(u)
&<
\widetilde{\mathcal{S}}_{t(\omega)}(\widetilde{\Phi}_{\omega})
,
\\[6pt]
\label{20/9/16/15:1}
\widetilde{\mathcal{N}}_{t(\omega)}(u)
&=0 
.
\end{align}
Then, define $U_{\omega}$ by $U_{\omega}(x):= T_{\lambda(\omega)^{-1}}[u](x)=\lambda(\omega) u (\lambda(\omega)^{\frac{2}{d-2}}x)$. 
By \eqref{20/9/16/12:3}, \eqref{20/9/16/14:59}, \eqref{20/9/16/15:1} and a computation involving the scaling, we see that 
\begin{equation}\label{20/9/16/15:14}
\mathcal{S}_{\omega}(U_{\omega})
<
\mathcal{S}_{\omega}(\Phi_{\omega}), 
\qquad 
\mathcal{N}_{\omega}(U_{\omega})=0.
\end{equation} 
However, \eqref{20/9/16/15:14} contradicts that $\Phi$ is a ground state to \eqref{eq:1.1}. 
 Thus, it must hold that $\widetilde{\Phi}_{\omega} \in \widetilde{\mathcal{G}}_{t(\omega)}$. Similarly, 
   we can verify that $\widetilde{\Psi}_{\omega}\in \widetilde{\mathcal{G}}_{t(\omega)}$.  
 Thus,  we have proved \eqref{20/11/25/11:50}, which completes the proof.
\end{proof}


\subsection{Basic estimates}\label{19/01/13/14:51}

In this section,  we prepare  several estimates  for the proof of Proposition \ref{19/01/21/08:01}.   
 In order to describe  them,  we use the following symbols: 
\begin{notation}\label{20/12/7/11:39}
\begin{enumerate}
\item
For $d=3,4$, $p>1$ and $q>1$,  define a number $\nu_{q}$ as    
\begin{equation}\label{20/10/26/8:27}
\nu_{q}
:=
\left\{ \begin{array}{cl}
\displaystyle{\frac{d}{2q}} 
& 
\mbox{if $d=3,4$ and $\frac{d}{d-2} \le p<\frac{d+2}{d-2}$}, 
\\[12pt]
\displaystyle{1 - \frac{(d-2)(p-1) }{4}}  
& 
\mbox{if $d=3,4$ and $1<p<\frac{d}{d-2}$}
. 
\end{array} \right. 
\end{equation}
Note that 
\begin{equation}\label{21/1/6/8:15}
\frac{(d-2)p-2}{2}>\frac{d-2}{2} - \nu_{q}.
\end{equation}
Furthermore,  if $p> \frac{4}{d-2}-1$ and $q>\max\{2^{*}, \frac{2^{*}}{p-1}\}$,  then 
\begin{equation}
\label{20/11/1/8:12}
\Theta_{q} -\nu_{q} >0, 
\qquad
\Theta_{q} +\frac{(d-2)(p-1)}{2}-1>0
.
\end{equation}

\item 
For $d=3,4$,  $p> \frac{4}{d-2}-1$ and $q>\max\{2^{*}, \frac{2^{*}}{p-1}\}$, define $\theta_{q}>0$ as 
\begin{equation}\label{21/1/6/10:11}
\theta_{q}:=\frac{1}{2}\min \Big\{  
\Theta_{q} -\nu_{q},~
\Theta_{q} +\frac{(d-2)(p-1)}{2}-1
\Big\} 
.
\end{equation}
\end{enumerate}
\end{notation}

\begin{remark}\label{20/12/7/11:47}
Assume $d=3,4$ and $p> \frac{4}{d-2}-1$,  and  let $q>\max\{ 2^{*}, \frac{2^{*}}{p-1}\}$.
 Then,  we see  from \eqref{20/10/15/16:46}  that  
 if $t>0$ is sufficiently small depending only on $d$, $p$ and  $q$, then 
\begin{align}
\label{21/2/6/15:27} 
t
&\le 
\alpha(t)^{\Theta_{q}}, 
\\[6pt]
\label{20/11/14/13:25}
t^{-1}  \alpha(t)^{2\Theta_{q}} 
&\le  
\alpha(t)^{\Theta_{q} -\nu_{q}}
.
\end{align}
\end{remark}

\begin{lemma}\label{19/02/24/14:00}
Assume $d=3,4$.   Let  $2^{*}< q < \infty$. 
 Then, for any $R>0$,  $s>0$,  $0< t_{1}\le t_{2}<1$, 
 $\eta_{1} \in Y_{q}(R,t_{1})$  and $\eta_{2}\in Y_{q}(R,t_{2})$,  
 the following holds: 
\begin{equation}\label{20/9/15/12:1}
\| (-\Delta+s)^{-1} D(\eta_{1},\eta_{2}) 
\|_{L^{q}}
\lesssim 
\Bigm\{
R\alpha(t_{2})^{\Theta_{q}}
+
s^{\frac{2^{*}}{q}-1} 
R^{\frac{4}{d-2}} \alpha(t_{2})^{\frac{4}{d-2}\Theta_{q}}  
\Bigm\}
\|\eta_{1}-\eta_{2}\|_{L^{q}}
,
\end{equation}
where the implicit constant depends only on $d$ and $q$.
\end{lemma}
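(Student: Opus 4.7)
The plan is to combine the pointwise bound \eqref{19/02/19/11:43} on $D(\eta_{1},\eta_{2})$ with the positivity property \eqref{20/9/15/14:54} of the resolvent, and then dispatch the resulting nonnegative majorant by two complementary mapping estimates for $(-\Delta+s)^{-1}$: the Hardy--Littlewood--Sobolev endpoint bound of Lemma \ref{18/11/23/17:17}, which is uniform in $s$, and Lemma \ref{18/11/05/10:29}, which supplies an explicit $s$-power in a different range.

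First I would use $\frac{6-d}{d-2}>0$ (since $d=3,4$) to bound $(W+|\eta_{1}|+|\eta_{2}|)^{\frac{6-d}{d-2}} \lesssim W^{\frac{6-d}{d-2}} + (|\eta_{1}|+|\eta_{2}|)^{\frac{6-d}{d-2}}$, and insert this into \eqref{19/02/19/11:43} to split
\begin{equation*}
|D(\eta_{1},\eta_{2})|
\lesssim
W^{\frac{6-d}{d-2}}(|\eta_{1}|+|\eta_{2}|)|\eta_{1}-\eta_{2}|
+
(|\eta_{1}|+|\eta_{2}|)^{\frac{4}{d-2}}|\eta_{1}-\eta_{2}|.
\end{equation*}
By \eqref{20/9/15/14:54} it is then enough to estimate the $L^{q}$-norm of $(-\Delta+s)^{-1}$ applied to each of the two nonnegative terms separately.

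For the first term I would invoke Lemma \ref{18/11/23/17:17}, obtaining $\|(-\Delta+s)^{-1}g\|_{L^{q}}\lesssim \|g\|_{L^{dq/(d+2q)}}$ with constant independent of $s$, and then apply H\"older's inequality with exponents $(a,q,q)$ where $a:=\frac{dq}{2q-d}$. Since $W^{\frac{6-d}{d-2}}\sim(1+|x|)^{-(6-d)}$ and $a>\frac{d}{6-d}$ for every $q>2^{*}$ and every $d\in\{3,4\}$, the weight $W^{\frac{6-d}{d-2}}$ lies in $L^{a}$ with a bound depending only on $d$ and $q$, while the two $\eta$-factors sit in $L^{q}$. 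Using $t_{1}\le t_{2}$ and $\eta_{j}\in Y_{q}(R,t_{j})$, this contribution is
\begin{equation*}
\lesssim
\big(\|\eta_{1}\|_{L^{q}}+\|\eta_{2}\|_{L^{q}}\big)\|\eta_{1}-\eta_{2}\|_{L^{q}}
\lesssim
R\alpha(t_{2})^{\Theta_{q}}\|\eta_{1}-\eta_{2}\|_{L^{q}}.
\end{equation*}

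For the second term I would apply Lemma \ref{18/11/05/10:29} with $q_{1}:=\frac{q(d-2)}{d+2}$ and $q_{2}:=q$; the admissibility condition $d(\frac{1}{q_{1}}-\frac{1}{q_{2}})<2$ reduces to $\frac{2\cdot 2^{*}}{q}<2$, i.e.\ $q>2^{*}$, which is precisely our hypothesis, and the resulting operator bound is $s^{\frac{2^{*}}{q}-1}$. A further H\"older inequality with exponents $\bigl(\frac{q(d-2)}{4},q\bigr)$ bounds the $L^{q_{1}}$-norm of $(|\eta_{1}|+|\eta_{2}|)^{\frac{4}{d-2}}|\eta_{1}-\eta_{2}|$ by $(\|\eta_{1}\|_{L^{q}}+\|\eta_{2}\|_{L^{q}})^{\frac{4}{d-2}}\|\eta_{1}-\eta_{2}\|_{L^{q}}$, giving a contribution $\lesssim s^{\frac{2^{*}}{q}-1}R^{\frac{4}{d-2}}\alpha(t_{2})^{\frac{4}{d-2}\Theta_{q}}\|\eta_{1}-\eta_{2}\|_{L^{q}}$. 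Summing the two contributions yields \eqref{20/9/15/12:1}. There is no serious obstacle; the only structural point worth stressing is the role of the hypothesis $q>2^{*}$, which is exactly what keeps $(-\Delta+s)^{-1}$ in the admissible range of Lemma \ref{18/11/05/10:29} for the high-order piece, while the low-order piece is treated at the HLS endpoint by the $s$-independent Lemma \ref{18/11/23/17:17}.
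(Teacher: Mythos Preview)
Your proposal is correct and follows essentially the same route as the paper: the same pointwise split of $D(\eta_{1},\eta_{2})$ via \eqref{19/02/19/11:43}, the same use of \eqref{20/9/15/14:54}, Lemma \ref{18/11/23/17:17} for the $W^{\frac{6-d}{d-2}}$ piece and Lemma \ref{18/11/05/10:29} with $q_{1}=\frac{(d-2)q}{d+2}$ for the high-order piece, with the identical H\"older exponents. Your integrability check $a>\tfrac{d}{6-d}$ is equivalent to the paper's observation \eqref{20/9/10/10:41}.
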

\begin{proof}[Proof of Lemma \ref{19/02/24/14:00}]
Note that $\frac{(6-d)dq}{(d-2)(2q-d)} >\frac{d}{d-2}$. Hence,  we see from  \eqref{20/9/13/9:55}  that    
\begin{equation}\label{20/9/10/10:41}
W \in L^{\frac{(6-d) dq}{(d-2)(2q-d)}}(\mathbb{R}^{d}).
\end{equation}
Furthermore, observe from  $0<t_{1}\le t_{2}$ that 
\begin{equation}\label{20/10/13/11:31}
\max_{j=1,2}\|\eta_{j}\|_{L^{q}}\le R \alpha(t_{2})^{\Theta_{q}}
.
\end{equation}
Then, by \eqref{20/9/15/14:54}, \eqref{19/02/19/11:43}, 
 Lemma \ref{18/11/23/17:17},  Lemma \ref{18/11/05/10:29}, 
 H\"older's inequality, 
 \eqref{20/9/10/10:41} and \eqref{20/10/13/11:31}, 
 we see that 
\begin{equation}\label{19/02/24/14:12}
\begin{split}
&
\| (-\Delta +s)^{-1} D(\eta_{1},\eta_{2}) \|_{L^{q}}
\\[6pt]
&
\lesssim
\| (-\Delta +s)^{-1} 
\big\{
W^{\frac{6-d}{d-2}} (|\eta_{1}|+|\eta_{2} |)+ (|\eta_{1}|+|\eta_{2}|)^{\frac{4}{d-2}}
\big\} 
|\eta_{1}-\eta_{2} |
\|_{L^{q}}
\\[6pt]
&
\lesssim
\| W^{\frac{6-d}{d-2}}
(|\eta_{1}|+|\eta_{2} |) 
|\eta_{1}-\eta_{2} |
\|_{L^{\frac{dq}{d+2q}}}
+
s^{\frac{2^{*}}{q}-1}
\|
(|\eta_{1}|+|\eta_{2} |)^{\frac{4}{d-2}} 
|\eta_{1}-\eta_{2} |
\|_{L^{\frac{(d-2)q}{d+2}}}
\\[6pt]
&\lesssim 
\| W \|_{ L^{\frac{(6-d)dq}{(d-2)(2q-d)}}}^{\frac{6-d}{d-2}}
\| |\eta_{1}| + |\eta_{2}| \|_{L^{q}}
\|\eta_{1}-\eta_{2}\|_{L^{q}}
+
s^{\frac{2^{*}}{q}-1} 
\|  |\eta_{1}| + |\eta_{2}| \|_{L^{q}}^{\frac{4}{d-2}}
\|\eta_{1}-\eta_{2}\|_{L^{q}}
\\[6pt]
&\lesssim  
\{
R\alpha(t_{2})^{\Theta_{q}}
+
s^{\frac{2^{*}}{q}-1} 
R^{\frac{4}{d-2}} \alpha(t_{2})^{\frac{4}{d-2}\Theta_{q}}  
\}
\|\eta_{1}-\eta_{2}\|_{L^{q}}
,
\end{split}
\end{equation}
where the implicit constants depend only on $d$ and $q$. This proves the lemma. 
\end{proof}

\begin{lemma}\label{19/02/23/18:11}
Assume $d=3,4$ and $1<p <\frac{d+2}{d-2}$.  
 Let  $2^{*}\le q <\infty$. Then, for any $R>0$,  $s>0$,  $0< t_{1}\le t_{2}<1$, 
  $\eta_{1}\in Y_{q}(R,t_{1})$ and  $\eta_{2}\in Y_{q}(R,t_{2})$,  
 the following holds:   
\begin{equation}\label{19/02/23/06:02}
\| (-\Delta +s)^{-1}  E(\eta_{1},\eta_{2}) \|_{L^{q}}
\lesssim 
\Bigm\{ 
s^{-\nu_{q}}
+
s^{\frac{d(p-1)}{2q}-1} R^{p-1}
\alpha(t_{2})^{(p-1)\Theta_{q}}
\Bigm\} 
\| \eta_{1}-\eta_{2} \|_{L^{q}}
,
\end{equation}
where  the implicit constant depends only on $d$, $p$ and $q$. 
\end{lemma}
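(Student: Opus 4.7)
\smallskip

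\noindent\textbf{Proof proposal.} The plan is to start from the pointwise estimate \eqref{19/02/19/17:02},
\[
|E(\eta_{1},\eta_{2})| \lesssim \bigl(W+|\eta_{1}|+|\eta_{2}|\bigr)^{p-1}|\eta_{1}-\eta_{2}|
\lesssim W^{p-1}|\eta_{1}-\eta_{2}| + \bigl(|\eta_{1}|+|\eta_{2}|\bigr)^{p-1}|\eta_{1}-\eta_{2}|,
\]
where the second inequality uses subadditivity of $x\mapsto x^{p-1}$ when $1<p\le 2$ and the elementary $(a+b)^{p-1}\lesssim a^{p-1}+b^{p-1}$ when $p>2$. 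Since $(-\Delta+s)^{-1}$ is positivity-preserving by \eqref{20/9/15/14:54}, it suffices to estimate the $L^{q}$ norm of $(-\Delta+s)^{-1}$ applied to each of the two non-negative pieces.

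For the second piece, I would apply H\"older's inequality in the form
\[
\bigl\|(|\eta_{1}|+|\eta_{2}|)^{p-1}|\eta_{1}-\eta_{2}|\bigr\|_{L^{q/p}}
\le \bigl\||\eta_{1}|+|\eta_{2}|\bigr\|_{L^{q}}^{p-1}\|\eta_{1}-\eta_{2}\|_{L^{q}}
\lesssim R^{p-1}\alpha(t_{2})^{(p-1)\Theta_{q}}\|\eta_{1}-\eta_{2}\|_{L^{q}},
\]
using $t_{1}\le t_{2}$, and then invoke Lemma \ref{18/11/05/10:29} with $q_{1}=q/p$ and $q_{2}=q$. The hypothesis $d(1/q_{1}-1/q_{2})=d(p-1)/q<2$ follows from $q\ge 2^{*}$ and $p<(d+2)/(d-2)$, and the resulting exponent is exactly $s^{\frac{d(p-1)}{2q}-1}$, matching the second term on the right of \eqref{19/02/23/06:02}.

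For the piece $W^{p-1}|\eta_{1}-\eta_{2}|$, the strategy is to apply Lemma \ref{18/11/05/10:29} with source exponent $r$ determined by $1/r=1/\sigma+1/q$, so that
\[
\bigl\|(-\Delta+s)^{-1}W^{p-1}|\eta_{1}-\eta_{2}|\bigr\|_{L^{q}}
\lesssim s^{d/(2\sigma)-1}\|W^{p-1}\|_{L^{\sigma}}\|\eta_{1}-\eta_{2}\|_{L^{q}},
\]
and then to pick $\sigma$ so that $d/(2\sigma)-1=-\nu_{q}$ while ensuring $\|W^{p-1}\|_{L^{\sigma}}<\infty$, i.e.\ $(d-2)(p-1)\sigma>d$ in view of $W(x)\sim(1+|x|)^{-(d-2)}$ from \eqref{20/9/13/9:55}. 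The case split in \eqref{20/10/26/8:27} is exactly what makes this possible: when $p\ge d/(d-2)$, take $\sigma=dq/(2q-d)$ (so $r=d/2$), which gives $d/(2\sigma)-1=-d/(2q)=-\nu_{q}$ and the integrability condition reduces to $((p-1)(d-2)-2)q\ge -d$, which holds; when $1<p<d/(d-2)$, take $\sigma=2d/((d-2)(p-1))$, which gives $d/(2\sigma)-1=(d-2)(p-1)/4-1=-\nu_{q}$ and satisfies $(d-2)(p-1)\sigma=2d>d$. In both subcases one checks $d(1/r-1/q)=d/\sigma<2$ using $p<(d+2)/(d-2)$, so Lemma \ref{18/11/05/10:29} applies.

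The only delicate point is the case analysis dictated by \eqref{20/10/26/8:27}: the subcritical range $p<d/(d-2)$ forces $\sigma$ to be chosen strictly larger than the critical value $d/((d-2)(p-1))$ at which $W^{p-1}$ just fails to be in $L^{\sigma}$, and this is precisely why the definition of $\nu_{q}$ changes form at $p=d/(d-2)$. Once the two choices of $\sigma$ are in hand, combining the bounds for the two pieces yields \eqref{19/02/23/06:02}.
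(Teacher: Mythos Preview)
Your proof is correct and follows essentially the same approach as the paper: split via \eqref{19/02/19/17:02}, handle the $(|\eta_{1}|+|\eta_{2}|)^{p-1}$ piece by H\"older and Lemma~\ref{18/11/05/10:29} with source exponent $q/p$, and handle the $W^{p-1}$ piece by the same lemma with a case split at $p=d/(d-2)$. The only cosmetic difference is that in the subcritical case $1<p<d/(d-2)$ the paper introduces an auxiliary parameter $\varepsilon_{0}\in\bigl(0,\tfrac{(d-2)(p-1)}{2}\bigr)$ and then specializes to $\varepsilon_{0}=\tfrac{(d-2)(p-1)}{4}$, whereas you go directly to the equivalent choice $\sigma=2d/((d-2)(p-1))$.
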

\begin{proof}[Proof of Lemma \ref{19/02/23/18:11}]
First, assume that $1<p<\frac{d}{d-2}$. 
  Let $0<\varepsilon_{0}<\frac{(d-2)(p-1)}{2}$, and define $q_{0}$ as 
\begin{equation}\label{20/10/13/10:37}
q_{0}:=\frac{dq}{d+(d-2)(p-1)q-2q\varepsilon_{0}}
.
\end{equation}
Observe that  $1< q_{0} < q$ and  $\frac{d}{d-2}< \frac{(p-1)q_{0}q}{q-q_{0}}$.
Furthermore,  by  \eqref{20/9/13/9:55}, we see that  
\begin{equation}\label{20/10/13/10:43}
W \in L^{\frac{(p-1)q_{0}q}{q-q_{0}}}(\mathbb{R}^{d}).
\end{equation}  
Then, by \eqref{20/9/15/14:54}, \eqref{19/02/19/17:02},  Lemma \ref{18/11/05/10:29}, 
H\"older's inequality, \eqref{20/10/13/10:43} and $\alpha(t_{1}) \le \alpha(t_{2})$, 
we see that 
\begin{equation}\label{20/8/9/14:26}
\begin{split}
&
\| (-\Delta +s)^{-1} E(\eta_{1},\eta_{2}) \|_{L^{q}}
\\[6pt]
&\lesssim 
s^{\frac{d}{2}(\frac{1}{q_{0}}-\frac{1}{q})-1}
\| 
W^{p-1}
|\eta_{1}-\eta_{2} |
\|_{L^{q_{0}}}
+
s^{\frac{d(p-1)}{2q}-1}
\| 
\{|\eta_{1}|+|\eta_{2}| \}^{p-1}
|\eta_{1}-\eta_{2} |
\|_{L^{\frac{q}{p}}}
\\[6pt]
&\lesssim 
s^{\frac{(d-2)(p-1)}{2}-\varepsilon_{0}-1} 
\| W \|_{L^{\frac{(p-1)q_{0}q}{q-q_{0}}}}^{p-1}
\| \eta_{1}-\eta_{2} \|_{L^{q}}
+
s^{\frac{d(p-1)}{2q}-1} 
\| |\eta_{1}| + |\eta_{2}| \|_{L^{q}}^{p-1}
\| \eta_{1}-\eta_{2} \|_{L^{q}}
\\[6pt]
&\lesssim 
s^{-1}
\Bigm\{ 
s^{\frac{(d-2)(p-1)}{2}-\varepsilon_{0}} 
+
s^{\frac{d(p-1)}{2q}}
R^{p-1}
\alpha(t_{2})^{(p-1)\Theta_{q}}
\Bigm\} 
\| \eta_{1}-\eta_{2} \|_{L^{q}}
,
\end{split}
\end{equation}
where the implicit constants depend only on $d$, $p$, $q$ and $\varepsilon_{0}$. 
Note that we may choose $\varepsilon_{0}=\frac{(d-2)(p-1)}{4}$ in \eqref{20/8/9/14:26}. 

Next,  assume that $\frac{d}{d-2}\le p < \frac{d+2}{d-2}$. 
 Observe from the assumptions about $p$ and $q$ that 
\begin{equation}\label{20/9/15/17:38}
\frac{(p-1)dq}{2q-d}>\frac{d}{d-2} \ge \frac{d}{2},
\qquad 
\frac{d(p-1)}{2q}<1,
\qquad
p< q
.
\end{equation}
Then, a computation similar to \eqref{20/8/9/14:26} shows that 
\begin{equation}\label{20/9/15/16:47}
\begin{split}
&
\| (-\Delta +s)^{-1} E(\eta_{1},\eta_{2}) \|_{L^{q}}
\\[6pt]
&\lesssim 
s^{\frac{d}{2}(\frac{2}{d}-\frac{1}{q})-1}
\| 
W^{p-1}
|\eta_{1}-\eta_{2} |
\|_{L^{\frac{d}{2}}}
+
s^{\frac{d(p-1)}{2q}-1}
\| 
\{|\eta_{1}|+|\eta_{2}| \}^{p-1}
|\eta_{1}-\eta_{2} |
\|_{L^{\frac{q}{p}}}
\\[6pt]
&\lesssim 
s^{-\frac{d}{2q}}
\| W \|_{L^{\frac{(p-1)dq}{2q-d}}}^{p-1}
\| \eta_{1}-\eta_{2} \|_{L^{q}}
+
s^{\frac{d(p-1)}{2q}-1}
\| |\eta_{1}|+ |\eta_{2}| \|_{L^{q}}^{p-1}
\| \eta_{1}-\eta_{2} \|_{L^{q}}
\\[6pt]
&\lesssim
s^{-1}
\Bigm\{ 
s^{1-\frac{d}{2q}}
+
s^{\frac{d(p-1)}{2q}} R^{p-1}
\alpha(t_{2})^{(p-1)\Theta_{q}}
\Bigm\} 
\| \eta_{1}-\eta_{2} \|_{L^{q}}
,
\end{split}
\end{equation}
where the implicit constants depend only on $d$, $p$ and $q$. 

Putting \eqref{20/8/9/14:26} with $\varepsilon_{0}=\frac{(d-2)(p-1)}{4}$, and \eqref{20/9/15/16:47},  
 we find that  \eqref{19/02/23/06:02} holds.
\end{proof}

The following lemma follows immediately from  Lemma \ref{19/02/24/14:00}, Lemma \ref{19/02/23/18:11}, 
  \eqref{20/11/8/11:39} and \eqref{20/11/8/11:35}:  
\begin{lemma}\label{19/02/10/16:33}
Assume $d=3,4$ and $\frac{4}{d-2}-1<p<\frac{d+2}{d-2}$.   
 Let $2^{*}<q<\infty$. 
\begin{enumerate}
\item  
For any $R>0$,  $s>0$, $t>0$, $0< t_{0}<1$  and $\eta \in Y_{q}(R,t_{0})$, the following holds: 
\begin{equation}\label{20/9/16/9:51}
\begin{split}
\| (-\Delta +s)^{-1}  N(\eta;t) \|_{L^{q}}
&\lesssim 
R^{2} 
\alpha(t_{0})^{2 \Theta_{q}} 
+
s^{\frac{2^{*}}{q}-1} 
R^{\frac{d+2}{d-2}}
\alpha(t_{0})^{ \frac{d+2}{d-2} \Theta_{q}} 
\\[6pt]
&\quad +
t \Bigm\{
s^{-\nu_{q}}
R \alpha(t_{0})^{\Theta_{q}}
+
s^{\frac{d(p-1)}{2q}-1} R^{p} 
\alpha(t_{0})^{ p \Theta_{q}}
\Bigm\},
\end{split}
\end{equation}
where the implicit constant depends only on $d$, $p$ and $q$.

\item
For any $R>0$,  $s>0$, $t>0$,  $0<t_{1}\le t_{2}<1$,  $\eta_{1}\in Y_{q}(R,t_{1})$ and  $\eta_{2} \in Y_{q}(R,t_{2})$, 
 the following holds:  
\begin{equation}\label{20/9/16/6:4}
\begin{split}
&\| (-\Delta +s)^{-1}  \{ N(\eta_{1};t) - N(\eta_{2};t) \} \|_{L^{q}}
\\[6pt]
&\lesssim 
\big\{ 
R \alpha(t_{2})^{\Theta_{q}} 
+
s^{\frac{2^{*}}{q}-1} 
R^{\frac{4}{d-2}}\alpha(t_{2})^{\frac{4}{d-2}\Theta_{q}}
\} 
\| \eta_{1}-\eta_{2} \|_{L^{q}}
\\[6pt]
&\quad +
t 
\big\{
s^{- \nu_{q}}
+
s^{\frac{d(p-1)}{2q} -1} 
R^{p-1}\alpha(t_{2})^{(p-1)\Theta_{q}}
\big\} 
\| \eta_{1}-\eta_{2} \|_{L^{q}}
,
\end{split}
\end{equation}
where the implicit constant depends only on $d$, $p$ and $q$.  
\end{enumerate} 
\end{lemma}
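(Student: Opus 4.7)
The plan is to reduce both estimates to direct applications of Lemma~\ref{19/02/24/14:00} and Lemma~\ref{19/02/23/18:11} via the algebraic decompositions \eqref{20/11/8/11:39} and \eqref{20/11/8/11:35}. No further analysis of the nonlinearity $N$ is required, since the necessary pointwise control of the $D$ and $E$ pieces has already been folded into those two lemmas.

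For part (1), I would invoke \eqref{20/11/8/11:39} to write $N(\eta;t)=D(\eta,0)+tE(\eta,0)$, apply $(-\Delta+s)^{-1}$, and use the triangle inequality. Lemma~\ref{19/02/24/14:00}, applied with $\eta_1=\eta$, $\eta_2=0$ and $t_1=t_2=t_0$, gives a bound on $\|(-\Delta+s)^{-1}D(\eta,0)\|_{L^q}$ whose prefactor multiplies $\|\eta\|_{L^q}$; since $\eta\in Y_q(R,t_0)$, the latter is at most $R\alpha(t_0)^{\Theta_q}$, which produces the first two terms of \eqref{20/9/16/9:51}. Lemma~\ref{19/02/23/18:11}, applied analogously, handles $\|(-\Delta+s)^{-1}E(\eta,0)\|_{L^q}$; multiplying the resulting bound by $t$ and using again $\|\eta\|_{L^q}\le R\alpha(t_0)^{\Theta_q}$ yields the remaining two terms. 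Adding the estimates completes \eqref{20/9/16/9:51}.

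For part (2), specializing \eqref{20/11/8/11:35} to $t_1=t_2=t$ kills the $(t_1-t_2)E(\eta_1,0)$ contribution, leaving
\[
N(\eta_1;t)-N(\eta_2;t)=D(\eta_1,\eta_2)+tE(\eta_1,\eta_2).
\]
I would then apply $(-\Delta+s)^{-1}$ and estimate the two pieces separately: Lemma~\ref{19/02/24/14:00} with $0<t_1\le t_2<1$ and $\eta_1,\eta_2$ as given controls the $D$-term and already carries the factor $\|\eta_1-\eta_2\|_{L^q}$ with the correct powers of $R\alpha(t_2)^{\Theta_q}$; Lemma~\ref{19/02/23/18:11} does the same for the $E$-term, and multiplying by $t$ and adding produces \eqref{20/9/16/6:4}. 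The only real piece of bookkeeping is remembering that in both estimates $\alpha$ is evaluated at the larger of the two parameters, which is consistent with the convention $t_1\le t_2$ of the lemmas cited. Since the hard analytic content—mapping properties of $(-\Delta+s)^{-1}$ combined with H\"older's inequality and the pointwise controls \eqref{19/02/19/11:43}–\eqref{19/02/19/17:02}—is already established, no substantive obstacle is expected beyond this bookkeeping.
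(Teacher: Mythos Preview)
Your proposal is correct and matches the paper's approach exactly: the paper states that the lemma ``follows immediately from Lemma~\ref{19/02/24/14:00}, Lemma~\ref{19/02/23/18:11}, \eqref{20/11/8/11:39} and \eqref{20/11/8/11:35}'', which is precisely the reduction you describe.
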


\begin{lemma}\label{19/01/03/16:41}
Assume $d=3,4$. Then, there exists $0< T <1$ depending only on $d$ such that 
 if $0< t_{1}\le t_{2}\le T$, then 
\begin{align}
\label{19/01/03/12:15}
|\alpha(t_{2})- \alpha(t_{1})|
&\le 
2 \delta(\alpha(t_{2})) 
|t_{2}
- 
t_{1}|,
\\[6pt]
\label{19/01/03/11:39}
\big| 
\delta(\alpha(t_{2}))-\delta(\alpha(t_{1}))
\big|
&\le 
(1+o_{t_{1}}(1)) t_{1}^{-1} 
\delta(\alpha(t_{1})) \delta(\alpha(t_{2}))^{d-3} 
|t_{2}- t_{1}|
. 
\end{align}
\end{lemma}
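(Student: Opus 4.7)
The plan is to treat $d=3$ and $d=4$ separately, since $\alpha$ and $\delta\circ\alpha$ are explicit in the former case but only implicitly defined in the latter. When $d=3$, the formulas $\alpha(t)=t^{2}$ and $\delta(\alpha(t))=t$ from \eqref{20/10/15/16:46} and \eqref{20/11/2/9:40} reduce both inequalities to elementary identities: $|\alpha(t_{2})-\alpha(t_{1})|=(t_{1}+t_{2})|t_{2}-t_{1}|\le 2t_{2}|t_{2}-t_{1}|=2\delta(\alpha(t_{2}))|t_{2}-t_{1}|$, and since $\delta(\alpha(t_{2}))^{d-3}=1$, the second bound becomes $|t_{2}-t_{1}|=t_{1}^{-1}\delta(\alpha(t_{1}))|t_{2}-t_{1}|$.

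For $d=4$, I would differentiate the defining relation $t=\alpha(t)\log(1+\alpha(t)^{-1})$ and, writing $\psi(t):=\delta(\alpha(t))^{-1}=\log(1+\alpha(t)^{-1})$, derive
\[
\alpha'(t)=\frac{\delta(\alpha(t))}{1-\delta(\alpha(t))(1+\alpha(t))^{-1}},\qquad \psi'(t)=-\frac{\alpha'(t)}{\alpha(t)(1+\alpha(t))}.
\]
Since $\alpha(t),\delta(\alpha(t))\to 0$ as $t\to 0$, shrinking $T$ guarantees $\delta(\alpha(s))(1+\alpha(s))^{-1}\le 1/2$ on $(0,T]$, hence $\alpha'(s)\le 2\delta(\alpha(s))$. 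The first inequality then follows by integration and the monotonicity of $\delta\circ\alpha$:
\[
\alpha(t_{2})-\alpha(t_{1})=\int_{t_{1}}^{t_{2}}\alpha'(s)\,ds\le 2\int_{t_{1}}^{t_{2}}\delta(\alpha(s))\,ds\le 2\delta(\alpha(t_{2}))(t_{2}-t_{1}).
\]

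For the second inequality when $d=4$, substituting $\alpha(t)=t\delta(\alpha(t))$ (see \eqref{20/11/2/9:40}) into the two displayed formulas above yields
\[
t\psi'(t)=-\frac{1}{1+\alpha(t)-\delta(\alpha(t))},
\]
which tends to $-1$ as $t\to 0$; equivalently, $\psi'(s)=-(1+o_{s}(1))/s$ on $(0,T]$. I would then combine the algebraic identity
\[
\delta(\alpha(t_{2}))-\delta(\alpha(t_{1}))=\frac{\psi(t_{1})-\psi(t_{2})}{\psi(t_{1})\psi(t_{2})}=\delta(\alpha(t_{1}))\delta(\alpha(t_{2}))\bigl(\psi(t_{1})-\psi(t_{2})\bigr)
\]
with
\[
\psi(t_{1})-\psi(t_{2})=-\int_{t_{1}}^{t_{2}}\psi'(s)\,ds\le(1+o_{t_{1}}(1))\log(t_{2}/t_{1})\le(1+o_{t_{1}}(1))(t_{2}-t_{1})/t_{1},
\]
where the last step uses $\log(1+x)\le x$ for $x\ge 0$.

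The key observation is that routing through $\psi=1/(\delta\circ\alpha)$ is essential to produce the mixed factor $\delta(\alpha(t_{1}))\delta(\alpha(t_{2}))$: a direct mean-value estimate on $\delta\circ\alpha$ (which is increasing) would only yield the larger quantity $\delta(\alpha(t_{2}))^{2}$. The main technical obstacle is controlling the correction $t\psi'(t)+1=(\alpha(t)-\delta(\alpha(t)))/(1+\alpha(t)-\delta(\alpha(t)))$ uniformly for $s\in[t_{1},t_{2}]\subset(0,T]$; since this correction vanishes as $s\to 0$ and is monotone in $s$, it can be absorbed into the $(1+o_{t_{1}}(1))$ factor by choosing $T$ sufficiently small (depending only on $d$).
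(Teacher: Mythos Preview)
Your direct-differentiation route is genuinely different from the paper's and works cleanly for \eqref{19/01/03/12:15}. The paper does not compute $\alpha'$; instead it first applies the mean-value theorem to $\delta$ between $\alpha(t_{1})$ and $\alpha(t_{2})$, evaluating $\delta'$ at the \emph{left} endpoint $\alpha(t_{1})$ (using that $(s+s^{2})\log^{2}(1+s^{-1})$ is increasing), to obtain $\delta(\alpha(t_{2}))-\delta(\alpha(t_{1}))\le t_{1}^{-1}\delta(\alpha(t_{1}))\{\alpha(t_{2})-\alpha(t_{1})\}$. It then couples this with the algebraic identity $\alpha(t_{2})-\alpha(t_{1})=\delta(\alpha(t_{2}))(t_{2}-t_{1})+t_{1}\{\delta(\alpha(t_{2}))-\delta(\alpha(t_{1}))\}$ and rearranges, closing a bootstrap that yields the prefactor $(1-\delta(\alpha(t_{1})))^{-1}$. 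Substituting back gives \eqref{19/01/03/11:39} with the same $o_{t_{1}}(1)$.

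Your argument for \eqref{19/01/03/11:39}, by contrast, only delivers $(1+o_{t_{2}}(1))$, not $(1+o_{t_{1}}(1))$. The correction $|t\psi'(t)+1|=\dfrac{\delta(\alpha(t))-\alpha(t)}{1+\alpha(t)-\delta(\alpha(t))}$ is \emph{increasing} near $0$: writing $\delta(\alpha(t))-\alpha(t)=\delta(\alpha(t))(1-t)$ and noting $\delta(\alpha(t))\sim 1/\log(t^{-1})$, one checks that the growth of $\delta\circ\alpha$ dominates the decay of $1-t$. Hence the supremum of the correction over $[t_{1},t_{2}]$ is attained at $t_{2}$, and ``choosing $T$ small'' only makes this a small constant, not an $o_{t_{1}}(1)$. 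In every application in the paper one has $t_{2}\le 2t_{1}$, so the distinction is immaterial downstream; but to recover the lemma exactly as stated you need either the paper's bootstrap (which places the error at the left endpoint by design) or an additional argument pinning the estimate to $t_{1}$.
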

\begin{proof}[Proof of Lemma \ref{19/01/03/16:41}]
We may  assume that $d=4$, as the case $d=3$ ($\alpha(t)=t^{2}$) is almost trivial.   
 Thus, $\delta(s)=\frac{1}{\log{(1+s^{-1})}}$.  Note that $\frac{d\delta}{ds}(s)=\{ (s+s^{2})\log^{2}{(1+s^{-1})}\}^{-1}$
 and  $(s+s^{2})\log^{2}(1+s^{-1})$ is strictly increasing on  (for instance) the interval $(0, e^{-2})$.

 Let $0<t_{1}\le t_{2}<1$.  Then,  by  the fundamental theorem of calculus and  \eqref{20/11/2/9:40}, 
 we see that  if $t_{2}\le \beta(e^{-2})$ (hence $\alpha(t_{2}) \le e^{-2}$), then 
\begin{equation}\label{19/01/03/11:47}
\begin{split}
&
\delta(\alpha(t_{2}))
-
\delta(\alpha(t_{1}))
=
\int_{\alpha(t_{1})}^{\alpha(t_{2})}
\frac{1}{(s+s^{2})\log^{2}{(1+s^{-1})}}\, ds 
\\[6pt]
&\le
\frac{\alpha(t_{2})-\alpha(t_{1})}{(\alpha(t_{1})+\alpha(t_{1})^{2})\log^{2}{(1+\alpha(t_{1})^{-1})}}
\\[6pt]
&\le 
\alpha(t_{1})^{-1} 
\delta(\alpha(t_{1}))^{2}
\{\alpha(t_{2})-\alpha(t_{1})\}
= 
t_{1}^{-1}\delta(\alpha(t_{1}))
\{\alpha(t_{2})-\alpha(t_{1})\}.
\end{split}
\end{equation}
Furthermore, by \eqref{20/11/2/9:40}, we see  that 
\begin{equation}\label{19/01/16/18:50}
\alpha(t_{2}) - \alpha(t_{1}) 
=
\delta(\alpha(t_{2})) (t_{2}-t_{1})  
+
\{ \delta(\alpha(t_{2}))  - \delta(\alpha(t_{1})) \}
t_{1}
.
\end{equation}
Plugging \eqref{19/01/03/11:47} into \eqref{19/01/16/18:50}, 
 we see that 
\begin{equation}\label{20/11/17/18:4}
\alpha(t_{2}) - \alpha(t_{1}) 
\le 
\delta(\alpha(t_{2})) (t_{2}-t_{1})  
+
\delta(\alpha(t_{1})) 
\{ \alpha(t_{2}) - \alpha(t_{1}) \}
,
\end{equation}
which implies that 
\begin{equation}\label{20/11/13/9:8}
\alpha(t_{2}) - \alpha(t_{1}) 
\le 
(1+o_{t_{1}}(1))
\delta(\alpha(t_{2})) (t_{2}-t_{1})  
. 
\end{equation}
If $t_{1}$ is sufficiently small depending only on $d$, then  \eqref{20/11/13/9:8} yields  \eqref{19/01/03/12:15}. 
 Furthermore, plugging \eqref{19/01/03/12:15} into \eqref{19/01/03/11:47}, 
 we obtain \eqref{19/01/03/11:39}.
\end{proof}


Now, observe from Lemma \ref{20/12/22/13:55},  Lemma \ref{18/12/19/01:00} and $\lim_{s\to 0}\delta(s)=0$ that  if $s>0$ is sufficiently small depending only on $d$ and $p$,
 then  
\begin{align}
\label{19/01/18/12:32}
|
\delta (s) 
\langle   
(-\Delta +s)^{-1}  W,  V \Lambda W 
\rangle 
|
&\sim A_{1} \sim 1,
\\[6pt]
\label{19/01/18/12:33}
| 
\langle 
(-\Delta +s)^{-1} W^{p}
,  V \Lambda W 
\rangle
|
&\sim K_{p} \sim 1
, 
\end{align}
where the implicit constants depend only on $d$ and $p$.

\begin{lemma}\label{19/01/03/10:53}
Assume $d=3, 4$. Then, the following holds for all $0<s_{1}\le s_{2} <1$:  
\begin{equation}\label{19/01/03/10:58}
\begin{split}
&\big|
\delta(s_{1}) 
\langle (-\Delta + s_{1})^{-1} W,  V \Lambda W \rangle
-
\delta(s_{2})
\langle  (-\Delta +s_{2})^{-1} W,  V  \Lambda W \rangle
\big|
\\[6pt]
&\lesssim 
\log{(1+s_{2}^{-1})} |s_{1}-s_{2}| 
+
|\delta(s_{1})- \delta(s_{2})|,
\end{split}
\end{equation}
where the implicit constant depends only on $d$. 
\end{lemma}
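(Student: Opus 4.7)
The plan is to subtract the leading singular behaviour of $B(s) := \langle(-\Delta+s)^{-1}W, V\Lambda W\rangle$ before comparing the two values. By Lemma~\ref{20/12/22/13:55}, we have $B(s) = A_{1}\delta(s)^{-1} + E(s)$ with $|E(s)| \lesssim 1$, so $\delta(s)B(s) = A_{1} + \delta(s)E(s)$, and the algebraic identity
\[
\delta(s_{1})B(s_{1}) - \delta(s_{2})B(s_{2}) = \delta(s_{1})\bigl[E(s_{1})-E(s_{2})\bigr] + \bigl[\delta(s_{1})-\delta(s_{2})\bigr]E(s_{2})
\]
handles the second summand trivially, using $|E(s_{2})|\lesssim 1$. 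Since $\delta(s_{1})\lesssim 1$ for $s_{1}<1$, the task reduces to showing $|E(s_{1})-E(s_{2})|\lesssim \log(1+s_{2}^{-1})|s_{1}-s_{2}| + |\delta(s_{1})-\delta(s_{2})|$, where $E(s_{1})-E(s_{2}) = [B(s_{1})-B(s_{2})] - A_{1}[\delta(s_{1})^{-1}-\delta(s_{2})^{-1}]$.

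To control this, apply the resolvent identity and Plancherel's theorem, together with $\mathcal{F}[W](\xi) = |\xi|^{-2}\mathcal{F}[W^{(d+2)/(d-2)}](\xi)$ (which follows from $-\Delta W = W^{(d+2)/(d-2)}$), to write
\[
B(s_{1})-B(s_{2}) = \frac{s_{2}-s_{1}}{(2\pi)^{d}}\int_{\mathbb{R}^{d}}\frac{\psi(\xi)\,d\xi}{|\xi|^{2}(|\xi|^{2}+s_{1})(|\xi|^{2}+s_{2})}, \quad \psi := \mathcal{F}[W^{(d+2)/(d-2)}]\cdot \mathcal{F}[V\Lambda W],
\]
where $\psi$ is bounded, continuous, and radial. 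Following the strategy of Lemma~\ref{20/12/12/9:52}, split $\mathbb{R}^{d} = \{|\xi|\le 1\} \cup \{|\xi|>1\}$ and decompose $\psi = \psi(0) + [\psi(\xi)-\psi(0)]$. The tail contributes $O(|s_{1}-s_{2}|)$ via boundedness of $\psi$ and integrability of $|\xi|^{-6}$ for $d=3,4$. The piece $\psi(\xi)-\psi(0) = O(|\xi|^{2})$ (from the radial $C^{2}$-regularity coming from $W^{(d+2)/(d-2)}, V\Lambda W = O(|x|^{-(d+2)})$) reduces to $\int_{|\xi|\le 1}[(|\xi|^{2}+s_{1})(|\xi|^{2}+s_{2})]^{-1}d\xi$, bounded by $\log(1+s_{2}^{-1})$ in dimension four, and by explicit partial fractions yielding a $|\delta(s_{1})-\delta(s_{2})|/(s_{2}-s_{1})$-type bound in dimension three.

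The principal term $\psi(0)\int_{|\xi|\le 1}|\xi|^{-2}[(|\xi|^{2}+s_{1})(|\xi|^{2}+s_{2})]^{-1}d\xi$ equals $\psi(0)[f(s_{1})-f(s_{2})]/(s_{2}-s_{1})$ with $f(s) := \int_{|\xi|\le 1}|\xi|^{-2}(|\xi|^{2}+s)^{-1}d\xi$. The estimate \eqref{20/9/30/9:12} gives $f(s) = (5-d)\pi^{2}\delta(s)^{-1} + g(s)$ with $|g(s)|\lesssim 1$; direct computation yields $g\equiv 0$ for $d=4$ and $g(s) = -4\pi\arctan(\sqrt{s})/\sqrt{s}$ for $d=3$, the latter Lipschitz on $(0,1)$ by the expansion $\arctan(\sqrt{s}) = \sqrt{s} - s^{3/2}/3 + O(s^{5/2})$. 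Since the constants align so that $\psi(0)(5-d)\pi^{2}/(2\pi)^{d} = A_{1}$ (both representing the leading coefficient of $\delta(s)B(s)$ as $s\to 0$), the singular contribution $A_{1}[\delta(s_{1})^{-1}-\delta(s_{2})^{-1}]$ cancels exactly against the subtraction in $E(s_{1})-E(s_{2})$, leaving only the $g$-remainder of size $\lesssim |s_{1}-s_{2}|$. The main obstacle will be the three-dimensional handling of the $\psi(\xi)-\psi(0)$ contribution on $\{|\xi|\le 1\}$, where a crude estimate gives $|s_{1}-s_{2}|/\sqrt{s_{2}}$, exceeding $\log(1+s_{2}^{-1})|s_{1}-s_{2}|$; this forces a finer partial-fraction computation that isolates a $|\sqrt{s_{1}}-\sqrt{s_{2}}|$-type term to be absorbed by $|\delta(s_{1})-\delta(s_{2})|$ in the target bound.
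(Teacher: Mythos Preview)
Your overall architecture is sound and close in spirit to the paper's: work on the Fourier side with $\psi = g := \mathcal{F}[V\Lambda W]\cdot\mathcal{F}[W^{(d+2)/(d-2)}] = \mathcal{F}[V\Lambda W * W^{(d+2)/(d-2)}]$, split $\{|\xi|\le 1\}\cup\{|\xi|>1\}$, and separate the constant $\psi(0)$ from the remainder. The explicit computation of the $\psi(0)$-contribution (the paper's identity~\eqref{19/01/03/15:37}) and the treatment of $\{|\xi|>1\}$ are essentially as you describe.

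There is, however, a genuine gap in the remainder term. Your claim $\psi(\xi)-\psi(0)=O(|\xi|^{2})$, justified by ``radial $C^{2}$-regularity coming from $O(|x|^{-(d+2)})$ decay'', fails: the convolution $h:=V\Lambda W * W^{(d+2)/(d-2)}$ satisfies $|h(x)|\sim (1+|x|)^{-(d+2)}$ exactly (no cancellation at infinity, since $W^{(d+2)/(d-2)}>0$ and $V\Lambda W$ has a definite sign at infinity), so $|x|^{2}h\notin L^{1}(\mathbb{R}^{d})$ and $\psi$ is \emph{not} $C^{2}$ at the origin. The correct bound carries a logarithmic loss, $|\psi(\xi)-\psi(0)|\lesssim |\xi|^{2}\log(2+|\xi|^{-1})$; equivalently the paper uses
\[
|g(\xi)-g(0)-\xi\cdot\nabla g(0)|\lesssim \int_{\mathbb{R}^{d}}\min\{|x||\xi|,\,|x|^{2}|\xi|^{2}\}(1+|x|)^{-(d+2)}\,dx,
\]
and then invokes Lemma~\ref{19/02/16/10:53}. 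With this logarithmic loss your subsequent bounds break: for $d=4$ you obtain $|s_{1}-s_{2}|\,(\log s_{2}^{-1})^{2}$ instead of $|s_{1}-s_{2}|\log(1+s_{2}^{-1})$, and for $d=3$ your partial-fraction argument no longer closes. The root cause is that you discarded the prefactor $\delta(s_{1})$ at the outset (``since $\delta(s_{1})\lesssim 1$\ldots''). The paper keeps it: Lemma~\ref{19/02/16/10:53} yields
\[
\int_{|\xi|\le 1}\frac{\min\{|x||\xi|,\,|x|^{2}|\xi|^{2}\}}{|\xi|^{2}(|\xi|^{2}+s_{1})(|\xi|^{2}+s_{2})}(1+|x|)^{-(d+2)}\,d\xi\,dx \lesssim \log(1+s_{2}^{-1})\,\delta(s_{2})^{-1},
\]
and multiplying by $\delta(s_{1})|s_{1}-s_{2}|$ together with the monotonicity $\delta(s_{1})\le \delta(s_{2})$ gives exactly $\log(1+s_{2}^{-1})|s_{1}-s_{2}|$. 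Retaining $\delta(s_{1})$ in your scheme and replacing the $O(|\xi|^{2})$ step by the $\min$-bound (or by $O(|\xi|^{2-\varepsilon})$) repairs the argument, and at that point it coincides with the paper's proof.
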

\begin{proof}[Proof of Lemma \ref{19/01/03/10:53}]
Since $-\Delta W= W^{\frac{d+2}{d-2}}$, we may write the Fourier transform of $W$ as   
\begin{equation}\label{19/02/15/12:15}
\mathcal{F}[W](\xi)
=
|\xi|^{-2} \mathcal{F}[W^{\frac{d+2}{d-2}}](\xi)
.
\end{equation}
Furthermore,  using the spherical coordinate system, we can verify that  
\begin{equation}\label{19/01/03/15:37}
\Big|
\int_{|\xi|\le 1} \!
\frac{\delta(s_{1})}{(|\xi|^{2}+s_{1})|\xi|^{2}}
\,d\xi 
-
\int_{|\xi|\le 1}
\!
\frac{\delta(s_{2})}{(|\xi|^{2}+s_{2})|\xi|^{2}}\,d\xi
\Big|~
\left\{ \begin{array}{ll}
 \lesssim |\delta(s_{1})-\delta(s_{2})| &\mbox{if $d=3$},
\\[6pt]
=0 &\mbox{if $d=4$}. 
\end{array} \right.
\end{equation}

Next, we define a function $g$ as 
\begin{equation}\label{19/02/15/12:07}
g
:=
\mathcal{F}[V \Lambda W]\mathcal{F}[W^{\frac{d+2}{d-2}}]
=
\mathcal{F}[V\Lambda W*W^{\frac{d+2}{d-2}}]
.
\end{equation}
Observe from $|V\Lambda W(x)| \sim (1+|x|)^{-(d+2)}$ and $|W^{\frac{d+2}{d-2}}(x)|\ \sim (1+|x|)^{-(d+2)}$ that 
\begin{equation}\label{19/02/16/15:55}
|V\Lambda W*W^{\frac{d+2}{d-2}}(x)|\lesssim (1+|x|)^{-(d+2)}
.
\end{equation}
Then, by  \eqref{19/02/16/15:55}  and \eqref{19/02/16/15:55},  
 we see that 
\begin{equation}\label{19/02/15/12:31}
\begin{split}
|g(\xi)-g(0)-\xi\cdot \nabla g(0)|
&=
\Big| 
\int_{\mathbb{R}^{d}}
(e^{-ix\cdot \xi}-1+i\xi \cdot x) \mathcal{F}^{-1}[g](x)\,dx 
\Big|
\\[6pt]
&\lesssim 
\int_{\mathbb{R}^{d}}
\min\{|x||\xi|, |x|^{2}|\xi|^{2}\} \big|V\Lambda W * W^{\frac{d+2}{d-2}}(x)\big|\,dx
\\[6pt]
&\lesssim 
\int_{\mathbb{R}^{d}}
\min\{|x||\xi|, |x|^{2}|\xi|^{2}\} 
(1+|x|)^{-(d+2)} \,dx
.
\end{split} 
\end{equation}

Now, we shall derive the desired estimate \eqref{19/01/03/10:58}. We see from Parseval's identity, $(-\Delta+s_{j})=\mathcal{F}[(|\xi|^{2}+s_{j})]\mathcal{F}$, \eqref{19/02/15/12:15}  and \eqref{19/01/03/15:37} that  
\begin{equation}\label{19/01/03/14:11}
\begin{split}
&
\big|
\delta(s_{1}) \langle 
R(s_{1}) W,  V \Lambda W
\rangle
-
\delta(s_{2})
\langle R(s_{2}) W
,  V  \Lambda W \rangle 
\big|
\\[6pt]
&=
\big|
\langle \big\{ \delta(s_{1}) R(s_{1}) -\delta(s_{2})R(s_{2}) \big\} 
V \Lambda W, W \rangle
\big|
\\[6pt]
&\lesssim  
\Big|
\int_{\mathbb{R}^{d}}
\Bigm\{ 
\frac{\delta(s_{1})}{(|\xi|^{2}+s_{1})|\xi|^{2}}
-
\frac{\delta(s_{2})}{(|\xi|^{2}+s_{2})|\xi|^{2}}
\Bigm\}
g(\xi)\,d\xi  
\Bigm|
\\[6pt]
&\lesssim  
\delta(s_{1}) 
\Big| \int_{|\xi|\le 1} 
\Bigm\{
\frac{
g(\xi)-g(0) 
}{(|\xi|^{2}+s_{1})|\xi|^{2}}
-
\frac{g(\xi)-g(0) 
}{(|\xi|^{2}+s_{2})|\xi|^{2}}
\Bigm\}
d\xi 
\Bigm|
+ 
|\delta(s_{1})-\delta(s_{2})| |g(0) |
\\[6pt]
&\quad +
| \delta(s_{1})-\delta(s_{2}) |
\Big|
\int_{|\xi|\le 1} 
\frac{ 
g(\xi)-g(0)
}{
(|\xi|^{2}+s_{2})|\xi|^{2}}  
\,d\xi 
\Bigm|
\\[6pt]
&\quad +
\int_{1\le|\xi|} 
\Bigm|
\frac{\delta(s_{1})}{(|\xi|^{2}+s_{1})|\xi|^{2}}
-
\frac{\delta(s_{2})}{(|\xi|^{2}+s_{2})|\xi|^{2}}
\Bigm|
| g(\xi) |
\,d\xi 
.
\end{split}
\end{equation}
We consider the first term on the right-hand side of \eqref{19/01/03/14:11}. 
 We see from \eqref{20/12/12/1}, \eqref{19/02/15/12:31}, and 
 Lemma \ref{19/02/16/10:53} in the appended section \ref{20/10/14/9:12} that 
\begin{equation}\label{19/01/03/17:11}
\begin{split}
&
\delta(s_{1}) 
\Big| \int_{|\xi|\le 1} 
\Bigm\{
\frac{
g(\xi)-g(0) 
}{(|\xi|^{2}+s_{1})|\xi|^{2}}
-
\frac{
g(\xi)-g(0) 
}{(|\xi|^{2}+s_{2})|\xi|^{2}}
\Bigm\}
d\xi 
\Bigm|
\\[6pt]
&=
\delta(s_{1}) |s_{1}-s_{2}|
\Big| \int_{|\xi|\le 1} 
\frac{ g(\xi)-g(0)
-\xi\cdot \nabla g(0)
}{(|\xi|^{2}+s_{1})(|\xi|^{2}+s_{2})|\xi|^{2}}\,d\xi 
\Bigm|
\\[6pt]
&\lesssim 
\delta(s_{1})
|s_{1}- s_{2}|
\int_{\mathbb{R}^{d}}
\int_{|\xi|\le 1} 
\frac{\min\{ |x||\xi|, \, |x|^{2}|\xi|^{2}\}}{(|\xi|^{2}+s_{1})
(|\xi|^{2}+s_{2})|\xi|^{2}}
(1+|x|)^{-(d+2)} \,d\xi dx 
\\[6pt]
&\lesssim 
|s_{1}-s_{2}|\log{(1+s_{2}^{-2})}
.
\end{split}
\end{equation}
Next, we consider the second term on the right-hand side of \eqref{19/01/03/14:11}. We see from \eqref{19/02/16/15:55} that 
\begin{equation}\label{19/02/09/14:15}
\begin{split}
| \delta(s_{1})-\delta(s_{2}) |
|g(0)|
&\le 
| \delta(s_{1})-\delta(s_{2}) |
\int_{\mathbb{R}^{d}} 
\big| V\Lambda W*W^{\frac{d+2}{d-2}}(x) \big| \,dx
\\[6pt]
&\lesssim
| \delta(s_{1})-\delta(s_{2}) |
\int_{\mathbb{R}^{d}} (1+|x|)^{-(d+2)}\,dx 
\lesssim 
| \delta(s_{1})-\delta(s_{2}) |
.
\end{split} 
\end{equation}
We move on to the third term on the right-hand side of \eqref{19/01/03/14:11}. 
 We see from \eqref{20/12/12/1} and \eqref{19/02/15/12:31} that   
\begin{equation}\label{19/01/03/14:06}
\begin{split}
&
| \delta(s_{1})-\delta(s_{2}) |
\Big|
\int_{|\xi|\le 1} 
\frac{ 
g(\xi)-g(0)
}{
(|\xi|^{2}+s_{2})|\xi|^{2}}  
\,d\xi 
\Bigm|
\\[6pt]
&=
| \delta(s_{1})-\delta(s_{2}) |
\Big|
\int_{|\xi|\le 1} 
\frac{ 
g(\xi)-g(0)-\xi\cdot (\nabla g)(0)
}{
(|\xi|^{2}+s_{2})|\xi|^{2}}  
\,d\xi 
\Bigm|
\\[6pt]
&\lesssim | \delta(s_{1})-\delta(s_{2}) |
\int_{|\xi|\le 1}  \int_{\mathbb{R}^{d}}
\frac{(|x||\xi|)^{\frac{3}{2}}}{(|\xi|^{2}+s_{2})|\xi|^{2}}(1+|x|)^{-(d+2)}
\,dxd\xi
\lesssim | \delta(s_{1})-\delta(s_{2}) |.
\end{split} 
\end{equation}
It remains to estimate the last term on the right-hand side of \eqref{19/01/03/14:11}. Note that $g$ is radial. Furthermore, it follows from Remark \ref{18/12/14/16:37} and \eqref{19/02/16/15:55} that  
\begin{equation}\label{19/02/16/15:45}
 |\xi| |g (\xi) | \lesssim 
\| g \|_{L^{2}}^{\frac{4-d}{2}} \|\nabla g\|_{L^{2}}^{\frac{d-2}{2}}
=
\|V\Lambda W * W^{\frac{d+2}{d-2}}\|_{L^{2}}^{\frac{4-d}{2}}
\||x| V\Lambda W * W^{\frac{d+2}{d-2}}\|_{L^{2}}^{\frac{d-2}{2}}
\lesssim 1
.
\end{equation}
Hence, we see from \eqref{19/02/16/15:45} that 
\begin{equation}\label{19/01/03/14:05}
\begin{split}
&
\int_{1\le|\xi|} 
\Bigm|
\frac{\delta(s_{1})}{(|\xi|^{2}+s_{1})|\xi|^{2}}
-
\frac{\delta(s_{2})}{(|\xi|^{2}+s_{2})|\xi|^{2}}
\Bigm|
|g(\xi)|
\,d\xi 
\\[6pt]
&\le
\delta(s_{1})
\int_{1\le|\xi|} 
\Bigm|
\frac{1}{(|\xi|^{2}+s_{1})|\xi|^{2}}
-
\frac{1}{(|\xi|^{2}+s_{2})|\xi|^{2}}
\Bigm|
| g(\xi) |
\,d\xi 
\\[6pt]
&\quad 
+
|\delta(s_{1})-\delta(s_{2})|
\int_{1\le|\xi|} 
\frac{ |g(\xi)|}{(|\xi|^{2}+s_{2})|\xi|^{2}}
\,d\xi 
\\[6pt]
&\le
\delta(s_{1}) | s_{1}-s_{2}|
\int_{1\le|\xi|} 
\frac{1}{|\xi|^{6}}
\,d\xi 
+
|\delta(s_{1})-\delta(s_{2})|
\int_{1\le|\xi|} 
\frac{1}{|\xi|^{5}}
\,d\xi 
\\[6pt]
&\lesssim 
\delta(s_{1}) | s_{1}-s_{2}|
+
|\delta(s_{1})-\delta(s_{2})|
.
\end{split} 
\end{equation}

Putting the estimates \eqref{19/01/03/14:11}, \eqref{19/01/03/17:11}, \eqref{19/02/09/14:15}, \eqref{19/01/03/14:06} and \eqref{19/01/03/14:05} together, we obtain the desired estimate \eqref{19/01/03/10:58}.  
\end{proof}

\begin{lemma}\label{19/02/09/10:47}
Assume $d=3,4$ and  $\frac{2}{d-2}< p < \frac{d+2}{d-2}$. Let $\max\{\frac{d}{(d-2)p}, \, \frac{d}{4} \} < r <\frac{d}{2}$. 
Then, the following holds for all $s_{1},s_{2}>0$:
\begin{equation}\label{19/01/03/20:27}
\big|
\langle (-\Delta+ s_{1})^{-1} W^{p}
,  V \Lambda W \rangle 
-
\langle (-\Delta +s_{2})^{-1} W^{p}
,  V \Lambda W \rangle 
\big| 
\lesssim 
|s_{1}- s_{2}|
s_{2}^{\frac{d}{2r}-2}
,
\end{equation}
where the implicit constant depends only on $d$, $p$ and $r$.
\end{lemma}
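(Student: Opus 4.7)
The plan is to mirror the algebraic structure used in Lemma \ref{18/12/19/01:00}, but applied to the \emph{difference} of two resolvents so that the Lipschitz factor $|s_1-s_2|$ appears automatically. By self-adjointness of $(-\Delta+s_j)^{-1}$ and the resolvent identity $(-\Delta+s_1)^{-1}-(-\Delta+s_2)^{-1}=(s_2-s_1)(-\Delta+s_1)^{-1}(-\Delta+s_2)^{-1}$, I would first rewrite
\begin{equation*}
\langle (-\Delta+s_1)^{-1} W^{p}, V\Lambda W\rangle -\langle (-\Delta+s_2)^{-1} W^{p}, V\Lambda W\rangle
=(s_{2}-s_{1})\bigl\langle W^{p},\, (-\Delta+s_{1})^{-1}(-\Delta+s_{2})^{-1}V\Lambda W\bigr\rangle,
\end{equation*}
so that it suffices to bound the bracket on the right by a constant times $s_{2}^{\frac{d}{2r}-2}$.

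The only non-routine step is to peel off one resolvent using $V\Lambda W=\Delta\Lambda W$ (see \eqref{20/9/13/9:52}) together with $(-\Delta+s_{1})^{-1}\Delta=-1+s_{1}(-\Delta+s_{1})^{-1}$. This yields $(-\Delta+s_{1})^{-1}V\Lambda W=-\Lambda W+s_{1}(-\Delta+s_{1})^{-1}\Lambda W$, and applying $(-\Delta+s_{2})^{-1}$ (which commutes with $(-\Delta+s_{1})^{-1}$) gives
\begin{equation*}
(-\Delta+s_{1})^{-1}(-\Delta+s_{2})^{-1}V\Lambda W
=-(-\Delta+s_{2})^{-1}\Lambda W +s_{1}(-\Delta+s_{1})^{-1}(-\Delta+s_{2})^{-1}\Lambda W.
\end{equation*}
The decisive choice here is to peel $V$ off with $(-\Delta+s_{1})^{-1}$, leaving $(-\Delta+s_{2})^{-1}$ acting on $\Lambda W$ throughout; this is what forces $s_{2}$ (rather than $s_{1}$) into the final exponent and accounts for the asymmetric form of the statement.

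It then remains to control both resulting terms via H\"older's inequality paired against $W^{p}\in L^{r}(\mathbb{R}^{d})$ (valid because $r(d-2)p>d$). For the first term, the weak-type bound \eqref{18/11/24/13:56} applied with $q_{1}=\frac{d}{d-2}$, $q_{2}=\frac{r}{r-1}$, together with $\Lambda W\in L^{\frac{d}{d-2}}_{\rm weak}(\mathbb{R}^{d})$ (see \eqref{20/9/13/10:2}), gives $\|(-\Delta+s_{2})^{-1}\Lambda W\|_{L^{r/(r-1)}}\lesssim s_{2}^{\frac{d}{2r}-2}$, exactly as in \eqref{18/12/25/10:28}; the two hypotheses $q_{1}\le q_{2}$ and $d(1/q_{1}-1/q_{2})<2$ are equivalent, respectively, to $r\le d/2$ and $r>d/4$, which are built into the assumptions. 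For the second term, the factor $s_{1}$ is absorbed by the trivial bound $\|(-\Delta+s_{1})^{-1}\|_{L^{r/(r-1)}\to L^{r/(r-1)}}\lesssim s_{1}^{-1}$ (the $q_{1}=q_{2}$ case of Lemma \ref{18/11/05/10:29}), reducing it to the same $L^{r/(r-1)}$-norm estimate already established. Multiplying by the prefactor $|s_{2}-s_{1}|$ yields \eqref{19/01/03/20:27}. There is no real obstacle---the entire argument is a careful bookkeeping of one resolvent identity and two applications of Lemma \ref{18/11/05/10:29}, and the range of $r$ in the hypothesis is exactly what makes both applications legal.
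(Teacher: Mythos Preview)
Your proof is correct and follows essentially the same route as the paper. The paper first rewrites each term as $\langle(-\Delta+s_j)^{-1}W^{p},V\Lambda W\rangle=-\langle W^{p},\Lambda W\rangle+s_j\langle W^{p},(-\Delta+s_j)^{-1}\Lambda W\rangle$ and then subtracts, arriving at precisely the same two pieces $-(-\Delta+s_2)^{-1}\Lambda W$ and $s_1(-\Delta+s_1)^{-1}(-\Delta+s_2)^{-1}\Lambda W$ paired against $W^{p}$; your resolvent-identity-first organization is algebraically equivalent, and your uniform $L^{r}\times L^{r/(r-1)}$ H\"older splitting is a mild streamlining of the paper's mixed $L^{d/2}\times L^{d/(d-2)}$ and $L^{r}\times L^{r/(r-1)}$ pairings.
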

\begin{proof}[Proof of Lemma \ref{19/02/09/10:47}]
Observe from \eqref{20/9/13/9:52} that for any $s>0$, 
\begin{equation}\label{20/10/11/16:4}
V\Lambda W = -( -\Delta +s ) \Lambda W +s \Lambda W.   
\end{equation}
Then, by \eqref{20/10/11/16:4},  Lemma \ref{20/8/16/15:45},  H\"older's inequality and Lemma \ref{18/11/05/10:29},
 we see  that  
\begin{equation}\label{19/01/03/21:10}
\begin{split}
&  
\big|
\langle (-\Delta + s_{1})^{-1} W^{p},  V \Lambda W \rangle
-
\langle (-\Delta + s_{2})^{-1} W^{p},  V \Lambda W \rangle 
\big|
\\[6pt]
&= 
\big|
s_{1} \langle  W^{p},  (-\Delta +s_{1})^{-1} \Lambda W \rangle
-
s_{2} \langle W^{p},  (-\Delta +s_{2})^{-1} \Lambda W \rangle 
\big|
\\[6pt]
&\le 
s_{1} 
\big|
\langle  W^{p},  \big\{ (-\Delta + s_{1})^{-1} -(-\Delta +s_{2})^{-1}\big\} \Lambda W \rangle
\big|
\\[6pt]
&\quad +
|s_{1}-s_{2}|
\big| \langle W^{p},  (-\Delta +s_{2})^{-1} \Lambda W \rangle \big|
\\[6pt]
&\le 
s_{1} |s_{1}-s_{2}| 
\| (-\Delta + s_{2})^{-1}  W^{p}  \|_{L^{\frac{d}{2}}}
\| (-\Delta +s_{1})^{-1} \Lambda W  \|_{L^{\frac{d}{d-2}}}
\\[6pt]
&\quad +
|s_{1}-s_{2}| 
\|W^{p}\|_{L^{r}}\|(-\Delta + s_{2})^{-1} \Lambda W \|_{L^{\frac{r}{r-1}}}
\\[6pt]
&\lesssim
s_{1}|s_{1}- s_{2}|
s_{2}^{\frac{d}{2r}-2}\| W^{p}  \|_{L^{r}}
s_{1}^{-1} \| \Lambda W  \|_{L_{\rm weak}^{\frac{d}{d-2}}}
+
|s_{1}- s_{2}|\| W^{p}  \|_{L^{r}}
s_{2}^{\frac{d}{2r}-2}
\| \Lambda W  \|_{L_{\rm weak}^{\frac{d}{d-2}}}
\\[6pt]
&\lesssim 
|s_{1}- s_{2}|
s_{2}^{\frac{d}{2r}-2}
,
\end{split}
\end{equation}
where the implicit constant depends only on $d$, $p$ and $r$. This proves 
 the lemma.  
\end{proof} 

\begin{lemma}\label{19/02/09/16:47}
Assume $d=3,4$ and $\frac{4}{d-2}-1 <p <\frac{d+2}{d-2}$.  
 Let $2^{*}<q < \infty$. 
 Then, for any $R>0$,  $s_{1}, s_{2} >0$, $t>0$,  
 $0< t_{1} \le t_{2}<1$,  
 $\eta_{1} \in Y_{q}(R,t_{1})$ 
  and $\eta_{2}\in Y_{q}(R,t_{2})$,  
  the following holds: 
\begin{equation}\label{19/02/09/16:50}
\begin{split}
&\big|
\langle (-\Delta + s_{1})^{-1} N(\eta_{1};t)
 ,  V \Lambda W
\rangle
-
\langle (-\Delta + s_{2})^{-1} N(\eta_{2};t)
 ,  V \Lambda W
\rangle
\big|
\\[6pt] 
&\lesssim  
s_{1}^{\frac{d}{2q}-1}
\Bigm\{ 
R^{2} \alpha(t_{1})^{ 2\Theta_{q}} 
+ 
s_{2}^{\frac{2^{*}}{q}-1}  
R^{\frac{d+2}{d-2}}
\alpha(t_{1})^{\frac{d+2}{d-2}\Theta_{q}}
\Bigm\} 
|s_{1}-s_{2}| 
\\[6pt]
& \quad   +
t s_{1}^{\frac{d}{2q}-1} 
\Bigm\{ 
s_{2}^{-\nu_{q}} 
R \alpha(t_{1})^{\Theta_{q}}
+
s_{2}^{\frac{d(p-1)}{2q}-1} 
R^{p} 
\alpha(t_{1})^{p\Theta_{q}}
\Bigm\} 
|s_{1}-s_{2}| 
\\[6pt]
&\quad +
\Bigm\{
R \alpha(t_{2})^{\Theta_{q}}
+
s_{2}^{\frac{2^{*}}{q}-1}
R^{\frac{4}{d-2}}
\alpha(t_{2})^{\frac{4}{d-2}\Theta_{q}}
\Bigm\} 
\|\eta_{1} -\eta_{2} \|_{L^{q}}
\\[6pt]
&\quad +
t 
\big\{ 
s_{2}^{-\nu_{q}}  
+
s_{2}^{\frac{d(p-1)}{2q}-1} 
R^{p-1}
\alpha(t_{2})^{(p-1)\Theta_{q}}
\big\}
\|\eta_{1}-\eta_{2}\|_{L^{q}}
,
\end{split}
\end{equation}
where the implicit constant depends only on $d$, $p$ and $q$. 
\end{lemma}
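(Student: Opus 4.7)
The natural strategy is to split the difference into a part where only the resolvent changes and a part where only the nonlinearity changes: write
\begin{align*}
&\langle (-\Delta+s_{1})^{-1} N(\eta_{1};t),\, V\Lambda W\rangle - \langle (-\Delta+s_{2})^{-1} N(\eta_{2};t),\, V\Lambda W\rangle \\
&= \underbrace{\langle \{(-\Delta+s_{1})^{-1} - (-\Delta+s_{2})^{-1}\} N(\eta_{1};t),\, V\Lambda W\rangle}_{(\mathrm{I})} \\
&\quad + \underbrace{\langle (-\Delta+s_{2})^{-1}\{N(\eta_{1};t)-N(\eta_{2};t)\},\, V\Lambda W\rangle}_{(\mathrm{II})}.
\end{align*}
Term $(\mathrm{II})$ should yield the last two lines of the right-hand side of \eqref{19/02/09/16:50}, and term $(\mathrm{I})$ the first two.

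For $(\mathrm{II})$, the plan is to apply H\"older's inequality to dualize against $V\Lambda W$, which decays like $(1+|x|)^{-(d+2)}$ and so lies in $L^{q/(q-1)}(\mathbb{R}^{d})$ with norm controlled by a constant depending only on $d$. This gives
\[
|(\mathrm{II})| \lesssim \|(-\Delta+s_{2})^{-1}\{N(\eta_{1};t)-N(\eta_{2};t)\}\|_{L^{q}},
\]
and the second part of Lemma \ref{19/02/10/16:33} applied with $s=s_{2}$ produces exactly the third and fourth lines of the target estimate.

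For $(\mathrm{I})$, the main point is to extract the factor $|s_{1}-s_{2}|$ using the resolvent identity (Lemma \ref{20/8/16/15:45}) and then to move one resolvent onto $V\Lambda W$ by self-adjointness:
\[
(\mathrm{I}) = -(s_{1}-s_{2}) \langle (-\Delta+s_{2})^{-1} N(\eta_{1};t),\, (-\Delta+s_{1})^{-1} V\Lambda W\rangle.
\]
By H\"older's inequality with exponents $q$ and $q/(q-1)$, this is at most $|s_{1}-s_{2}|\, \|(-\Delta+s_{2})^{-1} N(\eta_{1};t)\|_{L^{q}}\, \|(-\Delta+s_{1})^{-1} V\Lambda W\|_{L^{q/(q-1)}}$. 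The first factor is bounded by the first part of Lemma \ref{19/02/10/16:33} with $(s,t_{0})=(s_{2},t_{1})$. For the second factor, the plan is to use $V\Lambda W \in L^{1}(\mathbb{R}^{d})$ and apply Lemma \ref{18/11/05/10:29} with $(q_{1},q_{2})=(1,q/(q-1))$, which gives $\|(-\Delta+s_{1})^{-1} V\Lambda W\|_{L^{q/(q-1)}} \lesssim s_{1}^{d/(2q)-1}$; the condition $d(1/q_{1}-1/q_{2}) = d/q < 2$ is automatic since $q > 2^{*} > d/2$. Multiplying out produces the first two lines of \eqref{19/02/09/16:50}.

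The proof is therefore essentially bookkeeping once the decomposition is in place; no real obstacle is expected beyond keeping track of the many exponents in $R$, $\alpha(t_{j})^{\Theta_{q}}$, and powers of $s_{1},s_{2}$. The only subtle point is to confirm that the exponent arithmetic in the free-resolvent bound on $(-\Delta+s_{1})^{-1}V\Lambda W$ produces precisely $s_{1}^{d/(2q)-1}$ and that $V\Lambda W \in L^{1}$ (which follows from $|V(x)|\lesssim (1+|x|)^{-4}$ together with $|\Lambda W(x)|\lesssim (1+|x|)^{-(d-2)}$, yielding decay like $(1+|x|)^{-(d+2)}$).
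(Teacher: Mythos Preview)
Your proposal is correct and follows essentially the same approach as the paper: decompose into the resolvent-difference term and the nonlinearity-difference term, use the resolvent identity and self-adjointness to move $(-\Delta+s_{1})^{-1}$ onto $V\Lambda W$, then apply H\"older together with the free-resolvent bound $L^{1}\to L^{q/(q-1)}$ (yielding $s_{1}^{d/(2q)-1}$) and Lemma~\ref{19/02/10/16:33}. The exponent check $d/q<2$ and the decay bound $|V\Lambda W|\lesssim (1+|x|)^{-(d+2)}\in L^{1}\cap L^{q/(q-1)}$ are exactly as you note.
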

\begin{proof}[Proof of Lemma \ref{19/02/09/16:47}]
Observe from an elementary computation and Lemma \ref{20/8/16/15:45}  that 
\begin{equation}\label{20/10/14/10:23}
\begin{split}
&
\langle (-\Delta + s_{1})^{-1} N(\eta_{1};t)
 ,  V \Lambda W
\rangle
-
\langle (-\Delta + s_{2})^{-1} N(\eta_{2};t)
 ,  V \Lambda W
\rangle
\\[6pt] 
&=
-(s_{1}-s_{2})
\langle (-\Delta + s_{1})^{-1}  (-\Delta + s_{2})^{-1} N(\eta_{1};t)
 ,  V \Lambda W
\rangle
\\[6pt]
&\quad +
\langle (-\Delta + s_{2})^{-1} \{ N(\eta_{1};t) - N(\eta_{2};t)\}
 ,  V \Lambda W
\rangle
.
\end{split}
\end{equation}
Then, by H\"older's inequality and  Lemma \ref{18/11/05/10:29},  
we see that  
\begin{equation}\label{19/02/19/15:21}
\begin{split}
&
\big|
\langle (-\Delta + s_{1})^{-1} N(\eta_{1};t)
 ,  V \Lambda W
\rangle
-
\langle (-\Delta + s_{2})^{-1} N(\eta_{2};t)
 ,  V \Lambda W
\rangle
\big|
\\[6pt] 
&\le 
|s_{1}-s_{2}| 
\|(-\Delta + s_{1})^{-1} V\Lambda W\|_{L^{\frac{q}{q-1}}}
\| (-\Delta + s_{2})^{-1} N(\eta_{1},t) \|_{L^{q}}
\\[6pt]
&\quad +
 \| V \Lambda W\|_{L^{\frac{q}{q-1}}}
\| (-\Delta + s_{2})^{-1} \{ N(\eta_{1};t) - N(\eta_{2};t)\|_{L^{q}}
\\[6pt]
&\lesssim
|s_{1}-s_{2}| s_{1}^{\frac{d}{2q}-1}\| V \Lambda W \|_{L^{1}}
\| (-\Delta + s_{2})^{-1} N(\eta_{1};t) \|_{L^{q}}
\\[6pt]
&\quad +
\| (-\Delta + s_{2})^{-1} \{ N(\eta_{1};t) - N(\eta_{2};t)\|_{L^{q}}
,
\end{split}
\end{equation}
where the implicit constant depends only on $d$, $p$ and $q$. 
Furthermore, applying Lemma \ref{19/02/10/16:33} to the right-hand side of \eqref{19/02/19/15:21}, 
we find that the claim of the lemma is true.  
\end{proof}


\begin{lemma}\label{20/10/20/10:23}
Assume $d=3,4$ and $\frac{4}{d-2}-1<p<\frac{d+2}{d-2}$.  
 Let $\max\{2^{*}, \frac{2^{*}}{p-1}\} < q <\infty$ and $R>0$.    
   Then, there exists $0< T(q,R)<1$ depending only on $d$, $p$, $q$ and $R$ such that  
  for any $0<t <T(q,R)$,  $(\tau, \eta) \in I(t) \times Y_{q}(R,t)$  and $\max\{ 1, \frac{d}{(d-2)p}\}<r <\frac{d}{2}$,  
 the following estimate holds:    
\begin{equation}\label{20/10/15/17:12}
\Big| 
\mathfrak{s}(t;\tau, \eta) 
-\frac{K_{p}}{A_{1}}  t 
\Big|
\lesssim 
t 
\Big\{ \alpha(t)^{ \frac{d}{2r}-1 }
+
\delta(\alpha(t)) 
+
\alpha(t)^{\theta_{q}}
\Big\}
, 
\end{equation} 
where the implicit constant depends only on $d$, $p$, $q$ and $r$. 
\end{lemma}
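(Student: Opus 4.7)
The plan is to write the difference as a single fraction:
\begin{equation*}
\mathfrak{s}(t;\tau,\eta) - \frac{K_{p}}{A_{1}}t
= \frac{1}{\mathscr{X}(\tau)}\Big\{ t\big(\mathscr{W}_{p}(\tau) - K_{p}\big) + \frac{K_{p} t}{A_{1}}\big(A_{1} - \mathscr{X}(\tau)\big) + \mathscr{N}(t,\tau,\eta) \Big\},
\end{equation*}
and then bound each piece separately. First I would observe that $\tau \in I(t)$ together with the monotonicity of $\alpha$ gives $\alpha(\tau) \sim \alpha(t)$ and $\delta(\alpha(\tau)) \sim \delta(\alpha(t))$, so that any factor involving $\alpha(\tau)$ or $\delta(\alpha(\tau))$ may be replaced by its $\alpha(t)$-version up to a constant depending only on $d$ and $p$. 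Next, Lemma \ref{20/12/22/13:55} applied with $s=\alpha(\tau)$ yields $|\mathscr{X}(\tau) - A_{1}| \lesssim \delta(\alpha(\tau))$, which for $0<t<T(q,R)$ sufficiently small forces $\mathscr{X}(\tau) \sim A_{1} \sim 1$, so the prefactor $1/\mathscr{X}(\tau)$ is harmless.

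For the three numerator terms, I would apply Lemma \ref{18/12/19/01:00} to the first to get $|\mathscr{W}_{p}(\tau)-K_{p}| \lesssim \alpha(\tau)^{d/(2r)-1}$, contributing $t\alpha(t)^{d/(2r)-1}$; apply Lemma \ref{20/12/22/13:55} again to the second to get the bound $t\delta(\alpha(t))$; and apply Hölder's inequality together with Lemma \ref{19/02/10/16:33} (part 1) to the third:
\begin{equation*}
|\mathscr{N}(t,\tau,\eta)|
\le \|(-\Delta+\alpha(\tau))^{-1}N(\eta;t)\|_{L^{q}}\|V\Lambda W\|_{L^{q/(q-1)}}.
\end{equation*}
The last bound produces four sub-contributions of the form $R^{2}\alpha(t)^{2\Theta_{q}}$, $R^{(d+2)/(d-2)}\alpha(t)^{2^{*}/q-1+(d+2)/(d-2)\Theta_{q}}$, $tR\alpha(t)^{\Theta_{q}-\nu_{q}}$, and $tR^{p}\alpha(t)^{d(p-1)/(2q)-1+p\Theta_{q}}$.

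The heart of the argument is to collapse each of these four sub-terms into $t\alpha(t)^{\theta_{q}}$. For the first two, which carry no explicit factor of $t$, I would use \eqref{21/2/6/15:27} ($t\le \alpha(t)^{\Theta_{q}}$) to manufacture a $t$, then use \eqref{20/11/14/13:25} ($t^{-1}\alpha(t)^{2\Theta_{q}}\le \alpha(t)^{\Theta_{q}-\nu_{q}}$) to reduce the residual exponent; for the remaining two, I would just check that the exponent of $\alpha(t)$ strictly exceeds $\theta_{q}$, which follows directly from the two positivity statements in \eqref{20/11/1/8:12} and the definition \eqref{21/1/6/10:11} of $\theta_{q}$. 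All powers of $R$ and the extra positive powers of $\alpha(t)$ left over are absorbed by choosing $T(q,R)$ sufficiently small, as permitted by \eqref{19/02/19/12:01}.

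The main obstacle I expect is the bookkeeping associated with the $\mathscr{N}$ term: the four sub-contributions mix powers of $R$, $t$ and $\alpha(t)$, and to bundle them into the single target $t\alpha(t)^{\theta_{q}}$ one must use the sharp definitions of $\Theta_{q}$, $\nu_{q}$ and $\theta_{q}$, together with the nontrivial equivalences \eqref{21/2/6/15:27} and \eqref{20/11/14/13:25}. This is where the restriction $p>\frac{4}{d-2}-1$ and $q>\max\{2^{*},2^{*}/(p-1)\}$ is actually used, through the positivity of both quantities under the minimum in \eqref{21/1/6/10:11}. The remaining arguments (smallness of $\delta(\alpha(t))$, control of the ratios $\alpha(\tau)/\alpha(t)$) are standard consequences of the limits recorded in \eqref{19/01/14/16:26}.
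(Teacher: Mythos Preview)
Your proposal is correct and follows essentially the same route as the paper: the same algebraic decomposition of $\mathfrak{s}(t;\tau,\eta)-\frac{K_{p}}{A_{1}}t$, the same invocations of Lemmas \ref{20/12/22/13:55}, \ref{18/12/19/01:00} and \ref{19/02/10/16:33}, and the same absorption of the four $\mathscr{N}$ sub-terms into $t\,\alpha(t)^{\theta_{q}}$ via \eqref{20/11/14/13:25} and \eqref{19/02/19/12:01}. One small slip: the inequality \eqref{21/2/6/15:27} ($t\le\alpha(t)^{\Theta_{q}}$) points the wrong way to ``manufacture a $t$'' from $\alpha(t)^{2\Theta_{q}}$; it is \eqref{20/11/14/13:25} alone (which you also cite) that does this, since it reads $\alpha(t)^{2\Theta_{q}}\le t\,\alpha(t)^{\Theta_{q}-\nu_{q}}$.
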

\begin{remark}\label{20/11/15/16:23} 
Fix $\max\{1,  \frac{d}{(d-2)p} \} < r <\frac{d}{2}$, so that  the dependence of a constant on $r$ can be absorbed into that on $d$  and $p$. 
Then, Lemma \ref{20/10/20/10:23} shows that if $0< t<1$ is sufficiently small depending only on $d$, $p$, $q$ and $R$, then 
\begin{equation}\label{20/11/15/15:31}
\mathfrak{s}(t; \tau, \eta) 
\in I(t).
\end{equation} 
\end{remark}
\begin{proof}[Proof of Lemma \ref{20/10/20/10:23}] 
Let $\max\{ 1, \frac{d}{(d-2)p}\}<r <\frac{d}{2}$.  
 By \eqref{19/01/18/12:32},  H\"older's inequality,  
  Lemma \ref{18/12/19/01:00} and  Lemma \ref{20/12/22/13:55}  
 we see that 
 for any sufficiently small $t>0$ depending only on $d$ and $p$,   
 and any $(\tau, \eta) \in I(t) \times Y_{q}(R,t)$,  
\begin{equation}\label{19/01/14/13:27}
\begin{split}
&\Big| 
\mathfrak{s}(t; \tau, \eta) 
-\frac{K_{p}}{A_{1}}  t 
\Big|
=
\Big| 
\frac{t \mathscr{W}_{p}(\tau) + \mathscr{N}(t,\tau,\eta)}{\mathscr{X}(\tau)}
-\frac{K_{p}}{A_{1}}  t 
\Big|
\\[6pt]
&\le 
\Big| 
\frac{t A_{1} \mathscr{W}_{p}(\tau) - K_{p}t \mathscr{X}(\tau)
}{
A_{1} \mathscr{X}(\tau)  }
 \Bigm|
+
\Big|
\frac{\mathscr{N}(t,\tau, \eta)}{ \mathscr{X}(\tau)  }
\Big|
\\[6pt]
&\lesssim 
t  \big|  A_{1} \{ \mathscr{W}_{p}(\tau) - K_{p}\}  
-
K_{p}  \{ \mathscr{X}(\tau) -A_{1} \}  \big|
+
| \mathscr{N}(t,\tau,\eta) |
\\[6pt]
&\lesssim 
t | \mathscr{W}_{p}(\tau) - K_{p} |  
+
t  | \mathscr{X}(\tau) -A_{1} | 
+
\| (-\Delta+\alpha(\tau))^{-1} N(\eta;t) \|_{L^{q}}
\| V\Lambda W \|_{L^{\frac{q}{q-1}}}
\\[6pt]
&\lesssim
t \alpha(\tau)^{\frac{d}{2r}-1}
+
t \delta(\alpha(\tau)) 
+ 
\| (-\Delta+\alpha(\tau))^{-1} N(\eta; t) \|_{L^{q}}
,
\end{split} 
\end{equation}
where the implicit constants depend only on $d$, $p$, $q$ and $r$.  
 Note here that $\tau \in I(t)$ implies $\alpha(\tau) \sim \alpha(t)$. 
 Hence, applying Lemma \ref{19/02/10/16:33} to the right-hand side of \eqref{19/01/14/13:27} as $s=\alpha(\tau)$,   
  and  using  \eqref{20/11/14/13:25}, $\Theta_{q}<1$  (hence $\frac{2^{*}}{q}-1+\frac{d+2}{d-2}\Theta_{q} > 2 \Theta_{q}$), and \eqref{19/02/19/12:01} with $\theta=\theta_{q}$, 
 we see that if $t>0$ is sufficiently small depending only on $d$, $p$, $q$ and $R$, then 
\begin{equation}\label{20/10/15/15:42}
\begin{split}
&\Big| 
\frac{t \mathscr{W}_{p}(\tau) + \mathscr{N}(t,\tau,\eta)}{\mathscr{X}(\tau)}
-
\frac{K_{p}}{A_{1}}  t 
\Big|
\\[6pt]
&\lesssim 
t \alpha(\tau)^{ \frac{d}{2r}-1 }
+
t \delta(\alpha(\tau)) 
+ 
R^{2} 
\alpha(t)^{2\Theta_{q}} 
+
\alpha(\tau)^{\frac{2^{*}}{q}-1}
R^{\frac{d+2}{d-2}}
\alpha(t)^{\frac{d+2}{d-2}\Theta_{q}} 
\\[6pt]
&\quad +
t \Big\{
\alpha(\tau)^{-\nu_{q}} R \alpha(t)^{\Theta_{q}}
+
\alpha(\tau)^{\frac{d(p -1)}{2q}-1} R^{p} \alpha(t)^{p\Theta_{q}}
\Big\}
\\[6pt]
&\lesssim 
t \alpha(t)^{ \frac{d}{2r}-1 }
+
t \delta(\alpha(t)) 
+
t 
\alpha(t)^{\frac{1}{2}\{ \Theta_{q}-\nu_{q}\}}
+
t \alpha(t)^{ \frac{1}{2}\{ \Theta_{q}+ \frac{(d-2)(p -1)}{2}-1\}}
,
\end{split}
\end{equation}  
where the implicit constants depend only on $d$, $p$, $q$ and $r$.
Thus, we have proved the lemma.    
\end{proof}


\begin{lemma}\label{20/10/27/11:21}
Assume $d=3,4$ and $\frac{4}{d-2}-1<p<\frac{d+2}{d-2}$.
 Let $\max\{2^{*}, \frac{2^{*}}{p-1} \}< q <\infty$ and $R>0$. 
 Then, there exists $0< T(q,R)<1$ depending only on $d$, $p$, $q$ and $R$ 
 with the following property:
Let $0< t_{1} \le t_{2} < T(q,R)$,  and assume that $t_{2} \le 2t_{1}$.
Furthermore,  let  $(\tau_{1}, \eta_{1}) \in I(t_{1}) \times Y_{q}(R,t_{1})$ 
 and   $(\tau_{2}, \eta_{2}) \in I(t_{2}) \times Y_{q}(R,t_{2})$.  
 Then, the following holds for all $\max\{1, \frac{d}{(d-2)p}\} < r <\frac{d}{2}$:  
\begin{equation}\label{20/10/27/11:22}
\begin{split}
&
\Big| \mathfrak{s}(t_{2}; \tau_{2}, \eta_{2})
-
\mathfrak{s}(t_{1}; \tau_{1},  \eta_{1})
-
\frac{K_{p}}{A_{1}}(t_{2}-t_{1})
\Big|
\\[6pt]
&\lesssim 
\Big\{ 
\alpha(t_{1})^{\frac{d}{2r}-1}
+
\delta(\alpha(t_{1}))
+
\alpha( t_{1})^{\theta_{q}}   
\Big\}
|t_{1}-t_{2}|
\\[6pt]
&\quad +
\Big\{  
\delta\big( \alpha( t_{1} ) \big)^{d-2}  
+
\alpha( t_{1})^{\frac{d}{2r}-1}
+
\alpha(t_{1})^{\frac{d}{2q}}
\Big\} 
| \tau_{1} - \tau_{2} |
+
t_{1} \alpha(t_{1})^{-\Theta_{q} +\theta_{q}}
\|\eta_{1} -\eta_{2}\|_{L^{q}}
, 
\end{split} 
\end{equation} 
where the implicit constant depends only on $d$, $p$, $q$ and $r$. 
\end{lemma}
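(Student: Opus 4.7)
My plan is to prove the estimate by writing the difference as a telescoping sum that isolates the effect of changing each of the three variables $t$, $\tau$, $\eta$ one at a time, and then applying the already-established perturbation lemmas to each piece. Specifically, I will decompose
\begin{align*}
&\mathfrak{s}(t_{2};\tau_{2},\eta_{2}) - \mathfrak{s}(t_{1};\tau_{1},\eta_{1}) - \frac{K_{p}}{A_{1}}(t_{2}-t_{1}) \\
&= \underbrace{(t_{2}-t_{1})\frac{\mathscr{W}_{p}(\tau_{1})}{\mathscr{X}(\tau_{1})} - \frac{K_{p}}{A_{1}}(t_{2}-t_{1})}_{\text{(I)}}
+ \underbrace{t_{2}\Bigm[\frac{\mathscr{W}_{p}(\tau_{2})}{\mathscr{X}(\tau_{2})} - \frac{\mathscr{W}_{p}(\tau_{1})}{\mathscr{X}(\tau_{1})}\Bigm]}_{\text{(II)}} \\
&\quad + \underbrace{\frac{\mathscr{N}(t_{2},\tau_{2},\eta_{2})}{\mathscr{X}(\tau_{2})} - \frac{\mathscr{N}(t_{1},\tau_{1},\eta_{1})}{\mathscr{X}(\tau_{1})}}_{\text{(III)}}.
\end{align*}

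For (I), I factor it as $(t_{2}-t_{1})[A_{1}\mathscr{W}_{p}(\tau_{1}) - K_{p}\mathscr{X}(\tau_{1})]/(A_{1}\mathscr{X}(\tau_{1}))$, use $\mathscr{X}(\tau_{1}) \sim A_{1}$ (from Lemma \ref{20/12/22/13:55}), and then apply Lemma \ref{18/12/19/01:00} and Lemma \ref{20/12/22/13:55} to bound $|\mathscr{W}_{p}(\tau_{1})-K_{p}|$ by $\alpha(\tau_{1})^{d/(2r)-1}$ and $|\mathscr{X}(\tau_{1})-A_{1}|$ by $\delta(\alpha(\tau_{1}))$; together with $\alpha(\tau_{1}) \sim \alpha(t_{1})$ (since $\tau_{1} \in I(t_{1})$), this yields the contribution $|t_{1}-t_{2}|\{\alpha(t_{1})^{d/(2r)-1}+\delta(\alpha(t_{1}))\}$ to the right-hand side of \eqref{20/10/27/11:22}. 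For (II), I rewrite the bracket as $[\mathscr{W}_{p}(\tau_{2})-\mathscr{W}_{p}(\tau_{1})]/\mathscr{X}(\tau_{2}) + \mathscr{W}_{p}(\tau_{1})[\mathscr{X}(\tau_{1})-\mathscr{X}(\tau_{2})]/(\mathscr{X}(\tau_{1})\mathscr{X}(\tau_{2}))$, apply Lemma \ref{19/02/09/10:47} and Lemma \ref{19/01/03/10:53} (with $s_{j}=\alpha(\tau_{j})$), and then use Lemma \ref{19/01/03/16:41} to convert $|\alpha(\tau_{1})-\alpha(\tau_{2})|$ into $\delta(\alpha(t_{1}))|\tau_{1}-\tau_{2}|$ and $|\delta(\alpha(\tau_{1}))-\delta(\alpha(\tau_{2}))|$ into $t_{1}^{-1}\delta(\alpha(t_{1}))^{d-2}|\tau_{1}-\tau_{2}|$. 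The crucial algebraic cancellation is that the prefactor $t_{2}\sim t_{1}$ in (II) absorbs the awkward $t_{1}^{-1}$, and the identity $\alpha(t_{1}) = t_{1}\delta(\alpha(t_{1}))$ from \eqref{20/11/2/9:40} then turns $t_{1}\delta(\alpha(t_{1}))\alpha(t_{1})^{d/(2r)-2}|\tau_{1}-\tau_{2}|$ into $\alpha(t_{1})^{d/(2r)-1}|\tau_{1}-\tau_{2}|$, producing the middle term in \eqref{20/10/27/11:22}.

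For (III), I write it as $[\mathscr{N}(t_{2},\tau_{2},\eta_{2})-\mathscr{N}(t_{1},\tau_{1},\eta_{1})]/\mathscr{X}(\tau_{2}) + \mathscr{N}(t_{1},\tau_{1},\eta_{1})[\mathscr{X}(\tau_{1})-\mathscr{X}(\tau_{2})]/(\mathscr{X}(\tau_{1})\mathscr{X}(\tau_{2}))$. The first summand is split further as $\mathscr{N}(t_{2},\tau_{2},\eta_{2})-\mathscr{N}(t_{1},\tau_{2},\eta_{2}) + \mathscr{N}(t_{1},\tau_{2},\eta_{2})-\mathscr{N}(t_{1},\tau_{1},\eta_{1})$: the first is handled using the identity $N(\eta;t_{2})-N(\eta;t_{1})=(t_{2}-t_{1})E(\eta,0)$ together with the $E$-estimate from the proof of Lemma \ref{19/02/23/18:11}, and the second is handled directly by Lemma \ref{19/02/09/16:47}. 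The second summand of (III) is bounded by applying Lemma \ref{19/02/10/16:33}(i) to $|\mathscr{N}(t_{1},\tau_{1},\eta_{1})|$ and Lemmas \ref{19/01/03/10:53}--\ref{19/01/03/16:41} to $|\mathscr{X}(\tau_{1})-\mathscr{X}(\tau_{2})|$.

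Finally, I collect all contributions and simplify using $t_{1}\sim t_{2}\sim \tau_{1}\sim\tau_{2}$, $\alpha(t_{1})\sim \alpha(t_{2})\sim\alpha(\tau_{i})$, $\|\eta_{i}\|_{L^{q}}\le R\alpha(t_{1})^{\Theta_{q}}$, and the defining inequalities \eqref{20/11/1/8:12}--\eqref{21/1/6/10:11} of $\theta_{q}$. The bookkeeping uses the estimates \eqref{21/2/6/15:27} and \eqref{20/11/14/13:25} of Remark \ref{20/12/7/11:47} to absorb all the mixed powers of $\alpha(t_{1})$ and $R$ into the single exponent $\theta_{q}$ whenever possible. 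The main obstacle is purely bookkeeping: Lemma \ref{19/01/03/16:41} produces a singular $t_{1}^{-1}$ in the modulus of continuity of $\delta\circ\alpha$, and one must verify in each of the four subterms of (II) and (III) that this $t_{1}^{-1}$ is exactly compensated by an explicit prefactor of $t_{1}$ (coming from $t_{2}\sim t_{1}$, from $\mathscr{N}\sim t_{1}$-type contributions, or from applying $\alpha(t_{1})=t_{1}\delta(\alpha(t_{1}))$), so that no genuinely singular term remains and the final bound takes the clean form \eqref{20/10/27/11:22}.
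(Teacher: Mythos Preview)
Your proposal is correct and follows essentially the same approach as the paper's own proof. The only differences are cosmetic choices in the telescoping: the paper freezes $\tau_{2}$ in the analogue of your term (I) and uses prefactor $t_{1}$ in the analogue of (II), and it separates the $t$-variation in $\mathscr{N}$ as its own top-level term rather than nesting it inside (III); the key lemmas invoked (Lemmas \ref{20/12/22/13:55}, \ref{18/12/19/01:00}, \ref{19/01/03/10:53}, \ref{19/02/09/10:47}, \ref{19/02/09/16:47}, \ref{19/01/03/16:41}, \ref{19/02/10/16:33}) and the mechanism by which $t_{1}^{-1}$ is absorbed via $\alpha(t_{1})=t_{1}\delta(\alpha(t_{1}))$ are identical.
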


\begin{remark}\label{20/11/6/11:50}
Fix $\max\{1,\frac{d}{(d-2)p}\} < r <\frac{d}{2}$, so that  the dependence of a constant on $r$ can be 
 absorbed into that on $d$ and $p$.  Then, we may write \eqref{20/10/27/11:22} as   
\begin{equation}\label{20/11/6/11:53} 
\begin{split}
&
\Big| 
 \mathfrak{s}(t_{2}; \tau_{2}, \eta_{2})
-
\mathfrak{s}(t_{1}; \tau_{1},  \eta_{1})
-
\frac{K_{p}}{A_{1}}(t_{2}-t_{1})
\Big|
\\[6pt]
&\lesssim  
o_{t_{1}}(1)
\big\{ 
|t_{1}-t_{2}| 
+
|\tau_{1}- \tau_{2} |
\big\} 
+
t_{1} \alpha(t_{1})^{-\Theta_{q}+\theta_{q}}
\|\eta_{1} -\eta_{2}\|_{L^{q}}
,
\end{split} 
\end{equation} 
where the implicit constant depends only on $d$, $p$ and $q$. 
\end{remark}

\begin{proof}[Proof of Lemma \ref{20/10/27/11:21}] 
First, observe  from the assumptions about $t_{1},t_{2}, \tau_{1},\tau_{2}$  that 
  if $t_{1}$ and $t_{2}$ are  sufficiently small depending only on $d$ and $p$, then  
\begin{equation}\label{20/10/28/11:17}
\tau_{1} \sim t_{1} \sim  \tau_{2} \sim t_{2},
\qquad 
\alpha(\tau_{1}) \sim  \alpha(t_{1}) \sim  \alpha(\tau_{2}) \sim \alpha(t_{2}), 
\end{equation} 
where the implicit constants depend only on $d$ and $p$.

Now, observe from \eqref{19/01/14/13:30} that  
\begin{equation}\label{20/10/20/11:39}
\begin{split}
&
\Big| 
 \mathfrak{s}(t_{2}; \tau_{2}, \eta_{2})
-
\mathfrak{s}(t_{1}; \tau_{1},  \eta_{1})
-
\frac{K_{p}}{A_{1}}(t_{2}-t_{1})
\Big|
\\[6pt]
&\le   
|t_{2}-t_{1}|
\Big|
\frac{
\mathscr{W}_{p}( \tau_{2} ) 
}{ 
\mathscr{X}(\tau_{2} ) }
-\frac{K_{p}}{A_{1}}
\Big|
+
\Big| 
\frac{
\mathscr{N}(t_{2},  \tau_{2},  \eta_{2})
-
\mathscr{N}(t_{1},  \tau_{2},  \eta_{2})
 }{  \mathscr{X}(\tau_{2}) }
\Big| 
\\[6pt]
&\quad 
+
t_{1} 
\Big|
\frac{
\mathscr{W}_{p}( \tau_{2} ) 
}{ 
\mathscr{X}(\tau_{2}) }
-
\frac{
\mathscr{W}_{p}( \tau_{1} ) 
}{ 
\mathscr{X}(\tau_{1}) }
\Big|
+
\Big|
 \frac{
\mathscr{N}(t_{1},  \tau_{2},  \eta_{2})
 }{  \mathscr{X}(\tau_{2}) }
 -
\frac{
\mathscr{N}(t_{1},  \tau_{1},  \eta_{1})
 }{  \mathscr{X}(\tau_{1}) }
 \Big|
 .
\end{split} 
\end{equation}
We shall estimate each term on the right-hand side of \eqref{20/10/20/11:39} separately:
\\
\noindent 
{\bf Estimate for 1st term.}~We shall derive an estimate for the first term on the right-hand side of \eqref{20/10/20/11:39}.
Let $\max\{1, \frac{d}{(d-2)p}\} < r <\frac{d}{2}$.
 Then,  by \eqref{19/01/18/12:32},   
 Lemma \ref{20/12/22/13:55}, Lemma \ref{18/12/19/01:00} and \eqref{20/10/28/11:17},  
 we see that   for any 
\begin{equation}\label{20/11/6/11:8}
\begin{split}
&
\Big|
\frac{
 \mathscr{W}_{p}( \tau_{2} ) 
}{ 
\mathscr{X}(\tau_{2}) }
-
\frac{K_{p}}{A_{1}}
\Big|
=
\Big|
\frac{
 A_{1}\mathscr{W}_{p}( \tau_{2} ) 
 -
 K_{p}\mathscr{X}(\tau_{2})
}{ 
\mathscr{X}(\tau_{2})A_{1} }
\Big|
\lesssim  
\big|
 A_{1} \mathscr{W}_{p}( \tau_{2} ) 
 -
 K_{p}\mathscr{X}(\tau_{2})
\big|
\\[6pt]
&\le  
A_{1}
\big|
\langle (-\Delta+\alpha( \tau_{2} ))^{-1}W^{p}, V\Lambda W \rangle  
 -
K_{p} 
\big|
\\[6pt]
&\quad +
K_{p} \delta(\alpha(\tau_{2}))
\big|
 A_{1} \delta(\alpha(\tau_{2}))^{-1} 
 -
\langle (-\Delta+\alpha(\tau_{2}) )^{-1}W, V\Lambda W  \rangle 
\big|
\\[6pt]
&\lesssim 
\alpha(t_{1})^{\frac{d}{2r}-1}
+
\delta(\alpha(t_{1}))
,
\end{split} 
\end{equation}
where the implicit constants depend only on $d$,  $p$ and $r$. 
 Thus, we find that  if $t_{1}$ and $t_{2}$ are  sufficiently small depending only on $d$, $p$ and $q$, then 
\begin{equation}\label{20/10/21/9:29}
|t_{2}-t_{1}|
\Big|
\frac{
 \mathscr{W}_{p}( \tau_{2} ) 
}{ 
\mathscr{X}(\tau_{2}) }
-
\frac{K_{p}}{A_{1}}
\Big|
\lesssim 
\big\{ 
\alpha(t_{1})^{\frac{d}{2r}-1}
+
\delta(\alpha(t_{1}))
\big\}
|t_{1}-t_{2}| 
,
\end{equation}
where the implicit constants depend only on $d$,  $p$ and $r$.  

\noindent 
{\bf Estimate for 2nd term.}~ Consider the  second term on the right-hand side of \eqref{20/10/20/11:39}.  
 
We see from \eqref{20/11/8/11:35}  that   
\begin{equation}\label{20/10/21/11:7}
\mathscr{N}(t_{1},  \tau_{2},  \eta_{2})
-
\mathscr{N}(t_{2},  \tau_{2},  \eta_{2})
=
(t_{1} -t_{2} ) 
\langle 
(-\Delta+ \alpha( \tau_{2}) )^{-1} 
E(\eta_{2},0),
V\Lambda W 
\rangle 
.
\end{equation}
Then,  by \eqref{19/01/18/12:32}, \eqref{20/10/21/11:7},  H\"older's inequality,  
 Lemma \ref{19/02/23/18:11} with $s=\alpha(\tau_{2})$,    \eqref{20/10/28/11:17}
 and \eqref{19/02/19/12:01} with $\theta=\theta_{q}$,   
 we see that 
   if $t_{1}$ and $t_{2}$ are sufficiently small depending only on $d$, $p$, $q$ and $R$, then  
\begin{equation}\label{20/10/30/12:7}
\begin{split} 
&\Big| 
\frac{
\mathscr{N}(t_{2},  \tau_{2},  \eta_{2})
-
\mathscr{N}(t_{1},  \tau_{2},  \eta_{2})
 }{  \mathscr{X}(\tau_{2}) }
\Big| 
\lesssim 
| \mathscr{N}(t_{1},  \tau_{2},  \eta_{2})
-
\mathscr{N}(t_{2},  \tau_{2},  \eta_{2}) |
\\[6pt]
&\lesssim  
|t_{1}-t_{2}|
\| (-\Delta+ \alpha( \tau_{2}) )^{-1} E(\eta_{2},0)   \|_{L^{q}}
\|  V\Lambda W \|_{L^{\frac{q}{q-1}}}
\\[6pt]
&\lesssim 
|t_{1}-t_{2}|
\Big\{
\alpha(\tau_{2})^{-\nu_{q}}
+
\alpha(\tau_{2})^{\frac{d(p-1)}{2q}-1} 
R^{p-1} 
\alpha(t_{2})^{(p-1)\Theta_{q}} 
\Big\}
R \alpha(t_{2})^{\Theta_{q}}
\\[6pt]
&\lesssim 
\Big\{
\alpha( t_{1})^{\frac{1}{2}\{ \Theta_{q}-\nu_{q} \}}
+
\alpha (t_{1})^{\frac{1}{2} \{ \Theta_{q}+ \frac{(d-2)(p-1)}{2}-1  \}}   
\Big\}
|t_{1}-t_{2}|
,
\end{split} 
\end{equation}
where the implicit constants depend only on $d$, $p$ and $q$.

\noindent 
{\bf Estimate for 3rd term.}~ Consider the third term on the right-hand side of \eqref{20/10/20/11:39}.  
 Let $\max\{1, \frac{d}{(d-2)p}\}< r < \frac{d}{2}$.   
 Then, by \eqref{19/01/18/12:32},  \eqref{19/01/18/12:33},  
 Lemma \ref{19/01/03/10:53},  Lemma \ref{19/02/09/10:47},  \eqref{20/10/28/11:17}, 
 Lemma \ref{19/01/03/16:41},
 we see that  
\begin{equation}\label{20/11/1/9:39}
\begin{split} 
&
t_{1} 
\Big|
\frac{
\mathscr{W}_{p}( \tau_{2} ) 
}{ 
\mathscr{X}(\tau_{2}) }
-
\frac{
\mathscr{W}_{p}( \tau_{1} ) 
}{ 
\mathscr{X}(\tau_{1}) }
\Big|
=
t_{1} 
\Big|
\frac{
\{ \mathscr{X}(\tau_{1}) - \mathscr{X}(\tau_{2}) \}
\mathscr{W}_{p}( \tau_{2} ) 
-
\mathscr{X}(\tau_{2}) 
\{ \mathscr{W}_{p}( \tau_{1} ) - \mathscr{W}_{p}( \tau_{2} )\} 
}{ 
\mathscr{X}(\tau_{1}) \mathscr{X}(\tau_{2}) }
\Big|
\\[6pt]
&\lesssim  
t_{1} 
\big|
\mathscr{X}(\tau_{1})
-
\mathscr{X}(\tau_{2})
\big| 
+
t_{1}
\big|
\mathscr{W}_{p}( \tau_{1} ) 
-
\mathscr{W}_{p}( \tau_{2} ) 
\big|
\\[6pt]
&\lesssim 
t_{1} 
\Big\{
\log{( 1+\alpha(t_{1})^{-1} )} 
+
\alpha( t_{1} )^{\frac{d}{2r}-2}
\Big\}
| \alpha(\tau_{1}) - \alpha(\tau_{2}) |
+
t_{1} 
| \delta( \alpha(\tau_{1}) )   - \delta( \alpha(\tau_{2})) |
\\[6pt]
&\lesssim 
t_{1} 
\Big\{
\log{( 1+\alpha(t_{1})^{-1} )} 
+
\alpha( t_{1} )^{\frac{d}{2r}-2}
\Big\}
\delta(\alpha(t_{1})) |\tau_{1}-\tau_{2}|
+
\delta(\alpha(t_{1}))^{d-2}
|\tau_{1} -\tau_{2}| 
\\[6pt]
&\lesssim 
\Big\{  
\delta\big( \alpha( t_{1} ) \big)^{d-2}  
+
\alpha( t_{1})^{\frac{d}{2r}-1}
\Big\} 
| \tau_{1} - \tau_{2} |
,
\end{split} 
\end{equation}
where the implicit constant depends only on $d$, $p$ and $r$.

\noindent 
{\bf Estimate for the last term.}~ We shall derive an estimate for the last term on the right-hand side of \eqref{20/10/20/11:39}, and 
 finish the proof of the lemma. 
 
 Observe from an elementary computation and \eqref{19/01/18/12:32} that 
\begin{equation}\label{20/10/21/10:45}
\begin{split} 
&
\Big|
 \frac{
\mathscr{N}(t_{1},  \tau_{2},  \eta_{2})
 }{  \mathscr{X}(\tau_{2}) }
 -
\frac{
\mathscr{N}(t_{1},  \tau_{1},  \eta_{1})
 }{  \mathscr{X}(\tau_{1}) }
 \Big|
 \\[6pt]
 &\le 
 \Big|
 \frac{
\{ 
\mathscr{X}(\tau_{1})
-
\mathscr{X}(\tau_{2})
\}
\mathscr{N}(t_{1},  \tau_{2},  \eta_{2})
 }{ \mathscr{X}(\tau_{1}) \mathscr{X}(\tau_{2}) }
\Big|
+ 
 \Big|
 \frac{
\mathscr{X}(\tau_{2})
\{ 
\mathscr{N}(t_{1},  \tau_{2},  \eta_{2})
-
\mathscr{N}(t_{1},  \tau_{1},  \eta_{1})
\}
 }{ \mathscr{X}(\tau_{1}) \mathscr{X}(\tau_{2}) }
\Big|
\\[6pt]
&\lesssim
\big|
\mathscr{X}(\tau_{2}) 
-
\mathscr{X}(\tau_{1})
\big|
\big| 
\mathscr{N}(t_{1},  \tau_{2},  \eta_{2})
\big| 
+ 
\big|  
\mathscr{N}(t_{1},  \tau_{2},  \eta_{2})
-
\mathscr{N}(t_{1},  \tau_{1},  \eta_{1})
\big| 
,
\end{split} 
\end{equation}
where the implicit constant depends only on $d$ and $p$. 

Consider the first term on the right-hand side of \eqref{20/10/21/10:45}.  
 By Lemma \ref{19/01/03/10:53},   \eqref{20/10/28/11:17},  Lemma \ref{19/01/03/16:41}, 
  we see that  
\begin{equation}\label{20/10/29/10:14}
\begin{split}
\big|
\mathscr{X}(\tau_{2}) 
-
\mathscr{X}(\tau_{1})
\big|
&\lesssim 
\log{\big( 1+ \alpha(t_{1})^{-1}  \big)}
|\alpha(\tau_{1}) - \alpha(\tau_{2})| 
+
\big|
\delta\big( \alpha(\tau_{1})\big)
- 
\delta\big( \alpha(\tau_{2}) \big)   
\big|
\\[6pt]
&\lesssim 
\log{\big( 1+ \alpha(t_{1})^{-1}  \big)} 
\delta\big( \alpha(t_{1}) \big)
| \tau_{1} -  \tau_{2} | 
+
t_{1}^{-1} \delta\big( \alpha(t_{1}) \big)^{d-2}
| \tau_{1} -  \tau_{2}| 
\\[6pt]
&\lesssim 
\Big\{
1 +
t_{1}^{-1} \delta\big( \alpha(t_{1}) \big)^{d-2}
\Big\} 
| \tau_{1} -  \tau_{2}| 
, 
\end{split}
\end{equation} 
where the implicit constants depend only on $d$ and $p$. 
 Furthermore,   by H\"older's inequality, 
 Lemma \ref{19/02/10/16:33},  \eqref{20/10/28/11:17}, $\Theta_{q} \le 1$,  
 \eqref{20/11/14/13:25},  and \eqref{19/02/19/12:01} with $\theta=\theta_{q}$,   
 we see that 
\begin{equation}\label{20/10/29/10:25}
\begin{split}
&
\big| 
\mathscr{N}(t_{1},  \tau_{2},  \eta_{2})
\big| 
\le 
\|   (-\Delta +\alpha(\tau_{2}))^{-1} N(\eta_{2}; t_{1})  \|_{L^{q}} 
\|V\Lambda W \|_{L^{\frac{q}{q-1}}}
\\[6pt]
&\lesssim 
R^{2} \alpha(t_{1})^{2 \Theta_{q}}
+
R^{\frac{d+2}{d-2}}
\alpha(t_{1})^{\Theta_{q}+1}
+
t_{1}
\Big\{
R \alpha(t_{1})^{\Theta_{q}-\nu_{q}} 
+
R^{p}\alpha(t_{1})^{\Theta_{q}+\frac{(d-2)(p-1)}{2}-1}
\Big\}
\\[6pt]
&\lesssim 
(R^{2} +R^{\frac{d+2}{d-2}} + R ) t_{1} \alpha(t_{1})^{\Theta_{q}-\nu_{q}}
+
t_{1}
R^{p}\alpha(t_{1})^{\Theta_{q}+\frac{(d-2)(p-1)}{2}-1}
\\[6pt]
&\lesssim 
t_{1} 
\Big\{ 
\alpha(t_{1})^{\frac{1}{2}\{ \Theta_{q}-\nu_{q}\}}
+
\alpha(t_{1})^{\frac{1}{2}\{ \Theta_{q}+\frac{(d-2)(p-1)}{2}-1\}}
\Big\}
,
\end{split} 
\end{equation}
where the implicit constants depend only on $d$, $p$ and $q$. 
Then, we find from  \eqref{20/10/29/10:14},  \eqref{20/10/29/10:25} and $t_{1}\le  \delta(\alpha(t_{1}))^{d-2}$   
 that   
 \begin{equation}\label{20/11/1/10:33}
\begin{split}
&
\big|
\mathscr{X}(\tau_{2}) 
-
\mathscr{X}(\tau_{1})
\big|
\big| 
\mathscr{N}(t_{1},  \tau_{2},  \eta_{2})
\big| 
\\[6pt]
&\lesssim 
\delta\big( \alpha(t_{1}) \big)^{d-2}
\Big\{
\alpha(t_{1})^{\frac{1}{2}\{ \Theta_{q}-\nu_{q}\}} 
+
\alpha(t_{1})^{\frac{1}{2}\{ \Theta_{q}+\frac{(d-2)(p-1)}{2} -1\}}
\Big\}
| \tau_{1} -  \tau_{2} |
\\[6pt]
&\lesssim 
\delta\big( \alpha(t_{1}) \big)^{d-2}
| \tau_{1} -  \tau_{2} |
 , 
\end{split}
\end{equation} 
where the implicit constant depends only on $d$, $p$ and $q$. 


It remains to estimate the the second term on the right-hand side of \eqref{20/10/21/10:45}. 
 By Lemma \ref{19/02/09/16:47},  \eqref{19/01/03/12:15} in Lemma \ref{19/01/03/16:41}, 
 \eqref{20/10/28/11:17}  and $t_{1}^{-1} \alpha(t_{1}) = \delta(\alpha(t_{1}))$ (see \eqref{20/11/2/9:40}),  
 \eqref{20/11/14/13:25}, $\Theta_{q}<1$ and \eqref{21/1/6/8:15},   we see that   
\begin{equation}\label{20/10/25/10:17}
\begin{split}
&
\big| 
\mathscr{N}(t_{1},  \tau_{2},  \eta_{2})
-
\mathscr{N}(t_{1},  \tau_{1},  \eta_{1})
\big| 
\\[6pt]
&\lesssim 
\Big\{
R^{2}\alpha(t_{1})^{\Theta_{q}+\frac{d-2}{2}-1}
+
R^{\frac{d+2}{d-2}}
\alpha(t_{1})^{\frac{d-2}{2}}
\Big\}
t_{1}^{-1} \alpha(t_{1})
| \tau_{1}- \tau_{2} |
\\[6pt]
&\quad + 
\Big\{
R \alpha(t_{1})^{\frac{d-2}{2}-\nu_{q}}
+
R^{p}
\alpha(t_{1})^{\frac{(d-2)p -2}{2}}
\Big\}
| \tau_{1}- \tau_{2} |
\\[6pt]
&\quad +
\Big\{ 
R \alpha (t_{1})^{\Theta_{q}}
+
R^{\frac{4}{d-2}} 
\alpha(t_{1})  
\Big\}
\|\eta_{1} -\eta_{2}\|_{L^{q}}
\\[6pt]
&\quad +
t_{1}
\Big\{
 \alpha (t_{1})^{-\nu_{q}}
+
R^{p-1} 
\alpha(t_{1})^{\frac{(d-2)(p-1)}{2}-1} 
\Big\}
\|\eta_{1} -\eta_{2}\|_{L^{q}}
\\[6pt]
&\lesssim 
(R^{2}+R +R^{\frac{d+2}{d-2}} + R^{p}) \alpha(t_{1})^{\frac{d-2}{2}-\nu_{q}}
| \tau_{1}- \tau_{2} |
\\[6pt]
&\quad +
t_{1}
\Big\{
(R+R^{\frac{4}{d-2}}+1)
\alpha (t_{1})^{-\nu_{q}}
+
R^{p-1} 
\alpha(t_{1})^{\frac{(d-2)(p-1)}{2}-1} 
\Big\}
\|\eta_{1} -\eta_{2}\|_{L^{q}}
,
\end{split}
\end{equation}
where the implicit constants depend only on $d$, $p$ and $q$.  
 Furthermore, by  \eqref{20/10/25/10:17},
 and \eqref{19/02/19/12:01} with $\theta=\theta_{q}$, 
 we see that  
\begin{equation}\label{20/10/30/11:59}
\begin{split}
&
\big| 
\mathscr{N}(t_{1},  \tau_{2},  \eta_{2})
-
\mathscr{N}(t_{1},  \tau_{1},  \eta_{1})
\big| 
\\[6pt]
&\lesssim 
\alpha(t_{1})^{\frac{d}{2q}+ \frac{1}{2}\{ \Theta_{q}-\nu_{q}\}}
| \tau_{1}- \tau_{2} |
\\[6pt]
&\quad +
t_{1}\alpha(t_{1})^{-\Theta_{q}}
\Big\{ 
 \alpha (t_{1})^{\frac{1}{2}\{\Theta_{q}-\nu_{q}\}}
+
\alpha(t_{1})^{\frac{1}{2}\{ \Theta_{q}+\frac{(d-2)(p-1)}{2}-1\}} 
\Big\}
\|\eta_{1} -\eta_{2}\|_{L^{q}}
,
\end{split}  
\end{equation} 
where the implicit constant depends only on $d$, $p$ and $q$. 
 Then,  plugging  \eqref{20/11/1/10:33} and \eqref{20/10/30/11:59}  into \eqref{20/10/21/10:45}, 
  we see that  if $t_{1}$ and $t_{2}$ are sufficiently small depending only on $d$, $p$,  $q$ and $R$,
 then    
\begin{equation}\label{20/11/14/15:3}
\begin{split} 
&
\Big|
 \frac{
\mathscr{N}(t_{1},  \tau_{2},  \eta_{2})
 }{  \mathscr{X}(\tau_{2}) }
 -
\frac{
\mathscr{N}(t_{1},  \tau_{1},  \eta_{1})
 }{  \mathscr{X}(\tau_{1}) }
 \Big|
 \\[6pt]
 &\lesssim
\Big\{
\delta\big( \alpha(t_{1}) \big)^{d-2}
+
\alpha(t_{1})^{\frac{d}{2q}}
\Big\}
| \tau_{1} -  \tau_{2} |
\\[6pt]
&\quad +
t_{1}\alpha(t_{1})^{-\Theta_{q}}
\Big\{ 
 \alpha (t_{1})^{\frac{1}{2}\{\Theta_{q}-\nu_{q}\}}
+
\alpha(t_{1})^{\frac{1}{2}\{ \Theta_{q}+\frac{(d-2)(p-1)}{2}-1\}} 
\Big\}
\|\eta_{1} -\eta_{2}\|_{L^{q}}
,
\end{split} 
\end{equation}
where the implicit constant depends only on $d$, $p$ and $q$. 

Now,  putting  \eqref{20/10/20/11:39},    \eqref{20/10/21/9:29},   \eqref{20/10/30/12:7}, 
 \eqref{20/11/1/9:39} and \eqref{20/11/14/15:3} together,   
 we  find that  \eqref{20/10/27/11:22} holds. Thus, we have completed the proof.   
\end{proof}



\subsection{Proof of Proposition \ref{19/01/21/08:01}} \label{19/02/17/10:06}
In this section, we give a proof of Proposition \ref{19/01/21/08:01}. 
  The following lemma  plays an essential role to 
 prove Proposition \ref{19/01/21/08:01}:  
\begin{lemma}\label{20/10/30/11:47}
Assume $d=3,4$ and $\frac{4}{d-2}-1 <p<\frac{d+2}{d-2}$.   
Let $\max\{2^{*}, \frac{2^{*}}{p-1} \}< q <\infty$ and $R>0$. 
 Then,  there exists $0< T(q,R) <1$ depending only on $d$, $p$, $q$ and $R$ 
 such that the following estimates hold:
\begin{enumerate}
\item
Let $0< t < T(q,R)$ and  $\eta \in   Y_{q}(R,t)$. 
Furthermore, let $\tau \in I(t)$ with $\tau=\mathfrak{s}(t; \tau,\eta)$. 
 Then,    
\begin{equation}\label{20/11/7/10:23}
\| \mathfrak{g}(t; \tau, \eta)\|_{L^{q}} 
\lesssim 
\alpha(t)^{\Theta_{q}} 
,
\end{equation} 
where the implicit constant depends only on $d$, $p$ and $q$.  

\item
Let $0< t_{1} \le t_{2} < T(q,R)$ with $t_{2} \le 2t_{1}$,  
 and let $\eta_{1}  \in  Y_{q}(R,t_{1})$ and $\eta_{2} \in  Y_{q}(R,t_{2})$.
 Furthermore, let $\tau_{1} \in I(t_{1})$  with   
$\tau_{1}=\mathfrak{s}(t_{1}; \tau_{1},\eta_{1})$, 
and let $\tau_{2}\in I(t_{2})$ with $\tau_{2}=\mathfrak{s}(t_{2};\tau_{2},\eta_{2})$.
 Then, 
\begin{equation}\label{20/10/30/11:48}
\begin{split}
&
\| 
\mathfrak{g}(t_{2};  \tau_{2}, \eta_{2}) 
-
\mathfrak{g}(t_{1};  \tau_{1}, \eta_{1}) \|_{L^{q}}
\\[6pt]
&\lesssim 
o_{t_{1}}(1) t_{1}^{-1} 
\alpha(t_{1})^{\Theta_{q}}
|t_{2}-t_{1}| 
+
|t_{2}-t_{1}|
+ 
\alpha(t_{1})^{\theta_{q}} 
\|\eta_{2} -\eta_{1} \|_{L^{q}}
,
\end{split}
\end{equation}
where the implicit constant depends only on $d$, $p$ and $q$. 
\end{enumerate} 
\end{lemma}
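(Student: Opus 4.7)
The linchpin is the observation that the fixed-point condition $\tau=\mathfrak{s}(t;\tau,\eta)$ encodes an orthogonality:
\begin{equation}\label{plan-orth}
\tau=\mathfrak{s}(t;\tau,\eta)\quad\Longleftrightarrow\quad (-\Delta+\alpha(\tau))^{-1}F(\eta;\alpha(\tau),t)\in X_{q},
\end{equation}
which is verified by unfolding \eqref{19/01/14/13:30}--\eqref{20/10/15/8:3} and using $\tau\delta(\alpha(\tau))=\alpha(\tau)$ from \eqref{20/11/2/9:40}. Under \eqref{plan-orth} the non-singular bound of Proposition \ref{18/11/17/07:17}\,(ii), rather than (i), applies to $\mathfrak{g}(t;\tau,\eta)$; every estimate in this lemma rests on this gain, which removes an otherwise fatal factor of $\tau^{-1}\sim t^{-1}$.

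For (i), \eqref{plan-orth} together with Proposition \ref{18/11/17/07:17}\,(ii) gives
$$\|\mathfrak{g}(t;\tau,\eta)\|_{L^{q}}\lesssim \|(-\Delta+\alpha(\tau))^{-1}F(\eta;\alpha(\tau),t)\|_{L^{q}}.$$
Splitting $F=-\alpha(\tau)W+tW^{p}+N(\eta;t)$, I will estimate the three pieces separately: for the first I use Lemma \ref{18/11/05/10:29}\,(ii) with $W\in L_{\rm weak}^{d/(d-2)}$, producing $\alpha(\tau)\cdot\alpha(\tau)^{\Theta_{q}-1}=\alpha(\tau)^{\Theta_{q}}$; for the second, Lemma \ref{18/11/05/10:29} or Lemma \ref{18/11/23/17:17} on $W^{p}$ combined with $t\lesssim \alpha(t)^{\Theta_{q}}$ from \eqref{21/2/6/15:27}; for the third, Lemma \ref{19/02/10/16:33}\,(i). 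The equivalence $\alpha(\tau)\sim\alpha(t)$ on $\tau\in I(t)$ together with \eqref{20/11/14/13:25} and \eqref{19/02/19/12:01} collapses all contributions into $\alpha(t)^{\Theta_{q}}$, yielding \eqref{20/11/7/10:23}.

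For (ii), write $\mathfrak{g}_{i}:=\mathfrak{g}(t_{i};\tau_{i},\eta_{i})$ and $F_{i}:=F(\eta_{i};\alpha(\tau_{i}),t_{i})$. By construction $(-\Delta+V+\alpha(\tau_{i}))\mathfrak{g}_{i}=F_{i}$; subtracting and using the identity $(-\Delta+\alpha(\tau_{1}))^{-1}F_{1}-\mathfrak{g}_{1}=(-\Delta+\alpha(\tau_{1}))^{-1}V\mathfrak{g}_{1}$ (immediate from the defining equation of $\mathfrak{g}_{1}$) together with the resolvent identity of Lemma \ref{20/8/16/15:45} yields
\begin{equation}\label{plan-diff}
\begin{split}
\{1+(-\Delta+\alpha(\tau_{2}))^{-1}V\}(\mathfrak{g}_{2}-\mathfrak{g}_{1})
&=\bigl[(-\Delta+\alpha(\tau_{2}))^{-1}F_{2}-(-\Delta+\alpha(\tau_{1}))^{-1}F_{1}\bigr]\\
&\quad+(\alpha(\tau_{2})-\alpha(\tau_{1}))(-\Delta+\alpha(\tau_{2}))^{-1}(-\Delta+\alpha(\tau_{1}))^{-1}V\mathfrak{g}_{1}.
\end{split}
\end{equation}
By \eqref{plan-orth} the bracketed term lies in $X_{q}$, and Proposition \ref{18/11/17/07:17}\,(ii) controls its $\{1+(-\Delta+\alpha(\tau_{2}))^{-1}V\}^{-1}$-image by $\|(-\Delta+\alpha(\tau_{2}))^{-1}F_{2}-(-\Delta+\alpha(\tau_{1}))^{-1}F_{1}\|_{L^{q}}$; I rewrite this difference using the resolvent identity as $(-\Delta+\alpha(\tau_{2}))^{-1}(F_{2}-F_{1})-(\alpha(\tau_{2})-\alpha(\tau_{1}))(-\Delta+\alpha(\tau_{2}))^{-1}(-\Delta+\alpha(\tau_{1}))^{-1}F_{1}$, decompose $F_{2}-F_{1}=-(\alpha(\tau_{2})-\alpha(\tau_{1}))W+(t_{2}-t_{1})W^{p}+[N(\eta_{2};t_{2})-N(\eta_{1};t_{1})]$, and bound each summand via Lemmas \ref{18/11/05/10:29}, \ref{18/11/23/17:17}, \ref{19/02/10/16:33}\,(ii). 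For the last term in \eqref{plan-diff} I apply Proposition \ref{18/11/17/07:17}\,(i); its $\tau_{2}^{-1}$ loss is absorbed by the prefactor $|\alpha(\tau_{2})-\alpha(\tau_{1})|\lesssim \delta(\alpha(t_{1}))|\tau_{2}-\tau_{1}|=\alpha(t_{1})t_{1}^{-1}|\tau_{2}-\tau_{1}|$ from Lemma \ref{19/01/03/16:41}, with $\|V\mathfrak{g}_{1}\|_{L^{q}}$ controlled by part (i) and the auxiliary operator norm estimated via Lemma \ref{21/1/10/15:37}. Finally, $|\tau_{2}-\tau_{1}|$ is converted to $|t_{2}-t_{1}|$ and $\|\eta_{2}-\eta_{1}\|_{L^{q}}$ by applying Lemma \ref{20/10/27/11:21} to the identity $\tau_{i}=\mathfrak{s}(t_{i};\tau_{i},\eta_{i})$ and absorbing the $o_{t_{1}}(1)|\tau_{2}-\tau_{1}|$ term, producing $|\tau_{2}-\tau_{1}|\lesssim |t_{2}-t_{1}|+t_{1}\alpha(t_{1})^{-\Theta_{q}+\theta_{q}}\|\eta_{2}-\eta_{1}\|_{L^{q}}$; substituting and regrouping via \eqref{19/02/19/12:01} gives \eqref{20/10/30/11:48}.

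The principal obstacle is the orthogonality bookkeeping: because $(-\Delta+\alpha(\tau_{1}))^{-1}F_{1}$ and $(-\Delta+\alpha(\tau_{2}))^{-1}F_{2}$ lie in $X_{q}$ only with respect to \emph{their own} resolvents, the resolvent identity produces exactly one correction term in \eqref{plan-diff}, proportional to $\alpha(\tau_{2})-\alpha(\tau_{1})$, and the quantitative balance between its smallness and the $\tau^{-1}$ loss of Proposition \ref{18/11/17/07:17}\,(i)---together with the matching of the $(t_{2}-t_{1})W^{p}$ contribution to the target $o_{t_{1}}(1)t_{1}^{-1}\alpha(t_{1})^{\Theta_{q}}|t_{2}-t_{1}|$ term rather than a larger expression---is delicate and constitutes the substantive part of the argument.
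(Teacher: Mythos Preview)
Your plan for part (i) and for the bracketed term in \eqref{plan-diff} is correct and follows the paper's argument: the orthogonality \eqref{plan-orth} places $(-\Delta+\alpha(\tau_j))^{-1}F_j$ in $X_q$, so Proposition~\ref{18/11/17/07:17}(ii) applies, and the lemmas you cite close those pieces.

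The gap is in the last term of \eqref{plan-diff}. Applying Proposition~\ref{18/11/17/07:17}(i) directly costs $\tau_2^{-1}\sim t_1^{-1}$, and the sharpest general bound on the double resolvent (one factor by Lemma~\ref{18/11/23/17:17}, one by Lemma~\ref{18/11/05/10:29} from $L^1$ to $L^{dq/(d+2q)}$) gives
\[
\|(-\Delta+\alpha(\tau_2))^{-1}(-\Delta+\alpha(\tau_1))^{-1}V\mathfrak{g}_1\|_{L^q}\lesssim\alpha(t_1)^{\Theta_q-1}\|V\mathfrak{g}_1\|_{L^1}\lesssim\alpha(t_1)^{2\Theta_q-1}.
\]
Combined with $|\alpha(\tau_2)-\alpha(\tau_1)|\lesssim t_1^{-1}\alpha(t_1)|\tau_2-\tau_1|$ this yields $t_1^{-2}\alpha(t_1)^{2\Theta_q}|\tau_2-\tau_1|$; after inserting your bound for $|\tau_2-\tau_1|$ the coefficient on $\|\eta_2-\eta_1\|_{L^q}$ is $t_1^{-1}\alpha(t_1)^{\Theta_q+\theta_q}$. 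Since $t_1^{-1}\alpha(t_1)^{\Theta_q}\to\infty$ by \eqref{21/2/6/15:27}, this is genuinely larger than the required $\alpha(t_1)^{\theta_q}$ and \eqref{20/10/30/11:48} does not close. Lemma~\ref{21/1/10/15:37} is an $L^\infty$ bound on $(-\Delta+s)^{-1}\Lambda W$ and does not provide the needed gain for a general $V\mathfrak{g}_1$.

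What the paper does instead is split the correction term via the projection $\Pi$ onto $\mathbb{R}\,V\Lambda W$ (see \eqref{19/01/27/15:38} and \eqref{20/10/26/11:43}). On the $(1-\Pi)$-part the image lies in $X_q$, so Proposition~\ref{18/11/17/07:17}(ii) applies with no $\tau^{-1}$ loss. On the $\Pi$-part the $L^q$ norm is a single scalar pairing; writing $V\Lambda W=\alpha(\tau_2)\Lambda W-(-\Delta+\alpha(\tau_2))\Lambda W$ reduces it to $|\langle(-\Delta+\alpha(\tau_1))^{-1}V\mathfrak{g}_1,\Lambda W\rangle|$ plus a harmless remainder, and Lemma~\ref{18/12/02/11:25} bounds this pairing by $\delta(\alpha(t_1))^{-1}\|\mathfrak{g}_1\|_{L^q}$. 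This extra $\delta^{-1}$ exactly cancels against the loss from Proposition~\ref{18/11/17/07:17}(i), giving the sharp bound $t_1^{-1}\alpha(t_1)^{\Theta_q}|\tau_2-\tau_1|$ that your scheme needs but does not produce.
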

\begin{proof}[Proof of Lemma \ref{20/10/30/11:47}]
By \eqref{19/01/13/15:17}, $\alpha(\mathfrak{s}(t; \tau, \eta))= \mathfrak{s}(t; \tau, \eta) \delta( \alpha(\mathfrak{s}(t; \tau, \eta)) )$ (see \eqref{20/11/2/9:40}),   
 and \eqref{20/10/15/8:1} through  \eqref{20/10/15/8:3}  that   
\begin{equation}\label{21/2/5/10:9}
\begin{split}
&
\langle (-\Delta +\alpha( \mathfrak{s}(t; \tau, \eta) ) )^{-1}F( \eta;  \alpha( \mathfrak{s}(t; \tau, \eta) ),t), V\Lambda W \rangle 
\\[6pt]
&=
-\mathfrak{s}(t; \tau,\eta) \mathscr{X}(\mathfrak{s}(t;\tau,\eta) )
+
t \mathscr{W}_{p}(\mathfrak{s}(t; \tau,\eta) )
+
\mathscr{N}(\mathfrak{s}(t; \tau,\eta) )
,
\end{split}
\end{equation}
which together with \eqref{19/01/14/13:30}  implies that 
\begin{equation}\label{20/11/8/13:32}
\langle (-\Delta +\alpha(\tau) )^{-1}F( \eta;  \alpha(\tau), t), V\Lambda W \rangle 
=0,
\quad 
\mbox{provided $\tau = \mathfrak{s}(t; \tau,\eta)$.}
\end{equation}

Now, we shall prove \eqref{20/11/7/10:23}:
\\ 
\noindent 
{\bf Proof of  \eqref{20/11/7/10:23}}.~Let $0< t <1$ be a sufficiently small number to be specified in the middle of the proof,  dependently on $d$, $p$, $q$ and $R$.  Furthermore, let  $\eta  \in Y_{q}(R,t)$, and let $\tau \in I(t)$  with $\tau = \mathfrak{s}(t; \tau, \eta)$.  
 Then,  \eqref{18/11/11/15:50} in Proposition \ref{18/11/17/07:17} together with  \eqref{20/11/8/13:32} shows that 
\begin{equation}\label{21/2/5/10:31}
\| \mathfrak{g}(t; \tau,\eta) \|_{L^{q}}
\lesssim  
\| (-\Delta+ \alpha(\tau))^{-1} 
F(\eta;\alpha(\tau),t)  \|_{L^{q}}
, 
\end{equation}
where  the implicit constant depends only on $d$, $p$ and $q$.  
 Furthermore, by Lemma \ref{18/11/05/10:29},  
 Lemma \ref{18/11/23/17:17}, 
 \eqref{20/9/16/9:51} in Lemma \ref{19/02/10/16:33},  $\tau \in I(t)$ (hence $\tau \sim t$),  and  $\Theta_{q}<1$  (hence $\frac{2^{*}}{q}-1+\frac{d+2}{d-2}\Theta_{q} > 2 \Theta_{q}$),   
  we see that 
\begin{equation}\label{20/11/7/1:4}
\begin{split}
&
\| (-\Delta+ \alpha(\tau)  )^{-1}  F(\eta; \alpha(\tau), t) \|_{L^{q}}
\\[6pt]
&\le 
\alpha(\tau)  
\| 
(-\Delta+ \alpha(\tau)    )^{-1}  W 
\|_{L^{q}}
+
t \|  (-\Delta+ \alpha(\tau)    )^{-1}  W^{p} \|_{L^{q}}
\\[6pt]
&\quad + 
\|  (-\Delta+ \alpha(\tau)    )^{-1}
N(\eta;t)\|_{L^{q}}
\\[6pt]
&\lesssim  
\alpha(\tau)  ^{\Theta_{q}}
\| W  \|_{L_{\rm weak}^{\frac{d}{d-2}}}
+
t \| W^{p} \|_{L^{\frac{dq}{d+2q}}}
\\[6pt]
&\quad +  
R^{2} \alpha(t)^{2\Theta_{q}} 
+ 
\alpha(\tau)^{\frac{2^{*}}{q}-1} 
R^{\frac{d+2}{d-2}} 
\alpha(t)^{\frac{d+2}{d-2}\Theta_{q}}
\\[6pt]
&\quad + 
t \Big\{ 
\alpha(\tau)^{-\nu_{q}} 
R\alpha(t)^{\Theta_{q}} 
+ 
\alpha(\tau)^{\frac{d(p-1)}{2q}-1} 
R^{p}
\alpha(t)^{p \Theta_{q}}
\Big\} 
\\[6pt]
&\lesssim 
\alpha(t)^{\Theta_{q}}
+
(R^{2} + R^{\frac{d+2}{d-2}} ) \alpha(t)^{2\Theta_{q}} 
+
t R \alpha(t)^{\Theta_{q}-\nu_{q}} 
+ 
t R^{p}   \alpha(t)^{\Theta_{q} + \frac{(d-2)(p-1)}{2}-1}
,
\end{split} 
\end{equation}
where the implicit constants depend only on $d$, $p$ and $q$. 
 Plugging  \eqref{20/11/7/1:4} into \eqref{21/2/5/10:31},  and then using \eqref{19/02/19/12:01} with $\theta=\theta_{q}$, and \eqref{21/2/6/15:27}, 
  we see that  if $t$ is sufficiently small depending only on $d$, $p$, $q$ and $R$, then 
\begin{equation}\label{20/11/7/13:2}
\begin{split}
\| \mathfrak{g}(t; \tau,\eta) \|_{L^{q}}
&\lesssim
\alpha(t)^{\Theta_{q}} 
+ 
\alpha(t)^{2 \Theta_{q}-\theta_{q}} 
+
t \alpha(t)^{ \theta_{q}} 
\lesssim 
\alpha(t)^{\Theta_{q}} 
,
\end{split} 
\end{equation}
where the implicit constants depend only on $d$, $p$ and $q$.  
 Thus, we find that  \eqref{20/11/7/10:23} holds.


Next, we shall prove \eqref{20/10/30/11:48}:
\\
\noindent 
{\bf Proof of   \eqref{20/10/30/11:48}.}~Let $0< t_{1} \le  t_{2}<1$ with $t_{2}\le 2t_{1}$, and let $\eta_{1}  \in  Y_{q}(R,t_{1})$ and $\eta_{2} \in Y_{q}(R,t_{2})$. 
 We will specify $t_{1}$ and $t_{2}$ in the middle of the proof, dependently on $d$, $p$, $q$ and $R$. 
 Furthermore, let $\tau_{1} \in I(t_{1})$ with $\tau_{1}=\mathfrak{s}(t_{1}; \tau_{1},\eta_{1})$,  and let $\tau_{2}\in I(t_{2})$ with  $\tau_{2}=\mathfrak{s}(t_{2}; \tau_{2},\eta_{2})$.  
 Observe from $t_{1}\le t_{2}\le 2t_{1}$, $\tau_{1}\in I(t_{1})$ and  $\tau_{2}\in I(t_{2})$ that     
\begin{equation}\label{20/11/2/9:39}
t_{1} \sim \tau_{1} \sim t_{2} \sim \tau_{2},
\qquad 
\alpha(t_{1}) 
\sim 
\alpha(\tau_{1}) 
\sim  
\alpha(t_{2})
\sim 
\alpha(\tau_{2}) 
,
\end{equation} 
where the implicit constants depend only on $d$ and $p$.  
By \eqref{20/11/8/13:32}, we see that   
\begin{equation}\label{20/10/25/14:15}
\langle 
(-\Delta+ \alpha( \tau_{1})  )^{-1} F(\eta_{1};\alpha(\tau_{1}),t_{1})
-
(-\Delta+\alpha( \tau_{2}))^{-1} 
F(\eta_{2};\alpha(\tau_{2}),t_{2}),~
V\Lambda W \rangle =0 
.
\end{equation} 
Furthermore, Lemma \ref{20/10/27/11:21} (see also Remark \ref{20/11/6/11:50}) together with  $\tau_{j}=\mathfrak{s}(t_{j}; \tau_{j},\eta_{j})$ shows that     if $t_{1}$ and $t_{2}$ are sufficiently small depending only on  $d$, $p$, $q$ and $R$, then  
\begin{equation}\label{20/11/3/9:26}
|\tau_{2} - \tau_{1}|
\lesssim 
o_{t_{1}}(1) |t_{2}-t_{1}|
+
t_{1}
\alpha (t_{1})^{-\Theta_{q}+\theta_{q}}
\|\eta_{2} -\eta_{1}\|_{L^{q}}
,
\end{equation} 
where the implicit constant depends only on $d$, $p$ and $q$.

For notational convenience, we put  
\begin{equation}\label{20/11/5/9:15}
\mathscr{A}_{j}:= 1+(-\Delta+\alpha( \tau_{j}))^{-1} V 
\quad 
\mbox{for $j=1,2$.}
\end{equation}
Observe from  
 $\mathscr{A}_{2}-\mathscr{A}_{1}= \{ (-\Delta+\alpha( \tau_{2}))^{-1}- (-\Delta+\alpha( \tau_{1}))^{-1} \} V$ that 
\begin{equation}\label{20/10/25/11:55}
\begin{split}
&
\| \mathfrak{g}(t_{2}; \tau_{2}, \eta_{2})  -\mathfrak{g}(t_{1}; \tau_{1}, \eta_{1}) \|_{L^{q}}
\\[6pt]
&\le 
\| \mathscr{A}_{1}^{-1}
\big\{
 (-\Delta+ \alpha(\tau_{1}))^{-1}F(\eta_{1};\alpha(\tau_{1}), t_{1})
-
(-\Delta+ \alpha(\tau_{2}))^{-1}F(\eta_{2};\alpha(\tau_{2}), t_{2}) 
\big\}
\|_{L^{q}}
\\[6pt]
&\quad +
\| 
\mathscr{A}_{1}^{-1} 
\big\{
(-\Delta+ \alpha(\tau_{2}))^{-1} 
-
(-\Delta+ \alpha(\tau_{1}))^{-1} 
\big\} 
V \mathfrak{g}(t_{2};\tau_{2},\eta_{2})
 \|_{L^{q}}
 .
\end{split}
\end{equation}
Then,  the desired estimate \eqref{20/10/30/11:48} follows from  the following estimates: 
\begin{equation}\label{20/11/5/9:5}
\begin{split}
&
\|\mathscr{A}_{1}^{-1}
\big\{
 (-\Delta+ \alpha(\tau_{1}))^{-1}F(\eta_{1};\alpha(\tau_{1}), t_{1})
-
(-\Delta+ \alpha(\tau_{2}))^{-1}F(\eta_{2};\alpha(\tau_{2}), t_{2}) 
\big\}
\|_{L^{q}}
\\[6pt]
&\lesssim
o_{t_{1}}(1)  t_{1}^{-1} 
\alpha(t_{1})^{\Theta_{q}}
|t_{1}-t_{2}| 
+
|t_{1}-t_{2}| 
+ 
\alpha(t_{1})^{\theta_{q}} 
\|\eta_{1} -\eta_{2} \|_{L^{q}}
,
\end{split} 
\end{equation}
where the implicit constant depends only on $d$, $p$ and $q$; and 
\begin{equation}\label{20/11/11/11:25}
\begin{split} 
&
\| 
\mathscr{A}_{1}^{-1} 
\big\{
(-\Delta+ \alpha(\tau_{2}))^{-1} 
-
(-\Delta+ \alpha(\tau_{1}))^{-1} 
\big\} 
V \mathfrak{g}(t_{2};\tau_{2},\eta_{2})
 \|_{L^{q}}
\\[6pt]
&\lesssim 
o_{t_{1}}(1)t_{1}^{-1}  \alpha(t_{1})^{\Theta_{q}} |t_{2}-t_{1}|
+
\alpha (t_{1})^{\theta_{q}}
\|\eta_{2} -\eta_{1}\|_{L^{q}} 
, 
\end{split}
\end{equation}
where the implicit constant depends only on $d$, $p$ and $q$. 

It remains to prove \eqref{20/11/5/9:5} and \eqref{20/11/11/11:25}: 


\noindent 
{\bf  Proof of \eqref{20/11/5/9:5}.}~By \eqref{18/11/11/15:50} in Proposition \ref{18/11/17/07:17}, and \eqref{20/10/25/14:15}, we see that 
\begin{equation}\label{21/2/5/16:21}
\begin{split}
&
\|\mathscr{A}_{1}^{-1}
\big\{
 (-\Delta+ \alpha(\tau_{1}))^{-1}F(\eta_{1};\alpha(\tau_{1}), t_{1})
-
(-\Delta+ \alpha(\tau_{2}))^{-1}F(\eta_{2};\alpha(\tau_{2}), t_{2}) 
\big\}
\|_{L^{q}}
\\[6pt]
&\lesssim
\| 
(-\Delta+ \alpha(\tau_{1}))^{-1}F(\eta_{1};\alpha(\tau_{1}), t_{1})
-
(-\Delta+ \alpha(\tau_{2}))^{-1}F(\eta_{2};\alpha(\tau_{2}), t_{2})
\|_{L^{q}}
,
\end{split} 
\end{equation}
where the implicit constant depends only on $d$, $p$ and $q$. Furthermore, we 
 estimate the right-hand side of \eqref{21/2/5/16:21} as follows:   
\begin{equation}\label{20/11/2/9:6}
\begin{split}
&
\| 
(-\Delta+ \alpha(\tau_{1}))^{-1}F(\eta_{1};\alpha(\tau_{1}), t_{1})
-
(-\Delta+ \alpha(\tau_{2}))^{-1}F(\eta_{2};\alpha(\tau_{2}), t_{2})
\|_{L^{q}}
\\[6pt]
&\le 
\| 
(-\Delta+ \alpha( \tau_{1})  )^{-1} 
\big\{ 
F(\eta_{1};\alpha(\tau_{1}), t_{1})
-
F(\eta_{2};\alpha(\tau_{2}), t_{2})
\big\} \|_{L^{q}}
\\[6pt]
&\quad +
\| 
\big\{ (-\Delta+ \alpha(\tau_{1})  )^{-1} 
-
(-\Delta+\alpha(\tau_{2}))^{-1} 
\big\} 
F(\eta_{2};\alpha(\tau_{2}), t_{2})
\|_{L^{q}}
.
\end{split} 
\end{equation}

 Consider the first term on the right-hand side of \eqref{20/11/2/9:6}.  
 Observe that  $\frac{p dq}{d+2q}>\frac{d}{d-2}$ for $p>\frac{4}{d-2}-1$ and $q>\frac{2^{*}}{p-1}$. 
 Hence, we see that 
\begin{equation}\label{21/2/6/12}
\| W^{p} \|_{L^{\frac{dq}{d+2q}}} \lesssim 1, 
\end{equation} 
where the implicit constants depend only on $d$, $p$ and $q$.
 Furthermore, observe from \eqref{20/11/8/11:39} that 
\begin{equation}\label{21/2/6/11:48}
\begin{split}
N(\eta_{1}; t_{1}) - N(\eta_{2}; t_{2})
&=
 N(\eta_{1}; t_{1}) - N(\eta_{2}; t_{1})
 +
 N(\eta_{2}; t_{1}) - N(\eta_{2}; t_{2})
\\[6pt]
&=
N(\eta_{1}; t_{1}) - N(\eta_{2}; t_{1})
+
(t_{1}-t_{2}) E(\eta_{2}, 0). 
\end{split}
\end{equation}
Then,  by \eqref{19/01/13/15:17}, \eqref{19/01/03/12:15} in Lemma \ref{19/01/03/16:41},   Lemma \ref{18/11/05/10:29}, $\tau_{1} \sim \tau_{2} \sim t_{1}$ (see \eqref{20/11/2/9:39}),  Lemma \ref{18/11/23/17:17},  \eqref{21/2/6/12} 
 and \eqref{21/2/6/11:48},  
 we see that 
\begin{equation}\label{20/10/25/13:51}
\begin{split}
&
\| 
(-\Delta+ \alpha( \tau_{1})  )^{-1} 
\big\{ 
F(\eta_{1};\alpha(\tau_{1}), t_{1})
-
F(\eta_{2};\alpha(\tau_{2}), t_{2})
\big\} \|_{L^{q}}
\\[6pt]
&\le 
| \alpha(\tau_{1} ) 
-
\alpha(\tau_{2} )  |
\| 
(-\Delta+ \alpha(\tau_{1})  )^{-1}  W 
\|_{L^{q}}
+
|t_{1} - t_{2}| 
\| 
(-\Delta+ \alpha(\tau_{1}))^{-1}  W^{p} \|_{L^{q}}
\\[6pt]
&\quad + 
\|  (-\Delta+ \alpha(\tau_{1})  )^{-1}
\big\{ 
N(\eta_{1};t_{1})
-
N(\eta_{2};t_{2}) 
\} \|_{L^{q}}
\\[6pt]
&\lesssim
\delta(\alpha(t_{1}))
|\tau_{1}-\tau_{2}|
\alpha(t_{1})^{\Theta_{q}-1}
\| W  \|_{L_{\rm weak}^{\frac{d}{d-2}}}  
+
|t_{1}-t_{2}| 
\\[6pt]
&\quad +  
\| 
(-\Delta+ \alpha(\tau_{1})  )^{-1}
\{ N(\eta_{1}; t_{1}) - N(\eta_{2}; t_{1}) \} \|_{L^{q}}
\\[6pt]
&\quad +  
|t_{1}-t_{2}|\| 
(-\Delta+ \alpha(\tau_{1})  )^{-1} E(\eta_{2},0) \|_{L^{q}}
, 
\end{split} 
\end{equation}
where the implicit constants depend only on $d$, $p$ and $q$. 
Recall from \eqref{20/11/2/9:40} that
\begin{equation}\label{20/11/4/9:39}
\delta(\alpha(t_{1}))
=
t_{1}^{-1} \alpha(t_{1})
.
\end{equation} 
Then, by \eqref{20/11/3/9:26} and \eqref{20/11/4/9:39},  
 we see that  the first term on the right-hand side of \eqref{20/10/25/13:51} is estimated as follows: 
\begin{equation}\label{21/2/5/17:19}
\begin{split}
&
\delta(\alpha(t_{1}))
|\tau_{1}-\tau_{2}|
\alpha(t_{1})^{\Theta_{q}-1}
\| W  \|_{L_{\rm weak}^{\frac{d}{d-2}}}
\\[6pt]
&\lesssim 
o_{t_{1}}(1) \delta(\alpha(t_{1}))|t_{1}-t_{2}|\alpha(t_{1})^{\Theta_{q}-1}
+
\delta(\alpha(t_{1}))t_{1}
\alpha (t_{1})^{\theta_{q}-1}
\|\eta_{2} -\eta_{1}\|_{L^{q}}
\\[6pt]
&\lesssim 
o_{t_{1}}(1)t_{1}^{-1} \alpha(t_{1})^{\Theta_{q}}  |t_{1}-t_{2}|
+
\alpha (t_{1})^{\theta_{q}}
\|\eta_{2} -\eta_{1}\|_{L^{q}}
,
\end{split} 
\end{equation}
where the implicit constants depend only on $d$, $p$ and $q$.   
 Applying Lemma \ref{19/02/10/16:33} to the third term on the right-hand side of \eqref{20/10/25/13:51},
  and using \eqref{20/11/2/9:39},  \eqref{21/2/6/15:27},  \eqref{19/02/19/12:01} with $\theta=\theta_{q}$,  
 and  and the definition of $\theta_{q}$ (see \eqref{21/1/6/10:11}),  
 we see that  
\begin{equation}\label{21/2/5/17:10}
\begin{split}
&
\|(-\Delta+ \alpha(\tau_{1})  )^{-1}
\big\{ 
N(\eta_{1};t_{1})
-
N(\eta_{2};t_{1}) 
\} \|_{L^{q}}
\\[6pt]
&\lesssim 
\big\{  R\alpha(t_{1})^{\Theta_{q}} 
+ 
\alpha(t_{1})^{\frac{2^{*}}{q}-1}
R^{\frac{4}{d-2}} \alpha(t_{1})^{\frac{4}{d-2}\Theta_{q}}
\big\}
\|\eta_{1} -\eta_{2} \|_{L^{q}}
\\[6pt]
&\quad +
t_{1} \big\{
\alpha(t_{1})^{-\nu_{q}} 
+ 
\alpha(t_{1})^{\frac{d(p-1)}{2q}-1}
R^{p-1} \alpha(t_{1})^{(p-1)\Theta_{q} }
\big\}
\|\eta_{1} -\eta_{2} \|_{L^{q}}
\\[6pt] 
&\lesssim 
\big\{  \alpha(t_{1})^{\Theta_{q}-\theta_{q}} 
+ 
\alpha(t_{1})^{1-\theta_{q}}
\big\}
\|\eta_{1} -\eta_{2} \|_{L^{q}}
\\[6pt]
&\quad + 
\big\{
\alpha(t_{1})^{\Theta_{q}-\nu_{q}} 
+ 
\alpha(t_{1})^{\Theta_{q}+\frac{(d-2)(p-1)}{2}-1 -\theta_{q}}
\big\}
\|\eta_{1} -\eta_{2} \|_{L^{q}}
\\[6pt]
&\lesssim 
\alpha(t_{1})^{\theta_{q}}
\|\eta_{1} -\eta_{2} \|_{L^{q}}
, 
\end{split}
\end{equation}
where the implicit constants depend only on $d$, $p$ and $q$. 
 Furthermore,  using Lemma \ref{19/02/23/18:11}, \eqref{20/11/2/9:39},  \eqref{21/2/6/15:27},  \eqref{19/02/19/12:01} with $\theta=\theta_{q}$,  
  and the definition of $\theta_{q}$ (see \eqref{21/1/6/10:11}), 
 we see that the last term on the right-hand side of \eqref{20/10/25/13:51} 
 is estimated as follows:    
\begin{equation}\label{20/11/2/10:15}
\begin{split}
&
|t_{1}-t_{2}|\| 
(-\Delta+ \alpha(\tau_{1})  )^{-1} E(\eta_{2},0) \|_{L^{q}}
\\[6pt]
&\lesssim  
|t_{1}-t_{2}| 
\big\{
\alpha(t_{1})^{-\nu_{q}} 
+ 
\alpha(t_{1})^{\frac{d(p-1)}{2q}-1} 
R^{p-1} 
\alpha(t_{1})^{(p-1)\Theta_{q}}
\big\}
\|\eta_{2} \|_{L^{q}}
\\[6pt]
&\lesssim 
t_{1}^{-1}\alpha(t_{1})^{\Theta_{q}}
|t_{1}-t_{2}|
\Big\{ 
\alpha(t_{1})^{-\nu_{q}}
+
R^{p-1} \alpha(t_{1})^{\frac{(d-2)(p-1)}{2}-1}
\Big\}
R \alpha(t_{1})^{\Theta_{q}}
\\[6pt]
&\lesssim 
t_{1}^{-1} |t_{1}-t_{2}|
\Big\{ 
\alpha(t_{1})^{\Theta_{q}-\nu_{q}-\theta_{q}}
+
\alpha(t_{1})^{\Theta_{q}+\frac{(d-2)(p-1)}{2}-1-\theta_{q}}
\Big\}
\alpha(t_{1})^{\Theta_{q}}
\\[6pt]
&\lesssim 
\alpha(t_{1})^{\theta_{q}}
t_{1}^{-1}\alpha(t_{1})^{\Theta_{q}}  |t_{1}-t_{2}|
=
o_{t_{1}}(1) t_{1}^{-1}\alpha(t_{1})^{\Theta_{q}}  |t_{1}-t_{2}|
,
\end{split} 
\end{equation}
where the implicit constants depend only on $d$, $p$ and $q$.  
  Plugging \eqref{21/2/5/17:19},  \eqref{21/2/5/17:10} and \eqref{20/11/2/10:15}   into  \eqref{20/10/25/13:51},  
 we find that
 \begin{equation}\label{21/2/6/16:29}
\begin{split}
&
\| 
(-\Delta+ \alpha( \tau_{1})  )^{-1} 
\big\{ 
F(\eta_{1};\alpha(\tau_{1}), t_{1})
-
F(\eta_{2};\alpha(\tau_{2}), t_{2})
\big\} \|_{L^{q}}
\\[6pt]
&\lesssim 
o_{t_{1}}(1) t_{1}^{-1}\alpha(t_{1})^{\Theta_{q}}  |t_{1}-t_{2}|
+
|t_{1}-t_{2}|
+
\alpha(t_{1})^{\theta_{q}}
\|\eta_{1} -\eta_{2} \|_{L^{q}}
,
\end{split}
\end{equation} 
where the implicit constants depend only on $d$, $p$ and $q$.


Move on to the second term on the right-hand side of \eqref{20/11/2/9:6}. 
 By Lemma  \ref{20/8/16/15:45},  
 \eqref{19/01/03/12:15} in  Lemma \ref{19/01/03/16:41}, 
 Lemma \ref{18/11/05/10:29},
 \eqref{20/11/2/9:39}
 and $\delta(\alpha(t_{1}))=t_{1}^{-1}\alpha(t_{1})$ (see \eqref{20/11/2/9:40}),    
we see that 
\begin{equation}\label{20/11/3/15:47}
\begin{split}
&
\| 
\big\{ (-\Delta+ \alpha(\tau_{1})  )^{-1} 
-
(-\Delta+\alpha(\tau_{2}))^{-1} 
\big\} 
F(\eta_{2};\alpha(\tau_{2}), t_{2})
\|_{L^{q}}
\\[6pt]
&=
\big| 
\alpha( \tau_{1} ) - \alpha(\tau_{2}) 
\big|
\| 
 (-\Delta+ \alpha( \tau_{1})  )^{-1} 
 (-\Delta+ \alpha( \tau_{2})  )^{-1} 
F(\eta_{2};\alpha(\tau_{2}), t_{2})
\|_{L^{q}}
\\[6pt]
&\lesssim
\delta( \alpha(t_{1})) 
| \tau_{1} - \tau_{2} |
\alpha(t_{1})^{-1}
\| 
 (-\Delta+ \alpha(\tau_{1})  )^{-1} 
F(\eta_{2};\alpha(\tau_{2}), t_{2})
\|_{L^{q}}
\\[6pt]
&=
t_{1}^{-1}
| \tau_{1} - \tau_{2} |
\| 
 (-\Delta+ \alpha(\tau_{1})  )^{-1} 
F(\eta_{2};\alpha(\tau_{2}), t_{2})
 \|_{L^{q}}
,
\end{split}
\end{equation}
where the implicit constants depend only on $d$, $p$ and $q$.  
 Furthermore,  by \eqref{19/01/13/15:17},    
 Lemma \ref{18/11/05/10:29},  Lemma \ref{18/11/23/17:17},
 Lemma \ref{19/02/10/16:33},   \eqref{21/2/6/12}, 
  \eqref{20/11/2/9:39},  and \eqref{19/02/19/12:01} with $\theta=\theta_{q}$, 
 we see  that 
\begin{equation}\label{20/11/3/17:32}
\begin{split} 
&
\| 
 (-\Delta+ \alpha( \tau_{1})  )^{-1} 
F(\eta_{2};\alpha(\tau_{2}), t_{2})
 \|_{L^{q}}
\\[6pt]
&\lesssim 
\alpha(\tau_{2}) 
\| 
 (-\Delta+ \alpha( \tau_{1})  )^{-1} 
W \|_{L^{q}}
+
t_{2} 
\| 
 (-\Delta+ \alpha( \tau_{1})  )^{-1} 
W^{p} \|_{L^{q}}
\\[6pt]
&\quad +
\| 
 (-\Delta+ \alpha( \tau_{1})  )^{-1} 
N(\eta_{2}; t_{2}) \|_{L^{q}}
\\[6pt]
&\lesssim
\alpha(\tau_{2})
\alpha(\tau_{1})^{\Theta_{q}-1} 
\|W\|_{L_{\rm weak}^{\frac{d}{d-2}}}
+
t_{2} \|W^{p}\|_{L^{\frac{dq}{d+2q}}}
\\[6pt]
&\quad + 
R^{2}\alpha(t_{2})^{2\Theta_{q}} 
+
\alpha(\tau_{1})^{\frac{2^{*}}{q}-1} 
R^{\frac{d+2}{d-2}}
\alpha(t_{2})^{\frac{d+2}{d-2}\Theta_{q}}  
\\[6pt]
&\quad +
t_{2}
\big\{  
\alpha(\tau_{1})^{-\nu_{q}}
R \alpha(t_{2})^{\Theta_{q}}
+
\alpha(\tau_{1})^{\frac{d(p-1)}{2q}-1}
R^{p}  
\alpha(t_{2})^{p \Theta_{q}} 
\big\} 
\\[6pt]
&\lesssim
\alpha(t_{1})^{\Theta_{q}} 
+
t_{1} 
+ 
R^{2} \alpha(t_{1})^{2\Theta_{q}}
+
R^{\frac{d+2}{d-2}}\alpha(t_{1})^{\Theta_{q}+1}
\\[6pt]
&\quad +
t_{1}\{ R  \alpha(t_{1})^{\Theta_{q}-\nu_{q}}
+
R^{p}  \alpha(t_{1})^{\Theta_{q}+\frac{(d-2)(p-1)}{2}-1} 
\}
\\[6pt]
&\lesssim
\alpha(t_{1})^{\Theta_{q}} 
+
t_{1} 
+ 
\alpha(t_{1})^{2\Theta_{q}-\theta_{q}}
+
\alpha(t_{1})^{\Theta_{q}+1-\theta_{q} }
\\[6pt]
&\quad +
t_{1}\{ \alpha(t_{1})^{\Theta_{q}-\nu_{q}-\theta_{q}}
+
 \alpha(t_{1})^{\Theta_{q}+\frac{(d-2)(p-1)}{2}-1-\theta_{q}} 
\}
\\[6pt]
&\lesssim 
\alpha(t_{1})^{\Theta_{q}} + t_{1} +t_{1}\alpha(t_{1})^{\theta_{q}}
,
\end{split}
\end{equation}
where the implicit constants depend only on $d$, $p$ and $q$.
 Plugging \eqref{20/11/3/9:26} and \eqref{20/11/3/17:32} into \eqref{20/11/3/15:47}, 
  and using $t_{1} \le \alpha(t_{1})^{\Theta_{q}}$ (see \eqref{21/2/6/15:27}), 
 we see that 
\begin{equation}\label{20/11/4/10:37}
\begin{split}
&
\| 
\big\{ (-\Delta+ \alpha(\tau_{1})  )^{-1} 
-
(-\Delta+\alpha(\tau_{2}))^{-1} 
\big\} 
F(\eta_{2};\alpha(\tau_{2}), t_{2})
\|_{L^{q}}
\\[6pt]
&\lesssim
t_{1}^{-1}
\big\{  
o_{t_{1}}(1) |t_{2}-t_{1}|
+
t_{1}
\alpha (t_{1})^{-\Theta_{q}+\theta_{q}}
\|\eta_{2} -\eta_{1}\|_{L^{q}}
\big\}
\big\{ 
\alpha(t_{1})^{\Theta_{q}} 
+
t_{1} 
+ 
t_{1} \alpha(t_{1})^{\theta_{q}}
\big\}
\\[6pt]
&\lesssim
o_{t_{1}}(1)  t_{1}^{-1} 
\alpha(t_{1})^{\Theta_{q}}
|t_{1}-t_{2}| 
+
|t_{1}-t_{2}| 
+ 
\alpha(t_{1})^{\theta_{q}} 
\|\eta_{1} -\eta_{2} \|_{L^{q}}
,
\end{split}
\end{equation}
where the implicit constants depend only on $d$, $p$ and $q$.

Now,  putting  \eqref{21/2/5/16:21}, \eqref{20/11/2/9:6}, \eqref{21/2/6/16:29} and \eqref{20/11/4/10:37} together,    
 we find that \eqref{20/11/5/9:5} holds.


\noindent 
{\bf Proof of  \eqref{20/11/11/11:25}.}~For notational convenience, we put 
\begin{equation}\label{21/2/7/12:8}
\mathfrak{g}_{2}:=
\mathfrak{g}(t_{2};\tau_{2},\eta_{2})
. 
\end{equation}
 In order to treat the singularity comes from $\mathscr{A}_{1}^{-1}$, we  will use 
 the operator $\Pi$ introduced in Section \ref{18/11/11/10:47}. 
  Then, by Lemma \ref{20/8/16/15:45},  
 \eqref{19/01/03/12:15} in Lemma \ref{19/01/03/16:41}  
 and \eqref{20/11/2/9:39},  we see that    
\begin{equation}\label{20/10/26/11:43}
\begin{split} 
&
\| 
\mathscr{A}_{1}^{-1} 
\big\{
(-\Delta+ \alpha(\tau_{2}))^{-1} 
-
(-\Delta+ \alpha(\tau_{1}))^{-1} 
\big\} 
V \mathfrak{g}(t_{2};\tau_{2},\eta_{2})
 \|_{L^{q}}
\\[6pt]
&=
| \alpha( \tau_{1}) - \alpha( \tau_{1})  |
\| \mathscr{A}_{1}^{-1} 
(-\Delta+ \alpha( \tau_{2}))^{-1} 
(-\Delta+ \alpha( \tau_{1}))^{-1} 
V \mathfrak{g}_{2}
\|_{L^{q}}
 \\[6pt]
&\lesssim 
\delta( \alpha(t_{1}) ) 
| \tau_{1} - \tau_{2} |
\|
\mathscr{A}_{1}^{-1} 
(-\Delta+ \alpha(\tau_{2}))^{-1} 
(-\Delta+ \alpha(\tau_{1}))^{-1} 
V \mathfrak{g}_{2} 
\|_{L^{q}},
\\[6pt]
&\le     
\delta( \alpha(t_{1}) ) 
| \tau_{1} - \tau_{2} | 
\| 
\mathscr{A}_{1}^{-1}
\Pi  
(-\Delta+ \alpha( \tau_{2}))^{-1} 
(-\Delta+ \alpha( \tau_{1}))^{-1} 
V \mathfrak{g}_{2} 
\|_{L^{q}}
\\[6pt]
&\quad +
\delta( \alpha(t_{1}) ) 
|\tau_{1} -\tau_{2} |
\|\mathscr{A}_{1}^{-1} 
(1-\Pi) 
(-\Delta+ \alpha( \tau_{2}))^{-1} 
(-\Delta+ \alpha( \tau_{1}))^{-1} 
V \mathfrak{g}_{2} \|_{L^{q}}
,
\end{split}
\end{equation}
where the implicit constants depend only on $d$ and $q$. 


Consider the first term on the right-hand side of \eqref{20/10/26/11:43}.  
 By \eqref{19/02/02/11:47} in Proposition \ref{18/11/17/07:17}, and \eqref{20/11/2/9:39}, 
 we see that  
\begin{equation}\label{20/11/7/11:19}
\begin{split}
&
\delta( \alpha(t_{1}) ) 
| \tau_{1} - \tau_{2} | 
\| 
\mathscr{A}_{1}^{-1}
\Pi  
(-\Delta+ \alpha( \tau_{2}))^{-1} 
(-\Delta+ \alpha( \tau_{1}))^{-1} 
V \mathfrak{g}_{2}
\|_{L^{q}}
\\[6pt]
&\lesssim 
\delta(\alpha(t_{1}))^{2} 
\alpha(t_{1})^{-1}  
|\tau_{1} - \tau_{2} | 
\| \Pi  (-\Delta+ \alpha( \tau_{2}))^{-1} 
(-\Delta+ \alpha( \tau_{1}))^{-1} 
V \mathfrak{g}_{2}\|_{L^{q}}
,
\end{split}
\end{equation} 
where the implicit constants depend only on $d$ and $q$. 
Observe from the definition of $\Pi$ (see \eqref{19/01/27/15:38}) 
and $V\Lambda W =\Delta \Lambda W =- (-\Delta + \alpha( \tau_{2}) )\Lambda W +  \alpha(\tau_{2}) \Lambda W$
 that 
\begin{equation}\label{20/11/10/9:57}
\begin{split}
&
\| 
\Pi  
(-\Delta+ \alpha( \tau_{2}))^{-1} 
(-\Delta+ \alpha( \tau_{1}))^{-1} 
V \mathfrak{g}_{2}
\|_{L^{q}}
\\[6pt]
&\lesssim 
\big|
\langle (-\Delta+ \alpha(\tau_{2}))^{-1} 
(-\Delta+ \alpha(\tau_{1}))^{-1} 
V \mathfrak{g}_{2},\,  
V \Lambda W \rangle 
\big|
\\[6pt]
&\le   
|\langle (-\Delta +\alpha( \tau_{1}) )^{-1} V \mathfrak{g}_{2}, \,  
\Lambda W \rangle|
\\[6pt]
&\quad + 
|\langle (-\Delta +\alpha(\tau_{1}))^{-1} V  \mathfrak{g}_{2},\,   
\alpha(\tau_{2}) (-\Delta +\alpha(\tau_{2}))^{-1} \Lambda W \rangle|
,
\end{split}
\end{equation} 
where the implicit constants depend only on $d$ and $q$. 
 Applying Lemma \ref{18/12/02/11:25} to the first term on the right-hand side of \eqref{20/11/10/9:57},  
and using $V(x) \lesssim (1+|x|)^{-4}$(see \eqref{20/9/13/9:58}) and \eqref{20/11/2/9:39}, 
 we see that 
\begin{equation}\label{20/11/10/10:6}
|\langle (-\Delta +\alpha( \tau_{1}) )^{-1} V \mathfrak{g}_{2}, \, 
\Lambda W \rangle |
\lesssim  
\delta(  \alpha( \tau_{1}) )^{-1} \| V \mathfrak{g}_{2} \|_{L^{1}}
+
\| \mathfrak{g}_{2}  \|_{L^{q}}
\lesssim 
\delta(  \alpha( t_{1}) )^{-1} 
\| \mathfrak{g}_{2}  \|_{L^{q}}, 
\end{equation}
where the implicit constants depend only on $d$ and $q$.   On the other hand, 
 by H\"older's inequality, Lemma \ref{18/11/05/10:29},  \eqref{20/11/2/9:39} 
 and $V(x) \lesssim (1+|x|)^{-4}$,   
 the second term on the right-hand side of \eqref{20/11/10/9:57} is estimated as   
\begin{equation}\label{20/11/10/10:39}
\begin{split}
&
|\langle (-\Delta +\alpha(\tau_{1}))^{-1} V  \mathfrak{g}_{2},  \, 
\alpha(\tau_{2}) (-\Delta +\alpha(\tau_{2}))^{-1} \Lambda W \rangle|
\\[6pt]
&\lesssim 
\| (-\Delta +\alpha(\tau_{1})  )^{-1} V  \mathfrak{g}_{2}  
\|_{L^{\frac{dq}{d+2q}}} 
\alpha( \tau_{2})
\| (-\Delta +\alpha(\tau_{2}))^{-1} \Lambda W 
\|_{L^{\frac{dq}{(d-2)q-d}}}
\\[6pt]
&\lesssim 
\alpha( \tau_{1})^{\frac{d-2}{2}-\frac{d}{2q}-1}
\| V  \mathfrak{g}_{2} \|_{L^{1}} 
\alpha( \tau_{2}) ^{\frac{d}{2q}}
\| \Lambda W \|_{L_{\rm weak}^{\frac{d}{d-2}}}
\lesssim
\alpha(t_{1})^{\frac{d-2}{2}-1}
\| \mathfrak{g}_{2}  \|_{L^{q}} 
,
\end{split}
\end{equation} 
where the implicit constants depend only on $d$ and $q$. 
Putting   \eqref{20/11/7/11:19},  \eqref{20/11/10/9:57},  \eqref{20/11/10/10:6}  and  \eqref{20/11/10/10:39} together, 
  and using $\delta(\alpha(t_{1}))=t_{1}^{-1}\alpha(t_{1})$ (see \eqref{20/11/2/9:40}) and \eqref{19/01/27/16:58}, 
 we see that 
\begin{equation}\label{20/11/5/11:10}
\begin{split}
&
\delta( \alpha(t_{1}) ) 
| \tau_{1} - \tau_{2} | 
\| 
\mathscr{A}_{1}^{-1}
\Pi  
(-\Delta+ \alpha( \tau_{2}))^{-1} 
(-\Delta+ \alpha( \tau_{1}))^{-1} 
V \mathfrak{g}_{2}
\|_{L^{q}}
\\[6pt]
&\lesssim 
\delta(\alpha(t_{1}))^{2} 
\alpha(t_{1})^{-1}   
|\tau_{1} -  \tau_{2} | 
\big\{
\delta(\alpha(t_{1}))^{-1} 
+
\alpha(t_{1})^{\frac{d-2}{2}-1} 
\big\}
\|   \mathfrak{g}_{2}\|_{L^{q}}
\\[6pt]
&=
|\tau_{1} - \tau_{2} |  
\big\{
t_{1}^{-1} 
+
\delta(\alpha(t_{1})) t_{1}^{-1} \alpha(t_{1})^{\frac{d-2}{2}-1} 
\big\}
\|   \mathfrak{g}_{2}\|_{L^{q}}
\lesssim
| \tau_{1} -  \tau_{2} |  
t_{1}^{-1} 
\|   \mathfrak{g}_{2}  \|_{L^{q}}
,
\end{split}
\end{equation}
where the implicit constants depend only on $d$ and $q$. 


Move on to the second term on the right-hand side of \eqref{20/10/26/11:43}. 
Observe from the definition of $\Pi$ (see \eqref{19/01/27/15:38}) that 
 \begin{equation}\label{20/11/10/11:17}
 \|1-\Pi\|_{L^{q}\to L^{q}}\lesssim 1, 
\end{equation}
where the implicit constant depends only on $d$ and $q$.  
 Note  that the functions appear in what follows are radial, and recall from Lemma \ref{19/01/31/14:17} that $X_{q}=(1-\Pi) L_{\rm rad}^{q}(\mathbb{R}^{d})$.
Then, by \eqref{18/11/11/15:50} in Proposition \ref{18/11/17/07:17},  
 \eqref{20/11/10/11:17},  Lemma \ref{18/11/23/17:17},   
 Lemma \ref{18/11/05/10:29},  \eqref{20/11/2/9:39} 
 and $\delta(\alpha(t_{1}))=t_{1}^{-1}\alpha(t_{1})$ (see \eqref{20/11/2/9:40}),   
 we see that  
\begin{equation}\label{20/11/10/12:7}
\begin{split}
&
\delta( \alpha(t_{1}) ) 
|\tau_{1} -\tau_{2} |
\|\mathscr{A}_{1}^{-1} 
(1-\Pi) 
(-\Delta+ \alpha( \tau_{2}))^{-1} 
(-\Delta+ \alpha( \tau_{1}))^{-1} 
V \mathfrak{g}_{2} \|_{L^{q}}
\\[6pt]
&\lesssim 
\delta( \alpha(t_{1}) ) 
|\tau_{1} -\tau_{2} |
\| (-\Delta+ \alpha( \tau_{2}))^{-1} 
(-\Delta+ \alpha( \tau_{1}))^{-1} 
V \mathfrak{g}_{2} \|_{L^{q}}
\\[6pt]
&\lesssim 
\delta( \alpha(t_{1}) ) 
|\tau_{1} -\tau_{2} |
\| (-\Delta+ \alpha(\tau_{1}))^{-1} 
V \mathfrak{g}_{2} 
\|_{L^{\frac{dq}{d+2q}}}
\\[6pt]
&\lesssim 
\delta( \alpha(t_{1}) ) 
|\tau_{1} -\tau_{2} |
\alpha( \tau_{1})^{\frac{d-2}{2}-\frac{d}{2q}-1}  
\| V \mathfrak{g}_{2} \|_{L^{1}}
\\[6pt]
&\lesssim
\delta( \alpha(t_{1}) ) 
|\tau_{1} -\tau_{2} |
\alpha(t_{1})^{\Theta_{q}-1}  
\|\mathfrak{g}_{2}  \|_{L^{q}} 
=
|\tau_{1} -\tau_{2} |
t_{1}^{-1} \alpha(t_{1})^{\Theta_{q}}  
\|\mathfrak{g}_{2}  \|_{L^{q}} 
,
\end{split} 
\end{equation}
where the implicit constants depend only on $d$ and $q$.  
 
Plugging  \eqref{20/11/5/11:10} and \eqref{20/11/10/12:7} into \eqref{20/10/26/11:43}, 
 and using \eqref{20/11/3/9:26} and \eqref{20/11/7/13:2}, 
 we see that  
\begin{equation}\label{21/2/7/15:12}
\begin{split} 
&
\| 
\mathscr{A}_{1}^{-1} 
\big\{
(-\Delta+ \alpha(\tau_{2}))^{-1} 
-
(-\Delta+ \alpha(\tau_{1}))^{-1} 
\big\} 
V \mathfrak{g}(t_{2};\tau_{2},\eta_{2})
 \|_{L^{q}}
\\[6pt]
&\lesssim
 |\tau_{1}-\tau_{2}| t_{1}^{-1}
\|   \mathfrak{g}_{2} \|_{L^{q}}
+
|\tau_{1} -\tau_{2} |
t_{1}^{-1} \alpha(t_{1})^{\Theta_{q}}  
\|\mathfrak{g}_{2}  \|_{L^{q}} 
\lesssim 
|\tau_{1} -\tau_{2} |
t_{1}^{-1} 
\|\mathfrak{g}_{2}  \|_{L^{q}} 
\\[6pt]
&\lesssim 
\big\{ o_{t_{1}}(1) |t_{2}-t_{1}|
+
t_{1}
\alpha (t_{1})^{-\Theta_{q}+\theta_{q}}
\|\eta_{2} -\eta_{1}\|_{L^{q}} \big\}
t_{1}^{-1} 
\|   \mathfrak{g}_{2} \|_{L^{q}}
\\[6pt]
&\lesssim 
o_{t_{1}}(1)t_{1}^{-1}  \alpha(t_{1})^{\Theta_{q}} |t_{2}-t_{1}|
+
\alpha (t_{1})^{\theta_{q}}
\|\eta_{2} -\eta_{1}\|_{L^{q}} 
, 
\end{split}
\end{equation}
where the implicit constant depends only on $d$, $p$ and $q$. 
 Thus, we have proved \eqref{20/11/11/11:25} and completed the proof of the lemma. 

\end{proof}


Now, we are in a position to  prove Proposition \ref{19/01/21/08:01}:

\begin{proof}[Proof of Proposition \ref{19/01/21/08:01}]
First,  we claim  that  if $t>0$ is sufficiently small depending only on $d$, $p$, $q$ and $R$, 
 then for any $\eta \in Y_{q}(R,t)$,  the interval $I(t)$ admits one and only one element $\tau(t;\eta)$ such that 
\begin{equation}\label{21/2/7/17:55}
\tau(t;\eta)=\mathfrak{s}(t; \tau(t;\eta), \eta).
\end{equation}   
We prove this claim by applying the Banach fixed-point theorem to the mapping $\tau \in I(t) \mapsto \mathfrak{s}(t; \tau,\eta)$, as in \cite{CG}. 
  By Lemma \ref{20/10/20/10:23} (see also Remark \ref{20/11/15/16:23}), 
  we see that if $t$ is sufficiently small depending only on $d$, $p$, $q$ and $R$,   
   then  for any $\eta \in Y_{q}(R,t)$ and $\tau \in I(t)$,   $\mathfrak{s}(t; \tau, \eta) \in I(t)$.    
    Furthermore,  applying Lemma \ref{20/10/27/11:21} as $t_{1}=t_{2}=t$ and $\eta_{1}=\eta_{2}=\eta$ (see also Remark \ref{20/11/6/11:50}),
     we see that  if $t$ is sufficiently small depending only on $d$, $p$, $q$ and $R$, then for any $\eta \in Y_{q}(R,t)$ and $\tau_{1},\tau_{2}\in I(t)$, 
\begin{equation}\label{21/2/7/17:18}
\big| 
\mathfrak{s}(t; \tau_{2}, \eta)
-
\mathfrak{s}(t; \tau_{1},  \eta)
\big|
\lesssim  
o_{t}(1)
|\tau_{1}- \tau_{2} |
,
\end{equation} 
where the implicit constant depends only on $d$, $p$ and $q$.  
 Then,  the Banach fixed-point theorem 
 shows that  the claim is true.  
 
 
Next, we claim that if $t>0$ is sufficiently small depending only on $d$, $p$, $q$ and $R$, 
then $Y_{q}(R,t)$ admits one and only one element $\eta_{t}$ such that 
\begin{equation}\label{21/2/7/18:3}
\eta_{t}=\mathfrak{g}(t; \tau(t; \eta_{t}), \eta_{t}),
\end{equation}
where $\tau(t;\eta)$ denotes the unique number in $I(t)$ satisfying \eqref{21/2/7/17:55}. 
 Indeed,  applying Lemma \ref{20/10/30/11:47} (use \eqref{20/10/30/11:48} as $t_{j}=t$ and $\tau_{j}=\tau(t;\eta_{j})$),  we see that the mapping  $\eta \in Y_{q}(R,t) \mapsto  \mathfrak{g}(t; \tau(t; \eta), \eta)$ is contraction in the metric induced from $L^{q}(\mathbb{R}^{d})$.  
 Hence, the claim follows from  the Banach fixed-point theorem.  


By the above claims, we conclude that if $t>0$ is sufficiently small depending only on $d$, $p$, $q$ and $R$, then 
  $ I(t) \times Y_{q}(R,t)$ admits at least one solution to \eqref{19/01/13/14:55}.  
 Furthermore,  the uniqueness of such a solution follows from  Lemma  \ref{20/10/27/11:21}  and Lemma \ref{20/10/30/11:47}; 
 indeed,  by these lemmas and \eqref{21/2/6/15:27}, 
 we see that if $(\tau_{t}, \eta_{t})$ and $(\widetilde{\tau}_{t}, \widetilde{\eta}_{t})$ are solutions to \eqref{19/01/13/14:55} in $I(t) \times Y_{q}(R,t)$,
 then 
\begin{equation}\label{21/2/8/9:46}
\begin{split}
&
|\tau_{t}-\widetilde{\tau}_{t}|
+
\|\eta_{t} - \widetilde{\eta}_{t}\|_{L^{q}}
\\[6pt]
&=
| \mathfrak{s}(t; \tau_{t}, \eta_{t})
-
\mathfrak{s}(t; \widetilde{\tau}_{t},  \widetilde{\eta}_{t}) |
+
\| 
\mathfrak{g}(t;  \tau_{t}, \eta_{t}) 
-
\mathfrak{g}(t;  \widetilde{\tau}_{t},  \widetilde{\eta}_{t}) \|_{L^{q}}
\\[6pt]
&\lesssim  
o_{t}(1)
|\tau_{t}- \widetilde{\tau}_{t} |
+
\alpha(t)^{\theta_{q}} 
\|\eta_{t} -\widetilde{\eta}_{t} \|_{L^{q}}
,
\end{split}
\end{equation} 
where the implicit constants depend only on $d$, $p$ and $q$.  This estimate implies 
that  if $t>0$ is sufficiently small depending only on $d$, $p$, $q$ and $R$, then the uniqueness holds. 
 

Now,  for  a sufficiently small $t>0$,  we  use $(\tau_{t}, \eta_{t})$ to denote the unique solution to \eqref{19/01/13/14:55} in $I(t)\times Y_{q}(R,t)$. 
 
We shall show that  $\tau_{t}$ and $\eta_{t}$ are continuous with respect to $t$.  
   Let $t_{0},t>0$ be sufficiently small numbers with  $\frac{1}{2}t \le t_{0} \le 2t$.   
   Then,  by Lemma  \ref{20/10/27/11:21} (see also Remark \ref{20/11/6/11:50}), \eqref{21/2/6/15:27}  and  Lemma \ref{20/10/30/11:47}, 
 we see that 
\begin{equation}\label{20/11/19/9:57}
\begin{split}
&
|\tau_{t} - \tau_{t_{0}}|
+
\| \eta_{t} - \eta_{t_{0}} \|_{L^{q}}
\\[6pt]
&=
|\mathfrak{s}(t;\tau_{t},\eta_{t}) - \mathfrak{s}(t_{0};\tau_{t_{0}}, \eta_{t_{0}})|
+
\| \mathfrak{g}(t; \tau_{t}, \eta_{t}) - \mathfrak{g}(t_{0}; \tau_{t_{0}}, \eta_{t_{0}}) \|_{L^{q}}
\\[6pt]
&\lesssim 
o_{t_{0}}(1) |\tau_{t} - \tau_{t_{0}}| 
+ 
\{ 1+o_{t_{0}}(1) t_{0}^{-1}\alpha(t_{0})^{\Theta_{q}} \} |t-t_{0}| 
+
\alpha(t_{0})^{\theta_{q}}
\| \eta_{t} - \eta_{t_{0}} \|_{L^{q}}
,
\end{split}
\end{equation}
where the implicit constant depends only on $d$, $p$ and $q$. Note that if $t_{0}$ is sufficiently small, then
 the first and last terms on the right-hand side of \eqref{20/11/19/9:57} can be absorbed into the left-hand side. 
 Thus,   \eqref{20/11/19/9:57} implies the continuity of $\tau_{t}$ and $\eta_{t}$.

It remains to prove  that  $\tau_{t}$ is strictly increasing with respect to $t$. Let $0< t_{1} <t_{2}$ be sufficiently small numbers to be specified later, dependently on $d$, $p$, $q$ and $R$.   
 By the continuity of $\tau_{t}$ with respect to $t$,  
 we may assume that $t_{1}$ and $t_{2}$ are sufficiently close; In particular, we may assume $t_{2} \le 2t_{1}$. Then, by $\tau_{t_{j}}=\mathfrak{s}(t_{j};\tau_{t_{j}},\eta_{t_{j}})$, Lemma \ref{20/10/27/11:21} (see also Remark \ref{20/11/6/11:50}), \eqref{21/2/6/15:27},  
 we see that  
\begin{equation}\label{20/11/19/11:9}
\begin{split} 
\tau_{t_{2}} - \tau_{t_{1}} 
&\ge 
\frac{K_{p}}{A_{1}}(t_{2} -t_{1})
-
o_{t_{1}}(1) |t_{2}-t_{1}| 
-
o_{t_{1}}(1) |\tau_{t_{2}}-\tau_{t_{1}}| 
\\[6pt]
&\quad -
C_{0} t_{1} \alpha(t_{1})^{-\Theta_{q}+\theta_{q}} 
\| \eta_{t_{2}} -\eta_{t_{1}} \|_{L^{q}},
\end{split} 
\end{equation}
where $C_{0}>0$ is some constant depending only on $d$, $p$ and $q$. 
 Furthermore, we find from Lemma \ref{20/10/30/11:47} 
 and $\eta_{t_{j}}=\mathfrak{g}(t_{j}; \tau_{t_{j}}, \eta_{t_{j}})$
 that 
\begin{equation}\label{20/11/19/12}
\begin{split}
\| \eta_{t_{2}} - \eta_{t_{1}} \|_{L^{q}}
&\lesssim 
o_{t_{1}}(1) t_{1}^{-1} 
\alpha(t_{1})^{\Theta_{q}}
|t_{2}-t_{1}| 
+
|t_{2}-t_{1}|
,
\end{split} 
\end{equation}
where the implicit constant depends only on $d$, $p$ and $q$.  
 Plugging \eqref{20/11/19/12} into \eqref{20/11/19/11:9}, and using 
 \eqref{21/2/6/15:27}, we see that 
\begin{equation}\label{21/2/8/14:59}
\begin{split}
\tau_{t_{2}} - \tau_{t_{1}} 
&\ge 
\frac{K_{p}}{A_{1}}(t_{2} -t_{1})
-
o_{t_{1}}(1) |t_{2}-t_{1}| 
-
o_{t_{1}}(1) |\tau_{t_{2}}-\tau_{t_{1}}| 
\\[6pt]
&\quad -
C_{0} o_{t_{1}}(1) \alpha(t_{1})^{\theta_{q}} 
|t_{2}-t_{1}| 
+
C_{0} \alpha(t_{1})^{\theta_{q}} 
|t_{2}-t_{1}|
,
\end{split}
\end{equation}
where the implicit constant depends only on $d$, $p$ and $q$. 
This implies that $\tau_{t}$ is strictly increasing with respect to $t$.
 Thus, we have completed the proof of the proposition. 
\end{proof}


\subsection{Proof of Proposition \ref{19/02/11/16:56}}\label{19/02/17/09:57}

In this section, we prove  Proposition \ref{19/02/11/16:56}. 

Recall that $\widetilde{\mathcal{S}}_{t}$ and $\widetilde{\mathcal{N}}_{t}$ denote the action and Nehari functional associated with \eqref{19/02/11/17:03},     respectively (see \eqref{20/11/25/7:1} and \eqref{20/11/25/7:2}).   
 Furthermore, recall that $\widetilde{\mathcal{G}}_{t}$ denotes the set of positive radial minimizers for \eqref{19/02/11/17:03}.

The existence of a minimizer for \eqref{19/02/11/17:03} follows from that for \eqref{eq:1.1} via the $H^{1}$-scaling (see \eqref{20/12/6/16:37});  
 Precisely, we have:  
\begin{lemma}\label{20/9/21/10:29}
Assume $d=3,4$ and $\frac{4}{d-2}-1<p<\frac{d+2}{d-2}$.   
 Let $\max\{2^{*},\frac{2^{*}}{p-1}\}< q <\infty$ and $R>0$, and let $T_{1}(q, R)$ be the number given in Proposition \ref{19/01/21/08:01}; hence 
  for any $0<t <T_{1}(q,R)$,  $I(t) \times Y_{q}(R,t)$ admits  a unique solution to \eqref{19/01/13/14:55}, say $(\tau_{t}, \eta_{t})$.
 Furthermore,  let $0<t <T_{1}(q,R)$,  and define $\lambda(t)$ and $\omega(t)$ by   
\begin{equation}\label{20/9/21/16:19}
\lambda(t)^{-2^{*}+p+1} 
=t ,
\qquad 
\omega(t)
=
\alpha(\tau_{t}) \lambda(t)^{2^{*}-2}
=
\alpha(\tau_{t}) t^{\frac{-(2^{*}-2)}{ 2^{*}- (p+1)}}. 
\end{equation}
Then, the following hold:
\begin{enumerate}
\item 
Let $\Phi \in \mathcal{G}_{\omega(t)}$; Note that  by Proposition \ref{proposition:1.1},  $\mathcal{G}_{\omega(t)} \neq \emptyset$.
 Then,  $T_{\lambda(t)}[\Phi] \in \widetilde{\mathcal{G}}_{t}$; Hence,  $\widetilde{\mathcal{G}}_{t} \neq \emptyset$.

\item  
Let $u \in \widetilde{\mathcal{G}}_{t}$. 
Then,  $T_{\lambda(t)^{-1}}[u] \in \mathcal{G}_{\omega(t)}$, and 
\begin{equation}\label{20/9/22/11:38}
\widetilde{\mathcal{S}}_{t}(u) 
=
\frac{1}{d} \|\nabla u \|_{L^{2}}^{2}
\le 
\frac{1}{d} \|\nabla W\|_{L^{2}}^{2}
 . 
\end{equation}
\end{enumerate} 
\end{lemma}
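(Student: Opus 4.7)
The plan is to prove the lemma by a pure scaling correspondence followed by the Pohozaev identity and a Brezis--Nirenberg-type threshold bound.

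First I would set up the scaling dictionary. A direct change of variables in $T_{\lambda}[\Phi](x)=\lambda^{-1}\Phi(\lambda^{-2/(d-2)}x)$ gives $\|\nabla T_\lambda[\Phi]\|_{L^2}^2=\|\nabla\Phi\|_{L^2}^2$ and $\|T_\lambda[\Phi]\|_{L^r}^r=\lambda^{2^*-r}\|\Phi\|_{L^r}^r$ for $r\in\{2,p+1,2^*\}$. Substituting these identities into \eqref{20/11/25/7:1}--\eqref{20/11/25/7:2} and using the choices $\lambda(t)^{-(2^*-(p+1))}=t$ and $\omega(t)=\alpha(\tau_t)\lambda(t)^{2^*-2}$ in \eqref{20/9/21/16:19} yields
\begin{equation*}
\widetilde{\mathcal{S}}_{t}(T_{\lambda(t)}[\Phi])=\mathcal{S}_{\omega(t)}(\Phi),\qquad \widetilde{\mathcal{N}}_{t}(T_{\lambda(t)}[\Phi])=\mathcal{N}_{\omega(t)}(\Phi),
\end{equation*}
with completely analogous identities for $T_{\lambda(t)^{-1}}$ in the reverse direction (note $T_{\lambda^{-1}}\circ T_\lambda=\mathrm{id}$). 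Positivity and radiality are preserved by $T_{\lambda(t)^{\pm 1}}$. Hence $\Phi\in\mathcal{G}_{\omega(t)}$ if and only if $T_{\lambda(t)}[\Phi]\in\widetilde{\mathcal{G}}_{t}$, which gives (1), the existence half of (2), and the nonemptiness of $\widetilde{\mathcal{G}}_{t}$.

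For the identity $\widetilde{\mathcal{S}}_{t}(u)=\tfrac{1}{d}\|\nabla u\|_{L^2}^2$ in (2), I would invoke the Pohozaev identity for \eqref{19/02/25/12:07}: since $u\in\widetilde{\mathcal{G}}_{t}\subset H^1(\mathbb{R}^d)$ is a classical solution (by Proposition \ref{proposition:1.1} transported through the scaling), multiplying \eqref{19/02/25/12:07} by $x\cdot\nabla u$ and integrating by parts gives
\begin{equation*}
\tfrac{d-2}{2}\|\nabla u\|_{L^2}^2+\tfrac{d\,\alpha(\tau_t)}{2}\|u\|_{L^2}^2
=\tfrac{dt}{p+1}\|u\|_{L^{p+1}}^{p+1}+\tfrac{d-2}{2}\|u\|_{L^{2^*}}^{2^*}.
\end{equation*}
Eliminating $\tfrac{t}{p+1}\|u\|_{L^{p+1}}^{p+1}$ between this identity and the definition of $\widetilde{\mathcal{S}}_{t}$ using $\widetilde{\mathcal{N}}_{t}(u)=0$ collapses everything to $\tfrac{1}{d}\|\nabla u\|_{L^2}^2$.

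The main obstacle is the sharp upper bound $\tfrac{1}{d}\|\nabla u\|_{L^2}^2\le\tfrac{1}{d}\|\nabla W\|_{L^2}^2$, i.e.\ the Brezis--Nirenberg threshold. By the scaling correspondence already established, this is equivalent to $\mathcal{S}_{\omega(t)}(\Phi)\le\tfrac{1}{d}\|\nabla W\|_{L^2}^2$ for $\Phi\in\mathcal{G}_{\omega(t)}$. Because $\Phi$ is a minimizer on the Nehari constraint, one has
\begin{equation*}
\mathcal{S}_{\omega(t)}(\Phi)\le\inf_{v\in H^1\setminus\{0\}}\max_{\mu>0}\mathcal{S}_{\omega(t)}(\mu v),
\end{equation*}
so it suffices to exhibit a test family realizing the Aubin--Talenti energy in the limit. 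Using truncated rescaled Aubin--Talenti profiles $v_R(x)=\chi(x/R)\,R^{-(d-2)/2}W(x/R)$ and maximizing in $\mu$, the subcritical term $t\|v_R\|_{L^{p+1}}^{p+1}$ and mass term $\omega(t)\|v_R\|_{L^2}^2$ are both nonnegative and vanish as $R\to 0$ relative to the critical ones under the range $p>\tfrac{4}{d-2}-1$ for $d=3,4$; so the supremum tends to $\tfrac{1}{d}\|\nabla W\|_{L^2}^2$, proving the bound. (In fact this is the content of the variational analysis in \cite{AIIKN}, and one may just cite it here.) This completes (2).
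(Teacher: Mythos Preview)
Your proof is correct and follows essentially the same approach as the paper. The paper establishes the scaling correspondence between $\mathcal{G}_{\omega(t)}$ and $\widetilde{\mathcal{G}}_{t}$ by the same change-of-variables identities (phrased as a contradiction argument rather than your direct bijection), and then obtains both the identity $\widetilde{\mathcal{S}}_{t}(u)=\tfrac{1}{d}\|\nabla u\|_{L^{2}}^{2}$ and the threshold bound by transporting through the scaling and citing Lemma~2.2 of \cite{AIIKN}; you simply unpack that citation into its two ingredients (Pohozaev identity and the Brezis--Nirenberg test-function estimate). One minor remark: in your Pohozaev step the Nehari constraint $\widetilde{\mathcal{N}}_{t}(u)=0$ is not actually needed---substituting the Pohozaev relation directly into $\widetilde{\mathcal{S}}_{t}(u)$ already collapses to $\tfrac{1}{d}\|\nabla u\|_{L^{2}}^{2}$.
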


\begin{proof}[Proof of Lemma \ref{20/9/21/10:29}]
We shall prove the first claim.  
 Observe from a computation involving the scaling that 
\begin{equation}\label{20/11/21/13:22}
\widetilde{\mathcal{N}}_{t}( T_{\lambda(t)}[\Phi])
=
\mathcal{N}_{\omega(t)}(\Phi)
=0
,
\end{equation}
which implies that 
\begin{equation}\label{20/9/21/16:24}
\widetilde{\mathcal{S}}_{t}( T_{\lambda(t)}[\Phi])
\ge 
\inf\big\{ 
\widetilde{\mathcal{S}}_{t}(u) 
 \colon u \in H^{1}(\mathbb{R}^{d})\setminus \{0\},
\  
\widetilde{\mathcal{N}}_{t}(u)=0 
\big\}
.
\end{equation} 
If the equality failed in \eqref{20/9/21/16:24}, then we could take a function $u_{0} \in H^{1}(\mathbb{R}^{d})\setminus \{0\}$ such that  
\begin{equation}\label{20/9/21/16:45}
\widetilde{\mathcal{S}}_{t}( u_{0} )
<
\widetilde{\mathcal{S}}_{t}( T_{\lambda(t)}[\Phi] ), 
\qquad 
\widetilde{\mathcal{N}}_{t}(u_{0})
=0
.
\end{equation}
Furthermore,  \eqref{20/9/21/16:45} together with computations involving the scaling shows that 
\begin{align}
\label{20/9/21/16:57}
\mathcal{S}_{\omega(t)}(T_{\lambda(t)^{-1}}[u_{0}])
&=\widetilde{\mathcal{S}}_{t}( u_{0} )
<
\widetilde{\mathcal{S}}_{t}(T_{\lambda(t)}[\Phi] )
=
\mathcal{S}_{\omega(t)}(\Phi), 
\\[6pt]
\label{20/9/21/16:58} 
\mathcal{N}_{\omega(t)}(T_{\lambda(t)^{-1}}[u_{0}])
&=
\widetilde{\mathcal{N}}_{t}(u_{0})
=0
.
\end{align}
However, \eqref{20/9/21/16:57} together with \eqref{20/9/21/16:58} contradicts  $\Phi \in \mathcal{G}_{\omega(t)}$.  
 Thus, the equality must hold in \eqref{20/9/21/16:24}, which together with \eqref{20/11/21/13:22} shows that 
 $T_{\lambda(t)}[\Phi] \in \widetilde{\mathcal{G}}_{t}$.   
  
Move on to  the second claim.   
 As well as the first claim, we can verify  that   
  $T_{\lambda(t)^{-1}}[u] \in \mathcal{G}_{\omega(t)}$. 
 It remains to prove \eqref{20/9/22/11:38}. 
 Observe from computations involving the scaling that  
\begin{equation}\label{20/11/21/12:1}
\widetilde{\mathcal{S}}_{t}(u) 
=
\mathcal{S}_{\omega(t)} 
( T_{\lambda(t)^{-1} }[u]   ),
\qquad 
\|\nabla u \|_{L^{2}}
=
\|\nabla  T_{\lambda(t)^{-1} }[u]\|_{L^{2}}
. 
\end{equation}
Then,  by Lemma 2.2 of \cite{AIIKN},  and \eqref{20/11/21/12:1}, we find that \eqref{20/9/22/11:38} holds. 
\end{proof}

The following fact follows from  the second claim in Lemma \ref{20/9/21/10:29} (cf. Proposition \ref{proposition:1.1}):
\begin{corollary}\label{20/9/22/17:47}
Assume $d= 3,4$ and $\frac{4}{d-2}-1<p<\frac{d+2}{d-2}$. 
 Let $\max\{2^{*}, \frac{2^{*}}{p-1}\} <q <\infty$, $R>0$ 
 and let $T_{1}(q,R)$ be the number given in Proposition \ref{19/01/21/08:01}; 
 hence 
  for any $0<t <T_{1}(q,R)$,  $I(t) \times Y_{q}(R,t)$ admits  a unique solution to \eqref{19/01/13/14:55}, say $(\tau_{t}, \eta_{t})$.  
  Furthermore, let $0<t < T_{1}(q,R)$ and $u_{t}\in \widetilde{\mathcal{G}}_{t}$. 
 Then, $u_{t}$ is a positive radial solution to \eqref{19/02/25/12:07}. 
 Furthermore, $u_{t}\in C^{2}(\mathbb{R}^{d})\cap H^{2}(\mathbb{R}^{d})$, and $u_{t}(x)$ is strictly decreasing as a function of $|x|$.  
\end{corollary}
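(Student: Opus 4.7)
The plan is to reduce everything to Proposition \ref{proposition:1.1} via the scaling correspondence already established in Lemma \ref{20/9/21/10:29}. Let $0<t<T_{1}(q,R)$, fix $u_{t}\in\widetilde{\mathcal{G}}_{t}$, and set $\Phi:=T_{\lambda(t)^{-1}}[u_{t}]$ where $\lambda(t)$ and $\omega(t)$ are as in \eqref{20/9/21/16:19}. By Lemma \ref{20/9/21/10:29} (second claim), $\Phi\in\mathcal{G}_{\omega(t)}$. Since the assumption $d=3,4$ and $\frac{4}{d-2}-1<p<\frac{d+2}{d-2}$ falls within the range covered by Proposition \ref{proposition:1.1} (i.e., $d=3$ with $3<p<5$ or $d=4$ with $1<p<3$), we may apply all four items of that proposition to $\Phi$.

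Next, I would transfer each of the desired properties of $\Phi$ back to $u_{t}=T_{\lambda(t)}[\Phi]$ via the scaling operator $T_{\lambda}[v](x)=\lambda^{-1}v(\lambda^{-2/(d-2)}x)$. Concretely: item (i) of Proposition \ref{proposition:1.1} gives $\Phi\in C^{2}(\mathbb{R}^{d})$ and that $\Phi$ solves \eqref{eq:1.1} with $\omega=\omega(t)$; since the definition of $\widetilde{\mathcal{G}}_{t}$ forces $u_{t}$ to be positive and radial, items (ii) and (iii) applied to $\Phi$ show that the decomposition $\Phi(x)=e^{i\theta}\Phi_{\omega(t)}(x-y)$ must in fact have $y=0$ and $\theta=0$, so $\Phi$ itself is a \emph{positive radial} ground state strictly decreasing in $|x|$. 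Finally, item (iv) gives exponential decay of $\Phi$ together with its first and second derivatives, which in particular yields $\Phi\in H^{2}(\mathbb{R}^{d})$.

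It then remains to observe that the scaling $T_{\lambda(t)}$ preserves all the relevant structural properties: since $\lambda(t)>0$ and $T_{\lambda(t)}$ is a pure dilation combined with a positive multiplicative constant, positivity and radial symmetry are preserved; strict radial monotonicity of $\Phi(|x|)$ transfers to $u_{t}(|x|)=\lambda(t)^{-1}\Phi(\lambda(t)^{-2/(d-2)}|x|)$; smoothness of class $C^{2}$ is preserved under composition with the smooth dilation; and a direct change of variables shows $u_{t}\in H^{2}(\mathbb{R}^{d})$ (the $L^{2}$-norms of $u_{t}$, $\nabla u_{t}$, $\Delta u_{t}$ are finite multiples of the corresponding norms of $\Phi$). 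The equation \eqref{19/02/25/12:07} satisfied by $u_{t}$ is then simply the image of \eqref{eq:1.1} under the scaling identity \eqref{20/12/6/16:37}, with parameters matched by the definition of $\omega(t)$ in \eqref{20/9/21/16:19}.

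There is no real obstacle here: the corollary is essentially a bookkeeping exercise translating the known qualitative properties of ground states of \eqref{eq:1.1} to ground states of the rescaled equation \eqref{19/02/25/12:07}. The only small point requiring care is the argument that $\Phi$, being a ground state in $\mathcal{G}_{\omega(t)}$ obtained from a positive radial $u_{t}$, coincides with the canonical positive radial representative $\Phi_{\omega(t)}$ provided by Proposition \ref{proposition:1.1}(ii)—but this is immediate from the uniqueness of the positive radial profile of a positive radial function up to translation.
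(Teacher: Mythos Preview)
Your proposal is correct and matches the paper's approach exactly: the paper itself gives no explicit proof, merely noting that the corollary follows from the second claim of Lemma \ref{20/9/21/10:29} together with Proposition \ref{proposition:1.1}. Your write-up simply unpacks this reference, and the only superfluous step is the discussion of Proposition \ref{proposition:1.1}(ii), since by definition both $\widetilde{\mathcal{G}}_{t}$ and $\mathcal{G}_{\omega(t)}$ already consist of positive radial functions, so items (iii) and (iv) apply to $\Phi$ directly.
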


 
Next,  we give  properties of ground states to \eqref{19/02/25/12:07} (Lemma \ref{20/9/21/19:3} and Lemma \ref{20/9/22/16:51}):   

\begin{lemma}\label{20/9/21/19:3}
Assume  $d=3,4$ and $\frac{4}{d-2}-1<p<\frac{d+2}{d-2}$.  
 Let $\max\{2^{*},\frac{2^{*}}{p-1}\}<q <\infty$, $R>0$ and  
  let $T_{1}(q,R)$ be the number given in Proposition \ref{19/01/21/08:01}; 
 hence for any $0<t <T_{1}(q,R)$,  $I(t) \times Y_{q}(R,t)$ admits  a unique solution to \eqref{19/01/13/14:55}, say $(\tau_{t}, \eta_{t})$.  
  Then, the following hold:
\begin{align}
\label{20/9/22/11:09}
&
\lim_{t\to 0}  
\alpha(\tau_{t})^{ \frac{-\{2^{*}-(p+1)\}}{2^{*}-2} } 
t
=0,
\\[6pt]
\label{20/9/22/13:51}
&
\lim_{t\to 0}\sup_{u \in \widetilde{\mathcal{G}}_{t} } t \|u\|_{L^{p+1}}^{p+1}
=
\lim_{t\to 0}\sup_{u \in \widetilde{\mathcal{G}}_{t} } \alpha(\tau_{t})  \|u\|_{L^{2}}^{2}
=
0
.
\end{align}
\end{lemma}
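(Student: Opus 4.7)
The plan is to prove \eqref{20/9/22/11:09} by direct computation using the approximate identity $\alpha(\tau_t)\sim t\,\delta(\alpha(t))$ (from $\tau_t\in I(t)$, so $\tau_t\sim t$, and the defining relation $\alpha(\tau)=\tau\,\delta(\alpha(\tau))$ in \eqref{20/11/2/9:40}). Writing the quantity as
\begin{equation*}
\alpha(\tau_t)^{-\frac{2^{*}-(p+1)}{2^{*}-2}}\,t \;\sim\; t^{\frac{p-1}{2^{*}-2}}\,\delta(\alpha(t))^{-\frac{2^{*}-(p+1)}{2^{*}-2}},
\end{equation*}
I would check two cases. For $d=3$ we have $\delta(s)=s^{1/2}$ and $\alpha(t)=t^{2}$, reducing the right-hand side to $t^{(p-3)/2}$, which tends to $0$ since $p>3$. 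For $d=4$ we have $\alpha(t)\sim t\,\delta(t)$ with $\delta(s)=1/\log(1+s^{-1})$, yielding $t^{(p-1)/2}\,\log(1+\alpha(t)^{-1})^{(3-p)/2}$, a polynomial factor that overwhelms the logarithmic one as $t\to 0$ because $1<p<3$.

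For \eqref{20/9/22/13:51}, I would first use the Pohozaev identity together with the Nehari identity to collapse the two displayed limits into one. By Corollary \ref{20/9/22/17:47}, every $u\in\widetilde{\mathcal G}_t$ is a $C^2\cap H^2$ solution of \eqref{19/02/25/12:07}; multiplying this equation by $u$ and by $x\cdot\nabla u$ and integrating yields the standard Pohozaev--Nehari combination
\begin{equation*}
\alpha(\tau_t)\,\|u\|_{L^{2}}^{2}
\;=\;
\frac{(d+2)-(d-2)p}{2(p+1)}\,t\,\|u\|_{L^{p+1}}^{p+1},
\end{equation*}
where the coefficient is strictly positive since $p<\tfrac{d+2}{d-2}$. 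Hence it suffices to prove $\sup_{u\in\widetilde{\mathcal G}_t}t\,\|u\|_{L^{p+1}}^{p+1}\to 0$.

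To verify this, I would invoke Lemma \ref{20/9/21/10:29}: for each $u\in\widetilde{\mathcal G}_t$ the function $\Phi:=T_{\lambda(t)^{-1}}[u]$ lies in $\mathcal G_{\omega(t)}$ with $\omega(t)=\alpha(\tau_t)\,t^{-(2^{*}-2)/(2^{*}-(p+1))}$. A direct $H^{1}$-scaling computation gives $t\,\|u\|_{L^{p+1}}^{p+1}=\|\Phi\|_{L^{p+1}}^{p+1}$, and step (1) is exactly the statement $\omega(t)^{-(2^{*}-(p+1))/(2^{*}-2)}\to 0$, i.e.\ $\omega(t)\to\infty$. Using the scaling centered at $\Phi(0)$,
\begin{equation*}
\|\Phi\|_{L^{p+1}}^{p+1}
=
\Phi(0)^{-(2^{*}-(p+1))}\,\|T_{\Phi(0)}[\Phi]\|_{L^{p+1}}^{p+1},
\end{equation*}
Lemma \ref{proposition:2.3} controls the first factor by $o(1)$ as $\omega\to\infty$ uniformly in $\Phi\in\mathcal G_\omega$, while Lemma \ref{theorem:3.1} shows $\|T_{\Phi(0)}[\Phi]\|_{L^{p+1}}\to\|W\|_{L^{p+1}}$ uniformly (note $p+1>\tfrac{d}{d-2}$ under our hypotheses). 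Combining these yields the uniform vanishing of $\|\Phi\|_{L^{p+1}}^{p+1}$, hence of $t\,\|u\|_{L^{p+1}}^{p+1}$, and then the full claim \eqref{20/9/22/13:51}.

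The only genuinely delicate point is the $d=4$ case of step (1): one must be certain that the logarithmic corrections arising from $\delta(s)=1/\log(1+s^{-1})$ and from $\alpha(t)\sim t\,\delta(t)$ are dominated by the polynomial factor $t^{(p-1)/2}$. Everything else is bookkeeping with the $H^{1}$-scaling together with the Pohozaev--Nehari identity and the already-established asymptotic lemmas from Section \ref{20/8/18/11:39}.
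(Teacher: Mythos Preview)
Your proof of \eqref{20/9/22/11:09} is essentially the same as the paper's: both reduce to $\tau_t\sim t$ and the explicit forms of $\alpha$ in \eqref{20/10/15/16:46}, with your write-up simply being more explicit about the $d=3$ and $d=4$ cases.

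For \eqref{20/9/22/13:51}, your argument is correct but follows a genuinely different route from the paper. The paper stays at the level of the rescaled equation \eqref{19/02/25/12:07}: it combines the Nehari identity with the action bound \eqref{20/9/22/11:38} to obtain the uniform estimate
\[
\alpha(\tau_t)\|u_t\|_{L^2}^2+\|u_t\|_{L^{2^*}}^{2^*}\lesssim 1,
\]
then interpolates $L^{p+1}$ between $L^2$ and $L^{2^*}$ via H\"older to write
\[
t\|u_t\|_{L^{p+1}}^{p+1}\le \big(\alpha(\tau_t)^{-\frac{2^*-(p+1)}{2^*-2}}t\big)\,\big(\alpha(\tau_t)\|u_t\|_{L^2}^2\big)^{\frac{2^*-(p+1)}{2^*-2}}\,\|u_t\|_{L^{2^*}}^{\frac{2^*(p-1)}{2^*-2}},
\]
and concludes directly from part \eqref{20/9/22/11:09}; only then does it invoke Pohozaev to transfer the vanishing to $\alpha(\tau_t)\|u_t\|_{L^2}^2$. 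By contrast, you pull everything back to the original problem through the $H^1$-scaling of Lemma~\ref{20/9/21/10:29}, obtain $t\|u\|_{L^{p+1}}^{p+1}=\|\Phi\|_{L^{p+1}}^{p+1}$ with $\Phi\in\mathcal G_{\omega(t)}$, and then invoke the large-$\omega$ asymptotics of ground states (Lemmas~\ref{proposition:2.3} and~\ref{theorem:3.1}). Your argument is clean and makes efficient use of material already established in Section~\ref{20/8/18/11:39}; the paper's argument has the advantage of being self-contained at the rescaled level and not relying on the uniform $L^{p+1}$ convergence of $T_{\Phi(0)}[\Phi]$ to $W$.
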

\begin{proof}[Proof of Lemma \ref{20/9/21/19:3}]
By \eqref{20/10/15/16:46},  the assumption about $p$ and 
 $\tau_{t} \in I(t)$ (hence $\tau_{t} \sim t$), we see that  \eqref{20/9/22/11:09} holds.   

We shall prove \eqref{20/9/22/13:51}.  
 Let $0< t<T_{1}(q,R)$ be a number to be taken $t\to 0$, and let $u_{t}  \in \widetilde{\mathcal{G}}_{t}$.
  Note that $\widetilde{\mathcal{N}}_{t}(u_{t})=0$. Then, by \eqref{20/9/22/11:38} in Lemma \ref{20/9/21/10:29}, 
 we see that  
\begin{equation}\label{20/9/21/19:55}
\begin{split}
&
\frac{1}{d}\|\nabla W \|_{L^{2}}^{2}
\ge 
\widetilde{\mathcal{S}}_{t}(u_{t})
-
\frac{1}{p+1}\widetilde{\mathcal{N}}_{t}(u_{t})
\\[6pt]
&
=
\frac{p-1}{2(p+1)}
\alpha(\tau_{t})
\|u_{t}\|_{L^{2}}^{2}
+
\frac{p-1}{2(p+1)} \|\nabla u_{t}\|_{L^{2}}^{2}
+
\frac{ 2^{*}- (p+1) }{ 2^{*}(p+1) }
\|u_{t}\|_{L^{2^{*}}}^{2^{*}}
. 
\end{split} 
\end{equation}
Furthermore,  by H\"older's inequality, we see  that   
\begin{equation}\label{20/9/22/10:52}
\begin{split}
t \|u_{t}\|_{L^{p+1}}^{p+1}
&\le 
t \|u_{t}\|_{L^{2}}^{\frac{2\{ 2^{*}-(p+1)\}}{2^{*}-2}} 
\|u_{t}\|_{L^{2^{*}}}^{\frac{2^{*}(p-1)}{2^{*}-2}}
\\[6pt]
&= 
\alpha(\tau_{t})^{\frac{-\{ 2^{*}-(p+1) \}}{2^{*}-2} }  t 
\big\{ \alpha(\tau_{t}) 
\|u_{t}\|_{L^{2}}^{2} 
\big\}^{ \frac{2^{*}-(p+1)}{2^{*}-2} }
\|u_{t}\|_{L^{2^{*}}}^{\frac{2^{*}(p-1)}{2^{*}-2}}
.
\end{split} 
\end{equation}
Then,  we find from \eqref{20/9/22/11:09},  \eqref{20/9/21/19:55} and \eqref{20/9/22/10:52}  that
\begin{equation}\label{20/9/22/12}
\lim_{t\to 0} \sup_{u \in \widetilde{\mathcal{G}}_{t}} t\|u \|_{L^{p+1}}^{p+1}=0.
\end{equation}
Furthermore,  a standard argument (see \cite{Berestycki-Lions}) shows  that the following ``Pohozaev identity'' holds
 for all $u \in \widetilde{\mathcal{G}}_{t}$:
\begin{equation}\label{20/9/22/12:06}
\frac{1}{d}\alpha( \tau_{t} )
\|u \|_{L^{2}}^{2}
=
\frac{2^{*}-(p+1)}{2^{*}(p+1)}
t \|u \|_{L^{p+1}}^{p+1} 
.
\end{equation} 
Putting \eqref{20/9/22/12} and \eqref{20/9/22/12:06} together, we find that \eqref{20/9/22/13:51} is true.  
\end{proof}


\begin{lemma}\label{20/9/22/16:51}
Assume $d=3,4$ and $\frac{4}{d-2}-1<p<\frac{d+2}{d-2}$.   
 Let $\max\{2^{*}, \frac{2^{*}}{p-1}\} <q <\infty$,  $R>0$ and 
  let $T_{1}(q,R)$ be the number given in Proposition \ref{19/01/21/08:01}; 
 hence for any $0<t <T_{1}(q,R)$,  $I(t) \times Y_{q}(R,t)$ admits  a unique solution to \eqref{19/01/13/14:55}, say $(\tau_{t}, \eta_{t})$.    
 Then, all of the following hold:
\begin{enumerate}
\item 
\begin{equation}\label{20/9/22/16:57}
\lim_{t\to 0}\sup_{u \in \widetilde{\mathcal{G}}_{t}}
\| T_{u(0)} [u]  -W \|_{\dot{H}^{1}}
=0
.
\end{equation}

\item
If $0< t <T_{1}(q,R)$ is sufficiently small depending only on $d$, $p$, $q$ and $R$, then   
\begin{equation}\label{20/9/27/11:57}
\sup_{u \in \widetilde{\mathcal{G}}_{t}} | T_{u(0)}[u](x)| \lesssim (1+|x|)^{-(d-2)},
\end{equation}
 where the implicit constant depends only on $d$ and $p$; in particular,  for any $u \in \widetilde{\mathcal{G}}_{t}$ and $\frac{d}{d-2}<r \le \infty$, 
\begin{equation}\label{20/9/27/12:11}
\| T_{u(0)}[u] \|_{L^{r}}
\lesssim 1
, 
\end{equation}
where the implicit constant dpends only on $d$, $p$ and $r$. 

\item  
\begin{equation}\label{20/9/27/17:21}
\lim_{t\to 0} \sup_{u \in \widetilde{\mathcal{G}}_{t}} u(0)^{-2^{*}+2} \alpha(\tau_{t})=0. 
\end{equation}
\end{enumerate} 
\end{lemma}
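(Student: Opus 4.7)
The plan is to transfer the known asymptotic properties of the ground-state set $\mathcal{G}_{\omega}$ (as $\omega \to \infty$) to $\widetilde{\mathcal{G}}_{t}$ (as $t \to 0$) via the scaling correspondence established in Lemma \ref{20/9/21/10:29}. Specifically, for $u \in \widetilde{\mathcal{G}}_{t}$, the second assertion of Lemma \ref{20/9/21/10:29} says that $\Phi:=T_{\lambda(t)^{-1}}[u] \in \mathcal{G}_{\omega(t)}$, where $\lambda(t)^{-2^{*}+p+1}=t$ and $\omega(t)=\alpha(\tau_{t})\,t^{-(2^{*}-2)/\{2^{*}-(p+1)\}}$. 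A direct computation with the scaling operator shows $\Phi(0)=\lambda(t)\,u(0)$ and, most importantly,
\[
T_{\Phi(0)}[\Phi] = T_{u(0)}[u],
\]
so every quantity appearing in \eqref{20/9/22/16:57}--\eqref{20/9/27/17:21} can be rewritten in terms of $\Phi \in \mathcal{G}_{\omega(t)}$.

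The preliminary step is to verify that $\omega(t) \to \infty$ as $t\to 0$. This is immediate from \eqref{20/9/22/11:09} in Lemma \ref{20/9/21/19:3}: indeed
\[
\omega(t) = \bigl\{\alpha(\tau_{t})^{-\{2^{*}-(p+1)\}/(2^{*}-2)}\,t\bigr\}^{-(2^{*}-2)/\{2^{*}-(p+1)\}} \longrightarrow \infty.
\]
Once this is in hand, each claim is obtained by applying the corresponding result from Section 2. For (i), Lemma \ref{theorem:3.1} yields $\|T_{\Phi(0)}[\Phi]-W\|_{\dot{H}^{1}} \to 0$ uniformly in $\Phi \in \mathcal{G}_{\omega(t)}$, and the identification $T_{\Phi(0)}[\Phi]=T_{u(0)}[u]$ converts this to \eqref{20/9/22/16:57}. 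For (ii), Lemma \ref{18/09/05/01:05} provides the pointwise bound $T_{\Phi(0)}[\Phi](x) \lesssim (1+|x|)^{-(d-2)}$ uniformly in $\mathcal{G}_{\omega(t)}$ whenever $\omega(t)$ is large enough, and the scaling identification yields \eqref{20/9/27/11:57}; the $L^{r}$ bound \eqref{20/9/27/12:11} follows because $(1+|x|)^{-(d-2)}\in L^{r}(\mathbb{R}^{d})$ precisely for $r>\frac{d}{d-2}$. For (iii), using $u(0)=\lambda(t)^{-1}\Phi(0)$ one computes
\[
u(0)^{-(2^{*}-2)}\,\alpha(\tau_{t}) = \omega(t)\,\Phi(0)^{-(2^{*}-2)},
\]
and Lemma \ref{proposition:2.3} gives $\omega(t)\,\Phi(0)^{-(2^{*}-2)}\to 0$ uniformly in $\Phi\in \mathcal{G}_{\omega(t)}$.

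There is essentially no technical obstacle beyond bookkeeping with the scaling. The only point worth checking carefully is the conversion formula $T_{\Phi(0)}[\Phi]=T_{u(0)}[u]$, which relies on the homogeneity $T_{\lambda}[v](x)=\lambda^{-1}v(\lambda^{-2/(d-2)}x)$ from \eqref{20/9/25/7:1}: using $\Phi=T_{\lambda(t)^{-1}}[u]$ and $\Phi(0)=\lambda(t)u(0)$, the factors $\lambda(t)$ cancel exactly because both the amplitude rescaling and the spatial rescaling are carried by the same single parameter in $T_{\lambda}$. Once this identity is in place, the whole lemma reduces to invoking the three results Lemma \ref{proposition:2.3}, Lemma \ref{theorem:3.1} and Lemma \ref{18/09/05/01:05} at the frequency $\omega(t)$ and letting $t\to 0$.
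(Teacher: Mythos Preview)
Your argument for parts (i) and (ii) is exactly the paper's: transfer to $\mathcal{G}_{\omega(t)}$ via Lemma~\ref{20/9/21/10:29}, use \eqref{20/9/22/11:09} to get $\omega(t)\to\infty$, then quote Lemma~\ref{theorem:3.1} and Lemma~\ref{18/09/05/01:05} through the scaling identity $T_{\Phi(0)}[\Phi]=T_{u(0)}[u]$.

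For part (iii) you diverge from the paper, and your route is cleaner. The paper argues by contradiction: it assumes a sequence $t_n\to 0$, $u_n\in\widetilde{\mathcal{G}}_{t_n}$ along which $u_n(0)^{-2^{*}+2}\alpha(\tau_{t_n})$ stays bounded away from zero, then combines this with $\alpha(\tau_{t_n})\|u_n\|_{L^2}^2\to 0$ from Lemma~\ref{20/9/21/19:3} to force $\|T_{u_n(0)}[u_n]\|_{L^2}\to 0$, and finally uses parts (i)--(ii) to contradict $\|W\|_{L^{2^{*}}}>0$. Your approach instead reads off the identity $u(0)^{-(2^{*}-2)}\alpha(\tau_t)=\omega(t)\,\Phi(0)^{-(2^{*}-2)}$ directly from the scaling and invokes \eqref{eq:2.17} in Lemma~\ref{proposition:2.3}. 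This is more elementary and avoids the contradiction detour entirely; it also makes transparent that (iii) is really just the $\omega\,\Phi_\omega(0)^{-(2^{*}-2)}\to 0$ statement from the preliminaries, pulled back through the scaling. The paper's longer argument has the minor advantage of being self-contained relative to the quantities already introduced in Lemma~\ref{20/9/21/19:3}, but there is no mathematical gain.
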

\begin{remark}\label{20/9/27/16:11}
By \eqref{20/9/22/16:57} and \eqref{20/9/27/12:11}, we  see that 
\begin{equation}\label{20/9/27/12:7}
\lim_{t\to 0}\sup_{u \in \widetilde{\mathcal{G}}_{t}}
\| T_{u(0)} [u]  -W \|_{L^{r}}
=0
\quad 
\mbox{for all $\frac{d}{d-2}<r <\infty$}
.
\end{equation} 
\end{remark}
\begin{proof}[Proof of Lemma \ref{20/9/22/16:51}]
Let $0< t <T_{1}(q,R)$ be a number to be taken $t\to 0$,  
 and let $u \in \widetilde{\mathcal{G}}_{t}$.  
 Define $\lambda(t)$ and $\omega(t)$ as in \eqref{20/9/21/16:19}.  
  Furthermore, put $\Psi_{\omega(t)}:= T_{\lambda(t)^{-1}}[ u ]$. 
  Then,  Lemma \ref{20/9/21/10:29} shows that $\Psi_{\omega(t)}  \in \mathcal{G}_{\omega(t)}$.  
 Observe that 
\begin{equation}\label{20/9/23/8:51}
T_{\Psi_{\omega(t)}(0)}[ \Psi_{\omega(t)}]
=
T_{u(0)} [u] 
. 
\end{equation}
Furthermore, observe from \eqref{20/9/22/11:09} in Lemma \ref{20/9/21/19:3} that    
\begin{equation}\label{20/9/23/8:1}
\lim_{t\to 0}\omega(t) 
= 
\lim_{t\to 0}
\{\alpha(\tau_{t})^{\frac{-\{2^{*}-(p+1)\}}{2^{*}-2}} t
\}^{\frac{-(2^{*}-2)}{2^{*}-(p+1)}}
=
\infty
.
\end{equation}
Then, \eqref{20/9/22/16:57} follows from $\Psi_{\omega(t)}  \in \mathcal{G}_{\omega(t)}$, \eqref{20/9/23/8:51} and Lemma \ref{theorem:3.1}.  
 Furthermore,   \eqref{20/9/27/11:57} follows  from Lemma \ref{18/09/05/01:05}. 
 

It remains to prove the last claim \eqref{20/9/27/17:21}.  Suppose for contradiction that the claim is false. Then, we can take  a sequence 
 $\{t_{n}\}$ with the following property:  $\lim_{n\to \infty}t_{n}=0$,  and   
 for any $n\ge 1$  there exists $u_{n} \in  \widetilde{\mathcal{G}}_{t_{n}}$ such that 
\begin{equation}\label{20/9/27/17:32}
\lim_{n\to \infty} u_{n}(0)^{-2^{*}+2} \alpha(\tau_{t_{n}})>0
.
\end{equation} 
Observe from a computation involving the scaling, and \eqref{20/9/22/13:51} in Lemma \ref{20/9/21/19:3} that  
\begin{equation}\label{20/9/27/17:42}
\lim_{n\to \infty} u_{n}(0)^{-2^{*}+2} 
\alpha(\tau_{t_{n}}) 
\|T_{u_{n}(0)}[u_{n}]\|_{L^{2}}^{2}
=
\lim_{n\to \infty} 
\alpha(\tau_{t_{n}}) 
\| u_{n} \|_{L^{2}}^{2}
=0.
\end{equation}
By \eqref{20/9/27/17:32} and \eqref{20/9/27/17:42},  we see that 
\begin{equation}\label{20/9/28/7:56}
\lim_{n\to \infty} 
\|T_{u_{n}(0)}[u_{n}]\|_{L^{2}}
=
0
.
\end{equation}
Furthermore, by \eqref{20/9/22/16:57},  \eqref{20/9/27/12:11} and  \eqref{20/9/28/7:56}, we see  that  
\begin{equation}\label{20/9/28/7:1}
\begin{split}
\|W\|_{L^{2^{*}}} 
&=
\lim_{n\to \infty}
\| T_{u_{n}(0)}[u_{n}] \|_{L^{2^{*}}}
\\[6pt]
&\le 
\lim_{n\to \infty}
\| T_{u_{n}(0)}[u_{n}] \|_{L^{2}}^{\frac{d-2}{d}} 
\|T_{u_{n}(0)}[u_{n}]\|_{L^{\infty}}^{\frac{2}{d}}
\lesssim 
\lim_{n\to \infty}
\| T_{u_{n}(0)}[u_{n}] \|_{L^{2}}^{\frac{d-2}{d}}
= 0
.
\end{split} 
\end{equation}
This is a contradiction. Thus, the claim \eqref{20/9/27/17:21} is true. 
\end{proof}


We will prove Proposition \ref{19/02/11/16:56} by showing that  
 a minimizer for \eqref{19/02/11/17:03} coincides with the unique solution to \eqref{19/01/13/14:55}  obtained in Proposition \ref{19/01/21/08:01}.  
The first step to accomplish this  plan is the following lemma (cf. Lemma 3.8 in \cite{CG}):   


\begin{lemma}\label{20/9/23/9:8}
Assume $d=3,4$ and $\frac{4}{d-2}-1 <p<\frac{4}{d-2}+1$. 
 Let  $\max\{2^{*},\frac{2^{*}}{p+3-2^{*}} \} < q <\infty$. 
 Furthermore, for $R>0$,   let $T_{1}(q,R)$ denote the number given in Proposition \ref{19/01/21/08:01}. 
 Then, there exist $R_{*}>0$ and $0< T_{*}<T_{1}(q, R_{*})$, both depending only on $d$, $p$ and $q$,  with the following property:   
  For any $0<t < T_{*}$ and  $u_{t} \in \widetilde{\mathcal{G}}_{t}$,   
  there exists $\mu(t)=1+o_{t}(1)$ such that,   
  denoting  a unique solution to \eqref{19/01/13/14:55} in $I(t) \times Y_{q}(R_{*},t)$ by $(\tau_{t}, \eta_{t})$ 
(see Proposition \ref{19/01/21/08:01}), and  defining $\mu^{\dagger}(t)$,  $t^{\dagger}$,  $\alpha^{\dagger}(t)$ and $\eta_{t}^{\dagger}$ as  
\begin{align}
\label{20/9/23/10:33}
&
\mu^{\dagger}(t)
:=
\mu(t) u_{t}(0), 
\\[6pt]
\label{20/9/23/10:45}
&
t^{\dagger}
:=
\mu^{\dagger}(t)^{-\{2^{*}-(p+1)\}}t
,
\qquad 
\alpha^{\dagger}(t)
:=
\mu^{\dagger}(t)^{-(2^{*}-2)}\alpha (\tau_{t})
, 
\\[6pt]
\label{20/9/23/9:26}
&
\eta_{t}^{\dagger}
:=
T_{\mu^{\dagger}(t)}[u_{t}] - W
,
\end{align}
we have the following:    
\begin{enumerate}
\item 
\begin{align}
\label{20/9/23/9:12}
&
\langle  
(-\Delta
+ \alpha^{\dagger}(t) )^{-1}
F( \eta_{t}^{\dagger}; 
\alpha^{\dagger}(t),  t^{\dagger} )
, V\Lambda W  
\rangle 
=0
,
\\[6pt]
\label{20/10/4/11:37}
&
\lim_{t\to 0}
\sup_{u_{t} \in \widetilde{\mathcal{G}}_{t}}
\alpha^{\dagger}(t) =0,
\qquad 
\lim_{t\to 0} 
\sup_{u_{t} \in \widetilde{\mathcal{G}}_{t}}
t^{\dagger}=0,
\\[6pt]
\label{20/10/7/11:41}
&
\lim_{t\to 0}
\sup_{u_{t} \in \widetilde{\mathcal{G}}_{t}}
 \alpha^{\dagger}(t) ^{\frac{-\{ 2^{*}-(p+1)\}}{2^{*}-2}}
 t^{\dagger}  
= 0
, 
\\[6pt]
\label{20/9/25/11:37}
&
\lim_{t\to 0}
\| \eta_{t}^{\dagger} \|_{\dot{H}^{1}}
=0,
\quad 
\lim_{t\to 0}
\| \eta_{t}^{\dagger} \|_{L^{r}}
=
0
\quad 
\mbox{for all $\frac{d}{d-2}<r<\infty$}
.
\end{align}
\item 
\begin{equation}\label{20/10/5/9:1}
(\beta( \alpha^{\dagger}(t)),  \eta_{t}^{\dagger} ) 
 \in I( t^{\dagger}) \times Y_{q}(R_{*},  t^{\dagger}) 
 .
\end{equation}

\item  If  $0<t<T_{*}$, then $0< t^{\dagger} <T_{1}(q,R_{*})$.   Furthermore,  let  $0<t<T_{*}$, and 
 let $(\tau_{t^{\dagger}}, \eta_{t^{\dagger}})$ denote a unique solution to \eqref{19/01/13/14:55} 
 with $t=t^{\dagger}$ in $I( t^{\dagger}) \times Y_{q}(R_{*}, t^{\dagger})$. Then, 
\begin{equation}\label{20/11/23/18:15}
(\beta( \alpha^{\dagger}(t)), \eta^{\dagger}_{t} )
=
(\tau_{t^{\dagger}}, \eta_{t^{\dagger}}) 
.
\end{equation}
\end{enumerate} 
\end{lemma}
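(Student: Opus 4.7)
The plan is to construct $\mu(t)$ by an intermediate-value argument on a scalar orthogonality functional, verify the stated limits and the fixed-point set membership, and then conclude \eqref{20/11/23/18:15} from the uniqueness clause of Proposition \ref{19/01/21/08:01}. First I would parametrize things explicitly. Given $u_t \in \widetilde{\mathcal{G}}_t$, set, for $\mu > 0$ close to $1$,
$$s(\mu) := (\mu u_t(0))^{-(2^{*}-2)}\alpha(\tau_t), \qquad r(\mu) := (\mu u_t(0))^{-(2^{*}-(p+1))}t, \qquad \eta(\mu) := T_{\mu u_t(0)}[u_t] - W.$$
Since $T_{\mu u_t(0)}[u_t]$ solves \eqref{20/8/1/1:1} with parameters $(s(\mu), r(\mu))$ (and Corollary \ref{20/9/22/17:47} ensures $\eta(\mu) \in H^2(\mathbb{R}^d)$), one computes $(-\Delta + V + s(\mu))\eta(\mu) = F(\eta(\mu); s(\mu), r(\mu))$. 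Pairing the corresponding inverse with $V\Lambda W$ and using $\alpha(\beta(\cdot)) = \cdot$ together with Notation \ref{20/11/16/11:8} rewrites \eqref{20/9/23/9:12} (for $\mu = \mu(t)$) as the scalar equation
$$H(\mu) := -\beta(s(\mu))\,\mathscr{X}(\beta(s(\mu))) + r(\mu)\,\mathscr{W}_p(\beta(s(\mu))) + \mathscr{N}(r(\mu), \beta(s(\mu)), \eta(\mu)) = 0.$$

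To solve $H(\mu) = 0$, I would exploit that Lemma \ref{20/9/22/16:51} gives $\eta(1) \to 0$ in $\dot{H}^1$ and in every $L^r$ with $r > d/(d-2)$, while \eqref{20/9/27/17:21} and Lemma \ref{20/9/21/19:3} force $s(1), r(1) \to 0$ as $t \to 0$. The asymptotics \eqref{19/01/18/12:32}, \eqref{19/01/18/12:33} and an $\mathscr{N}$-bound analogous to Lemma \ref{19/02/10/16:33} then yield $H(1) = -A_1\beta(s(1)) + K_p r(1) + o(\max\{\beta(s(1)), r(1)\})$. Since $s(\mu)$ and $r(\mu)$ are strictly decreasing in $\mu$, the leading part $\mu \mapsto -A_1\beta(s(\mu)) + K_p r(\mu)$ is strictly increasing, with derivative of order $\beta(s(\mu))/\mu$, and this dominates the variation of the remainder on any interval $[1-c(t), 1+c(t)]$ with $c(t) = o_t(1)$. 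The intermediate value theorem then produces a unique $\mu(t) = 1 + o_t(1)$ with $H(\mu(t)) = 0$.

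For the limits \eqref{20/10/4/11:37}--\eqref{20/9/25/11:37}, writing $u_t = T_{\lambda(t)}[\Psi_{\omega(t)}]$ as in Lemma \ref{20/9/21/10:29} gives $u_t(0)^{-(2^{*}-(p+1))}t = \Psi_{\omega(t)}(0)^{-(2^{*}-(p+1))}$, which tends to $0$ by \eqref{eq:2.16} since $\omega(t) \to \infty$; combined with $\mu(t) \to 1$ this yields $t^\dagger \to 0$. Similarly $\alpha^\dagger(t) \to 0$ from \eqref{20/9/27/17:21}, and a direct calculation reduces $\alpha^\dagger(t)^{-(2^{*}-(p+1))/(2^{*}-2)}t^\dagger = \alpha(\tau_t)^{-(2^{*}-(p+1))/(2^{*}-2)}t$ to \eqref{20/9/22/11:09}. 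For \eqref{20/9/25/11:37}, one uses the scaling identity $T_{\mu u_t(0)}[u_t] = \mu^{-1} T_{u_t(0)}[u_t](\mu^{-2/(d-2)}\cdot)$ together with $\mu(t) \to 1$ and the convergences \eqref{20/9/22/16:57} and \eqref{20/9/27/12:7}. The construction $H(\mu(t)) = 0$ is precisely $\beta(\alpha^\dagger(t)) = \mathfrak{s}(t^\dagger; \beta(\alpha^\dagger(t)), \eta_t^\dagger)$, so feeding this into \eqref{20/10/15/17:12} of Lemma \ref{20/10/20/10:23} places $\beta(\alpha^\dagger(t)) \in I(t^\dagger)$ for small $t$. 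For the $L^q$ bound, the orthogonality puts $(-\Delta + \alpha^\dagger(t))^{-1}F(\eta_t^\dagger; \alpha^\dagger(t), t^\dagger)$ in $X_q$, so \eqref{18/11/11/15:50} applies; running the chain of inequalities leading to \eqref{20/11/7/10:23} gives $\|\eta_t^\dagger\|_{L^q} \lesssim \alpha(t^\dagger)^{\Theta_q}$ with an implicit constant depending only on $d, p, q$, and one takes $R_*$ larger than this constant. Both $(\beta(\alpha^\dagger(t)), \eta_t^\dagger)$ and $(\tau_{t^\dagger}, \eta_{t^\dagger})$ then lie in $I(t^\dagger) \times Y_q(R_*, t^\dagger)$ and solve \eqref{19/01/13/14:55} at parameter $t^\dagger$, so the uniqueness clause of Proposition \ref{19/01/21/08:01} yields \eqref{20/11/23/18:15}.

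The main obstacle is the second step: the IVT has to produce $\mu(t)$ with an $o_t(1)$ error that is \emph{uniform} in $u_t \in \widetilde{\mathcal{G}}_t$, which requires balancing the leading combination $-A_1\beta(s(\mu)) + K_p r(\mu)$ against the correction terms in $\mathscr{X}$, $\mathscr{W}_p$ and $\mathscr{N}$. These corrections are quantitatively of size $\delta(\alpha^\dagger)$, powers of $\alpha^\dagger$, and terms involving $\|\eta(\mu)\|_{L^q}$ controlled by Lemma \ref{20/9/22/16:51}, so the analysis is essentially a quantitative implicit function theorem that consumes all the asymptotics of Section \ref{18/11/11/10:42}.
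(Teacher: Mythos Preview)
Your approach to Step~1 (finding $\mu(t)$) has a genuine gap. The claim that the ``leading part'' $\mu\mapsto -A_{1}\beta(s(\mu))+K_{p}r(\mu)$ is strictly increasing is simply false: since $s(\mu)$ and $r(\mu)$ are both decreasing in $\mu$, the term $-A_{1}\beta(s(\mu))$ is increasing while $K_{p}r(\mu)$ is decreasing, and their sum has no definite monotonicity unless you already know the ratio $\beta(s(\mu))/r(\mu)$ is near $K_{p}/A_{1}$ --- which is exactly the conclusion $\beta(\alpha^{\dagger}(t))\in I(t^{\dagger})$ you are trying to establish. More fundamentally, you have no a~priori reason why $\mathscr{N}(r(\mu),\beta(s(\mu)),\eta(\mu))$ is $o(\max\{\beta(s(\mu)),r(\mu)\})$: the bounds of Lemma~\ref{19/02/10/16:33} are in terms of $\|\eta(\mu)\|_{L^{q}}$ and negative powers of $s(\mu)$, and at this stage you only know $\|\eta(1)\|_{L^{q}}=o_{t}(1)$ qualitatively, with no comparison to $\beta(s(1))$ or $r(1)$. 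The quantitative bound $\|\eta_{t}^{\dagger}\|_{L^{q}}\lesssim\alpha^{\dagger}(t)^{\Theta_{q}}+t^{\dagger}$ is only available \emph{after} $\mu(t)$ is fixed and the orthogonality is in force (the paper's Step~2), so your argument is circular. The same circularity reappears when you invoke Lemma~\ref{20/10/20/10:23} to get $\beta(\alpha^{\dagger}(t))\in I(t^{\dagger})$: that lemma already assumes $(\tau,\eta)\in I(t)\times Y_{q}(R,t)$.

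The paper avoids all of this by a key simplification you are missing: because $\eta(\mu)$ \emph{solves the PDE} $(-\Delta+s(\mu)+V)\eta(\mu)=F(\eta(\mu);s(\mu),r(\mu))$, one can use $V\Lambda W=s(\mu)\Lambda W-(-\Delta+s(\mu))\Lambda W$ to reduce the orthogonality functional to
\[
\langle(-\Delta+s(\mu))^{-1}F,\,V\Lambda W\rangle
=\langle\eta(\mu),\,s(\mu)V(-\Delta+s(\mu))^{-1}\Lambda W\rangle,
\]
which is \emph{linear} in $\eta(\mu)$ and involves neither $W$, $W^{p}$, nor $N(\eta)$. Writing $\eta(\mu)=T_{\mu}[T_{u_{t}(0)}[u_{t}]-W]+(T_{\mu}[W]-W)$ and expanding $T_{\mu}[W]-W=-\tfrac{2}{d-2}(\mu-1)\Lambda W+O(|\mu-1|^{2})$ via \eqref{20/9/25/11:45}--\eqref{20/9/26/8:41}, together with Lemma~\ref{18/12/02/11:25}, one finds that the orthogonality reads $\mathscr{C}_{0}\beta(s(\mu))(\mu-1)+O\big(\beta(s(\mu))\{\|T_{u_{t}(0)}[u_{t}]-W\|_{L^{2^{*}}}+|\mu-1|^{2}+\delta(s(\mu))|\mu-1|\}\big)=0$. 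This is genuinely linear in $(\mu-1)$ to leading order, the coefficient $\mathscr{C}_{0}\beta(s(\mu))$ has a definite sign, and the intermediate value theorem yields $\mu(t)=1+o_{t}(1)$ without any information on $\beta(s(1))/r(1)$ or on $\mathscr{N}$.
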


\begin{proof}[Proof of Lemma \ref{20/9/23/9:8}]
Since the proof is long, we divide it  into three steps:
\\ 
\noindent 
{\bf Step 1.}~Let $R>0$. Then,  we shall show that  there exists $0< T_{2}(q,R) <T_{1}(q,R)$, depending only on $d$, $p$, $q$ and $R$, with the 
 following property:  For any $0<t<T_{2}(q,R)$ and $u_{t} \in \widetilde{\mathcal{G}}_{t}$,   
 there exists $\mu(t)=1+o_{t}(1)$ such that  defining  $\mu^{\dagger}(t)$, $t^{\dagger}$, $\alpha^{\dagger}(t)$ and $\eta_{t}^{\dagger}$ in the same manner as \eqref{20/9/23/10:33} through \eqref{20/9/23/9:26}, we have  \eqref{20/9/23/9:12} through \eqref{20/9/25/11:37}. 
 
First, observe from  \eqref{20/9/27/17:21} in Lemma \ref{20/9/22/16:51} that 
\begin{equation}\label{20/11/22/11:35}
\lim_{t\to 0}  \sup_{u_{t} \in \widetilde{\mathcal{G}}_{t}} 
\sup_{\frac{1}{2}\le \mu \le \frac{3}{2}}
\{  \mu u_{t}(0)\}^{-(2^{*}-2)} \alpha(\tau_{t})
=0
.
\end{equation}
Furthermore,  observe from \eqref{20/9/22/11:09} in Lemma \ref{20/9/21/19:3} 
 that  
\begin{equation}\label{20/10/7/11:21}
\begin{split}
&
\lim_{t\to 0}
\sup_{u_{t} \in \widetilde{\mathcal{G}}_{t}} 
\sup_{\mu >0}
\big( 
\{ \mu u_{t}(0)\}^{-(2^{*}-2)}
 \alpha(\tau_{t})
\big)^{\frac{-\{ 2^{*}-(p+1)\}}{2^{*}-2}}
\{\mu u_{t}(0)\}^{-\{ 2^{*}-(p+1)\}} 
t
\\[6pt]
&=
\lim_{t\to 0}
\alpha(\tau_{t})^{\frac{-\{2^{*}-(p+1)\}}{2^{*}-2}}  t
=0
.
\end{split} 
\end{equation}  
  
Next,  let $0< T_{2}<T_{1}(q,R)$ be a sufficiently small number to be specified in the middle of the proof, dependently on $d$, $p$,
 $q$ and $R$ (we will see that  the dependence of $T_{2}$ on  $q$ and $R$ comes from  the restriction $T_{2}<T_{1}(q,R)$ only). 
  Furthermore, let $0< t< T_{2}$,  $u_{t}\in  \widetilde{\mathcal{G}}_{t}$ 
 and $\frac{1}{2}<\mu <\frac{3}{2}$.  
By Corollary \ref{20/9/22/17:47} and a computation involving the scaling, 
 we see that  
\begin{equation}\label{20/9/23/10:1}
\begin{split}
&
-\Delta T_{\mu u_{t}(0)}[u_{t}] 
+
\{\mu u_{t}(0)\}^{-(2^{*}-2)} \alpha(\tau_{t}) 
T_{\mu u_{t}(0)}[u_{t}] 
\\[6pt]
&\qquad -
\{\mu u_{t}(0)\}^{-\{ 2^{*}-(p+1)\}} t \,   
T_{\mu u_{t}(0)}[u_{t}]^{p}
-
T_{\mu u_{t}(0)}[u_{t}]^{\frac{d+2}{d-2}}
= 0
.
\end{split}
\end{equation}

Define $t(\mu)$,  $\alpha_{t}(\mu)$  and $\eta_{t}(\mu)$ as 
\begin{align}
\label{21/2/15/11:22}
&t(\mu)
:=
\{\mu u_{t}(0)\}^{-\{2^{*}-(p+1)\}}t,  
\qquad 
\alpha_{t}(\mu)
:=
\{\mu u_{t}(0)\}^{-(2^{*}-2)} \alpha(\tau_{t})
,
\\[6pt]
\label{21/2/15/11:28}
&
\eta_{t}(\mu):=  T_{\mu u_{t}(0)}[u_{t}] - W
.
\end{align}
By \eqref{20/11/22/11:35}, we may assume that 
 \begin{equation}\label{21/2/14/11:30}
\alpha_{t}(\mu)< 1, 
\qquad 
\delta(\alpha_{t}(\mu))< 1
.
\end{equation} 
Observe from  \eqref{20/9/23/10:1}  and \eqref{20/8/19/15:59}  that 
\begin{equation}\label{20/10/6/10:27}
(-\Delta + \alpha_{t}(\mu) +V ) 
\eta_{t}(\mu)
=
F(\eta_{t}(\mu); \alpha_{t}(\mu), t(\mu) )
.
\end{equation}
Furthermore, observe from $(-\Delta + V) \Lambda W =0$ (see \eqref{20/9/13/9:52}) that 
\begin{equation}\label{21/2/13/14:19}
V\Lambda W 
= 
\alpha_{t}(\mu) \Lambda W 
- 
(-\Delta + \alpha_{t}(\mu) )\Lambda W
.
\end{equation}
By \eqref{20/10/6/10:27} and \eqref{21/2/13/14:19}, 
 we see that 
\begin{equation} \label{20/9/24/21:29} 
\begin{split}
&
\langle  
(-\Delta
+ 
\alpha_{t}(\mu) )^{-1}
F(\eta_{t}(\mu); 
\alpha_{t}(\mu), t(\mu))
, V\Lambda W \rangle 
\\[6pt]
&=
\langle 
(-\Delta + \alpha_{t}(\mu) )^{-1} 
(-\Delta + \alpha_{t}(\mu) +V ) \eta_{t}(\mu) 
,
V\Lambda W
\rangle 
\\[6pt]
&=
\langle 
\eta_{t}(\mu)
,
V\Lambda W
\rangle 
+
\langle 
(-\Delta + \alpha_{t}(\mu) )^{-1} 
V \eta_{t}(\mu)
, V\Lambda W
\rangle 
\\[6pt]
&=
\langle 
(-\Delta + \alpha_{t}(\mu) )^{-1}V \eta_{t}(\mu), 
\alpha_{t}(\mu)  
\Lambda W
\rangle 
=
\langle 
\eta_{t}(\mu) 
,
\alpha_{t}(\mu) V (-\Delta + \alpha_{t}(\mu) )^{-1}
\Lambda W
\rangle 
.
\end{split} 
\end{equation}
Note that  $\eta_{t}(\mu)$ can be written as 
\begin{equation}\label{20/9/27/16:41}
\eta_{t}(\mu) = T_{\mu } [ T_{u_{t} (0)}[u_{t}] - W] + T_{\mu}[W] -W. 
\end{equation}
Plugging \eqref{20/9/27/16:41} into \eqref{20/9/24/21:29},  
 we see that for any $\mu>0$, 
\begin{equation} \label{20/9/24/21:50}
\begin{split}
&
\langle  
(-\Delta
+ 
\alpha_{t}(\mu) )^{-1}
F(\eta_{t}(\mu); 
\alpha_{t}(\mu)
, t(\mu))
, V\Lambda W  
\rangle 
\\[6pt]
&=
\langle
T_{\mu} [T_{u_{t}(0)}[u_{t}]-W]  
,~\alpha_{t}(\mu) V  (-\Delta + \alpha_{t}(\mu) )^{-1} \Lambda W
\rangle 
\\[6pt]
&\quad +
\langle
T_{\mu}[W] - W +\frac{2}{d-2}(\mu-1)\Lambda W 
,~\alpha_{t}(\mu) V  (-\Delta +  \alpha_{t}(\mu) )^{-1} \Lambda W
\rangle 
\\[6pt]
&\quad -
\frac{2}{d-2}(\mu-1)
\langle
\Lambda W 
,~\alpha_{t}(\mu) V  (-\Delta +  \alpha_{t}(\mu) )^{-1} \Lambda W
\rangle 
.
\end{split}
\end{equation}
Furthermore,  by \eqref{20/9/26/8:41}, we may rewrite \eqref{20/9/24/21:50} as 
\begin{equation}\label{20/9/26/9:3}
\begin{split}
&
\langle  
(-\Delta
+ 
\alpha_{t}(\mu) )^{-1}
F(\eta_{t}(\mu); 
\alpha_{t}(\mu)
, t(\mu)), V\Lambda W  
\rangle 
\\[6pt]
&=
\langle
T_{\mu} [T_{u_{t}(0)}[u]-W]  
,~
\alpha_{t}(\mu) V  (-\Delta + \alpha_{t}(\mu) )^{-1} \Lambda W
\rangle 
\\[6pt]
&\quad 
+\frac{2}{d-2}
\langle
\int_{1}^{\mu} 
\int_{1}^{\nu}
\lambda^{-2} 
T_{\lambda}\big[ 
2 \Lambda W
+
\frac{2}{d-2}
 x\cdot \nabla \Lambda W
\big]
d \lambda d\nu 
,~\alpha_{t}(\mu) V  (-\Delta + \alpha_{t}(\mu) )^{-1} \Lambda W
\rangle 
\\[6pt]
&\quad -
\frac{2}{d-2}(\mu-1)
\langle
 \Lambda W
,~
\alpha_{t}(\mu) V  (-\Delta + \alpha_{t}(\mu) )^{-1} \Lambda W
\rangle 
.
\end{split} 
\end{equation}
Observe from the duality, Lemma \ref{18/12/02/11:25},
 \eqref{21/2/14/11:30} and $\beta(s):=\delta(s)^{-1}s$ that 
\begin{equation} \label{20/9/26/11:43}
\begin{split}
&
\| \alpha_{t}(\mu) 
V  (-\Delta +  \alpha_{t}(\mu) )^{-1} \Lambda W
\|_{L^{\frac{2d}{d+2}}} 
= 
\sup_{{g\in L^{2^{*}}}\atop {\|g\|_{L^{2^{*}}}=1}}
| 
\langle 
\alpha_{t}(\mu) V(-\Delta + \alpha_{t}(\mu) )^{-1} \Lambda W
,~
g 
\rangle  
|
\\[6pt]
&=
\alpha_{t}(\mu)
\sup_{{g\in L^{2^{*}}}\atop {\|g\|_{L^{2^{*}}}=1}}
|
\langle 
(-\Delta + \alpha_{t}(\mu) )^{-1}  V g 
,~
\Lambda W
\rangle 
| 
\\[6pt]
&\lesssim 
\alpha_{t}(\mu)
\sup_{{g\in L^{2^{*}}}\atop {\|g\|_{L^{2^{*}}} =1}}
\big\{ 
\delta( \alpha_{t}(\mu))^{-1} 
\|V\|_{L^{\frac{2d}{d+2}}} 
\|g\|_{L^{2^{*}}} + 
\|g\|_{L^{2^{*}}}
\big\} 
\lesssim 
\beta( \alpha_{t}(\mu)) 
,
\end{split} 
\end{equation}
where the implicit constants depend only on $d$. Furthermore, 
 by H\"older's inequality, a computation involving the scaling  and \eqref{20/9/26/11:43}, the first term on the right-hand side of \eqref{20/9/26/9:3} is estimated as follows: 
\begin{equation}\label{18/06/29/13:57}
\begin{split}
&
|
\langle
T_{\mu} [T_{u_{t}(0)}[u_{t}]-W]  
,~
\alpha_{t}(\mu) V  (-\Delta + \alpha_{t}(\mu) )^{-1} \Lambda W
\rangle 
|
\\[6pt]
&\lesssim 
\|T_{u_{t}(0)}[u_{t}]-W  \|_{L^{2^{*}}}
\beta(\alpha_{t}(\mu)) 
,
\end{split}
\end{equation}
where the implicit constant depends only on $d$. Similarly,
 the second term on the right-hand side of \eqref{20/9/26/9:3} is estimated as follows:    
\begin{equation} \label{20/9/26/13:5}
\begin{split}
&
\Big|
\langle
\int_{1}^{\mu} 
\int_{1}^{\nu}
\lambda^{-2} 
T_{\lambda}\big[ 
2\Lambda W
+
\frac{2}{d-2}
 x\cdot \nabla \Lambda W
\big]
\,d \lambda d\nu 
,~
\alpha_{t}(\mu) V  (-\Delta + \alpha_{t}(\mu) )^{-1} \Lambda W
\rangle 
\Big|
\\[6pt]
&\lesssim  
\beta( \alpha_{t}(\mu))   
\| \int_{1}^{\mu} 
\int_{1}^{\nu}
\lambda^{-2} 
T_{\lambda}\big[ 2 \Lambda W
+
\frac{2}{d-2}
 x\cdot  \nabla \Lambda W
\big]
\,d \lambda d\nu 
 \|_{L^{2^{*}}}
\\[6pt]
&\le
\beta(\alpha_{t}(\mu))   
\int_{1}^{\mu} 
\int_{1}^{\nu}
\lambda^{-2}
\| 2 \Lambda W 
+
\frac{2}{d-2}
x\cdot \nabla \Lambda W
\|_{L^{2^{*}}}
\, d\lambda d\nu 
\\[6pt]
&\lesssim 
\beta(\alpha_{t}(\mu))  
\int_{1}^{\mu} 
\int_{1}^{\nu}
\lambda^{-2}
\, d\lambda d\nu
\lesssim 
\beta(\alpha_{t}(\mu))  
|\mu- 1 |^{2} 
,
\end{split}
\end{equation}
where the implicit constants depend only on $d$. 
 Consider the last  term on the right-hand side of \eqref{20/9/26/9:3}. 
Note that 
\begin{equation}\label{21/2/13/16:25}
\langle
 \Lambda W
,~
\alpha_{t}(\mu) V  (-\Delta +\alpha_{t}(\mu) )^{-1} \Lambda W
\rangle 
=
\alpha_{t}(\mu) 
\langle
 (-\Delta +\alpha_{t}(\mu) )^{-1}  V\Lambda W
,~
\Lambda W
\rangle 
.
\end{equation}
Furthermore, observe from \eqref{20/12/22/14:18} that $\Re{\mathcal{F}[V\Lambda W](0)} = - \frac{d+2}{d-2}\langle W^{\frac{4}{d-2}}, \Lambda W \rangle >0$. 
 Then, applying Lemma \ref{18/12/02/11:25} to \eqref{21/2/13/16:25} as $g=\Lambda W$ and $q=2^{*}$, and using $\beta(s):=\delta(s)^{-1}s$, 
 we see that there exists  $\mathscr{C}_{0}>0$ depending only on $d$ such that 
\begin{equation}\label{20/9/26/12:1}
\begin{split}
&
\Big|
-\frac{2}{d-2}(\mu-1)
\langle
 \Lambda W
,~
\alpha_{t}(\mu) V  (-\Delta +\alpha_{t}(\mu) )^{-1} \Lambda W
\rangle 
-
\mathscr{C}_{0} 
\beta(\alpha_{t}(\mu))
(\mu-1) 
\Big|
\\[6pt]
&\lesssim 
\delta(\alpha_{t}(\mu))
\beta(\alpha_{t}(\mu)) 
|\mu-1 |
,
\end{split} 
\end{equation}
where the implicit constant depends only on $d$.   

Plugging \eqref{18/06/29/13:57}, \eqref{20/9/26/13:5} and \eqref{20/9/26/12:1} into \eqref{20/9/26/9:3},  
we see that 
\begin{equation}\label{20/9/26/13:33}
\begin{split}
&
\big|
\langle  
(-\Delta
+ 
\alpha_{t}(\mu) )^{-1}
F(\eta_{t}(\mu); 
\alpha_{t}(\mu)
,  t(\mu) )
, V\Lambda W  
\rangle 
-\mathscr{C}_{0}  
\beta( \alpha_{t}(\mu))  
\{ \mu-1\} 
\big|
\\[6pt]
&\le C_{1}   
\beta(\alpha_{t}(\mu))
\big\{  
\sup_{u\in \widetilde{\mathcal{G}}_{t}}\|T_{u(0)}[u]-W  \|_{L^{2^{*}}}
+
|\mu - 1 |^{2} 
+
\delta(\alpha_{t}(\mu)) 
| \mu-1 |
\big\}
,
\end{split} 
\end{equation}
where $C_{1}>0$ is some constant  depending only on $d$.   
 Furthermore, by \eqref{20/9/26/13:33},  \eqref{20/9/22/16:57} in Lemma \ref{20/9/22/16:51} and $\lim_{t\to 0} \delta(\alpha_{t}(\mu))=0$ (see \eqref{20/11/22/11:35}), we see that  there  exists $T_{2}>0$ depending only on $d$ and $p$ such that for any $0<t<T_{2}$, $u_{t}\in  \widetilde{\mathcal{G}}_{t}$ and $\frac{1}{2}< \mu <\frac{3}{2}$,   
\begin{equation}\label{21/2/16/9:24}
\begin{split} 
&
\big|
\langle  
(-\Delta
+ 
\alpha_{t}(\mu) )^{-1}
F(\eta_{t}(\mu); 
\alpha_{t}(\mu)
,  t(\mu) )
, V\Lambda W  
\rangle 
-
\mathscr{C}_{0}  
\beta( \alpha_{t}(\mu))  
\{ \mu-1\} 
\big|
\\[6pt]
&\le   
\beta(\alpha_{t}(\mu))
\Big\{ \frac{\mathscr{C}_{0}}{4(1+C_{1})}  
+
C_{1} |\mu - 1 |^{2} 
+
 \frac{\mathscr{C}_{0}}{4}
| \mu-1 |
\Big\}
,
\end{split} 
\end{equation} 
which together with the intermediate value theorem implies that  
  for any $0< t< T_{2}$ and $u_{t}\in \widetilde{\mathcal{G}}_{t}$,  
 there exists $\mu(t)>0$ with  $|\mu(t) -1| \le \frac{\min\{ \mathscr{C}_{0},1 \}}{4(1+C_{1})}$ 
  such that 
\begin{equation}\label{21/2/16/9:24}
\langle  
(-\Delta
+ 
\alpha_{t}(\mu(t)) )^{-1}
F(\eta_{t}(\mu(t)); 
\alpha_{t}(\mu(t))
,  t(\mu(t)) )
, V\Lambda W  
\rangle 
=0
.
\end{equation}
Furthermore, when $\mu=\mu(t)$,   \eqref{20/9/26/13:33}  together with  \eqref{21/2/16/9:24}  implies that 
\begin{equation}\label{21/2/17/10:37}
\lim_{t\to 0}|\mu(t) -1 |
=0.  
\end{equation}
Thus, we have proved  that  for any $R>0$, there exists $0< T_{2}(q,R) <T_{1}(q,R)$ such that for any 
  $0<t<T_{2}(q,R)$ and $u_{t}\in \widetilde{\mathcal{G}}_{t}$, there exists  $\mu(t)=1+o_{t}(1)$ for which  \eqref{20/9/23/9:12} holds. 
  Define $\mu^{\dagger}(t)$, $t^{\dagger}$, $\alpha^{\dagger}(t)$ and $\eta_{t}^{\dagger}$ as in \eqref{20/9/23/10:33} through \eqref{20/9/23/9:26}.  
 Then, the claims \eqref{20/10/4/11:37} and \eqref{20/10/7/11:41} follows from  \eqref{20/11/22/11:35} and \eqref{20/10/7/11:21};  
 We remark that $\lim_{t\to 0}\sup_{u_{t} \in \widetilde{\mathcal{G}}_{t}}t^{\dagger}=0$ follows from  $\lim_{t\to 0}\sup_{u_{t} \in \widetilde{\mathcal{G}}_{t}}\alpha^{\dagger}(t)=0$ and \eqref{20/10/7/11:41}.

It remains to prove \eqref{20/9/25/11:37}.  
 By the fundamental theorem of calculus (see \eqref{20/9/25/11:45}), a computation involving the scaling, \eqref{20/9/22/16:57} in Lemma \ref{20/9/22/16:51}, and $\mu(t)=1+o_{t}(1)$, 
 we see that   
\begin{equation}\label{20/9/28/8:52}
\begin{split}
\|\eta_{t}^{\dagger}\|_{\dot{H}^{1}}
&\le 
\| T_{\mu(t)}[ T_{u_{t}(0)}[u_{t}] -W] \|_{\dot{H}^{1}}
+
\| T_{\mu(t)}[ W] -W \|_{\dot{H}^{1}}
\\[6pt]
&\lesssim 
\| T_{u_{t}(0)}[u_{t}] -W  \|_{\dot{H}^{1}}
+
\int_{1}^{\mu(t)} 
\lambda^{-1}  \| \Lambda W \|_{\dot{H}^{1}} 
\,d \lambda
\to 0
\quad 
\mbox{as $t\to 0$}
.
\end{split}
\end{equation}
Similarly,  using  \eqref{20/9/27/12:7} instead of \eqref{20/9/22/16:57}, 
 we see  that   for any $\frac{d}{d-2}<r < \infty$, 
\begin{equation}\label{20/10/1/10:30}
\begin{split}
\|\eta_{t}^{\dagger}\|_{L^{r}}
&\lesssim
\mu(t)^{\frac{2^{*}-r}{r}}  
\| T_{u_{t}(0)}[u_{t}] -W  \|_{L^{r}}
+
\int_{1}^{\mu(t)} 
\lambda^{-1 + \frac{2^{*}-r}{r}}  \| \Lambda W \|_{L^{r}} 
\,d \lambda
\\[6pt]
&\to 0
\quad 
\mbox{as $t\to 0$}
.
\end{split}
\end{equation}
Thus, we have proved  \eqref{20/9/25/11:37}.

\noindent 
{\bf Step 2.}~Let $R>0$, and let  $T_{2}(q,R)$ denote the number given in the previous step. 
  Furthermore, let   $0< t<T_{2}(q,R)$,  $u_{t} \in \widetilde{\mathcal{G}}_{t}$, and  let  $\mu(t)=1+o_{t}(1)$ be a number such that  \eqref{20/9/23/9:12} through \eqref{20/9/25/11:37} hold.  Define $\mu^{\dagger}(t)$, $t^{\dagger}$, $\alpha^{\dagger}(t)$ and $\eta_{t}^{\dagger}$ as in \eqref{20/9/23/10:33} through \eqref{20/9/23/9:26}.  Then,  our aim in this step is to show  that, taking  $T_{2}(q,R)$ even smaller dependently on  $d$, $p$ and $q$,  we have     
\begin{equation}\label{20/9/27/16:35}
\|\eta_{t}^{\dagger}\|_{L^{q}}
\lesssim 
\alpha^{\dagger}(t)^{\Theta_{q}} 
+
t^{\dagger} 
,
\end{equation}
where the implicit constant depends only on $d$, $p$ and $q$.   

 Note that $\eta_{t}^{\dagger}$ satisfies 
\begin{equation}\label{20/11/23/17:21}
\eta_{t}^{\dagger}
=
\{ 1+(-\Delta + \alpha^{\dagger}(t))^{-1} V  \}^{-1}
(-\Delta + \alpha^{\dagger}(t))^{-1} 
F(\eta_{t}^{\dagger}; \alpha^{\dagger}(t), t^{\dagger})
.
\end{equation}
By \eqref{19/01/13/15:17},   \eqref{18/11/24/13:56} in Lemma \ref{18/11/05/10:29} with $q_{1}=\frac{d}{d-2}$,  
 Lemma \ref{18/11/23/17:17}, $\frac{pdq}{d+2q}>\frac{d}{d-2}$ and \eqref{20/11/8/11:39}, 
 we see that  
\begin{equation}\label{20/9/28/9:38}
\begin{split}
&\| (-\Delta+ \alpha^{\dagger}(t))^{-1} 
F(\eta_{t}^{\dagger}; \alpha^{\dagger}(t), t^{\dagger})
\|_{L^{q}}
\\[6pt]
&\le 
\alpha^{\dagger}(t) \| (-\Delta+\alpha^{\dagger}(t))^{-1} W \|_{L^{q}}
+
t^{\dagger} \| (-\Delta+\alpha^{\dagger}(t))^{-1} W^{p} \|_{L^{q} }
\\[6pt]
&\quad +
\| (-\Delta+ \alpha^{\dagger}(t))^{-1}  N(\eta_{t}^{\dagger}; t^{\dagger}) \|_{L^{q} }
\\[6pt]
&\lesssim 
\alpha^{\dagger}(t)^{\Theta_{q}} 
+
t^{\dagger}   
\\[6pt]
&\quad +
\| (-\Delta+ \alpha^{\dagger}(t))^{-1}  D(\eta_{t}^{\dagger}, 0)\|_{L^{q} }
+
t^{\dagger}   
\| (-\Delta+\alpha^{\dagger}(t))^{-1}  E(\eta_{t}^{\dagger}, 0)\|_{L^{q} }
,
\end{split}
\end{equation}
where the implicit constant depends only on $d$, $p$ and $q$. 
 Consider the third term on the right-hand side of \eqref{20/9/28/9:38}.  
 By \eqref{20/9/15/14:54}, \eqref{19/02/19/11:43},  
Lemma \ref{18/11/23/17:17},  H\"older's inequality,  
\eqref{20/9/25/11:37}, $\frac{(6-d)dq}{(d-2)(2q-d)}>\frac{d}{d-2}$ and $q>\frac{d}{d-2}$, 
 we see  that  
\begin{equation}\label{20/9/30/11:42}
\begin{split}
&\| (-\Delta + \alpha^{\dagger}(t))^{-1} D(\eta_{t}^{\dagger},0) \|_{L^{q}}
\lesssim
\| 
(W+ |\eta_{t}^{\dagger}|)^{\frac{6-d}{d-2}}
|\eta_{t}^{\dagger}|^{2}
\|_{ L^{\frac{dq}{d+2q}}}
\\[6pt]
&\lesssim 
\| (W + |\eta_{t}^{\dagger}|)^{\frac{6-d}{d-2}} \|_{ L^{\frac{dq}{2q-d}}} 
\| |\eta_{t}^{\dagger}|^{2} \|_{L^{\frac{q}{2}}}
\lesssim 
\|\eta_{t}^{\dagger} \|_{L^{q}}^{2}
=
o_{t}(1) 
\|\eta_{t}^{\dagger} \|_{L^{q}}
,
\end{split}
\end{equation}
where the implicit constants depend only on $d$ and $q$.
 Consider  the last term on the right-hand side of \eqref{20/9/28/9:38}. 
 Introduce an exponent $q_{1}$ as  
\begin{equation}\label{20/10/2/9:29} 
q_{1}:= \frac{2^{*}q}{(p-1)q +2^{*} } =\frac{2^{*}}{p-1+ \frac{2^{*}}{q}}
.
\end{equation} 
Observe from $q>\max\{2^{*}, \frac{2^{*}}{p-1}\}$ that  
\begin{equation}\label{20/10/2/9:13}
1<q_{1} < \frac{q}{2},
\qquad 
\frac{d}{2}\Big( \frac{1}{q_{1}} -\frac{1}{q} \Big) -1 
= 
\frac{-\{ 2^{*}-(p+1)\}}{2^{*}-2},
\qquad 
\frac{d}{d-2}  < \frac{(p-1) q_{1}q}{q-q_{1}}
.
\end{equation} 
Furthermore,  observe from the last  condition in \eqref{20/10/2/9:13} and \eqref{20/9/25/11:37}
 that 
\begin{equation}\label{20/10/2/9:38}
\|W\|_{L^{\frac{(p-1) q_{1}q}{q-q_{1}}}}\lesssim 1, 
\qquad 
\|\eta_{t}^{\dagger}\|_{L^{\frac{(p-1) q_{1}q}{q-q_{1}}}}=o_{t}(1)
,
\end{equation}
where the implicit constant depends only on $d$, $p$ and $q$.
 Then, by  \eqref{20/9/15/14:54},  \eqref{19/02/19/17:02}, Lemma \ref{18/11/05/10:29},  H\"older's inequality, the second condition in \eqref{20/10/2/9:13}, 
 \eqref{20/10/2/9:38} and \eqref{20/10/7/11:41},  
 we see that 
\begin{equation}\label{20/10/3/18:2}
\begin{split}
&
t^{\dagger}  
\| (-\Delta+\alpha^{\dagger}(t))^{-1} E(\eta_{t}^{\dagger}, 0)  \|_{L^{q}}
\\[6pt]
&\lesssim
t^{\dagger} 
\alpha^{\dagger}(t)^{\frac{d}{2}(\frac{1}{q_{1}}-\frac{1}{q})-1}
\|   (W+|\eta_{t}^{\dagger}|)^{p-1} |\eta_{t}^{\dagger}|  \|_{L^{q_{1}}}
\\[6pt]
&\le 
t^{\dagger} 
\alpha^{\dagger}(t)^{\frac{-\{ 2^{*}-(p+1)\}}{2^{*}-2}}
\| (W + |\eta_{t}^{\dagger}|)^{p-1} \|_{L^{\frac{q_{1}q}{q-q_{1}}}}
\|\eta_{t}^{\dagger}\|_{L^{q}}
\\[6pt]
&\lesssim 
t^{\dagger} 
\alpha^{\dagger}(t)^{\frac{-\{ 2^{*}-(p+1)\}}{2^{*}-2}}
\|\eta_{t}^{\dagger}\|_{L^{q}}
=
o_{t}(1)
\|\eta_{t}^{\dagger}\|_{L^{q}}
,
\end{split}
\end{equation}
where the implicit constants depend only on $d$, $p$ and $q$.

Plugging  \eqref{20/9/30/11:42} and \eqref{20/10/3/18:2} into \eqref{20/9/28/9:38},  
 we see  that 
\begin{equation}\label{20/11/23/16:58}
\| (-\Delta+ \alpha^{\dagger}(t))^{-1} 
F(\eta_{t}^{\dagger}; \alpha^{\dagger}(t), t^{\dagger})
\|_{L^{q}}
\lesssim 
\alpha^{\dagger}(t)^{\Theta_{q}}
+
t^{\dagger} 
+
o_{t}(1) \|\eta_{t}^{\dagger}\|_{L^{q}}
,
\end{equation}
where  the implicit constant depends only on $d$, $p$ and $q$. 
 Furthermore, by \eqref{20/11/23/17:21}, \eqref{20/9/23/9:12},  Proposition \ref{18/11/17/07:17} and \eqref{20/11/23/16:58}, we see that     
\begin{equation}\label{20/9/28/9:35}
\begin{split}
\|\eta_{t}^{\dagger}\|_{L^{q}}
&\lesssim  
\| (-\Delta+ \alpha^{\dagger}(t))^{-1} 
F(\eta_{t}^{\dagger}; \alpha^{\dagger}(t), t^{\dagger})
\|_{L^{q}}
\lesssim 
\alpha^{\dagger}(t)^{\Theta_{q}}
+
t^{\dagger} 
+
o_{t}(1) \|\eta_{t}^{\dagger}\|_{L^{q}}
,
\end{split} 
\end{equation}
where the implicit constants depend only on $d$, $p$ and $q$.  Since the last term on the right-hand side of \eqref{20/9/28/9:35} can be absorbed into the left-hand side, \eqref{20/9/28/9:35} implies \eqref{20/9/27/16:35}. 


\noindent 
{\bf Step 3.}~ We shall finish the proof of the lemma.  
 Let $R>0$, and let $T_{2}(q,R)$ denote the same number as in  the previous step.    Then, for any $0< t <T_{2}(q,R)$ and $u_{t}\in \widetilde{\mathcal{G}}_{t}$,  we can take 
  $\mu(t)=1+o_{t}(1)$ for which \eqref{20/9/23/9:12} through \eqref{20/9/25/11:37} and \eqref{20/9/27/16:35} hold.  

Note that since $\alpha$ is the inverse function of $\beta$,  we have  
\begin{equation}\label{20/11/23/18:5}
\alpha^{\dagger}(t)= \alpha (\beta(\alpha^{\dagger}(t)))
.
\end{equation}
Observe from  \eqref{20/9/23/9:12}  that 
\begin{equation}\label{20/10/4/11:21}
\begin{split}
0&=\langle  
(-\Delta
+ 
\alpha^{\dagger}(t) )^{-1}
F(\eta_{t}^{\dagger}; 
\alpha^{\dagger}(t)
,  
t^{\dagger} )
, V\Lambda W  
\rangle 
\\[6pt]
&=
-
\alpha^{\dagger}(t)
\langle  
(-\Delta+\alpha^{\dagger}(t) )^{-1}
W
,  
V\Lambda W  
\rangle 
+ 
t^{\dagger} 
\langle  
(-\Delta+\alpha^{\dagger}(t) )^{-1}W^{p}
, V\Lambda W  
\rangle 
\\[6pt]
&\quad +
\langle  
(-\Delta + \alpha^{\dagger}(t) )^{-1}
D(\eta_{t}^{\dagger}, 0) 
,
V\Lambda W  
\rangle 
+
t^{\dagger}
\langle  
(-\Delta + \alpha^{\dagger}(t) )^{-1}
E(\eta_{t}^{\dagger}, 0) 
,
V\Lambda W  
\rangle 
.
\end{split} 
\end{equation}
Recall that $\beta(s)= \delta(s)^{-1}s$. 
Then,  by Lemma \ref{20/12/22/13:55} and Lemma \ref{18/12/19/01:00}, 
 we see that 
\begin{align}
\label{20/10/4/11:41}
\alpha^{\dagger}(t)
\langle  
(-\Delta+\alpha^{\dagger}(t) )^{-1}
W
,  
V\Lambda W  
\rangle 
&=
A_{1} \beta(\alpha^{\dagger}(t))
+
O(\alpha^{\dagger}(t) )
,
\\[6pt]
\label{20/10/4/11:42}
t^{\dagger} 
\langle  
(-\Delta+ \alpha^{\dagger}(t) )^{-1}W^{p}
, V\Lambda W  
\rangle 
&= 
K_{p} t^{\dagger} 
+
o_{t}(1)t^{\dagger}
.
\end{align}
By \eqref{20/9/30/11:42} and \eqref{20/9/27/16:35}, 
 we see that    
\begin{equation}\label{20/10/4/17:17}
\begin{split}
&
|
\langle  
(-\Delta +  \alpha^{\dagger}(t) )^{-1}
D(\eta_{t}^{\dagger}, 0) 
,
V\Lambda W  
\rangle 
|
\\[6pt]
&\le  
\|(-\Delta + \alpha^{\dagger}(t) )^{-1}
D(\eta_{t}^{\dagger}, 0) 
\|_{L^{q}}
\|V\Lambda W   \|_{L^{\frac{q}{q-1}}}
\lesssim 
\|\eta_{t}^{\dagger} \|_{L^{q}}^{2}
\lesssim 
\alpha^{\dagger}(t)^{2 \Theta_{q}} + o_{t}(1) t^{\dagger}
, 
\end{split} 
\end{equation}
where the implicit constants depend only on $d$, $p$ and $q$. 
 Observe from $q>\max\{2^{*}, \frac{2^{*}}{p+3-2^{*}}\}$ and $p> \frac{4}{d-2}-1$ that \begin{equation}\label{21/2/16/14:50}
\Theta_{q}-\frac{2^{*}-(p+1)}{2^{*}-2}>0,
\qquad 
2\Theta_{q} > \frac{d-2}{2}
.
\end{equation} 
By \eqref{20/10/3/18:2}, \eqref{20/9/27/16:35}, \eqref{21/2/16/14:50} and 
 \eqref{20/10/7/11:41}, we see 
\begin{equation}\label{20/10/4/17:18}
\begin{split} 
&
t^{\dagger} 
| 
\langle  
(-\Delta +  \alpha^{\dagger}(t) )^{-1}
E(\eta_{t}^{\dagger}, 0) 
,
V\Lambda W  
\rangle 
|
\lesssim  
t^{\dagger} \|(-\Delta + \alpha^{\dagger}(t) )^{-1}
E(\eta_{t}^{\dagger}, 0) 
 \|_{L^{q}}
\\[6pt]
&= 
t^{\dagger} 
\alpha^{\dagger}(t)^{\frac{-\{ 2^{*}-(p+1)\}}{2^{*}-2}}
\|\eta_{t}^{\dagger} \|_{ L^{q} } 
\lesssim 
t^{\dagger} 
\alpha^{\dagger}(t)^{\frac{-\{ 2^{*}-(p+1)\}}{2^{*}-2}}
\big\{ 
\alpha^{\dagger}(t)^{\Theta_{q}}+ t^{\dagger} 
\big\}
= 
o_{t}(1) t^{\dagger}
,
\end{split} 
\end{equation}
where the implicit constants depend only on $d$, $p$ and $q$. 

Plugging  \eqref{20/10/4/11:41}, \eqref{20/10/4/11:42}, \eqref{20/10/4/17:17} and \eqref{20/10/4/17:18} into \eqref{20/10/4/11:21}, and using \eqref{20/11/23/18:5}, \eqref{21/2/16/14:50} and \eqref{20/10/15/16:46},  
 we see that 
\begin{equation}\label{20/10/4/17:55}
\begin{split}
\Big|
\beta( \alpha^{\dagger}(t))
-
\frac{K_{p}}{A_{1}}
t^{\dagger} 
\Big|
&\le  
o_{t}(1) t^{\dagger} + O(\alpha^{\dagger}(t)) 
+
\alpha^{\dagger}(t)^{2\Theta_{q}}
\le 
o_{t}(1) t^{\dagger} 
+
o_{t}(1) \beta( \alpha^{\dagger}(t)).
\end{split} 
\end{equation}
This implies that $\beta( \alpha^{\dagger}(t)) \in I(t^{\dagger})$. 
 Furthermore, by \eqref{20/9/27/16:35}, \eqref{20/11/23/18:5}, 
 \eqref{21/2/6/15:27} and $\beta( \alpha^{\dagger}(t)) \in I(t^{\dagger})$ (hence $\beta(\alpha^{\dagger}(t)) \sim t^{\dagger}$), we see that  
\begin{equation}\label{20/11/23/15:57}
\|\eta_{t}^{\dagger}\|_{L^{q}}
\lesssim 
\alpha^{\dagger}(t)^{\Theta_{q}} 
+
t^{\dagger} 
\lesssim
\alpha( \beta(\alpha^{\dagger}(t)) )^{\Theta_{q}}
+
\alpha(t^{\dagger})^{\Theta_{q}}
\lesssim 
\alpha(t^{\dagger})^{\Theta_{q}}
,
\end{equation}
where the implicit constants depend only on $d$, $p$ and $q$, so that 
 there exists $R_{*}>0$ depending only on $d$, $p$ and $q$ such that 
\begin{equation}\label{21/2/16/16:16}
\eta_{t}^{\dagger}\in Y_{q}(R_{*}, t^{\dagger}) 
. 
\end{equation}
Since $R_{*}$ depends only on $d$, $p$ and $q$, we may take $R=R_{*}$ from 
 the beginning of the proof;  Put $T_{*}:=T_{2}(q,R_{*})$.  
 Thus, we have proved the claims \eqref{20/9/23/9:12} through \eqref{20/10/5/9:1}. 

 It remains to prove \eqref{20/11/23/18:15}.  
 By \eqref{20/11/23/18:5},  \eqref{20/11/16/10:42} and \eqref{19/01/14/13:30}, 
  we may rewrite  \eqref{20/9/23/9:12} as  
 \begin{equation}\label{20/11/23/17:52}
\beta(\alpha^{\dagger}(t))
=
\mathfrak{s}( t^{\dagger}; \beta(\alpha^{\dagger}(t) ),  \eta_{t}^{\dagger} )
. 
\end{equation}
Furthermore, by \eqref{20/11/23/18:5}, 
 we may write  \eqref{20/11/23/17:21} as 
\begin{equation}
\label{20/11/23/17:25}
\eta_{t}^{\dagger}
=
\mathfrak{g}(t^{\dagger}; \beta( \alpha^{\dagger}(t)) ,  \eta_{t}^{\dagger})
.
\end{equation}
By \eqref{20/10/4/11:37}, choosing $T_{*}$ even smaller dependently only on $d$, $p$ and $q$,  
  we may assume that  $t^{\dagger} <T_{1}(q,R_{*})$ for all $0<t<T_{*}$. 
 Then,  the claim \eqref{20/11/23/18:15} follows from the uniqueness of solutions to \eqref{19/01/13/14:55} in 
 $I(t^{\dagger})\times Y_{q}(R_{*},t^{\dagger})$ (see Proposition \ref{19/01/21/08:01}) and \eqref{20/10/5/9:1}.

Thus, we have completed the proof of the lemma. 
\end{proof} 


Now, we are in a position to prove Proposition \ref{19/02/11/16:56}:  
\begin{proof}[Proof of Proposition \ref{19/02/11/16:56}]
We employ the idea  from \cite{CG} (see the proof of Lemma 3.10 in \cite{CG}).   For $R>0$, let $T_{1}(q,R)$ denote the number given by Proposition \ref{19/01/21/08:01}. Furthermore, for $0<t< T_{1}(q,R)$ and $0< \lambda <\frac{T_{1}(q,R)}{t}$, let $(\tau_{\lambda t}, \eta_{\lambda t})$ denote a 
 unique solution to \eqref{19/01/13/14:55} with $t=\lambda t$ in $I( \lambda t) \times Y_{q}(R, \lambda t)$ (see Proposition \ref{19/01/21/08:01}).  
 Then, for $0<t< T_{1}(q,R)$, we define a function $\Omega_{t} \colon (0, \frac{T_{1}(q,R)}{t}) \to (0,\infty)$  by 
\begin{equation}\label{20/10/7/7:17}
\Omega_{t}(\lambda)
:=
\lambda^{\frac{-(2^{*}-2)}{2^{*}-(p+1)}}  
\alpha( \tau_{\lambda t} )
.
\end{equation}
Observe from the continuity of $\tau_{t}$ with respect to $t$ (see Proposition \ref{19/01/21/08:01})  
that  $\Omega_{t}(\lambda)$ is continuous with respect to $\lambda$. 


We claim: 
\\
\noindent 
{\bf Claim.}~Let $R>0$, and let $0<t<T_{1}(q,R)$; We may choose $T_{1}(q,R)$ even smaller, dependently on $d$, $p$, $q$ and $R$, if necessary. 
 Then, $\Omega_{t}(\lambda)$ is injective on $(0,\frac{T_{1}(q,R)}{t})$. 
 
To prove this claim,  it suffices to show that  $\Omega_{t}(\lambda)$ is strictly decreasing with respect to $\lambda$ on $(0,\frac{T_{1}(q,R)}{t})$.
 In addition, by the continuity of $\Omega_{t}(\lambda)$, it suffices to prove the following: 
 There exists $0< \varepsilon <1$ depending only on $d$, $p$ and $q$ such that 
 if $0< \lambda_{1} <\lambda_{2} <\frac{T_{1}(q,R)}{t}$ and  
 $\lambda_{2}  < (1+\varepsilon) \lambda_{1}$, then 
\begin{equation}\label{20/10/26/9:11}
\Omega_{t}(\lambda_{2}) 
<
\Omega_{t}(\lambda_{1})
.
\end{equation} 
 
Let $0< \varepsilon <1$ be a constant to be specified later, dependently on $d$, $p$ and $q$. Furthermore,  let $0< \lambda_{1}<\lambda_{2}<\frac{T_{1}(q,R)}{t}$, and assume that $\lambda_{2} < (1+\varepsilon)\lambda_{1}$. 
 Observe that 
\begin{equation}\label{20/10/7/10:51}
\begin{split}
\Omega_{t}(\lambda_{2}) -\Omega_{t}(\lambda_{1})
&=
\big\{
\lambda_{2}^{\frac{-(2^{*}-2)}{2^{*}-(p+1)}} 
- 
\lambda_{1}^{\frac{-(2^{*}-2)}{2^{*}-(p+1)}}  
\big\} 
\alpha( \tau_{\lambda_{1}t} )
\\[6pt]
&\quad +
\lambda_{2}^{\frac{-(2^{*}-2)}{2^{*}-(p+1)}} 
\big\{   
\alpha( \tau_{\lambda_{2}t})
- 
\alpha( \tau_{\lambda_{1}t})
\big\} 
.
\end{split} 
\end{equation}
Note that for each $j=1,2$, $(\tau_{\lambda_{j}t}, \eta_{\lambda_{j}t} )$ is a 
 unique solution to \eqref{19/01/13/14:55} with $t=\lambda_{j}t$ in 
 $I( \lambda_{j}t) \times Y_{q}(R, \lambda_{j}t)$; in particular, we have 
\begin{equation}\label{20/10/19/6:2} 
\tau_{\lambda_{j}t}
=
\mathfrak{s}( \lambda_{j}t ;  \tau_{\lambda_{j}t},  \eta_{\lambda_{j}t})
,
\qquad 
\eta_{\lambda_{j}t}
=
\mathfrak{g}(\lambda_{j}t; \tau_{\lambda_{j}t}, \eta_{\lambda_{j}t}) 
.
\end{equation}  
Furthermore, observe from $\tau_{\lambda_{j}t} \in I(\lambda_{j}t)$ and $\lambda_{1} \le \lambda_{2} \le 2\lambda_{1}$ ($0<\varepsilon <1$) that  
\begin{align}
\label{20/10/27/11}
&
\tau_{\lambda_{1}t} \sim \lambda_{1}t \sim \lambda_{2}t \sim \tau_{\lambda_{2}t}, 
\\[6pt]
\label{20/10/27/10:31}
&
\alpha(\tau_{\lambda_{1}t}) 
\sim 
\alpha(\lambda_{1}t) 
\sim 
\alpha(\lambda_{2}t) 
\sim 
\alpha(\tau_{\lambda_{2}t}),
\end{align} 
where the implicit constants depend only on $d$ and $p$. 


We consider the first term on the right-hand side of \eqref{20/10/7/10:51}. 
 By an elementary computation, we see that 
\begin{equation}\label{20/10/19/10:41}
\begin{split} 
&
\lambda_{2}^{\frac{-(2^{*}-2)}{2^{*}-(p+1)}} 
- 
\lambda_{1}^{\frac{-(2^{*}-2)}{2^{*}-(p+1)}}  
=
-\lambda_{2}^{\frac{-(2^{*}-2)}{2^{*}-(p+1)}} 
\Big\{
\frac{\lambda_{2}^{\frac{2^{*}-2}{2^{*}-(p+1)}}  }{\lambda_{1}^{\frac{2^{*}-2}{2^{*}-(p+1)}}}  
-1
\Big\} 
\\[6pt]
&=
-\lambda_{2}^{\frac{-(2^{*}-2)}{2^{*}-(p+1)}} 
\frac{2^{*}-2}{2^{*}-(p+1)}
\lambda_{1}^{-1} ( \lambda_{2} - \lambda_{1}) 
+
\lambda_{2}^{\frac{-(2^{*}-2)}{2^{*}-(p+1)}} 
o\big( \lambda_{1}^{-1} |\lambda_{2} - \lambda_{1}| 
\big) 
.
\end{split} 
\end{equation}
 Furthermore, by $\alpha(t)=\delta(\alpha(t))t$ (see \eqref{20/11/2/9:40}),  \eqref{20/10/19/6:2} and Lemma \ref{20/10/20/10:23}, 
 we see that 
\begin{equation}\label{20/11/11/17:55}
\alpha(\tau_{\lambda_{1}t}) 
=
\delta(\alpha(\tau_{\lambda_{1}t})) 
\tau_{\lambda_{1}t}
=
\delta(\alpha(\tau_{\lambda_{1}t})) 
\Big\{
\frac{K_{p}}{A_{1}} \lambda_{1}t
+ 
o(\lambda_{1}t) 
\Big\} 
.
\end{equation} 
Putting \eqref{20/10/19/10:41} and \eqref{20/11/11/17:55} together, 
 we find  that 
\begin{equation}\label{20/11/12/17:27}
\begin{split}
&
\big\{
\lambda_{2}^{\frac{-(2^{*}-2)}{2^{*}-(p+1)}} 
- 
\lambda_{1}^{\frac{-(2^{*}-2)}{2^{*}-(p+1)}}
\big\} 
\alpha(\tau_{\lambda_{1}t}) 
\\[6pt]
&=
-\delta(\alpha(\tau_{\lambda_{1}t}) )
\lambda_{2}^{\frac{-(2^{*}-2)}{2^{*}-(p+1)}} 
\frac{2^{*}-2}{2^{*}-(p+1)}
\Big\{
\frac{K_{p}}{A_{1}}\lambda_{1}t
+
o(\lambda_{1}t)
\Big\}
\lambda_{1}^{-1}  ( \lambda_{2} - \lambda_{1}) 
\\[6pt]
&\quad +
\delta(\alpha(\tau_{\lambda_{1}t}))
\lambda_{2}^{\frac{-(2^{*}-2)}{2^{*}-(p+1)}} 
\Big\{
\frac{K_{p}}{A_{1}}  \lambda_{1}t
+
o(\lambda_{1}t)
\Big\}
o( \lambda_{1}^{-1} |\lambda_{2} - \lambda_{1}| ) 
.
\end{split}  
\end{equation}

Next, we consider the second term on 
 the right-hand side of \eqref{20/10/7/10:51}. 
 Since $\tau_{t}$ is strictly increasing with respect to $t$ (see Proposition \ref{19/01/21/08:01}),   
 $\tau_{\lambda_{2}t} -  \tau_{\lambda_{1}t}=|\tau_{\lambda_{2}t}-  \tau_{\lambda_{1}t}|$. 
 Hence, by $\alpha(t)=\delta(\alpha(t))t$ (see \eqref{20/11/2/9:40}), 
 \eqref{19/01/03/11:39} in Lemma \ref{19/01/03/16:41}, 
 we see that  
\begin{equation}\label{20/11/12/17:46}
\begin{split}
&\alpha(\tau_{\lambda_{2}t}) 
-
\alpha(\tau_{\lambda_{1}t})
\\[6pt]
&=
\{ 
\delta( \alpha(\tau_{\lambda_{2}t}) )
-
\delta( \alpha(\tau_{\lambda_{1}t}) )
\}
\tau_{\lambda_{2}t}
+
\delta( \alpha(\tau_{\lambda_{1}t}) )
\{ \tau_{\lambda_{2}t} - \tau_{\lambda_{1}t} \}
\\[6pt]
&\le 
\Big\{ 
(1+o_{\lambda_{1}t}(1)) \frac{\tau_{\lambda_{2}t}}{\tau_{\lambda_{1}t}} 
\delta( \alpha( \tau_{\lambda_{2}t}) )^{d-3}
+
1 
\Big\} 
\delta( \alpha( \tau_{\lambda_{1}t}) ) 
\big\{  \tau_{\lambda_{2}t} - \tau_{\lambda_{1}t} \big\}
. 
\end{split}
\end{equation}
Here, observe from $\tau_{\lambda_{j}t}=\mathfrak{s}(\lambda_{j}t; \tau_{\lambda_{j}t}, \eta_{\lambda_{j}t})$ (see \eqref{20/10/19/6:2}) and Lemma \ref{20/10/27/11:21} (see Remark \ref{20/11/6/11:50}) that 
\begin{equation}\label{21/2/17/17}
|\tau_{\lambda_{2}t}- \tau_{\lambda_{1}t}|
\lesssim 
|\lambda_{2}t-\lambda_{1}t |
 +
\lambda_{1}t
\alpha(\lambda_{1}t)^{-\Theta_{q}+\theta_{q}} 
\|\eta_{\lambda_{1}t} -\eta_{\lambda_{2}t}\|_{L^{q}}
, 
\end{equation}
where the implicit constant depends only on $d$, $p$ and $q$.  
Furthermore, Lemma \ref{20/10/27/11:21} (see Remark \ref{20/11/6/11:50}) together with \eqref{21/2/17/17} shows that 
\begin{equation}\label{20/11/12/9:40}
\begin{split}
&\Big| 
\tau_{\lambda_{2}t}- \tau_{\lambda_{1}t}
- 
\frac{K_{p}}{A_{1}}\{ \lambda_{2}t-\lambda_{1}t \}
\Big|
\\[6pt]
&\lesssim 
o_{\lambda_{1}t}(1)
|\lambda_{2}t-\lambda_{1}t|
+
\lambda_{1}t
\alpha (\lambda_{1}t)^{-\Theta_{q}+\theta_{q}}
\|\eta_{\lambda_{1}t} -\eta_{\lambda_{2}t}\|_{L^{q}}
, 
\end{split} 
\end{equation}
where the implicit constant depends only on $d$, $p$ and $q$.  
 Observe from $\eta_{\lambda_{j}t}=\mathfrak{g}(\lambda_{j}t; 
 \tau_{\lambda_{j}t}, \eta_{\lambda_{j}t})$ (see \eqref{20/10/19/6:2}),  
 and \eqref{20/10/30/11:48} in Lemma \ref{20/10/30/11:47} that 
\begin{equation}\label{20/11/12/10:56}
\|\eta_{\lambda_{1}t} -\eta_{\lambda_{2}t} \|_{L^{q}}
\lesssim 
\{
o_{\lambda_{1}t}(1) (\lambda_{1}t )^{-1} 
\alpha(\lambda_{1}t)^{\Theta_{q}}
+1 
\} 
|\lambda_{1}t- \lambda_{2}t|  
,
\end{equation}
where the implicit constant depends only on $d$, $p$ and $q$. 
Plugging \eqref{20/11/12/10:56} into \eqref{20/11/12/9:40}, 
and using \eqref{21/2/6/15:27}, 
  we see that  
\begin{equation}\label{20/11/17/17:43}
\tau_{\lambda_{2}t}- \tau_{\lambda_{1}t}
=
\frac{K_{p}}{A_{1}} ( \lambda_{2}-\lambda_{1} )t
+
o_{\lambda_{1}t}(1)
(\lambda_{2}-\lambda_{1}) t 
.
\end{equation}
Furthermore, dividing both sides of \eqref{20/11/17/17:43} by $\tau_{\lambda_{1}t}$, and using \eqref{20/10/27/11} and $\lambda_{2}\le (1+\varepsilon )\lambda_{1}$, 
 we see that 
\begin{equation}\label{20/11/18/10:3}
\frac{\tau_{\lambda_{2}t} }{\tau_{\lambda_{1}t}} -1
\lesssim 
\frac{ \lambda_{2}-\lambda_{1} }{\lambda_{1}}
\le 
\varepsilon 
,
\end{equation}
where the implicit constant depends only on $d$ and $p$. We may write \eqref{20/11/18/10:3} as 
\begin{equation}\label{20/11/18/10:45}
\frac{\tau_{\lambda_{2}t} }{\tau_{\lambda_{1}t}} =1 +O(\varepsilon).
\end{equation}
Putting \eqref{20/11/12/17:46}, \eqref{20/11/17/17:43} and \eqref{20/11/18/10:3} together, we find that 
\begin{equation}\label{20/11/18/10:48}
\begin{split}
&
\lambda_{2}^{\frac{-(2^{*}-2)}{2^{*}-(p+1)}} 
\big\{   
\alpha( \tau_{\lambda_{2}t})
- 
\alpha( \tau_{\lambda_{1}t})
\big\} 
\\[6pt]
&\le  
\lambda_{2}^{\frac{-(2^{*}-2)}{2^{*}-(p+1)}} 
\Big\{ 
(1+o_{\lambda_{1}t}(1) + O(\varepsilon) ) 
\delta( \alpha( \tau_{\lambda_{2}t}) )^{d-3}
+
1 
\Big\} 
\delta( \alpha( \tau_{\lambda_{1}t}) )  
\\[6pt]
&\qquad \times 
\Big\{ 
\frac{K_{p}}{A_{1}}
+
o_{\lambda_{1}t}(1)
\Big\}
(\lambda_{2}-\lambda_{1}) t 
.
\end{split}
\end{equation}


Now, plugging \eqref{20/11/12/17:27} and \eqref{20/11/18/10:48} into \eqref{20/10/7/10:51}, and making a simple computation, 
we see that  
\begin{equation}\label{21/2/18/12:6}
\begin{split}
&\Omega_{t}(\lambda_{2}) -\Omega_{t}(\lambda_{1})
\\[6pt]
&\le 
-\delta(\alpha(\tau_{\lambda_{1}t}) )
\lambda_{2}^{\frac{-(2^{*}-2)}{2^{*}-(p+1)}} 
\frac{2^{*}-2}{2^{*}-(p+1)}
\frac{K_{p}}{A_{1}} 
( \lambda_{2} - \lambda_{1}) t
\\[6pt]
&\quad +
\delta(\alpha(\tau_{\lambda_{1}t}))
\lambda_{2}^{\frac{-(2^{*}-2)}{2^{*}-(p+1)}} 
o(|\lambda_{2} - \lambda_{1}|t )
\\[6pt]
&\quad +
\delta( \alpha( \tau_{\lambda_{1}t}) )
\lambda_{2}^{\frac{-(2^{*}-2)}{2^{*}-(p+1)}} 
\Big\{ \delta( \alpha( \tau_{\lambda_{2}t}) )^{d-3}
+
1 
\Big\} 
\frac{K_{p}}{A_{1}}(\lambda_{2}-\lambda_{1}) t
\\[6pt]
&\quad + 
\delta( \alpha( \tau_{\lambda_{1}t}) )
\lambda_{2}^{\frac{-(2^{*}-2)}{2^{*}-(p+1)}} 
\big\{ 
o_{\lambda_{1}t}(1) 
+ 
O(\varepsilon) \delta( \alpha( \tau_{\lambda_{2}t}) )^{d-3}
\big\}
|\lambda_{2}-\lambda_{1}| t
.
\end{split}
\end{equation}
We find from \eqref{21/2/18/12:6} that if $t$ and $\varepsilon$ are sufficiently small depending only on $d$, $p$ and $q$, then $\Omega_{t}(\lambda_{2}) -\Omega_{t}(\lambda_{1})<0$. 
 Thus, we have proved that $\Omega_{t}(\lambda)$ is strictly decreasing with respect to $\lambda$, which implies that $\Omega_{t}$ is injective.

Now, we shall finish the proof of the proposition:
\\
\noindent 
{\bf End of the proof.}~Let $R_{*}$ and $T_{*}$ be the numbers given in Lemma \ref{20/9/23/9:8}. Furthermore, let $0<t<T_{*}$, and let $(\tau_{t},\eta_{t})$ be  a unique solution to \eqref{19/01/13/14:55} in $I(t) \times Y_{q}(R_{*}, t)$ (see Proposition \ref{19/01/21/08:01}). 
 Then, what we need to prove is that if $u_{t} \in \widetilde{\mathcal{G}}_{t}$, then $u_{t}=W+\eta_{t}$.        
 
Let $u_{t} \in \widetilde{\mathcal{G}}_{t}$. Then, Lemma \ref{20/9/23/9:8} shows that there exists $\mu(t)=1+o_{t}(1)$ such that, 
  defining $t^{\dagger}$, $\alpha^{\dagger}(t)$  and $\eta_{t}^{\dagger}$ as 
\begin{align}
\label{20/10/6/12:9}
&t^{\dagger}
:=\{ \mu(t) u_{t}(0) \}^{-2^{*}+p+1}t, 
\qquad 
\alpha^{\dagger}(t)
:=\{ \mu(t) u_{t}(0) \}^{-2^{*}+2} \alpha(\tau_{t})
,
\\[6pt]
\label{20/10/9/10:46}
&
\eta_{t}^{\dagger}
:= T_{\mu(t)u_{t}(0)}[u_{t}] -W, 
\end{align}
we have the following: 
\begin{align}
\label{20/10/9/11:17}
&
\lim_{t\to 0}\alpha^{\dagger}(t)=0,
\qquad 
\lim_{t\to 0}t^{\dagger} =0
, 
\\[6pt]
\label{20/10/6/12:2}
&(\tau_{t^{\dagger}}, \eta_{t^{\dagger}}) 
=
(\beta(\alpha^{\dagger}(t)), \eta_{t}^{\dagger}) 
\in 
I(t^{\dagger}) \times Y_{q}(R_{*},t^{\dagger})
, 
\end{align}
where $(\tau_{t^{\dagger}}, \eta_{t^{\dagger}})$ denotes a unique solution to \eqref{19/01/13/14:55} with $t=t^{\dagger}$ in $I(t^{\dagger}) \times Y_{q}(R_{*},t^{\dagger})$.  
Since $\alpha$ is the inverse function of $\beta$, 
  \eqref{20/10/6/12:2} shows  that 
\begin{equation}\label{20/10/6/12:21}
\alpha(\tau_{t^{\dagger}})
=
\alpha(\beta ( \alpha^{\dagger}(t) ) )
=
\alpha^{\dagger}(t)
.
\end{equation}
Note that 
\begin{equation}\label{20/10/9/11:16}
\frac{t^{\dagger}}{t}
=
\{\mu(t) u_{t}(0)\}^{-2^{*}+p+1} 
.
\end{equation} 
Furthermore, observe from \eqref{20/10/9/11:16} and \eqref{20/10/6/12:21} that 
\begin{equation}\label{20/10/7/7:19}
\Omega_{t}\Big( \frac{t^{\dagger}}{t} \Big)
 = 
\{ \mu(t) u_{t}(0)\}^{2^{*}-2} 
\alpha^{\dagger}(t) 
=
\alpha(\tau_{t})
=
\Omega_{t}(1)
.
\end{equation}
Then,  the injectivity of $\Omega_{t}(\lambda)$ with respect to $\lambda$, 
 together with \eqref{20/10/7/7:19} and \eqref{20/10/9/11:16}, 
 implies that $\mu(t)u_{t}(0)=1$. Hence, $t^{\dagger}=t$. 
 Furthermore, by $t^{\dagger}=t$, $\mu(t)u_{t}(0)=1$, \eqref{20/10/9/10:46} and \eqref{20/10/6/12:2}, we see that $\eta_{t}=\eta_{t^{\dagger}}=u_{t}-W$. 
Thus, we have completed the proof.

\end{proof}


\appendix 


\section{Compactness in Lebesgue spaces}\label{18/11/11/15:30}

We record a compactness theorem in Lebesgue spaces (see Proposition A.1 of \cite{Killip-Visan-book}):

\begin{lemma}\label{18/11/10/13:29} 
Assume $d\ge 1$, and let $1\le q \le \infty$. A sequence $\{f_{n}\}$ in $L^{q}(\mathbb{R}^{d})$ has a convergent subsequence 
 if and only if 
 it satisfies the following properties:
\begin{enumerate}
\item 
\begin{equation}\label{18/11/07/10:16}
\sup_{n\ge 1} \|f_{n} \|_{L^{q}}<\infty 
.
\end{equation}

\item 
\begin{equation}\label{18/11/07/10:17}
\lim_{y \to 0}
\sup_{n\ge 1} \| f_{n}(\cdot)-f_{n}(\cdot+y) \|_{L^{q}} =0
.
\end{equation}

\item 
\begin{equation}\label{18/11/07/10:18}
\lim_{R\to \infty} \sup_{n\ge 1} \|   f_{n} \|_{L^{q}(|x|\ge R)} =0
.
\end{equation}
\end{enumerate} 
\end{lemma}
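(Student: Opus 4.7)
\textbf{Proof plan for Lemma \ref{18/11/10/13:29}.}

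The statement is the classical Kolmogorov--Riesz--Fr\'echet compactness criterion, so the plan is to give its standard proof via mollification and Ascoli--Arzel\`a. The necessity direction (``only if'') is easy: if $\{f_n\}$ has a convergent subsequence with limit $f$, then (i) follows because convergent sequences in a normed space are bounded, and (ii), (iii) follow by the triangle inequality $\|f_n(\cdot)-f_n(\cdot+y)\|_{L^q} \le \|f_n-f\|_{L^q}+\|f(\cdot)-f(\cdot+y)\|_{L^q}+\|f-f(\cdot+y)\|_{L^q}$ (resp.\ the analogous decomposition on $\{|x|\ge R\}$), combined with the fact that the three properties are known to hold for any single $L^q$-function; here one passes first to the subsequence and then handles the finitely many leftover terms by taking $y$ small or $R$ large.

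For the main direction, I would fix a radial mollifier $\phi \in C_c^\infty(\mathbb{R}^d)$ with $\int \phi = 1$ and set $\phi_\varepsilon(x):=\varepsilon^{-d}\phi(x/\varepsilon)$. The first step is to use (ii) to prove
\begin{equation*}
\lim_{\varepsilon\to 0}\sup_{n\ge 1}\|\phi_\varepsilon * f_n - f_n\|_{L^q} = 0,
\end{equation*}
which follows by writing $\phi_\varepsilon*f_n(x)-f_n(x)=\int \phi(y)\{f_n(x-\varepsilon y)-f_n(x)\}\,dy$ and applying Minkowski's integral inequality together with (ii). The second step is to check that for each fixed $\varepsilon>0$ and each fixed $R>0$, the family $\{\phi_\varepsilon * f_n\}_{n\ge 1}$ is uniformly bounded and uniformly equicontinuous on $\overline{B_R(0)}$: boundedness is immediate from (i) and Young's inequality, while equicontinuity follows from $|\phi_\varepsilon*f_n(x)-\phi_\varepsilon*f_n(x')|\le \|\nabla \phi_\varepsilon\|_{L^{q'}}\|f_n\|_{L^q}|x-x'|$ (with the usual modification if $q=\infty$, using $q'=1$). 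Ascoli--Arzel\`a then yields a subsequence converging uniformly on $\overline{B_R(0)}$, which in particular converges in $L^q(B_R(0))$.

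The third step is a diagonal extraction: take $\varepsilon_k\to 0$ and $R_k\to\infty$, apply the previous step iteratively, and extract a diagonal subsequence $\{f_{n_k}\}$ such that $\phi_{\varepsilon_j}*f_{n_k}$ converges in $L^q(B_{R_j})$ for every $j$. Using (iii) to bound $\|\phi_\varepsilon * f_n\|_{L^q(|x|\ge 2R)}$ by a quantity involving $\sup_n\|f_n\|_{L^q(|x|\ge R)}$ (and using the compact support of $\phi_\varepsilon$, so that $\phi_\varepsilon * f_n$ is essentially supported where $f_n$ lives up to a shift $\le \varepsilon$), I would then combine the three steps: given $\delta>0$, choose $\varepsilon$ so small that $\sup_n\|f_n-\phi_\varepsilon*f_n\|_{L^q}<\delta$, choose $R$ so large that the tails of both $f_n$ and $\phi_\varepsilon*f_n$ on $\{|x|\ge R\}$ are $<\delta$, and use the Cauchy property of the diagonal subsequence on $B_R$ to close the estimate $\|f_{n_k}-f_{n_\ell}\|_{L^q}\lesssim \delta$. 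Hence $\{f_{n_k}\}$ is Cauchy in $L^q(\mathbb{R}^d)$ and converges.

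The main subtlety I expect is the endpoint $q=\infty$, where ``Cauchy in $L^\infty$'' means uniform a.e.\ convergence and condition (ii) must be interpreted as essential translation continuity; here the pointwise estimate $\|\phi_\varepsilon*f_n-f_n\|_{L^\infty}\le \int\phi(y)\|f_n(\cdot-\varepsilon y)-f_n\|_{L^\infty}\,dy$ and the bound on $\nabla\phi_\varepsilon$ in $L^1$ keep the argument intact. All the other cases $1\le q<\infty$ are standard, and the rest of the proof is a routine $\varepsilon/3$ argument as sketched.
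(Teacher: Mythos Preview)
The paper does not prove this lemma at all: it merely records the statement with a citation to Proposition~A.1 of Killip--Visan, and only the sufficiency direction is ever used (in the proof of Lemma~\ref{18/11/14/11:15}). Your mollifier plus Ascoli--Arzel\`a plus diagonal extraction argument for sufficiency is exactly the standard Kolmogorov--Riesz--Fr\'echet proof and is fine.

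There is, however, a genuine gap in your treatment of the ``only if'' direction, and it stems from the fact that the statement as written is actually false in that direction. Having \emph{a} convergent subsequence does not force $\sup_{n\ge 1}\|f_n\|_{L^q}<\infty$: take $f_{2k}=k\chi_{B_1}$ and $f_{2k+1}=0$. Your sentence ``(i) follows because convergent sequences in a normed space are bounded'' applies only to the subsequence, not to the full sequence, and your phrase ``finitely many leftover terms'' is incorrect since the complement of a subsequence is typically infinite. The honest version of the necessity direction is that \emph{relative compactness} of $\{f_n\}$ (every subsequence has a further convergent subsequence) implies (i)--(iii); this is presumably what both the paper and Killip--Visan intend, and since the paper only invokes sufficiency, the imprecision is harmless there. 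If you want to keep the ``only if'' direction, you should either restate it for relatively compact sets or simply drop it.
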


\begin{remark}\label{20/8/14/14:15}
Although Proposition A.1 of \cite{Killip-Visan-book} does not refer to the case $q=\infty$,
 we may include  this case in the statement. 
 Note that when $q=\infty$,  \eqref{18/11/07/10:17} implies that the sequence $\{f_{n}\}$ is continuous; hence the limit along the subsequence is also continuous.
 \end{remark}


\section{Completeness of $\boldsymbol{Y_{q}(R,t)}$}\label{19/03/02/16:22}

Let $\frac{d}{d-2}<q < \infty$, $R>0$,  $0< t <1$,  and let $Y_{q}(R,t)$ be a set defined by \eqref{19/01/13/15:35}. 
 Then, we shall show that  $Y_{q}(R,t)$  is complete with the metric  induced from $L^{q}(\mathbb{R}^{d})$. 
\par 
Let $\{ \eta_{n} \}$ be a Cauchy sequence in $Y_{q}(R,t)$.  Since $L_{\rm rad}^{q}(\mathbb{R}^{d})$ is complete, 
there exists a radial function $\eta_{\infty} \in L^{q}(\mathbb{R}^{d})$ such that 
\begin{equation}\label{19/03/02/16:30}
\lim_{n\to \infty}\eta_{n}=\eta_{\infty}
\qquad \mbox{strongly in $L^{q}(\mathbb{R}^{d})$}
.
\end{equation}
Furthermore, we see that 
\begin{equation}\label{19/03/02/19:50} 
\|\eta_{\infty}\|_{L^{q}} 
=
\lim_{n\to \infty} \| \eta_{n}\|_{L^{q}}
\le R\alpha(t)^{\frac{d-2}{2}-\frac{d}{2q}},
\end{equation}
so that $\eta_{\infty} \in Y_{q}(R,t)$. Thus, we have proved the completeness.  


\section{Elementary computations}\label{20/10/14/9:12}

\begin{lemma}\label{20/12/30/8:13}
Define a function $\beta$ on $(0,\infty)$ by $\beta(s):=s \log{(1+s^{-1})}$. 
 Then,  $\beta$ is strictly increasing  on $(0,\infty)$, and the image of $(0,\infty)$ by $\beta$ is $(0,1)$; 
 hence the inverse is defined on $(0,1)$, say $\alpha \colon (0,1) \to (0,\infty)$.  
 Furthermore, there exists $0< T_{0} < 1$ such that if $0<t\le T_{0}$, then 
\begin{equation}\label{20/12/30/8:19}
\alpha(t)  \sim   \frac{t}{\log{(1+t^{-1})}}
.
\end{equation} 
\end{lemma}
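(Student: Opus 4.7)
The plan is first to establish monotonicity and the image of $\beta$ by elementary calculus, and then to extract the asymptotic behavior of $\alpha$ by a bootstrap argument from the relation $\beta(\alpha(t))=t$.

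For monotonicity I would differentiate and show
\[
\beta'(s)=\log(1+s^{-1})-\frac{1}{s+1}>0
\quad \text{for all } s>0.
\]
The inequality follows from the standard bound $\log(1+x)>\dfrac{x}{1+x}$ (valid for $x>0$) applied with $x=s^{-1}$. For the image, I would use the boundary behavior $\beta(s)\to 0$ as $s\to 0^+$ (since $s\log s\to 0$), and $\beta(s)\to 1$ as $s\to\infty$ (from the expansion $\log(1+s^{-1})=s^{-1}-\tfrac12 s^{-2}+O(s^{-3})$). The strict bound $\beta(s)<1$ follows from $\log(1+s^{-1})<s^{-1}$ (the standard $\log(1+x)<x$), and together with continuity and strict monotonicity this identifies the image as $(0,1)$. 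Hence $\alpha\colon(0,1)\to(0,\infty)$ is a well-defined strictly increasing bijection with $\lim_{t\to 0^+}\alpha(t)=0$.

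For the asymptotic \eqref{20/12/30/8:19}, the identity $\beta(\alpha(t))=t$ rewrites as
\[
\alpha(t)=\frac{t}{\log(1+\alpha(t)^{-1})},
\]
so everything reduces to showing
\[
\log(1+\alpha(t)^{-1})\sim\log(1+t^{-1})\qquad \text{as }t\to 0^+.
\]
Taking logarithms in $t^{-1}=\alpha(t)^{-1}/\log(1+\alpha(t)^{-1})$ gives
\[
\log(t^{-1})=\log(\alpha(t)^{-1})-\log\bigl(\log(1+\alpha(t)^{-1})\bigr),
\]
and since $\alpha(t)\to 0$ forces $\log(\alpha(t)^{-1})\to\infty$, the double-log term is negligible compared with $\log(\alpha(t)^{-1})$. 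Hence $\log(1+t^{-1})\sim\log(t^{-1})\sim\log(\alpha(t)^{-1})\sim\log(1+\alpha(t)^{-1})$, which combined with the displayed identity for $\alpha(t)$ yields \eqref{20/12/30/8:19} for $t$ sufficiently small, i.e.\ for $0<t\le T_0$ with some $T_0\in(0,1)$ depending only on $d$.

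The proof is essentially routine one-variable calculus; the only mildly delicate point is the bootstrap comparison of $\log(1+\alpha(t)^{-1})$ with $\log(1+t^{-1})$, which I would handle by the double-logarithm argument above. Everything else reduces to the two standard inequalities $\tfrac{x}{1+x}<\log(1+x)<x$ for $x>0$.
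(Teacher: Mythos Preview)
Your proof is correct and covers all the claims. The paper takes a slightly different path in two places, so a brief comparison is worthwhile.

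For monotonicity and the image, the paper uses the integral representation
\[
\beta(s)=s\int_0^1\frac{d}{d\theta}\log(s+\theta)\,d\theta=1-\int_0^1\frac{\theta}{s+\theta}\,d\theta,
\]
from which strict monotonicity and the limits $\beta(0^+)=0$, $\beta(\infty)=1$ are read off directly. Your approach via $\beta'(s)=\log(1+s^{-1})-\frac{1}{s+1}>0$ and the standard inequalities $\frac{x}{1+x}<\log(1+x)<x$ is equally elementary and arguably more direct; the integral form has the minor advantage of yielding both monotonicity and the endpoint values from one line.

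For the asymptotic, the paper argues via a concrete sandwich: choosing $T_0$ so that $s:=\alpha(t)\le e^{-1}$, one checks that $s\le t\le s^{1/2}$, whence $\tfrac12\log(s^{-1})\le\log(t^{-1})\le\log(s^{-1})$, and the conclusion $\alpha(t)\log(1+t^{-1})\sim t$ follows. Your double-logarithm bootstrap, taking logs in $t^{-1}=\alpha(t)^{-1}/\log(1+\alpha(t)^{-1})$ to show $\log(t^{-1})\sim\log(\alpha(t)^{-1})$, reaches the same endpoint by a slightly more abstract route. Both arguments are equivalent in strength; the paper's sandwich gives explicit constants in the comparison of the logarithms, while yours is cleaner to write but leaves the constants implicit.

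One small slip: at the end you say $T_0$ depends ``only on $d$'', but this lemma is a pure one-variable statement with no dimension parameter; $T_0$ is an absolute constant.
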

\begin{proof}[Proof of Lemma \ref{20/12/30/8:19}]
We may write $\beta$ as  
\begin{equation}\label{20/12/30/8:25}
\beta(s)
=
s\int_{0}^{1}\frac{d}{d\theta}\log{(s+\theta)}\,d\theta 
=
1-\int_{0}^{1}\frac{\theta}{s+\theta}\,d\theta. 
\end{equation}
This formula shows that $\beta$ is strictly increasing on $(0,\infty)$, 
 $\lim_{s\to 0}\beta(s)=0$, and $\lim_{s\to \infty}\beta(s)=1$.
 In particular, the image of $(0,\infty)$ by $\beta$ is $(0,1)$. 

Next, let $\alpha\colon (0,1)\to (0,\infty)$ be the inverse of $\beta$.  
 Note that $\alpha$ is strictly increasing, continuous, and $\lim_{t\to 0}\alpha(t)=0$. Hence, there exists $0< T_{0} < 1$ such that 
  $\alpha(t) \le e^{-1} $ for all $0<t\le T_{0}$.  
 Let $0< t\le T_{0}$, and put $s:=\alpha(t)$. Note that  $0<s\le e^{-1}$ and $\beta(s)=t$.  
 Then, observe from $\beta(s):=s \log{(1+s^{-1})}$ that  $s \le t \le s^{\frac{1}{2}}$, which implies that 
\begin{equation}\label{20/12/30/13:47}
\frac{1}{2}\log{(s^{-1})} \le \log{(t^{-1})} \le \log{(s^{-1})}.  
\end{equation} 
Furthermore, it follows from  $0< s,t \le 1$, \eqref{20/12/30/13:47} and $s=\alpha(t)$ that 
\begin{equation}\label{20/12/30/13:53}
t = \beta(s) \sim s \log{(s^{-1}) } \sim s   \log{(t^{-1})} \sim \alpha(t) \log{(1+t^{-1})}.  
\end{equation} 
Thus, we have prove the lemma. 
 \end{proof}

\begin{lemma}\label{19/02/16/10:53}
Assume $d=3,4$. Let $0<s_{1}\le s_{2}<1$. Then, we have 
\begin{equation}\label{19/02/16/10:55}
\int_{\mathbb{R}^{d}}
\int_{|\xi| \le 1 } 
\frac{\min\{|x||\xi|, \, |x|^{2}|\xi|^{2}\}}{|\xi|^{2}(|\xi|^{2}+s_{1})
(|\xi|^{2}+s_{2})}
(1+|x|)^{-(d+2)}
\,d\xi dx 
\lesssim 
\log{(1+s_{2}^{-1})}\delta(s_{2})^{-1}
,
\end{equation}
where the implicit constant depends only on $d$. 
\end{lemma}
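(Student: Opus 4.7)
The plan is to first perform the $x$-integration with $\xi$ fixed, so as to reduce the double integral to a one-dimensional $\xi$-integral of the same shape as in \eqref{20/9/30/9:12}, but with an additional logarithmic weight in $|\xi|$; the factor $\delta(s_{2})^{-1}$ in the target bound will then come from that $\xi$-integral as in \eqref{20/9/30/9:12}, while the extra $\log(1+s_{2}^{-1})$ will come from the weight. Concretely, for $0<|\xi|\le 1$ I would first establish
\[
\Psi(\xi):=\int_{\mathbb{R}^{d}}\min\{|x||\xi|,\,|x|^{2}|\xi|^{2}\}(1+|x|)^{-(d+2)}\,dx
\lesssim |\xi|^{2}\bigl(1+\log(1+|\xi|^{-1})\bigr),
\]
with implicit constant depending only on $d$. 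This follows by splitting the $x$-integral at $|x|=|\xi|^{-1}$ (the crossover of the two entries of the minimum) and further splitting $|x|\le |\xi|^{-1}$ at $|x|=1$. In spherical coordinates the three contributions are $|\xi|^{2}\int_{0}^{1}r^{d+1}\,dr\lesssim |\xi|^{2}$, $|\xi|^{2}\int_{1}^{|\xi|^{-1}}r^{-1}\,dr\sim |\xi|^{2}\log(1+|\xi|^{-1})$, and $|\xi|\int_{|\xi|^{-1}}^{\infty}r^{-2}\,dr\sim |\xi|^{2}$.

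Using the trivial bound $(|\xi|^{2}+s_{1})^{-1}\le |\xi|^{-2}$ together with this pointwise estimate, the left-hand side of \eqref{19/02/16/10:55} is dominated, up to a $d$-dependent constant, by
\[
\int_{0}^{1}\frac{\bigl(1+\log(1+r^{-1})\bigr)r^{d-3}}{r^{2}+s_{2}}\,dr.
\]
After the substitution $r=\sqrt{s_{2}}\,u$ this equals
\[
s_{2}^{(d-4)/2}\int_{0}^{s_{2}^{-1/2}}\frac{u^{d-3}\bigl(1+\log\bigl(1+(\sqrt{s_{2}}u)^{-1}\bigr)\bigr)}{1+u^{2}}\,du,
\]
and I would treat the two cases separately. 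For $d=3$, the $u$-integrand is bounded by $(1+u^{2})^{-1}\bigl(1+|\log\sqrt{s_{2}}|+|\log u|\bigr)$, whose integral over $(0,\infty)$ is $\lesssim 1+|\log s_{2}|$; multiplying by $s_{2}^{-1/2}$ gives $\delta(s_{2})^{-1}\log(1+s_{2}^{-1})$. For $d=4$, splitting the $u$-integral at $u=1$, the portion $0\le u\le 1$ contributes $O(|\log s_{2}|)$, while on $1\le u\le s_{2}^{-1/2}$ the bounds $u(1+u^{2})^{-1}\sim u^{-1}$ and $|\log(\sqrt{s_{2}}u)|\lesssim |\log s_{2}|$ yield a contribution $\lesssim |\log s_{2}|\int_{1}^{s_{2}^{-1/2}}u^{-1}\,du\sim |\log s_{2}|^{2}$, matching $\log(1+s_{2}^{-1})\delta(s_{2})^{-1}$.

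The main obstacle is the $d=4$ case: since $\delta(s_{2})^{-1}$ is already logarithmic, the claimed bound is a squared logarithm, so it is essential to retain the weight $\log(1+|\xi|^{-1})$ produced by the $x$-integration rather than discarding it, and to exploit the full range $1\le u\le s_{2}^{-1/2}$ in the substituted $u$-integral in order to generate the second logarithmic factor. The $d=3$ case is genuinely easier, because there $\delta(s_{2})^{-1}=s_{2}^{-1/2}$ is a power, and the logarithmic weight is absorbed into a single bounded integral against the Poisson-type kernel $(1+u^{2})^{-1}$.
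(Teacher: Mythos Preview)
Your argument is correct. The approach is essentially the same as the paper's: both use radial coordinates, split at the crossover $|x||\xi|=1$ (and at $|x|=1$), and eventually substitute $\lambda=\sqrt{s_{2}}\,u$ in the $\xi$-variable. The difference is purely organizational: you apply Fubini and carry out the $x$-integral first, packaging the result as the clean pointwise bound $\Psi(\xi)\lesssim |\xi|^{2}(1+\log(1+|\xi|^{-1}))$, and only then integrate in $\xi$; the paper instead keeps the double integral and splits the $(|x|,|\xi|)$-rectangle directly into three pieces, the middle one of which (corresponding to $|x|\ge 1$, $|\xi|\le |x|^{-1}$) requires a further three-fold split after substitution. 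Your ordering avoids that nested splitting and makes the $d=4$ squared-logarithm transparent as the product of the weight $\log(1+|\xi|^{-1})$ with the range $\int_{1}^{s_{2}^{-1/2}}u^{-1}\,du$; the paper's route arrives at the same conclusion but with more bookkeeping.
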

\begin{proof}[Proof of Lemma \ref{19/02/16/10:53}]
Observe that 
\begin{equation}\label{19/02/16/10:54}
\begin{split}
&
\int_{\mathbb{R}^{d}}
\int_{|\xi| \le 1 } 
\frac{\min\{|x||\xi|, \, |x|^{2}|\xi|^{2}\}}{|\xi|^{2}(|\xi|^{2}+s_{1})
(|\xi|^{2}+s_{2})}
(1+|x|)^{-(d+2)}
\,d\xi dx 
\\[6pt]
&\lesssim 
\int_{0}^{1}
\int_{0}^{1} 
\frac{r^{d+1} \lambda^{d-1}}{(1+r)^{d+2}(\lambda^{2}+s_{1})
(\lambda^{2}+s_{2})}
\,d\lambda dr
\\[6pt]
&\quad + 
\int_{1}^{\infty}
\int_{0}^{\frac{1}{r}} 
\frac{r^{d+1} \lambda^{d-1}}{(1+r)^{d+2}(\lambda^{2}+s_{1})
(\lambda^{2}+s_{2})}
\,d\lambda dr
\\[6pt]
&\quad + 
\int_{1}^{\infty}
\int_{\frac{1}{r}}^{1} 
\frac{r^{d} \lambda^{d-2}}{(1+r)^{d+2}(\lambda^{2}+s_{1})
(\lambda^{2}+s_{2})}
\,d\lambda dr,
\end{split}
\end{equation}
where the implicit constant depends only on $d$. 
 Consider the first term on the right-hand side of \eqref{19/02/16/10:54}. 
 By an elementary computation, and \eqref{20/12/12/11:12} (only for $d=3$), 
we see that
\begin{equation}\label{19/02/16/11:10}
\begin{split}
&
\int_{0}^{1}
\int_{0}^{1} 
\frac{r^{d+1} \lambda^{d-1}}{(1+r)^{d+2}(\lambda^{2}+s_{1})
(\lambda^{2}+s_{2})}
\,d\lambda dr
\\[6pt]
&\lesssim
 \int_{0}^{1} 
\frac{\lambda^{d-1}}{(\lambda^{2}+s_{1})
(\lambda^{2}+s_{2})}
\,d\lambda 
\lesssim
 \int_{0}^{1} 
\frac{\lambda^{d-3}}{\lambda^{2}+s_{2}}
\,d\lambda 
\lesssim 
\delta(s_{2})^{-1},
\end{split}
\end{equation}
where the implicit constant depends only on $d$. Next, consider the second term on the right-hand side of \eqref{19/02/16/10:54}. 
 By integration by substitution ($\lambda=\sqrt{s_{2}}\mu$), elementary computations, and \eqref{20/12/12/11:12} (only for $d=3$), we see that 
\begin{equation}\label{19/02/16/11:25}
\begin{split}
&
\int_{1}^{\infty}
\int_{0}^{\frac{1}{r}} 
\frac{r^{d+1} \lambda^{d-1}}{(1+r)^{d+2}(\lambda^{2}+s_{1})
(\lambda^{2}+s_{2})}
\,d\lambda dr 
\\[6pt]
&\le
\int_{1}^{\infty}\int_{0}^{\frac{1}{\sqrt{s_{2}} r}} 
\frac{s_{2}^{\frac{d-4}{2}} \mu^{d-1}}{r (\mu^{2}+\frac{s_{1}}{s_{2}}) (\mu^{2}+1) }
\,d\mu dr
\\[6pt]
&= 
\int_{1}^{\frac{1}{\sqrt{s_{2}}}} 
\int_{0}^{1} 
\frac{s_{2}^{\frac{d-4}{2}} \mu^{d-1}}{r (\mu^{2}+\frac{s_{1}}{s_{2}}) (\mu^{2}+1)}
\,d\mu dr
+
\int_{1}^{\frac{1}{\sqrt{s_{2}}}} 
\int_{1}^{\frac{1}{\sqrt{s_{2}} r}} 
\frac{s_{2}^{\frac{d-4}{2}} \mu^{d-1}}{r (\mu^{2}+\frac{s_{1}}{s_{2}}) (\mu^{2}+1) }
\,d\mu dr
\\[6pt]
&\quad +
 \int_{\frac{1}{\sqrt{s_{2}}}}^{\infty}
 \int_{0}^{\frac{1}{\sqrt{s_{2}} r}} 
\frac{s_{2}^{\frac{d-4}{2}} \mu^{d-1}}{r (\mu^{2}+\frac{s_{1}}{s_{2}}) (\mu^{2}+1)}
\,d\mu dr
\\[6pt]
&\le 
\int_{1}^{\frac{1}{\sqrt{s_{2}}}} 
\frac{s_{2}^{\frac{d-4}{2}}}{r} 
\,dr
+
\int_{1}^{\frac{1}{\sqrt{s_{2}}}} 
\int_{1}^{\frac{1}{\sqrt{s_{2}}}} 
\frac{s_{2}^{\frac{d-4}{2}} \mu^{d-3}}{r (\mu^{2}+1)}
\,d\mu dr
+
 \int_{\frac{1}{\sqrt{s_{2}}}}^{\infty}
 \int_{0}^{\frac{1}{\sqrt{s_{2}} r}} 
\frac{s_{2}^{\frac{d-4}{2}}}{r}
\,d\mu dr
\\[6pt]
&\lesssim 
\delta(s_{2})^{-1}\log{(1+s_{2}^{-1})},
\end{split}
\end{equation}
where the implicit constant depends only on $d$.
 Finally, we consider the last term on the right-hand side of \eqref{19/02/16/10:54}. By change of the order of integrals, and \eqref{20/12/12/11:12} (only for $d=3$), we see that 
\begin{equation}\label{19/02/16/14:50}
\begin{split}
&
\int_{1}^{\infty}
\int_{\frac{1}{r}}^{1} 
\frac{r^{d} \lambda^{d-2}}{(1+r)^{d+2}(\lambda^{2}+s_{1})
(\lambda^{2}+s_{2})}
\,d\lambda dr
\\[6pt]
&\le 
\int_{1}^{\infty}
\int_{\frac{1}{r}}^{1} 
\frac{\lambda^{d-2}}{r^{2}(\lambda^{2}+s_{1})
(\lambda^{2}+s_{2})}
\,d\lambda dr
=
\int_{0}^{1}\frac{\lambda^{d-2}}{(\lambda^{2}+s_{1})
(\lambda^{2}+s_{2})} \int_{\frac{1}{\lambda}}^{\infty} \frac{1}{r^{2}}
\,dr d\lambda
\\[6pt]
&=
\int_{0}^{1}\frac{\lambda^{d-1}}{(\lambda^{2}+s_{1})
(\lambda^{2}+s_{2})}\,d\lambda
\le 
\int_{0}^{1}\frac{\lambda^{d-3}}{\lambda^{2}+s_{2}}\,d\lambda
\lesssim 
\delta(s_{2})^{-1},
\end{split}
\end{equation}
where the implicit constant depends only on $d$. 

Putting \eqref{19/02/16/10:54} through \eqref{19/02/16/14:50} together, 
 we find that \eqref{19/02/16/10:55} holds.
\end{proof}


The following lemma is quoted from \cite{Jensen-Kato}: 
\begin{lemma}[Lemma 3.12 of \cite{Jensen-Kato}]\label{21/1/12/9:48}
Let $\mathscr{X}$, $\mathscr{Y}$, $\mathbf{X}$, $\mathbf{Y}$ be vector spaces. 
 Let $\mathscr{A}\colon \mathscr{X} \to \mathscr{Y}$, $B\colon \mathbf{X}\to \mathscr{X}$,  $C \colon \mathscr{Y} \to \mathbf{Y}$ be linear operators. Define $\mathbf{A}:=C \mathscr{A} B$. If $\mathbf{A}^{-1}$ exists, then $\mathscr{A}^{-1}=B \mathbf{A}^{-1} C$ provided $B$ is surjective and $C$ is injective. 
\end{lemma}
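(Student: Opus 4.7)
The plan is to show directly that $\mathscr{A}$ is bijective and that $B\mathbf{A}^{-1}C$ serves as its two-sided inverse; everything reduces to pure diagram chasing using the two hypotheses ``$B$ surjective'' and ``$C$ injective'' in complementary roles.

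First I would verify that $B\mathbf{A}^{-1}C$ is a right inverse of $\mathscr{A}$, i.e., $\mathscr{A}B\mathbf{A}^{-1}C = \mathrm{Id}_{\mathscr{Y}}$. For any $y \in \mathscr{Y}$, apply $C$ to $\mathscr{A}B\mathbf{A}^{-1}Cy$; using the definition $\mathbf{A} = C\mathscr{A}B$ gives
\[
C\mathscr{A}B\mathbf{A}^{-1}Cy = \mathbf{A}\mathbf{A}^{-1}Cy = Cy,
\]
so $C(\mathscr{A}B\mathbf{A}^{-1}Cy - y) = 0$, and the injectivity of $C$ yields $\mathscr{A}B\mathbf{A}^{-1}Cy = y$. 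In particular $\mathscr{A}$ is surjective. This is the step where the hypothesis on $C$ is used.

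Next I would show $\mathscr{A}$ is injective, which is where surjectivity of $B$ enters. Suppose $\mathscr{A}x = 0$ for some $x \in \mathscr{X}$. By surjectivity of $B$, pick $\mathbf{x} \in \mathbf{X}$ with $B\mathbf{x} = x$. Then
\[
\mathbf{A}\mathbf{x} = C\mathscr{A}B\mathbf{x} = C\mathscr{A}x = 0,
\]
and because $\mathbf{A}$ is invertible we conclude $\mathbf{x} = 0$, whence $x = B\mathbf{x} = 0$. Combined with the previous paragraph, $\mathscr{A}$ is bijective, so $\mathscr{A}^{-1}$ exists, and since the right inverse of a bijection equals its two-sided inverse, we conclude $\mathscr{A}^{-1} = B\mathbf{A}^{-1}C$.

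There is no real obstacle: the proof is essentially a one-line verification in each direction. The only thing to be careful about is not using $C^{-1}$ or $B^{-1}$ (which do not exist in general)—instead, injectivity of $C$ is used via the implication ``$Cz=0 \Rightarrow z=0$,'' and surjectivity of $B$ is used to lift an arbitrary $x$ to some $\mathbf{x}$. These two uses are genuinely complementary and explain why both hypotheses are needed.
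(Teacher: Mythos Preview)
Your proof is correct and complete. The paper does not actually give its own proof of this lemma; it merely quotes the statement from Jensen--Kato \cite{Jensen-Kato} without argument, so your direct verification (right inverse via injectivity of $C$, injectivity of $\mathscr{A}$ via surjectivity of $B$) is exactly what is needed to fill in the cited result.
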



\bibliographystyle{plain}

\end{document}